\theoremstyle{plain}
\newtheorem{thm}{Theorem}[section]
\newtheorem{lem}[thm]{Lemma}
\newtheorem{prop}[thm]{Proposition}
\newtheorem{cor}[thm]{Corollary}
\theoremstyle{definition}
\newtheorem{defn}[thm]{Definition}
\newtheorem{example}[thm]{Example}
\newtheorem{remark}[thm]{Remark}
\newtheorem{claim}[thm]{Claim}
\newtheorem{notation}[thm]{Notation}
\newtheorem{question}[thm]{Question}
\newtheorem{sassumption}[thm]{Standing Assumptions}
\newtheorem{openquestion}[thm]{Open Problem}
\newcommand{\Z}{\mathbb{Z}}
\newcommand{\R}{\mathbb{R}}
\newcommand{\N}{\mathbb{N}}
\newcommand{\Q}{\mathbb{Q}}
\newcommand{\Hy}{\mathbb{H}}
\newcommand{\C}{\mathbb{C}}
\newcommand{\cC}{\mathcal{C}}
\newcommand{\cE}{\mathcal{E}}
\newcommand{\cF}{\mathcal{F}}
\newcommand{\cG}{\mathcal{G}}
\newcommand{\cH}{\mathcal{H}}
\newcommand{\cN}{\mathcal{N}}
\newcommand{\cP}{\mathcal{P}}
\newcommand{\cR}{\mathcal{R}}
\newcommand{\cU}{\mathcal{U}}
\newcommand{\cV}{\mathcal{V}}
\newcommand{\cW}{\mathcal{W}}
\newcommand{\E}{\mathbb{E}}
\newcommand{\G}{\Gamma}
\newcommand{\p}{\partial}
\newcommand{\la}{\left\langle}
\newcommand{\ra}{\right\rangle}
\newcommand{\ba}{\boldsymbol{a}}
\newsavebox{\@brx}
\newcommand{\llangle}[1][]{\savebox{\@brx}{\(\m@th{#1\langle}\)}%
  \mathopen{\copy\@brx\kern-0.5\wd\@brx\usebox{\@brx}}}
\newcommand{\rrangle}[1][]{\savebox{\@brx}{\(\m@th{#1\rangle}\)}%
  \mathclose{\copy\@brx\kern-0.5\wd\@brx\usebox{\@brx}}}
\DeclareMathOperator{\cd}{cd}
\DeclareMathOperator{\Isom}{Isom}
\DeclareMathOperator{\Lk}{Lk}
\DeclareMathOperator{\CAT}{CAT}
\DeclareMathOperator{\diam}{diam}
\DeclareMathOperator{\Hdim}{Hausdim}
\DeclareMathOperator{\Cdim}{Confdim}
\DeclareMathOperator{\boxd}{box}
\DeclareMathOperator{\covering}{covering}
\newcommand{\St}{\operatorname{St}}
\newcommand{\arccosh}{\operatorname{arccosh}}
\tikzset{
	labl/.style={anchor=south, rotate=90, inner sep=.5mm}
}
\definecolor{amethyst}{rgb}{0.6, 0.4, 0.8}
\newcommand{\hide}[1]{}
\title{Visual metrics on boundaries of hyperbolic spaces}
\author{Emily Stark}
\date{\today}
\begin{document}

\begin{abstract}
    This is an expository article on visual metrics on boundaries of hyperbolic metric spaces. We discuss the construction of visual metrics, quasisymmetries and their invariants, Hausdorff and conformal dimension, and constructions and applications of Gromov's round trees. There is a focus on providing examples throughout. These notes are based on the material of a minicourse given by the author at the 2024 Riverside Workshop in Geometric Group Theory.
\end{abstract}

 \maketitle

    \section{Introduction}

        Hyperbolicity is central to geometric group theory. This robust notion of negative curvature, introduced by Gromov~\cite{gromov}, captures and unifies features of negatively curved manifolds and combinatorial objects like trees. Hyperbolic groups abound. This family includes groups that act properly and cocompactly by isometries on hyperbolic space $\Hy^n$, other rank-1 symmetric spaces, $\CAT(-1)$ spaces, and piecewise Euclidean or piecewise hyperbolic cellular complexes that satisfy natural local combinatorial curvature conditions. The family includes many Coxeter groups, free-by-cyclic groups, and random groups in the sense of Gromov \cite{moussong-thesis, brinkmann00, gromov, gromov93}.

        Consequences of hyperbolicity have further justified its study. Hyperbolic groups are finitely presented and act properly and cocompactly on a finite-dimensional contractible simplicial complex. These groups have linear isoperimetric inequalities, are biautomatic, and have solvable word problem, conjugacy problem, and isomorphism problem \cite{dehn12, gromov, cannon84, wordprocessing, sela95, dahmaniguirardel-iso}. Powerful programs within the field rely on aspects of hyperbolicity, including the study of relative hyperbolicity, acylindrical hyperbolicity, and hierarchical hyperbolicity.

    The geometry of real hyperbolic space $\Hy^{n+1}$ and the structure of its isometry group are intimately tied to its boundary sphere $S^{n}$. Isometries of hyperbolic space induce conformal maps of the boundary, and the converse holds as well: every conformal map of the boundary is an extension of an isometry for $n \geq 2$. In analogy, a hyperbolic group has a natural boundary compactification. The boundary admits a family of metrics inducing the topology. Quasi-isometries between hyperbolic groups extend to quasisymmetric homeomorphisms between their boundaries, and the converse holds as well.

    Hyperbolic groups are effectively studied through the lens of their boundaries. The topology alone captures algebraic information. Celebrated work of Bowditch~\cite{bowditch} proves the local cut point structure of the boundary of a hyperbolic group captures the system of splittings of the group over $2$-ended subgroups. The topology yields a quasi-isometry invariant JSJ decomposition, generalizing work of Sela~\cite{sela-JSJ} in the torsion-free case. The JSJ decomposition can be used, for example, to understand the automorphism group of a hyperbolic group.

 \noindent {\bf Quasisymmetries.} The boundary of a hyperbolic group admits a family of metrics, called visual metrics, which induce the topology on the boundary.  Quasisymmetries are the naturally associated maps, which is made precise in the following theorem.

    \begin{thm} \cite{buyaloschroeder} \cite{paulin}    \label{thm:introQIQS}
         Let $X$ and $X'$ be proper geodesic hyperbolic metric spaces admitting geometric actions by hyperbolic groups. Let $d_a$ and $d_a'$ be visual metrics on $\p X$ and $\p X'$, respectively. Then the spaces $X$ and $X'$ are quasi-isometric if and only if the boundaries $(\p X, d_a)$ and $(\p X', d_a')$ are quasisymmetric.
    \end{thm}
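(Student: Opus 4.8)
The plan is to prove the two implications separately, using throughout the relationship between a visual metric and the Gromov product, which I record in the form $d_a(\xi,\eta) \asymp a^{-(\xi|\eta)_x}$ for a fixed basepoint $x$ and a suitable parameter $a$.

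For the forward implication, suppose $f \colon X \to X'$ is a quasi-isometry. First I would invoke the stability of quasigeodesics in hyperbolic spaces (the Morse lemma): $f$ sends geodesic rays to quasigeodesics, which lie within bounded Hausdorff distance of genuine geodesic rays, so $f$ induces a well-defined boundary map $\partial f \colon \partial X \to \partial X'$, and the symmetric argument applied to a quasi-inverse shows $\partial f$ is a homeomorphism. The heart of this direction is the estimate that a quasi-isometry distorts Gromov products in a controlled way: there are constants $\lambda \geq 1$ and $C \geq 0$ so that $\tfrac{1}{\lambda}(\xi|\eta)_x - C \leq (\partial f(\xi) \mid \partial f(\eta))_{f(x)} \leq \lambda (\xi|\eta)_x + C$. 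I would prove this directly from the definition of the Gromov product, again using the Morse lemma to compare the images of geodesics joining boundary points with honest geodesics in $X'$. Exponentiating via the visual metric relation then converts this additive-multiplicative control on Gromov products into the multiplicative control required by the definition of a quasisymmetry (in fact a power quasisymmetry); the two visual parameters $a$ and $a'$ enter only through the distortion of the control function, so the particular choice of parameters is immaterial.

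For the converse, the geometric actions must be used essentially. By the Milnor--\v{S}varc lemma each space is quasi-isometric to the acting group equipped with a word metric, so both $X$ and $X'$ are cobounded, and it suffices to reconstruct a quasi-isometry from a quasisymmetry $\varphi \colon (\partial X, d_a) \to (\partial X', d_a')$. The key tool is that a cobounded proper geodesic hyperbolic space is quasi-isometric to a hyperbolic cone (a hyperbolic filling) built functorially from its boundary equipped with a visual metric; here coboundedness, guaranteed by cocompactness of the action, is precisely what forces $X$ to be coarsely dense in the cone over $\partial X$. I would then show that a quasisymmetry of the boundaries induces a quasi-isometry of the associated cones, by lifting $\varphi$ level by level and controlling distances with the quasisymmetry inequalities together with the cone metric's logarithmic dependence on the boundary metric. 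Composing the quasi-isometries $X \to \mathrm{Con}(\partial X) \to \mathrm{Con}(\partial X') \to X'$ then yields the desired quasi-isometry.

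I expect the converse to be the main obstacle. The forward direction is essentially a computation with the Morse lemma and the Gromov product, whereas the converse genuinely reconstructs interior geometry from boundary data, and it is here that coboundedness enters in an essential way. The delicate point is verifying that the cone construction is stable enough that a quasisymmetry --- which only controls ratios of small boundary distances --- produces uniform quasi-isometry constants on the cone, rather than constants that degrade as one moves deeper into the space.
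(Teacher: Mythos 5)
Your converse direction is correct and takes a genuinely different route from the paper's. You reconstruct the interior functorially via hyperbolic cones/fillings in the style of Bonk--Schramm, whereas the paper (following Paulin, as sketched in Section~\ref{sec:QIandQS}) rebuilds the map point by point: each $x \in X$ is realized as the center of an ideal triangle with vertices $z,z',z'' \in \partial X$ and sent to the center of the ideal triangle on $f(z), f(z'), f(z'')$, with the quasi-M\"obius condition verifying well-definedness up to bounded error and the quasi-isometry inequalities. Your cone approach buys generality (it works for visual hyperbolic spaces with no group action, with coboundedness/visuality being exactly where cocompactness enters, as you correctly flag), at the cost of importing the filling machinery; the paper's tripod-center approach is more hands-on and stays entirely inside the two spaces.

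There is, however, a genuine gap in your forward direction, at the step ``exponentiating \ldots converts this additive-multiplicative control on Gromov products into the multiplicative control required by the definition of a quasisymmetry.'' The estimate you propose to prove, $\frac{1}{\lambda}(\xi,\eta)_p - C \leq \bigl(\partial f(\xi), \partial f(\eta)\bigr)_{f(p)} \leq \lambda (\xi,\eta)_p + C$, is true, but exponentiating it only yields the two-sided snowflake envelope $c\, d_a(\xi,\eta)^{\lambda'} \leq d_a'\bigl(\partial f(\xi),\partial f(\eta)\bigr) \leq C'\, d_a(\xi,\eta)^{1/\lambda'}$, and such envelopes do not control ratios. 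Concretely, take $\eta, \zeta$ with $(\xi,\eta)_p = (\xi,\zeta)_p = s$, so that $d_a(\xi,\eta)/d_a(\xi,\zeta) = 1$; the best bound your estimate gives on the image ratio is of order $a^{(\lambda - \frac{1}{\lambda})s + 2C}$, which blows up as $s \to \infty$, so no control function $\phi$ can be extracted from it. (Two-sided H\"older bounds are strictly weaker than quasisymmetry.) What is actually needed --- and what the paper's sketch is organized around --- is control of \emph{differences} of Gromov products, i.e.\ of cross-ratios: $\log[x,y,z,w]$ equals, up to a multiple of $\delta$, a single distance between geodesics, and it is \emph{that one quantity} which the Morse lemma shows is distorted multiplicatively-additively, so the factor $\lambda$ hits the difference rather than each Gromov product separately. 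This shows $\partial f$ is (power) quasi-M\"obius, and one then upgrades quasi-M\"obius to quasisymmetric using that boundaries of hyperbolic groups are compact and uniformly perfect (the equivalence the paper cites from Ha\"issinsky, going back to V\"ais\"al\"a's normalization theorem). Without this cross-ratio detour, or Bourdon's finer direct argument which the paper also references, your forward implication does not close.
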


    \Cref{thm:introQIQS} is crucial in proving rigidity results, where one wants to show that additional algebraic or metric structure is preserved by a quasi-isometry. For example, the renowned Mostow Rigidity Theorem \cite{mostow}, proves that any isomorphism between fundamental groups of compact, negatively curved, locally symmetric manifolds is induced by an isometry. In a different direction, any finitely generated group quasi-isometric to $\Hy^{n+1}$ is virtually isomorphic to a uniform lattice in $\Isom(\Hy^{n+1})$ by work of Tukia~\cite{tukia86} for $n \geq 2$. The proofs utilized the quasiconformal structure of the boundary and the possibility of promoting the quasisymmetries induced by a quasi-isometry to conformal maps. These ideas have been vastly extended, producing remarkable results in the field; we refer the reader to the surveys of Kleiner~\cite{kleiner-ICM} and Bourdon~\cite{bourdon-mostowtype} for detailed discussion of this direction and open problems.

\vskip.05in

    \noindent {\bf Conformal dimension.}    Foundational to geometric group theory is the search for quasi-isometry invariants. \Cref{thm:introQIQS} proves that any quasisymmetry invariant yields one. Conformal dimension is a powerful quasisymmetry invariant introduced by Pansu~\cite{pansu} to study the structure of the rank-1 symmetric spaces. Pansu computed the conformal dimension of the boundary of each such space and, consequently, proved that non-isometric non-compact rank-1 symmetric spaces are not quasi-isometric. For example, while $\p \Hy_{\R}^4 \cong \p \Hy_{\C}^2 \cong S^3$, these spaces are not quasi-isometric.

    The conformal dimension of a metric space $(Z,d)$ is defined to be the infimum of the Hausdorff dimension amongst all metric spaces quasisymmetric to $(Z,d)$. Conformal dimension is often viewed as a uniformization or optimization problem: amongst all quasisymmetric deformations of a metric space, find the metric that is ``least fractal,'' as measured by Hausdorff dimension.
    For example, the Hausdorff dimension of the von Koch snowflake is greater than the Hausdorff dimension of the unit circle. There is a quasisymmetry from the von Koch snowflake to the unit circle, and the conformal dimension of the von Koch snowflake equals one.

    Conformal dimension has algebraic consequences. Carrasco--Mackay~\cite{carrascomackay} characterized the hyperbolic groups without 2-torsion whose boundaries have conformal dimension equal to one as exactly the family of such hyperbolic groups that admit a hierarchy over elementary subgroups that terminates in elementary and Fuchsian groups (including free groups). Bonk--Kleiner~\cite{bonkkleiner05S2} proved that Cannon's Conjecture, that every hyperbolic group with 2-sphere boundary is virtually Kleinian, can be viewed as a question of attaining (Ahlfors regular) conformal dimension.

\vskip.05in

 \noindent {\bf Gromov's round trees.}   In this article we discuss a primary tool to produce lower bounds on conformal dimension by finding a sufficiently rich curve family in the metric space. For example, the Hausdorff dimension of the two-thirds Cantor set equals $\frac{\log 2}{\log 3}$, while its conformal dimension equals zero. However, taking the product with an interval has a remarkable consequence: the resultant space is one whose Hausdorff dimension achieves the conformal dimension; see \Cref{thm:cantorInterval}. This type of result is known as a {\it stabilization theorem} and has many useful consequences.

    Gromov's round tree construction allows one to construct the product of a Cantor set and an interval in the boundary of a hyperbolic metric space so that the Hausdorff dimension of the Cantor set can be computed. Lower bounds on the conformal dimension of the boundary of a hyperbolic group can then be obtained by quasi-isometrically embedding a suitable round tree in a space the group acts on geometrically. 

    Following Gromov's article~\cite[Section 7.C3]{gromov93}, the round tree technique was first employed by Bourdon~\cite{bourdon95} to prove there are infinitely many quasi-isometry classes among hyperbolic groups that act geometrically on right-angled Fuchsian buildings. We note that subsequent work of Bourdon~\cite{bourdon} computed the conformal dimension explicitly, illustrating a family of groups for which conformal dimension of the boundary achieves a dense set of values in $(1,\infty)$.
    Mackay~\cite{mackay10} utilized the round tree technique to prove that the conformal dimension of the boundary of a one-ended hyperbolic group that does not virtually split over a 2-ended subgroup is strictly greater than one. Mackay~\cite{mackay} also used round trees to prove there are infinitely many quasi-isometry classes among certain small cancellation and random groups.

    With Field--Gupta--Lyman~\cite{fieldguptalymanstark}, the author used round trees to produce lower bounds on the conformal dimension of certain Bowditch boundaries of non-hyperbolic, relatively hyperbolic group pairs. We proved there are infinitely many quasi-isometry classes among hyperbolic groups with Pontryagin sphere boundary. Work joint with Cashen--Dani--Schreve~\cite{cashendanischrevestark} proves an analogous result for right-angled Coxeter groups with Pontryagin sphere boundary also using round trees.

\vskip.05in

\noindent {\bf Further reading.} This article presents only a tiny fraction of the power of the analytic perspective to studying boundaries. There are many excellent sources on the structure of boundaries of hyperbolic groups and their quasisymmetric structure. This list includes texts by the following authors and the references therein: Bonk~\cite{bonk-ICM}; Bourdon~\cite{bourdon-mostowtype}; Bridson--Haefliger~\cite{bridsonhaefliger}; Buyalo--Schroeder~\cite{buyaloschroeder}; Dru\c{t}u--Kapovich~\cite{drutukapovich}; Ha\"{i}ssinsky~\cite{haissinsky-bourbaki, haissinsky-QM}; Heinonen~\cite{heinonen-Lectures};  Kapovich--Benakli \cite{kapovichbenakli}; Kleiner~\cite{kleiner-ICM}; Mackay~\cite{mackay-survey}, and Mackay--Tyson~\cite{mackaytyson}.

\vskip.05in

\noindent {\bf Remarks on the text.} This article aims to provide a unified, elementary treatment to serve those new to the topic of visual metrics on boundaries and conformal dimension. In particular, we intended to present the background required to understand lower bounds on conformal dimension achieved via the round tree technique. In the last section we survey such results.
We aimed to provide a plethora of figures and examples as well as proofs, especially in the case that the author could not find one freely available in English.

\subsection*{Acknowledgements} The author thanks Matthew Durham and Thomas Koberda for organizing the Riverside Workshop in Geometric Group Theory 2024 and thanks the participants for many interesting questions and discussions. The author thanks Arianna Zikos for providing an introductory lecture to the minicourse and for many useful discussions. The author is thankful for helpful discussions with Lydia Ahlstrom, Dave Constantine, Yu-Chan Chang, Adrienne Nolt, Felipe Ramirez, Eleanor Rhoads, Elvin Shrestha, and Daniel Woodhouse, and thanks John Mackay for comments on a draft of the article. The author was supported by NSF Grant No. DMS-2204339.

\clearpage
\tableofcontents

\clearpage

    \section{Hyperbolicity}

    This section recalls three notions of hyperbolicity: $\delta$-hyperbolicity, $(\delta)$-hyperbolicity, and the $\CAT(-1)$ condition. The relationship between these notions is relevant to the construction of visual metrics and is discussed as well. Additional background can be found in the texts of Bridson--Haefliger~\cite{bridsonhaefliger}, Buyalo--Schroeder~\cite{buyaloschroeder}, and Ghys--de la Harpe~\cite{ghysdelaharpe}.

\subsection{$\delta$-hyperbolicity}

        \begin{defn}[Geodesics]
            Let $X$ be a metric space. A {\it geodesic} from $x \in X$ to $y \in X$ is a map $c:[0,\ell] \rightarrow X$ so that $c(0) = x$, $c(\ell) = y$ and $d\bigl(c(t), c(t')\bigr) = |t - t'|$ for all $t, t' \in [0,\ell]$. The image of $c$ in $X$ is called a {\it geodesic segment} from $x$ to $y$. A metric space $X$ is a {\it geodesic metric space} if there is a geodesic segment from $x$ to $y$ for all $x,y \in X$. A {\it geodesic triangle} in a metric space $X$ consists of three points in $X$ together with a choice of a geodesic segment (a {\it side}) between each pair of distinct points.
        \end{defn}

            The first notion of hyperbolicity applies to geodesic metric spaces and is given as follows.
            \begin{wrapfigure}{r}{0.25\textwidth}
                 \begin{center}
                    \begin{overpic}[width=.25\textwidth,tics=5]{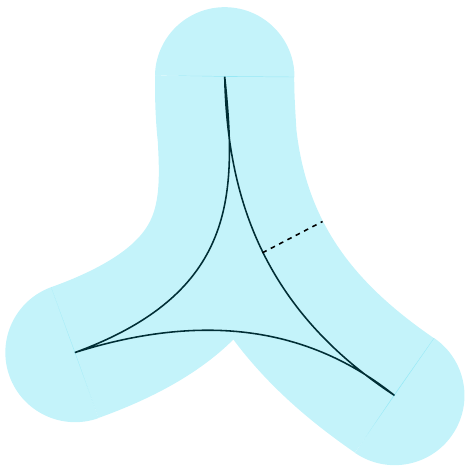} 
                        \put(64,42){\small{$\delta$}}
                    \end{overpic}
                 \end{center}
            \end{wrapfigure}

        \begin{defn}[$\delta$-slim triangles and $\delta$-hyperbolicity]
            Let $\delta>0$. A geodesic triangle in a metric space is {\it $\delta$-slim} if each of its three sides is contained in the $\delta$-neighborhood of the union of the other two sides. A geodesic metric space $X$ is {\it $\delta$-hyperbolic} if every triangle in $X$ is $\delta$-slim. A finitely generated group is {\it $\delta$-hyperbolic} (or {\it hyperbolic}) if it acts geometrically, ie properly discontinuously and cocompactly by isometries, on a $\delta$-hyperbolic metric space.
        \end{defn}

        \begin{example}
            A geodesic metric space is $0$-hyperbolic if and only if it is a metric tree.
        \end{example}

        \begin{example}
            The Cayley graph of the free product $\Z/n\Z \, * \, \Z/n\Z = \la \, a,b \,|\, a^n,\,  b^n \, \ra$ with respect to the generating set $\{a,b\}$ is $n$-hyperbolic, and $n$ is not the optimal $\delta$.
        \end{example}

            Foundational to geometric group theory is that hyperbolicity is preserved by quasi-isometries.

    \begin{defn}
        Let $K \geq 1$ and $C \geq 0$. A map $f:X \rightarrow X'$ is a {\it $(K,C)$-quasi-isometry} if the following two conditions hold.
        \begin{enumerate}
            \item For all $x,y \in X$,
            \[\frac{1}{K}d(x,y) - C \leq d\bigl(f(x),f(y)\bigr) \leq Kd(x,y) +C.\]
            \item For all $x' \in X'$ there exists $x \in X$ so that $d\bigl(f(x),x'\bigr) \leq C$.
        \end{enumerate}
    \end{defn}

        \begin{thm}[Quasi-isometry invariance] \cite[Theorem III.H.1.9]{bridsonhaefliger}
            Let $X$ and $X'$ be quasi-isometric geodesic metric spaces. If $X$ is $\delta$-hyperbolic, then $X'$ is $\delta'$-hyperbolic. \qed
        \end{thm}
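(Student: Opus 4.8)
The plan is to transport geodesic triangles from $X'$ back into $X$, exploit $\delta$-hyperbolicity there, and then push the resulting slimness forward; the engine driving the whole argument is the stability of quasi-geodesics (the \emph{Morse lemma}), which is the step I expect to carry all the weight.

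First I would fix a $(K,C)$-quasi-isometry $f \colon X \to X'$ and a quasi-inverse $g \colon X' \to X$, which is again a $(K',C')$-quasi-isometry and satisfies $d\bigl(g(f(x)),x\bigr) \le D$ and $d\bigl(f(g(x')),x'\bigr) \le D$ for all $x \in X$ and $x' \in X'$, where $D$ depends only on $K,C$. The only elementary observation needed about such maps is that they send geodesics to quasi-geodesics: if $c \colon [0,\ell] \to X'$ is a geodesic, then $g \circ c$ is a $(K',C')$-quasi-geodesic in $X$, directly from the quasi-isometry inequalities.

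The heart of the proof is the Morse lemma: in a $\delta$-hyperbolic geodesic space there is a constant $R = R(\delta,K',C')$ so that every $(K',C')$-quasi-geodesic lies in the $R$-neighborhood of any geodesic with the same endpoints, and vice versa. I would establish this by the standard divergence estimate. Given a quasi-geodesic $\gamma$ and a geodesic joining the same endpoints, let $R_0$ be the maximal distance from a point of the geodesic to $\gamma$; one shows that the portion of $\gamma$ straying outside a controlled neighborhood of the geodesic must have length at least exponential in $R_0$, while the quasi-geodesic inequality bounds that same length linearly in the corresponding distance along the geodesic. Combining these forces $R_0$ to be bounded in terms of $\delta,K',C'$ alone; the reverse containment then follows with a slightly larger constant. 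This is the main obstacle: everything else is bookkeeping, whereas this estimate is where negative curvature is genuinely used.

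Granting the Morse lemma, I would finish as follows. Since $X'$ is geodesic, let $\Delta'$ be a geodesic triangle in $X'$ with vertices $a',b',c'$ and sides $[a'b']$, $[b'c']$, $[c'a']$. Applying $g$ yields three $(K',C')$-quasi-geodesics joining $g(a'),g(b'),g(c')$ in $X$. Build a genuine geodesic triangle $\Delta$ on these three vertices; by $\delta$-hyperbolicity $\Delta$ is $\delta$-slim, and by the Morse lemma each side of $\Delta$ lies within $R$ of the corresponding quasi-geodesic $g([\,\cdot\,])$. Hence any point on one quasi-geodesic side is within $\delta + 2R$ of the union of the other two quasi-geodesic sides. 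Finally I apply $f$: a point $p'$ on a side of $\Delta'$ satisfies $d\bigl(p', f(g(p'))\bigr) \le D$, the quasi-isometry inequality converts the $(\delta+2R)$-slimness in $X$ into a bound of the form $K(\delta + 2R) + C$ in $X'$, and the two applications of the $D$-estimate add at most $2D$. This yields that $\Delta'$ is $\delta'$-slim with $\delta' = K(\delta + 2R) + C + 2D$, a constant depending only on $\delta,K,C$, completing the proof.
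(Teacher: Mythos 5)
Your proposal is correct and is essentially the argument behind the result as the paper presents it: the paper gives no proof of its own, citing \cite[Theorem III.H.1.9]{bridsonhaefliger}, and the Bridson--Haefliger proof is exactly your route of transporting geodesic triangles by a quasi-inverse, invoking stability of quasi-geodesics (the Morse lemma), and pushing the slimness constant back with the bookkeeping $\delta' = K(\delta + 2R) + C + 2D$. The only technical point glossed over in your divergence sketch is that quasi-geodesics need not be continuous or rectifiable, so before measuring the length of the excursion one first replaces the quasi-geodesic by a continuous (``tamed'') one at bounded Hausdorff distance, as in \cite[Lemma III.H.1.11]{bridsonhaefliger}.
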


    \subsection{Gromov product}

            The Gromov product is sometimes referred to as the {\it Gromov overlap function} and has natural geometric interpretations in a hyperbolic metric space. The Gromov product is central to defining metrics on the boundary of such a space.

        \begin{defn}[Gromov product]
            Let $X$ be a metric space, and let $p \in X$. The {\it Gromov product} of $x,y \in X$ based at $p$ is denoted $(x,y)_p$ and is given by the formula
                \[(x,y)_p := \frac{1}{2}\Bigl( d(p,x) + d(p,y) - d(x,y) \Bigr). \]
        \end{defn}

        \begin{example}
            The Gromov product has a simple meaning in a tree: the quantity $(x,y)_p$ equals the distance from $p$ to the geodesic $[x,y]$. Further, the Gromov product $(x,y)_p$ measures the distance the geodesics $[p,x]$ and $[p,y]$ fellow-travel before diverging.
        \end{example}

        \begin{figure}[h]
                \begin{centering}
	            \begin{overpic}[width=.6\textwidth, tics=5]{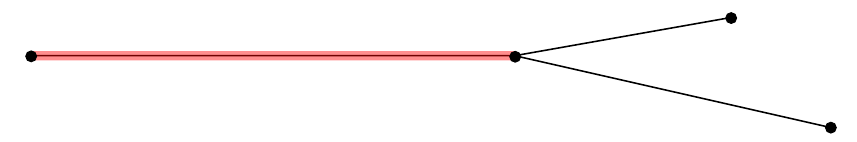} 
                    \put(-.5,9){$p$}
                    \put(87.5,13.5){$x$}
                    \put(99,1){$y$}
                    \put(30,5.5){$(x,y)_p$}
                \end{overpic}
	           \caption{\small{The Gromov product in a metric tree. }}
                \end{centering}
            \end{figure}

            The above example generalizes to a $\delta$-hyperbolic space: the Gromov product between $x$ and $y$ with respect to a point $p$ captures, up to a few $\delta$, how far $p$ is from the geodesic between $x$ and $y$.

        \begin{lem} \cite[Page 410; Proposition III.H.1.17]{bridsonhaefliger}
            Let $X$ be a $\delta$-hyperbolic geodesic metric space. Let $p,x,y \in X$, and let $[x,y]$ denote a geodesic segment from $x$ to $y$. Then, for all $x,y, p \in X$,
                \[ | \,(x,y)_p - d\bigl(p, [x,y]\bigr) \,| \leq 4\delta. \]
        \end{lem}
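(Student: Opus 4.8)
The plan is to prove the two inequalities $(x,y)_p \le d(p,[x,y])$ and $d(p,[x,y]) \le (x,y)_p + 4\delta$ separately. The first needs no hyperbolicity: for any $w \in [x,y]$ the triangle inequality gives $d(p,x) \le d(p,w) + d(w,x)$ and $d(p,y) \le d(p,w) + d(w,y)$, and since $w$ lies on the geodesic $[x,y]$ we have $d(w,x) + d(w,y) = d(x,y)$. Adding the two inequalities and rearranging yields $(x,y)_p \le d(p,w)$; taking the infimum over $w \in [x,y]$ gives $(x,y)_p \le d(p,[x,y])$.

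For the reverse inequality I would exhibit a single, explicitly chosen point of $[x,y]$ whose distance to $p$ is close to $(x,y)_p$. Let $m$ be the point of $[x,y]$ at distance $(y,p)_x$ from $x$; a short computation with the Gromov product shows $(y,p)_x \le d(x,y)$, so $m$ exists, and $d(y,m) = (x,p)_y$. Let $i$ be the point of $[p,x]$ at distance $(y,p)_x$ from $x$ (again $(y,p)_x \le d(p,x)$, so $i$ exists), and symmetrically let $j \in [p,y]$ be at distance $(x,p)_y$ from $y$. A direct expansion of the Gromov products gives $d(p,i) = d(p,x) - (y,p)_x = (x,y)_p$, and likewise $d(p,j) = (x,y)_p$.

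Now I would apply $\delta$-slimness to the triangle with vertices $p,x,y$: the point $m \in [x,y]$ lies within $\delta$ of $[p,x] \cup [p,y]$, so there is a point $a$ on one of these two sides with $d(m,a) \le \delta$. Suppose $a \in [p,x]$ (the case $a \in [p,y]$ being symmetric, with $j$ in place of $i$). Then $|d(x,a) - d(x,m)| \le \delta$, and since $d(x,m) = d(x,i) = (y,p)_x$ while $a$ and $i$ both lie on the geodesic $[p,x]$ issuing from $x$, the points $a$ and $i$ are at nearly equal distance from $x$ along a single geodesic, forcing $d(a,i) \le \delta$. Hence $d(m,i) \le d(m,a) + d(a,i) \le 2\delta$, and therefore $d(p,[x,y]) \le d(p,m) \le d(p,i) + d(i,m) \le (x,y)_p + 2\delta \le (x,y)_p + 4\delta$.

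The main obstacle is the reverse inequality, which is the only place hyperbolicity enters; within it the crucial observation is that two points on the \emph{same} geodesic that are equidistant (up to $\delta$) from a common endpoint must themselves be $\delta$-close, and this is precisely what converts the slimness estimate into control of $d(m,i)$. One must also take care to verify that the points $i,j,m$ are genuinely defined, i.e. that the relevant Gromov products do not exceed the corresponding side lengths, and to treat the symmetric dichotomy produced by slimness. I note that this argument in fact yields the sharper constant $2\delta$; the stated bound $4\delta$ is comfortably sufficient.
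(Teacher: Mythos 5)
Your proof is correct. The paper itself gives no proof of this lemma---it is quoted from Bridson--Haefliger---so there is no internal argument to compare against; measured against the standard treatment, yours is the classical argument via internal points, executed cleanly and self-contained. The lower bound $(x,y)_p \le d\bigl(p,[x,y]\bigr)$ is indeed pure triangle inequality, and your points $m$, $i$, $j$ are precisely the equiradial (internal) points of the triangle $\Delta(pxy)$ that the paper introduces right after the definition of the Gromov product: $d(x,m)=d(x,i)=(y,p)_x$, $d(y,m)=d(y,j)=(x,p)_y$, and $d(p,i)=d(p,j)=(x,y)_p$; your existence checks (that $(y,p)_x \le \min\{d(x,y),\,d(x,p)\}$ and symmetrically) are the right ones and are routine consequences of the triangle inequality. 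The crucial step---that two points on the \emph{same} geodesic whose distances to a common endpoint agree up to $\delta$ are themselves $\delta$-close---is valid because a geodesic is an isometric embedding, so $d(a,i)=|d(x,a)-d(x,i)|$, and your handling of the slimness dichotomy ($a \in [p,x]$ versus $a \in [p,y]$) is correct. One point worth noting about your sharper constant: under the paper's definition of $\delta$-hyperbolicity via $\delta$-slim triangles, your argument genuinely does give $2\delta$. The $4\delta$ in the statement comes from Bridson--Haefliger's Proposition III.H.1.17, where the relevant $\delta$ is the constant of the four-point ($(\delta)$-hyperbolicity) condition rather than the slimness constant; converting between the two notions (as in the paper's own comparison lemma, where $\delta$-slim implies $(4\delta)$-hyperbolic) accounts for the factor. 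So your bound is consistent with, and strictly implies, the stated inequality for the definition in force here.
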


        In a hyperbolic metric space, one may also view the Gromov product $(x,y)_p$ as measuring (up to a few $\delta$) how long the geodesic segments from $p$ to $x$ and $y$ fellow-travel before diverging at close to maximal speed.

        In an arbitrary geodesic metric space, the Gromov product $(x,y)_p$ measures the distance between the point $p$ and the {\it equiradial points} on the sides of the triangle $\Delta(pxy)$ from $p$ to $x$ and $y$, as indicated on the triangle below ~\cite[Section 1.2]{buyaloschroeder}; these are also known as {\it internal points}~\cite[Section III.H.1]{bridsonhaefliger}. That is, the triangle inequality implies there are three numbers $(x,y)_p$, $(p,x)_y$, and $(p,y)_x$ so that

        \begin{figure}[h]
                \begin{centering}
	            \begin{overpic}[width=.9\textwidth,  tics=5]{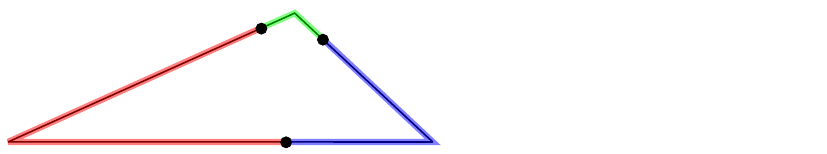} 
                    \put(-2,1){$p$}
                    \put(34.8,18.5){$x$}
                    \put(54,1){$y$}
                    \put(15,4){$(x,y)_p$}
                    \put(31.5,11){$(p,y)_x$}
                    \put(38,4){$(p,x)_y$}
                    \put(65,15){$d(x,p) \,=\, (x,y)_p + (p,y)_x$}
                    \put(65,10){$d(y,p) \,=\, (x,y)_p + (p,x)_y$}
                    \put(65,5){$d(x,y) \,=\, (p,x)_y + (p,y)_x$.}
                \end{overpic}
                \end{centering}
            \end{figure}

    \subsection{$(\delta)$-hyperbolicity}

        An alternative definition of hyperbolicity, denoted ($\delta$)-hyperbolicity and defined below, applies to metric spaces that need not be geodesic. This second notion is given using the Gromov product. The $(\delta)$-hyperbolic definition is a 4-point condition that will also be needed to prove the existence of visual metrics.
        \begin{defn}[$\delta$-inequality and $(\delta)$-hyperbolicity]
            Let $\delta>0$. Let $X$ be a metric space and $p \in X$. A triple of points $x,y,z \in X$ satisfy the {\it $\delta$-inequality} if
            \[(x,y)_p \geq \min\bigl\{ (x,z)_p, (y,z)_p)\bigr\} - \delta.\]      A metric space $X$ is {\it $(\delta)$-hyperbolic} if for all $p \in X$, every triple of points $x,y,z \in X$ satisfies the $\delta$-inequality.
        \end{defn}

            Note that $x,y,z$ satisfy the $\delta$-inequality with respect to a basepoint $p$ if and only if the two smallest of the numbers $(x,y)_p$, $(x,z)_p$, and $(y,z)_p$ differ by at most $\delta$.

        \begin{example}
            ($\delta$-inequality in a tree.) The $\delta$-inequality holds in a tree $T$ with constant $\delta= 0$. The two cases to consider, up to symmetry, when all three points $x,y,z$ are in the same component of $T - \{p\}$ are illustrated below. Otherwise, two of the three terms in the inequality equal zero.
                \begin{figure}[h]
                \begin{centering}
	            \begin{overpic}[width=.9\textwidth, tics=5]{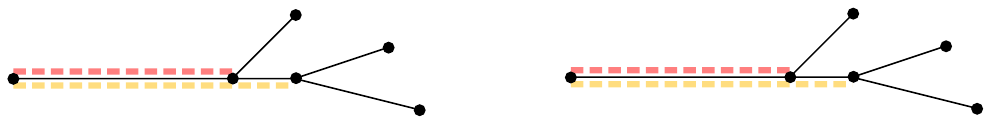} 
                    \put(-1.5,4){$p$}
                    \put(30.8,10.5){$x$}
                    \put(40,7){$y$}
                    \put(43,.7){$z$}
                    \put(5,7.5){$(x,y)_p,\, (x,z)_p$}
                    \put(10,.5){$(y,z)_p$}
                    \put(-53,4){$p$}
                    \put(86,10.5){$z$}
                    \put(95.8,7){$x$}
                    \put(98.5,1){$y$}
                    \put(61,7.5){$(x,z)_p,\, (y,z)_p$}
                    \put(69,.5){$(x,y)_p$}
                \end{overpic}
	           \caption{\small{Checking the $\delta$-inequality in a tree. }}
                \end{centering}
            \end{figure}

        \end{example}

         \begin{remark}
             The $(\delta)$-hyperbolicity definition also has a natural accompanying picture that indicates a notion of thinness. The definition of Gromov product implies that a metric space $X$ is $(\delta)$-hyperbolic if and only if for all $p,x,y,z \in X$,
            \[d(x,p)+d(y,z) \leq \max\bigl\{d(x,y)+d(z,p), \, d(x,z) + d(y,p) \bigr\} + 2\delta. \]
            As shown in the figure below, the inequality fails in $\mathbb{E}^2$ with the four points the corners of a square of increasing side length. The inequality holds in a tree with $\delta=0$. In the $\delta$-hyperbolic space, the geodesics $[x,z]$ and $[y,p]$ track close to the union of the geodesics $[x,p]$ and $[y,z]$.
            \begin{figure}[h]
                \begin{centering}
	            \begin{overpic}[width=.9\textwidth,tics=5]{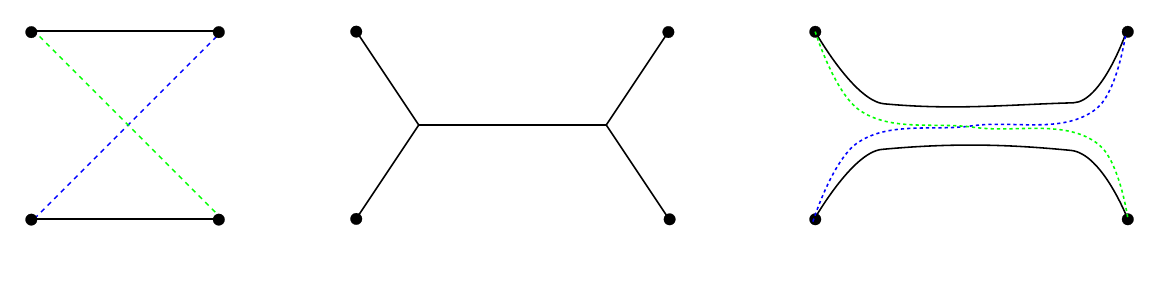} 
                    \put(0,22.5){$x$}
                    \put(0,6.5){$y$}
                    \put(20.5,22.5){$p$}
                    \put(20.5,6.5){$z$}
                    \put(28,22.5){$x$}
                    \put(28,6.5){$y$}
                    \put(59.5,22.5){$p$}
                    \put(59.5,6.5){$z$}
                    \put(68,22.5){$x$}
                    \put(68,6.5){$y$}
                    \put(99,22.5){$p$}
                    \put(99,6.5){$z$}
                    \put(8,2){$\mathbb{E}^2$}
                    \put(42,2){Tree}
                    \put(78,2){$\delta$-hyperbolic}
                \end{overpic}
	           \caption{\small{The 4-point hyperbolicity condition. }}
                \end{centering}
            \end{figure}
          \end{remark}

            The notions of $\delta$-hyperbolicity and $(\delta)$-hyperbolicity are related via the following lemma.

        \begin{lem} \cite[Proposition III.H.1.17, III.H.1.22]{bridsonhaefliger}
            Let $X$ be a geodesic metric space. Let $\delta>0$.
            \begin{enumerate}
                \item If $X$ is $\delta$-hyperbolic, then $X$ is $(4\delta)$-hyperbolic.
                \item If $X$ is $(\delta)$-hyperbolic, then $X$ is $6\delta$-hyperbolic. \qed
            \end{enumerate}
       \end{lem}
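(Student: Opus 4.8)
The plan is to prove the two implications separately, treating the combinatorial $(\delta)$-hyperbolicity condition and the geometric $\delta$-slimness condition as the two sides of a dictionary. The first implication is the easier one, since $\delta$-slimness is the stronger geometric hypothesis; the second is the crux, as one must manufacture nearness of a point to the other two sides of a triangle purely from a four-point inequality, with no geodesic available to exploit directly.

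For part (1), I would deduce the $\delta$-inequality from slimness using the previously quoted estimate $|(x,y)_p - d(p,[x,y])| \le 4\delta$, together with the elementary bound $(x,y)_p \le d(p,[x,y])$ that follows at once from the triangle inequality. Fix $p,x,y,z$ and assume without loss of generality that $\min\{(x,z)_p,(y,z)_p\} = (x,z)_p$. The elementary bound gives $(x,z)_p \le d(p,[x,z])$, and the quoted estimate gives $d(p,[x,y]) \le (x,y)_p + 4\delta$, so it suffices to compare $d(p,[x,z])$ with $d(p,[x,y])$. Letting $w \in [x,y]$ be a point nearest $p$ and applying $\delta$-slimness of the triangle on $x,y,z$, the point $w$ lies within $\delta$ of $[x,z]\cup[y,z]$, which bounds $\min\{d(p,[x,z]),d(p,[y,z])\}$ above by $d(p,[x,y])+\delta$. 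Chaining these estimates yields the $\delta$-inequality with a controlled constant; a direct argument through the internal points of the triangles meeting at $p$ can be used to pin the constant down to $4\delta$.

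For part (2), assume $X$ is $(\delta)$-hyperbolic and let $xyz$ be a geodesic triangle; I must show any $w \in [x,y]$ is within $6\delta$ of $[x,z]\cup[y,z]$. The key first move is to base the Gromov product at $w$ itself: since $w$ lies on the geodesic $[x,y]$ one has $(x,y)_w = 0$, so the $\delta$-inequality forces $\min\{(x,z)_w,(y,z)_w\} \le \delta$. Suppose $(x,z)_w \le \delta$. Writing $t = d(x,w)$, I would then choose the comparison point $u$ on $[x,z]$ at distance $t$ from $x$ (or take $u=z$ in the degenerate case $t \ge d(x,z)$, where $(x,z)_w \le \delta$ directly forces $d(w,z) \le 2\delta$). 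The bound $(x,z)_w \le \delta$ controls $d(w,z)$ from above, and a second application of the $\delta$-inequality, this time based at $x$ and applied to $w,u,z$, controls $(w,u)_x$ from below; unwinding the identity $d(w,u) = 2\bigl(t - (w,u)_x\bigr)$ then bounds $d(w,u)$ by a few $\delta$, well within the stated $6\delta$, placing $w$ near $[x,z]$. The symmetric case $(y,z)_w \le \delta$ places $w$ near $[y,z]$.

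The main obstacle is part (2): unlike part (1), there is no geodesic-to-side estimate available to import, so the nearness of $w$ to another side must be extracted by applying the four-point inequality twice, with two different basepoints, and by correctly guessing the comparison point $u$ on the candidate side. Care is also needed for the degenerate configuration in which $w$ projects beyond the vertex $z$, and for bookkeeping the accumulated constant, which this strategy keeps comfortably within the bound claimed in the statement.
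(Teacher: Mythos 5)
Your proposal is essentially correct in structure, and it should be noted that the paper itself supplies no argument for this lemma: it is quoted from Bridson--Haefliger (Propositions III.H.1.17 and III.H.1.22) with a reference in lieu of proof, so your write-up functions as a self-contained replacement for the citation rather than a variant of an in-paper proof. Part (2) is complete and correct as sketched. From $(x,y)_w = 0$ the four-point inequality at $w$ gives $\min\{(x,z)_w,(y,z)_w\} \leq \delta$; in the case $(x,z)_w \leq \delta$, taking $u \in [x,z]$ with $d(x,u) = d(x,w) = t$, one computes $(u,z)_x = t$ and, using $d(w,z) \leq d(x,z) - t + 2\delta$, also $(w,z)_x \geq t - \delta$, so the second application of the inequality at $x$ yields $(w,u)_x \geq t - 2\delta$ and hence $d(w,u) = 2\bigl(t - (w,u)_x\bigr) \leq 4\delta$; your degenerate case $t \geq d(x,z)$ correctly gives $d(w,z) \leq 2\delta$. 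This in fact proves $4\delta$-slimness, comfortably inside the stated $6\delta$, and the two-basepoint strategy is exactly the right way to extract slimness from the four-point condition.

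The one genuine shortfall is the constant in part (1). Chaining your three estimates gives
\[
\min\bigl\{(x,z)_p,(y,z)_p\bigr\} \;\leq\; \min\bigl\{d(p,[x,z]),\, d(p,[y,z])\bigr\} \;\leq\; d\bigl(p,[x,y]\bigr) + \delta \;\leq\; (x,y)_p + 5\delta,
\]
that is, $(5\delta)$-hyperbolicity rather than the stated $(4\delta)$: the slimness step costs one $\delta$ on top of the $4\delta$ from the quoted comparison between the Gromov product and the distance to a geodesic. (Incidentally, your WLOG on which product is minimal is unnecessary, since the displayed chain bounds the minimum directly regardless of which side the nearest point $w$ lands near.) Your closing claim that a direct argument through the internal points recovers $4\delta$ is plausible --- it is how the sharp constant is obtained in Bridson--Haefliger --- but it is asserted, not executed, so as written your proof establishes the lemma with $(5\delta)$ in place of $(4\delta)$. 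For an expository statement whose constants are only ever ``a few $\delta$'' this is cosmetic, but to match the statement verbatim you must either carry out the internal-points argument or accept the weaker constant.
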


    \subsection{$\CAT(-1)$ metric spaces.}

            A quasi-isometry invariant, $\delta$-hyperbolicity is a robust notion of negative curvature. A finer, yet still pervasive, notion of hyperbolicity is given by the $\CAT(-1)$ condition.

        \begin{defn}
           Let $X$ be a metric space, and let $\Delta = \Delta(xyz)$ be a geodesic triangle in $X$ with vertices $x,y,z \in X$. A geodesic triangle $\bar{\Delta} = \Delta(\bar{x}\bar{y}\bar{z}) \subset \Hy^2$ with vertices $\bar{x}, \bar{y}, \bar{z} \in \Hy^2$ is a {\it comparison triangle} for $\Delta$ if $d(x,y) = d(\bar{x},\bar{y})$, $d(y,z) = d(\bar{y},\bar{z})$, and $d(x,z) = d(\bar{x},\bar{z})$. If $p$ is contained in the geodesic side of $\Delta$ from $x$ to $y$, then its {\it comparison point} $\bar{p} \in \bar{\Delta}$ is the unique point in the geodesic side from $\bar{x}$ to $\bar{y}$ so that $d(x,p) = d(\bar{x},\bar{p})$. Comparison points on the other sides are defined analogously.

           A geodesic triangle $\Delta$ in $X$ satisfies the {\it $\CAT(-1)$ inequality} if for all $p,q \in \Delta$, their comparison points $\bar{p},\bar{q} \in \bar{\Delta}$ satisfy $d(p,q) \leq d(\bar{p},\bar{q})$. A metric space $X$ is $\CAT(-1)$ if $X$ is a geodesic metric space and all of its geodesic triangles satisfy the $\CAT(-1)$ inequality.
        \end{defn}

        We will primarily be interested in $\CAT(-1)$ metric spaces in this article, though there is a well-studied notion of a $\CAT(\kappa)$ space for all $\kappa \in \R$. In particular, in the definition above, the hyperbolic plane is replaced with the model space $M_{\kappa}^2$, which equals the Euclidean plane if $\kappa =0$ and is obtained by scaling either the hyperbolic plane or the unit sphere otherwise.

        Because the geometry of the hyperbolic plane is explicit and well-studied, there are important consequences of $\CAT(-1)$ geometry in the study of visual metrics. A $\CAT(-1)$ metric space is $\delta$-hyperbolic, where $\delta$ can be computed using the geometry of the hyperbolic plane.
        Whether these notions of hyperbolicity agree in the setting of groups is a long-standing open question.

        \begin{openquestion}
                Does every hyperbolic group act geometrically on a $\CAT(-1)$ metric space?
        \end{openquestion}

        In fact, it is also open whether every hyperbolic group acts geometrically on a $\CAT(0)$ metric space.

    \section{The boundary of a hyperbolic space}

 The boundary of a hyperbolic metric space is a topological space that captures the set of directions to infinity in the space. This section recalls two definitions of the boundary and its topology. The first definition applies to geodesic metric spaces and is given in terms of geodesic rays. The second definition applies to arbitrary $(\delta)$-hyperbolic metric spaces and is given in terms of sequences of points. Visual metrics on the boundary will be defined with respect to the second definition. Throughout this section, let $X$ be a $(\delta)$-hyperbolic metric space.

    \subsection{Definition via rays}

        \begin{defn}
            Let $X$ be a metric space. Let $c,c':[0,\infty) \rightarrow X$ be geodesic rays in $X$. The rays $c$ and $c'$ are {\it asymptotic} if there exists a constant $D\geq 0$ so that $d\bigl(c(t),c'(t)\bigr) \leq D$ for all $t \in [0,\infty)$.
        \end{defn}

        Being asymptotic is an equivalence relation on geodesic rays, leading to the following definition.

        \begin{defn}
            Let $X$ be a geodesic metric space. The {\it boundary} of $X$, denoted $\partial X$, is the set of equivalence classes of geodesic rays in $X$, where two rays are equivalent if they are asymptotic. If $c:[0,\infty)\rightarrow X$ is a geodesic ray, let $c(\infty)$ denote the equivalence class of $c$.
        \end{defn}

        \begin{remark}[Quasi-geodesic rays]
            In a hyperbolic metric space, the boundary can equivalently be defined in terms of quasi-geodesic rays. Recall, a map $c:[0,\infty) \rightarrow X$ is a {\it quasi-geodesic ray} if $c$ is a quasi-isometric embedding. Quasi-geodesic rays are defined to be {\it asymptotic} if the Hausdorff distance between their images is finite. Let $\partial_q X$ denote the set of equivalence classes of quasi-geodesic rays. Then, the map $\partial X \rightarrow \partial_q X$ is a bijection; see \cite[Lemma III.H.3.1]{bridsonhaefliger}.
        \end{remark}

        \begin{example}
            The boundary of the line $\R$ with the standard metric consists of two points. The boundary of the infinite ladder graph below, where edges have length one, also consists of two points. Additional examples are given at the end of this section.

            \begin{figure}[h]
                \begin{centering}
	            \begin{overpic}[width=.6\textwidth,tics=5]{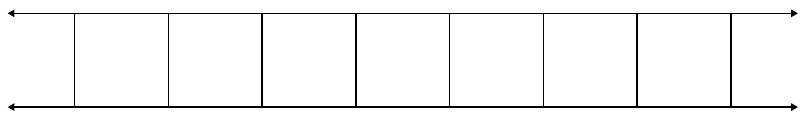} 

                \end{overpic}
	           \caption{\small{A metric space with visual boundary consisting of two points. }}
                \end{centering}
            \end{figure}
        \end{example}

    \subsection{Topology on the boundary}

        The boundary of $X$ admits a topology that compactifies the metric space $X$. We first specify notation for the union of $X$ and its boundary.

        \begin{defn}
            Let $X$ be a metric space. A {\it generalized geodesic ray} in $X$ is a geodesic $c:I \rightarrow X$, where either $I = [0,R]$ for some $R\geq 0$ or $I = [0,\infty)$. If $I = [0,R]$, define $c(t) := c(R)$ for all $t \in [R,\infty]$. Let
            \[\overline{X} = X \cup \p X = \{\, c(\infty) \,|\, c \textrm{ is a generalized geodesic ray} \,\}.\]
        \end{defn}

    The topology on $\overline{X}$ can be defined via convergence. Take the topology on the set of generalized geodesic rays to be the topology of uniform convergence on compact sets. Then take the quotient topology on $\overline{X}$.

    The topology on $\overline{X}$ can alternatively be specified via the {\it neighborhood topology}, by specifying a collection of neighborhoods of each point in $\overline{X}$. These neighborhoods need not be open in the boundary; see~\Cref{ex:NotOpen}. Nonetheless, they satisfy four natural axioms, recalled here for completeness, from which a topology can be defined; see \cite[Chapter 1.2]{bourbaki}. That these topologies are the same follows from ~\cite[Lemma III.H.3.6]{bridsonhaefliger}.

    \begin{defn}[Neighborhood topology]
        Let $X$ be a set, and let $\cN$ be a function from $X$ to $\cP\bigl(\cP(X)\bigr)$, where $\cP(X)$ denotes the powerset of $X$. Elements of $\cN(x)$ are called {\it neighborhoods} of $x$. Then $\cN$ is called a {\it neighborhood topology} if it satisfies the following four axioms:
            \begin{enumerate}
                \item If $N \in \cN(x)$, then $x \in N$.
                \item If $N \subset N'$ and $N \in \cN(x)$, then $N' \in \cN(x)$.
                \item If $N, N' \in \cN(x)$, then $N \cap N' \in \cN(x)$.
                \item If $N \in \cN(x)$, then there exists $M \in \cN(x)$ so that if $y \in M$, then $N \in \cN(y)$.

                This axiom captures the notion that a neighborhood of a point $x$ is also a neighborhood of every point sufficiently close to $x$.
            \end{enumerate}
        If $\cN$ is a neighborhood topology, then a topology on $X$ can be specified by $U \subset X$ is {\it open} if $U$ is a neighborhood of every point in $U$. In this topology, every element of $\cN(x)$ is a {\it neighborhood} of $x$ in the sense that if $V \in \cN(x)$, then there exists an open set $U$ so that $x \in U \subset V$.
    \end{defn}

    \begin{defn}[Neighborhoods of a point] \label{defn:neighborhoods}
        Let $X$ be a hyperbolic proper geodesic metric space, and let $p \in X$. Let $\cN$ be the function from $X$ to $\cP\bigl(\cP(X)\bigr)$ defined as follows. For each $x \in \overline{X}$, let $\cN(x)$ the the collection of subsets of $\overline{X}$ that contain a set in $\cN'(x)$ given as follows.

        \begin{enumerate}
            \item (Points in $X$.) If $x \in X$, let $\cN'(x)$ be the family of metric balls centered $x$.

\vskip.07in

            \item (Points in $\p X$.) Fix $k>2\delta$.

            Let $c(\infty) \in \p X$. Fix $c_0:[0,\infty) \rightarrow X$ a geodesic ray with $c_0(0) =p$ and $c_0(\infty) = c(\infty)$. For each $n \in \Z_+$, let
            \[ V_n(c_0):= \left\{
                            \begin{array}{c}
                                \\
                                \,\, c'(\infty) \,\, \\
                                \\
                            \end{array}
                            :
                             \begin{array}{c}
                                c' \textrm{ is a generalized geodesic ray }\\
                                c'(0) =p \\
                                d\bigl(c'(n), c_0(n) \bigr) < k\\
                            \end{array}
                            \right\}\]
            Let $\cN'\bigl(c(\infty)\bigr) = \bigl\{V_n(c_0) \,|\, n \in \Z_+\bigr\}$.
        \end{enumerate}
    \end{defn}

    \begin{figure}[t]
                \begin{centering}
	            \begin{overpic}[width=.6\textwidth,  tics=5]{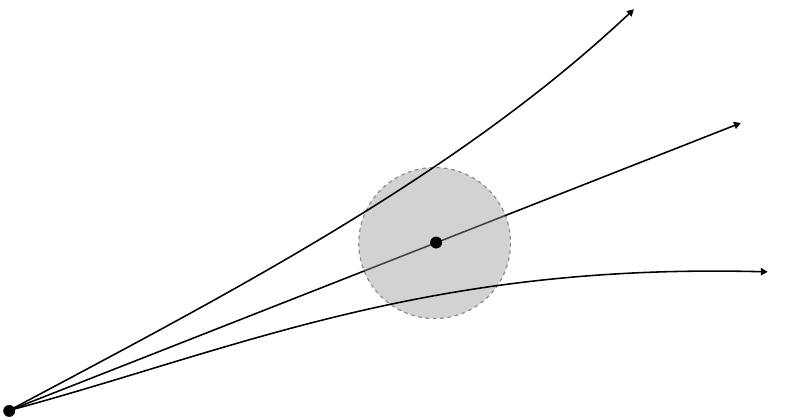} 
                    \put(-2,0){\small{$p$}}
                    \put(55,19){\small{$c_0(n)$}}
                    \put(90,32){$c_0(t)$}
                \end{overpic}
	           \caption{\small{A neighborhood in $\p X$ of the boundary point represented by the ray $c_0$ consists of equivalence classes of rays with a representative passing through the shaded ball. }}
                \end{centering}
            \end{figure}

    \begin{lem}
        The function $\cN$ given in \Cref{defn:neighborhoods} is a neighborhood topology.
    \end{lem}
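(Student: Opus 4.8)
The plan is to verify the four neighborhood-topology axioms for $\cN$. Since $\cN(x)$ is by construction the family of all subsets of $\overline{X}$ containing some member of the basic family $\cN'(x)$, axioms (1) and (2) are essentially immediate: axiom (2) is literally the statement that $\cN(x)$ is closed under supersets, and for (1) it is enough to see each basic neighborhood contains its center, which is clear since $c_0$ itself witnesses $c(\infty)\in V_n(c_0)$ and each metric ball contains its center. The content lies in axioms (3) and (4), and both rest on a single geometric input that I would isolate first: \textbf{fellow-traveling.} From $\delta$-hyperbolicity (via slim triangles and the quoted Gromov-product estimates) one gets that two generalized geodesic rays from $p$ that are within $k$ at time $\ell$ remain within $2\delta$ at every time strictly below their Gromov product. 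Concretely, $d(c'(\ell),c_0(\ell))<k$ forces $(c'(\ell),c_0(\ell))_p=\ell-\tfrac12 d(c'(\ell),c_0(\ell))>\ell-\tfrac{k}{2}$, so for all $n\le \ell-\tfrac{k}{2}$ we have $d(c'(n),c_0(n))\le 2\delta<k$, the last inequality being exactly the standing hypothesis $k>2\delta$. I would also record that two rays from $p$ to the same boundary point stay $2\delta$-close for all time.

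\textbf{Axiom (3).} It suffices to show $V\cap V'$ contains a member of $\cN'(x)$ when $V,V'\in\cN'(x)$. For $x\in X$ the intersection of two balls about $x$ is the smaller ball, so there is nothing to prove. For $x=c(\infty)$ with $V=V_n(c_0)$ and $V'=V_m(c_0)$, I would \emph{not} use $V_{\max(n,m)}$: these basic neighborhoods need not be nested, since near time $\ell$ two rays may separate by slightly more than $k$. Instead I would pick $\ell\ge \max(n,m)+\tfrac{k}{2}$ and claim $V_\ell(c_0)\subseteq V_n(c_0)\cap V_m(c_0)$. Indeed any $c'\in V_\ell(c_0)$ has both $n$ and $m$ below $\ell-\tfrac{k}{2}$, hence below the Gromov product, so the fellow-traveling estimate gives $d(c'(n),c_0(n))\le 2\delta<k$ and $d(c'(m),c_0(m))\le 2\delta<k$, placing $c'$ in both neighborhoods.

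\textbf{Axiom (4), the main obstacle.} Given $N\in\cN(x)$ I may assume $N\in\cN'(x)$, since exhibiting $M$ with $N'\in\cN(y)$ for a basic $N'\subseteq N$ yields $N\in\cN(y)$ by axiom (2). For $x\in X$ and $N=B(x,r)$, take $M=B(x,r/2)$: every $y\in M$ lies in $X$ and $B(y,r/2)\in\cN'(y)$ sits inside $B(x,r)$, so $N\in\cN(y)$. The substantial case is $x=c(\infty)$, $N=V_n(c_0)$, where I would set $M=V_m(c_0)$ with a generous margin, say $m=n+k$, and check $N\in\cN(y)$ for each $y\in M$, splitting into interior and boundary points. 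If $y\in X$ I would produce a ball $B(y,\rho)\subseteq V_n(c_0)$: for small $\rho$ a geodesic $[p,z]$ with $d(y,z)<\rho$ fellow-travels $[p,y]$ up to time $n$, while membership $y\in V_m(c_0)$ together with $m=n+k$ places $n$ below the relevant Gromov product, so $[p,y]$ fellow-travels $c_0$ at time $n$; chaining these puts $z\in V_n(c_0)$. If $y=c'(\infty)\in\p X$, let $c_1$ be the fixed representative ray from $p$ to $y$ used to define $\cN'(y)$; since $c_1$ and the witnessing ray $d_y\in V_m(c_0)$ are asymptotic they stay $2\delta$-close, so $c_1$ is close to $c_0$ at time $m$, hence by the lemma at time $n$. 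I would then show $V_{m'}(c_1)\subseteq V_n(c_0)$ with $m'=n+k$: any $c''\in V_{m'}(c_1)$ fellow-travels $c_1$ at time $n$, which in turn tracks $c_0(n)$, so $d(c''(n),c_0(n))<k$. Thus $V_n(c_0)\supseteq V_{m'}(c_1)\in\cN'(y)$, giving $N\in\cN(y)$.

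The delicate point throughout axiom (4) is the bookkeeping of additive $\delta$-errors: each fellow-traveling step contributes a bounded multiple of $\delta$, and the margins $m,m',\rho$ must be chosen so the accumulated error stays strictly below the threshold $k$. This is precisely where the hypothesis $k>2\delta$ is used, and where I would be most careful to close the constants (e.g. invoking the sharp $2\delta$-fellow-traveling bound rather than a lossy four-point estimate, or allowing $k$ slightly larger if the chaining genuinely costs $4\delta$). The remaining checks—that $M$ itself lies in $\cN(x)$ and that the case $y=x$ is automatic—are routine.
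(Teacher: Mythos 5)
The paper itself supplies no proof of this lemma (it is stated bare, with the surrounding discussion deferring to Bridson--Haefliger, Lemma III.H.3.6), so your attempt is judged against the standard argument. Your architecture is sound: axioms (1)--(2) are formal, and your treatment of axiom (3) via margins in the time parameter is essentially correct once two small slips are repaired. First, the identity $(c'(\ell),c_0(\ell))_p=\ell-\tfrac12 d\bigl(c'(\ell),c_0(\ell)\bigr)$ fails when $c'$ is a \emph{finite} generalized ray: then $d\bigl(p,c'(\ell)\bigr)$ can be as small as $\ell-k$, so the Gromov product is only bounded below by roughly $\ell-k$, not $\ell-\tfrac{k}{2}$. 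Second, the clean $2\delta$ fellow-traveling bound for two geodesics from $p$ requires $t\le(\textrm{product})-\delta$, not merely $t$ strictly below the product; at times within $\delta$ of the product the slim-triangle argument only yields about $4\delta$. Both slips are harmless for axiom (3), since you may simply enlarge $\ell$.

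The genuine gap is in axiom (4), at exactly the step you flagged but did not resolve. In both subcases you certify membership in $V_n(c_0)$ by \emph{chaining two} fellow-traveling estimates --- $c_z$ to $c_y$ to $c_0$, or $c''$ to $c_1$ to $c_0$ --- and each leg genuinely costs $2\delta$. These are additive errors between \emph{different pairs} of rays; no choice of the margins $m$, $m'$, $\rho$ shrinks them, so your conclusion is $d\bigl(c''(n),c_0(n)\bigr)\le 4\delta$, which is not $<k$ when $k\in(2\delta,4\delta]$. Nor can you ``allow $k$ slightly larger'': $k>2\delta$ is fixed once and for all in \Cref{defn:neighborhoods}, and changing it changes the function $\cN$ the lemma is about. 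The repair is to never chain at the level of distances, but to push all losses into the Gromov product, where your free margins can absorb them. Concretely: (i) show that $x\in V_m(c_0)$ forces $(x,c(\infty))_p\ge m-C(k+\delta)$ for a universal constant $C$; (ii) show that if $(x,c(\infty))_p\ge n+3\delta$, then \emph{any} generalized ray from $p$ to $x$ satisfies $d\bigl(\,\cdot\,(n),c_0(n)\bigr)\le 2\delta<k$, by a \emph{single} slim-triangle estimate applied to the triangle with vertices $p$, $c''(T)$, $c_0(T)$ for $T$ large, after the extended $\delta$-inequality (\Cref{lemma:delta_ineq_boundary} and its mixed interior--boundary variants) transports largeness of the product from $y$ to nearby $z$, or from $c_1$ to $c''$, at a cost of $\delta$ in the \emph{time} index only. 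The distance threshold $k$ is then invoked exactly once, and $k>2\delta$ is precisely the hypothesis needed --- this is the mechanism your write-up is missing.
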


    For the remainder of the article, the topology on $\overline{X}$ is the topology induced by the neighborhood topology, and the topology on $\p X$ is the subspace topology.

    \begin{example}[Neighborhoods need not be open] \label{ex:NotOpen}
        As in \Cref{defn:neighborhoods}, let $c_0:[0,\infty) \rightarrow X$ be a geodesic ray based at a point $p \in X$. Then the neighborhood $V_n(c_0)$ consists of all generalized geodesic rays $c'$ based at $p$ and so that
            \[d\bigl(c'(n), c_0(n)\bigr) < k.\]
       The strict inequality above may (at first) suggest these sets should be open. Indeed, these sets are open in trees and in the hyperbolic plane, for example. However, they may fail to be open if geodesics in the space $X$ are not unique. Morally, a point on the boundary may have {\it some} geodesic representative passing through the open ball, but there are boundary points that are arbitrarily close for which there is no geodesic representative passing through that open ball. Indeed, this occurs in the following example. See \Cref{figure-ex_neigh}.

       \begin{figure}
                \begin{centering}
	            \begin{overpic}[width=.4\textwidth,  tics=5]{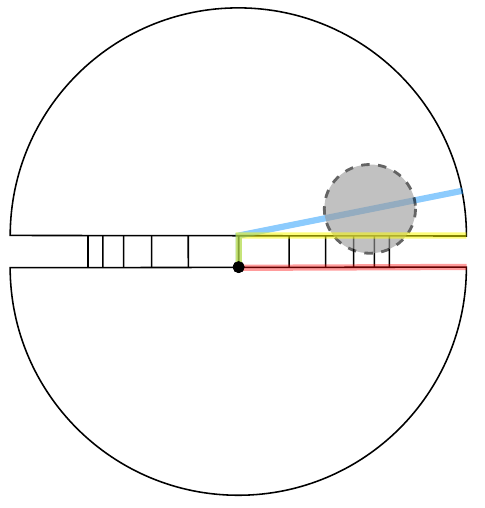} 
                    \put(0,90){$X$}
                    \put(93,62.5){$c_0(\infty)$}
                    \put(45,42){$p$}
                    \put(9,50){$\ldots$}
                    \put(80,50){$\ldots$}
                \end{overpic}
	           \caption{\small{An example illustrating that neighborhoods in the boundary need not be open. The hyperbolic space $X$ is formed by cutting the hyperbolic plane into two pieces and gluing them together along intervals of length one evenly spaced along the boundary geodesics. The neighborhood of $c_0(\infty)$ in $\p X$ given by equivalence classes of rays passing through the open shaded ball is not open in $\p X$; it is a half-open interval contained in the circle boundary of $X$. }}
	           \label{figure-ex_neigh}
                \end{centering}
            \end{figure}

       View the hyperbolic plane $\Hy^2$ in the unit disk model.
       Let $\gamma: \R \rightarrow \Hy^2$ be the geodesic line in the hyperbolic plane traveling along the real axis and with $\gamma(0)$ the origin. Cut the hyperbolic plane along $\gamma$ into two halfplanes; label the two copies of the line $\gamma$ by $\gamma$ and $\gamma'$. Glue these spaces back together by attaching, for each $n \in \Z$, an interval of length one so that one endpoint is glued to $\gamma(n)$ and the other endpoint is glued to $\gamma'(n)$. The resulting space $X$ is a $\delta$-hyperbolic proper geodesic metric space.

       Let $V_n(c_0)$ be the following set. Let $p = \gamma'(0)$.
       Let $c_0$ be the concatenation of the geodesic segment from $p$ to $\gamma(0)$ and a geodesic ray making a small angle with $\gamma|_{[0,\infty)}$. Then $c_0$ is a geodesic ray. Fix $n \in N$ so that the open ball of radius $k$ about $c_0(n)$ intersects $\gamma|_{[0,\infty)}$. Then $c' := \gamma|_{[0,\infty)} \subset V_n(c_0)$. However, there are points in the boundary arbitrarily close to $c'(\infty)$, those coming from geodesic rays in the other half of $\Hy^2$ that are arbitrarily close to $\gamma'|_{[0,\infty)}$, with no geodesic representative passing through this open ball. Indeed, while geodesic rays can cross the infinite ladder, no geodesic will cross twice. In particular $V_n(c_0)$ is not open because $V_n(c_0)$ is not a neighborhood of $c'$. The boundary $\p X$ is a circle, and $V_n(c_0)$ restricted to the boundary is a half-open interval.

       The space $X$ does not admit a geometric group action, but one can easily alter the construction. Indeed, fix a hyperbolic metric on a genus two surface. Cut the surface along a separating curve and re-glue the boundary components to the boundary components of a cellular Euclidean annulus. Take the universal cover, restricting to the one-skeleton on each lift of the annulus.
    \end{example}

        Crucial to the study of boundaries is the following theorem; see \cite[Theorem III.H.3.9]{bridsonhaefliger}.

        \begin{thm}[Quasi-isometry invariance] \label{thm:QI_inv_boundary}
            Let $X$ and $X'$ be proper geodesic hyperbolic metric spaces. Let $f: X \rightarrow X'$ be a quasi-isometry. Then there is a well-defined map $f_\partial:\p X \rightarrow \p X'$ defined by $f_\partial([c]) = [f \circ c]$. The map $f_\partial$ is a homeomorphism.
        \end{thm}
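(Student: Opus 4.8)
The plan is to build everything on a single principle: a quasi-isometry carries geodesic rays to quasi-geodesic rays, and in a hyperbolic space every quasi-geodesic ray fellow-travels an honest geodesic ray (stability of quasi-geodesics, the Morse Lemma). This is precisely the mechanism encoded in the bijection $\p X' \cong \p_q X'$ from the Quasi-geodesic rays remark, so I would work throughout with the quasi-geodesic description of the boundary and only convert back to geodesic representatives at the end.

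First I would verify that $f_\partial$ is well-defined. If $c$ is a geodesic ray in $X$, then composing with the $(K,C)$-quasi-isometry $f$ yields a quasi-geodesic ray $f \circ c$ in $X'$; since $X'$ is proper and hyperbolic, this quasi-geodesic ray lies within bounded Hausdorff distance of a genuine geodesic ray, so it determines a well-defined point of $\p X'$. Independence of representative is then the observation that if $c, c'$ are asymptotic, say $d(c(t), c'(t)) \le D$ for all $t$, then $d\bigl(f(c(t)), f(c'(t))\bigr) \le KD + C$, so $f \circ c$ and $f \circ c'$ have finite Hausdorff distance and represent the same boundary point. Hence $[f \circ c]$ depends only on $[c]$.

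For bijectivity I would invoke a quasi-inverse $g : X' \to X$, a quasi-isometry with $d(g \circ f, \id_X)$ and $d(f \circ g, \id_{X'})$ uniformly bounded. Two maps at bounded distance induce the same boundary map (by the asymptotic argument just given), and composition is respected, $(g \circ f)_\partial = g_\partial \circ f_\partial$, because pushing forward rays is functorial. Therefore $g_\partial \circ f_\partial = (\id_X)_\partial = \id_{\p X}$ and symmetrically $f_\partial \circ g_\partial = \id_{\p X'}$, so $f_\partial$ is a bijection with inverse $g_\partial$.

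The main work, and the step I expect to be the genuine obstacle, is continuity. By symmetry it suffices to prove $f_\partial$ is continuous, since the same argument applies to $g_\partial = f_\partial^{-1}$. I would phrase convergence through the Gromov product, extended to the boundary by a liminf over representing sequences: $\xi_i \to \xi$ in $\p X$ precisely when $(\xi_i,\xi)_p \to \infty$. The essential estimate is that a quasi-isometry between hyperbolic spaces coarsely preserves Gromov products, i.e.\ there are constants $\lambda \ge 1$ and $\mu \ge 0$ depending only on $K$, $C$, and $\delta$ with $\tfrac{1}{\lambda}(x,y)_p - \mu \le (f(x),f(y))_{f(p)} \le \lambda (x,y)_p + \mu$ for all $x,y \in X$. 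This is exactly where hyperbolicity enters in an essential way: the crude bounds from the quasi-isometry inequalities alone are too weak (they mix $\tfrac1K$ and $K$ against $d(x,y)$), so instead one uses the comparison $(x,y)_p \approx d\bigl(p,[x,y]\bigr)$ up to $4\delta$ together with the Morse Lemma to relate $d\bigl(f(p),[f(x),f(y)]\bigr)$ to $d\bigl(p,[x,y]\bigr)$. Granting this estimate, and absorbing the bounded $\delta$-ambiguity in the boundary Gromov product coming from the choice of representatives, $(\xi_i,\xi)_p \to \infty$ forces $(f_\partial\xi_i, f_\partial\xi)_{f(p)} \to \infty$, so $f_\partial$ is sequentially continuous. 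Since the sets $\{V_n(c_0)\}_{n \in \Z_+}$ of \Cref{defn:neighborhoods} form a countable neighborhood basis at each boundary point, $\p X$ is first countable, so sequential continuity yields continuity. A more hands-on alternative avoids the boundary Gromov product altogether: show directly that the $f_\partial$-preimage of a basic neighborhood $V_n(c_0')$ of $f_\partial(\xi)$ contains some basic neighborhood $V_m(c_0)$ of $\xi$, by tracking how $f$ and the Morse Lemma move the initial fellow-traveling segments of geodesic rays based at $p$.
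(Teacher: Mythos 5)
Your proposal is essentially correct, but there is nothing in the paper to measure it against: the paper states this theorem with a pointer to \cite[Theorem III.H.3.9]{bridsonhaefliger} and gives no proof of its own. Your argument follows the standard route of that cited source. The well-definedness and bijectivity parts are exactly right: working through the quasi-geodesic boundary $\partial_q X'$ sidesteps the fact that $f \circ c$ is neither continuous nor geodesic (harmless, since the paper's notion of quasi-geodesic ray is just a quasi-isometric embedding), the Morse Lemma converts back to geodesic representatives, and the quasi-inverse argument for bijectivity, via the observation that maps at uniformly bounded distance induce equal boundary maps, is the standard functoriality device. Your coarse Gromov-product estimate is also genuine: it follows from the comparison $\bigl| (x,y)_p - d\bigl(p,[x,y]\bigr)\bigr| \leq 4\delta$ together with the Morse Lemma, and a multiplicative-plus-additive distortion is all one needs to propagate $(\xi_i,\xi)_p \rightarrow \infty$.

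Two points deserve explicit care if this were written out in full. First, the equivalence you assert between the paper's neighborhood topology (the sets $V_n(c_0)$ of \Cref{defn:neighborhoods}) and convergence of Gromov products, namely that $\xi_i \rightarrow \xi$ in $\p X$ if and only if $(\xi_i,\xi)_p \rightarrow \infty$, is itself a lemma requiring proof or citation; it is standard for proper geodesic hyperbolic spaces but not developed in the paper, and your ``hands-on alternative'' of directly showing $f_\partial^{-1}\bigl(V_n(c_0')\bigr) \supseteq V_m(c_0)$ is the honest way to discharge it without importing that equivalence. Second, your extension of the coarse estimate from interior points to boundary points (``absorbing the bounded $\delta$-ambiguity'') is justified by realizing boundary Gromov products along sequences as in \Cref{lemma:exist_seq}, and your appeal to first countability is legitimate since $\cN'\bigl(c(\infty)\bigr) = \bigl\{V_n(c_0) \mid n \in \Z_+\bigr\}$ is a countable neighborhood base. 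With those two steps made explicit, the proof is complete and matches the approach of the reference the paper cites.
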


    \subsection{Definition via sequences}

        \begin{defn}
            A sequence of points $\{x_i\}$ in $X$ {\it converges to infinity} if for some $p \in X$,
                \[ \lim_{i,j \rightarrow \infty} (x_i,x_j)_p = \infty.\]
        \end{defn}

        It follows from the definition of the Gromov product and the triangle inequality that for all $x_i,x_j, p,p' \in X$,
            \[ |(x_i,x_j)_p - (x_i,x_j)_{p'}| \leq d(p,p'). \]
        Hence, the above definition is independent of the choice of $p \in X$.

        \begin{defn} \label{defn:equivalence}
            Two sequences $\{x_i\}$ and $\{x_i'\}$ in $X$ that converge to infinity are {\it equivalent} if
                \[ \lim_{i \rightarrow \infty} (x_i,x_i')_p = \infty.\]
        \end{defn}

        As above, this notion is independent of the choice of basepoint $p$.

        \begin{example}
            Suppose the figure below is a metric graph with edge lengths equal to one. Then,
                    \[(x_i,x_j)_p = (x_i',x_j')_p = \min\{i,j\} \quad \textrm{ and } \quad (x_i,x_i')_p = i.\]

        \begin{figure}[h]
    \begin{centering}
	\begin{overpic}[width=.5\textwidth, tics=5]{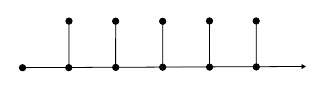} 
        \put(2,5){$p$}
        \put(19,2){\small{$x_1$}}
        \put(19,23.5){\small{$x_1'$}}
        \put(33,2){\small{$x_2$}}
        \put(33,23.5){\small{$x_2'$}}
        \put(48,2){\small{$x_3$}}
        \put(48,23.5){\small{$x_3'$}}
        \put(62,2){\small{$x_4$}}
        \put(62,23.5){\small{$x_4'$}}
        \put(77,2){\small{$x_5$}}
        \put(77,23.5){\small{$x_5'$}}
        \put(90,15){$\ldots$}
    \end{overpic}
	\caption{\small{Sequences converging to the same point on the boundary. }}
	\label{figure-sequence_ex}
    \end{centering}
    \end{figure}

   \noindent Hence, both sequences $\{x_i\}$ and $\{x_i'\}$ converge to infinity, and the sequences are equivalent.
        \end{example}

        The $\delta$-inequality implies that being equivalent in the sense of Definition~\ref{defn:equivalence} is an equivalence relation on the set of sequences in $X$ converging to infinity. Indeed, suppose $\{x_i\}$, $\{x_i'\}$, and $\{x_i''\}$ are sequences converging to infinity and
            \[\lim_{i \rightarrow \infty} (x_i, x_i')_p = \infty \quad \textrm{ and } \quad \lim_{i \rightarrow \infty} (x_i', x_i'')_p = \infty.\]
        Since $(x_i,x_i'')_p \geq \min\bigl\{ (x_i,x_i')_p, (x_i', x_i'')_p\bigr\} - \delta$, this implies that $\lim_{i \rightarrow \infty} (x_i,x_i'')_p = \infty$.

        \begin{defn} (Sequential boundary.)
            The set of equivalence classes of sequences converging to infinity is called the {\it sequential boundary} of $X$ and is denoted $\partial_s X$.
        \end{defn}

        The sequential boundary $\partial_s X$ agrees with the boundary $\p X$ in the case that $X$ is a proper geodesic hyperbolic metric space:

        \begin{lem} \cite[Lemma 3.13]{bridsonhaefliger}
            If $X$ is a proper geodesic $\delta$-hyperbolic metric space, then there is a bijection $\p_s X \rightarrow \p X$.
        \end{lem}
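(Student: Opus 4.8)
The plan is to construct an explicit map $\Phi \colon \p X \to \p_s X$ and to show it is a well-defined bijection, using $\delta$-slimness of triangles together with the properness of $X$. First I would send a geodesic ray $c\colon[0,\infty)\to X$ to the class of the sequence $\{c(n)\}_{n\in\N}$: taking basepoint $p=c(0)$ gives $(c(i),c(j))_p=\min\{i,j\}$, so $\{c(n)\}$ converges to infinity, and the estimate $|(x_i,x_j)_p-(x_i,x_j)_{p'}|\le d(p,p')$ makes the resulting class independent of the chosen basepoint, so $\Phi([c]):=[\{c(n)\}]$ is defined. To see $\Phi$ respects asymptoty I would note that if $d(c(t),c'(t))\le D$ for all $t$, then with $p=c(0)$ the triangle inequality gives $d(p,c'(n))\ge n-D$, whence $(c(n),c'(n))_p\ge n-D\to\infty$; thus asymptotic rays yield equivalent sequences and $\Phi$ descends to classes.

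For injectivity I would first record a lemma: for any ray $c'$ and any point $q$, applying the Arzel\`a--Ascoli theorem to the segments $[q,c'(n)]$ (properness supplies the required precompactness) yields a geodesic ray $c'_q$ based at $q$ asymptotic to $c'$. Then, given $\Phi([c])=\Phi([c'])$, i.e. $(c(n),c'(n))_p\to\infty$, I would set $q=c(0)$ and replace $c'$ by the asymptotic ray $c'_q$; by the preceding paragraph $\{c'(n)\}$ and $\{c'_q(n)\}$ are equivalent, reducing to two rays sharing the basepoint $q$ with $(c(n),c'_q(n))_q\to\infty$. On the geodesic triangle with vertices $q,c(n),c'_q(n)$ and sides $c|_{[0,n]}$, $c'_q|_{[0,n]}$, $[c(n),c'_q(n)]$, the bound $d(q,[c(n),c'_q(n)])\ge (c(n),c'_q(n))_q-4\delta$ is eventually large, so $\delta$-slimness forces each $c(t)$ (for $t$ up to about $(c(n),c'_q(n))_q$) to lie within $\delta$ of $c'_q|_{[0,n]}$; comparing distances to $q$ upgrades this to $d(c(t),c'_q(t))\le 2\delta$ for all $t$. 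Hence $c$ and $c'_q$, and therefore $c$ and $c'$, are asymptotic, so $[c]=[c']$.

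For surjectivity, given $\{x_n\}$ converging to infinity, taking $i=j$ in the defining limit shows $d(p,x_n)\to\infty$, so I would apply Arzel\`a--Ascoli to the segments $[p,x_n]$ to extract a subsequence converging uniformly on compact sets to a geodesic ray $c$ based at $p$. Writing $\sigma_n(m)$ for the point at arclength $m$ along $[p,x_n]$, one has $(\sigma_n(m),x_n)_p=m$ exactly; since $\sigma_n(m)\to c(m)$, continuity of the Gromov product gives $(c(m),x_n)_p\ge m-o(1)$ along the subsequence, and a diagonal choice then exhibits $\{c(n)\}$ as equivalent to a sub-subsequence of $\{x_n\}$. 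Since a sequence converging to infinity is equivalent to each of its subsequences (for indices $a,b\ge N$ one has $(x_a,x_b)_p$ large), it follows that $\Phi([c])=[\{x_n\}]$.

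I expect the main obstacle to be the two compactness arguments—the existence of asymptotic rays from a prescribed basepoint and surjectivity itself—which both extract limiting geodesics via Arzel\`a--Ascoli and are precisely where properness of $X$ is indispensable. The secondary difficulty is the bookkeeping needed to pass from subsequential, uniform-on-compacta convergence to genuine equivalence of the associated sequences: this requires careful index matching and repeated use of the $\delta$-inequality (equivalently, slimness) to keep the relevant Gromov products under control throughout.
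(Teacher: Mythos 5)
Your proposal is correct, and the paper offers no proof of this lemma at all---it simply cites Bridson--Haefliger---so there is nothing in the text to diverge from; your argument (Gromov products $(c(i),c(j))_p=\min\{i,j\}$ for rays, Arzel\`a--Ascoli via properness for surjectivity and for basing rays at a prescribed point, and slimness plus the $4\delta$-comparison between $(x,y)_p$ and $d(p,[x,y])$ for injectivity) is essentially the standard one from that reference. The details check out, including the $n-D$ lower bound for asymptotic rays, the upgrade from $\delta$-closeness to $d(c(t),c'_q(t))\le 2\delta$ by comparing distances to $q$, and the diagonal/subsequence bookkeeping justified by transitivity of equivalence via the $\delta$-inequality.
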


    \subsection{Examples}

    \begin{defn}
         The {\it boundary} of a hyperbolic group $\Gamma$ is defined to be the boundary of any hyperbolic geodesic metric space on which $\Gamma$ acts geometrically.
    \end{defn}

      The definition above is well-defined by~\Cref{thm:QI_inv_boundary}.

        \begin{example}
            The boundary of the $n$-valent tree for $n \geq 3$ is homeomorphic to the Cantor set.
        \end{example}

        \begin{example}
            The boundary of real hyperbolic space $\Hy^n$ is homeomorphic to the sphere of dimension $n-1$.
        \end{example}

        \begin{example}[Free product of surface groups]
            Let $S$ and $S'$ be homeomorphic to the closed orientable surface of genus $g \geq 2$ and equipped with a hyperbolic metric. Let $p \in S$ and $p' \in S'$.
            Let $Y = (S \sqcup [0,1] \sqcup S')/\sim$ be the quotient space defined by $0 \sim p$ and $1 \sim p'$. Let $\widetilde{Y}$ be the universal cover of $Y$. Then $\p \widetilde{Y}$ is disconnected and consists of countably many circles and uncountably many singletons, each of which is a limit of circles.
        \end{example}

        The previous example sits in the broader framework of boundaries of free products of one-ended hyperbolic groups. This structure is detailed by Martin--\'{S}wi\k{a}tkowski \cite{martinswiatkowski}.

        \begin{example}[Surface amalgams] Let $S$ and $S'$ be homeomorphic to the closed orientable surface of genus $g \geq 2$ and equipped with a hyperbolic metric. Let $\gamma$ and $\gamma'$ be essential simple closed curves on $S$ and $S'$. Let $Y$ be the space obtained by gluing $\gamma$ and $\gamma'$ together by a homeomorphism. Then, the boundary of the universal cover $\widetilde{Y}$ is connected. A geodesic ray either terminates in a circle corresponding to the boundary of a hyperbolic plane covering $S$ or $S'$, or the geodesic ray travels through infinitely many copies of such hyperbolic planes.

            There are cut pairs in the boundary corresponding to the boundary of a geodesic line in $\widetilde{Y}$ that does not cross a lift of the glued curves; these cut pairs separate the boundary into two components. In addition, there are cut pairs in the boundary corresponding to lifts of the glued curves that split the boundary into four components. (Recall, a {\it cut pair} in a topological space $Z$ is a set of distinct points $\{z,z'\} \subset Z$ so that $Z \setminus \{z,z'\}$ is not connected.)
        \end{example}

        Bowditch utilized the structure of cut pairs in the boundary of a one-ended hyperbolic group to produce a quasi-isometry-invariant JSJ theory, which captures all splittings of the group over two-ended subgroups. Indeed, the boundary perspective is powerful as topological features of the boundary of a hyperbolic group correspond to algebraic and geometric properties of the group. The next theorem highlights two significant, celebrated cases of this relationship. We stated only a consequence of the influential work of Bowditch below and refer to the paper for a detailed description of the JSJ theory.

        \begin{thm}
            Let $\Gamma$ be a hyperbolic group. Then,
            \begin{enumerate}
                \item \cite[Theorem 6.2]{bowditch} Let $\Gamma$ be a one-ended hyperbolic group that is not cocompact Fuchsian. Then $\Gamma$ splits over a two-ended subgroup if and only if $\p \Gamma$ contains a local cut point.
                \item \cite[Corollary 1.4]{bestvinamess}
       Let $\Gamma$ be a hyperbolic group. Then
        \[ \dim \p \Gamma = \max\{n\,|\, H^n(\Gamma, \Z\Gamma) \neq 0\}. \]
        If $\Gamma$ is torsion-free, then $\dim \p \Gamma = \cd \Gamma -1$, where $\cd \Gamma$ denotes the cohomological dimension of $\Gamma$.

            \end{enumerate}
        \end{thm}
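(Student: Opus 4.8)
The statement assembles two deep results, and I would argue each by its established route. For part (1), I would separate the two implications. The forward direction is the softer one: a splitting of $\Gamma$ over a two-ended subgroup $H$ produces an action on the associated Bass--Serre tree, and since $H$ is two-ended its limit set $\p H \subset \p\Gamma$ is a pair of points $\{a,b\}$. Geodesics realizing the splitting must fellow-travel the axis of $H$, so the two ``sides'' of the edge group meet in $\p\Gamma$ precisely along $\{a,b\}$; tracking this shows $\p\Gamma \setminus \{a\}$ has more than one end near $a$, i.e.\ $a$ is a local cut point. The reverse direction is the heart of Bowditch's work and is where I expect essentially all the difficulty to lie: from the hypotheses that $\p\Gamma$ is connected (one-ended), is not a circle (not cocompact Fuchsian), and contains a local cut point, one must manufacture a genuine algebraic splitting. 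Following Bowditch, I would encode the local cut-point structure of $\p\Gamma$ as a \emph{pretree}, promote this combinatorial object to an $\R$-tree and then to a simplicial tree $T$ on which $\Gamma$ acts without global fixed point, and verify that the edge stabilizers of $T$ are two-ended. The obstacle is proving that the cut-point combinatorics is rigid and canonical enough to yield a $\Gamma$-equivariant tree, which is exactly the content of the (quasi-isometry invariant) JSJ theory.

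For part (2), I would use the Rips-complex compactification of Bestvina--Mess. For $d$ large relative to $\delta$, the Rips complex $P_d(\Gamma)$ is a finite-dimensional contractible complex carrying a geometric $\Gamma$-action, and I would compactify it to $\widehat P := P_d(\Gamma) \cup \p\Gamma$ using the neighborhood basis of \Cref{defn:neighborhoods}. The crucial technical input is that $\widehat P$ is a compact, finite-dimensional absolute retract in which $\p\Gamma$ sits as a $Z$-set. Granting this, the long exact sequence of the pair $(\widehat P, \p\Gamma)$, combined with the acyclicity of $\widehat P$, yields for each $n$
\[ \check H^{n}(\p\Gamma) \;\cong\; H^{n+1}(\widehat P, \p\Gamma) \;\cong\; H^{n+1}_c\bigl(P_d(\Gamma)\bigr) \;\cong\; H^{n+1}(\Gamma, \Z\Gamma), \]
where the last isomorphism uses that $P_d(\Gamma)$ is a contractible complex with cocompact $\Gamma$-action, so its compactly supported cohomology computes $H^*(\Gamma,\Z\Gamma)$ (when $\Gamma$ is torsion-free, $P_d(\Gamma)/\Gamma$ is a finite $K(\Gamma,1)$ and this is classical).

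To extract the dimension count, I would invoke the fact that for the compact metric spaces produced this way the covering dimension coincides with the (sheaf-theoretic) cohomological dimension, which is detected as the top degree carrying nonzero Čech cohomology of subpairs. Feeding the displayed duality isomorphism into this identification --- and accounting for the degree shift by one --- gives $\dim \p\Gamma = \max\{n : H^n(\Gamma,\Z\Gamma)\neq 0\} - 1$; in the torsion-free case $\Gamma$ is of type $\mathrm{FP}$ with $\cd\Gamma = \max\{n : H^n(\Gamma,\Z\Gamma)\neq 0\}$, so this specializes to $\dim\p\Gamma = \cd\Gamma - 1$. The main obstacle in this part is establishing the $Z$-set absolute-retract compactification: one must show $\widehat P$ is finite-dimensional and locally contractible at boundary points, and it is precisely here that the hyperbolic geometry (stability of geodesics and the structure of the neighborhoods $V_n$) does the essential work.
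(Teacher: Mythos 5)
The paper does not prove this theorem: it is stated as a survey citation to Bowditch and to Bestvina--Mess, so there is no internal argument to compare against. Measured against the cited sources, your sketch follows exactly the canonical routes: for (1), the Bass--Serre tree gives the soft direction (the endpoints $\{a,b\}$ of the axis of the two-ended edge group become local cut points), and the hard converse is Bowditch's pretree machinery, which you correctly identify as the real content; for (2), the Rips-complex $Z$-set compactification and the chain of isomorphisms $\check H^{n}(\p\Gamma)\cong H^{n+1}(\widehat P,\p\Gamma)\cong H^{n+1}_c\bigl(P_d(\Gamma)\bigr)\cong H^{n+1}(\Gamma,\Z\Gamma)$ is precisely the Bestvina--Mess argument, with the genuinely difficult steps (the compactification is a finite-dimensional AR with $\p\Gamma$ a $Z$-set; covering dimension of the compactum equals its cohomological dimension) correctly located and black-boxed. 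One caveat: the isomorphism $H^{*}_c\bigl(P_d(\Gamma)\bigr)\cong H^{*}(\Gamma,\Z\Gamma)$ is classical only for free actions; when $\Gamma$ has torsion the Rips action is merely proper, and this step needs the additional argument supplied in Bestvina--Mess, which your parenthetical remark does not cover.

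More importantly, note that your derivation yields $\dim\p\Gamma=\max\{n : H^{n}(\Gamma,\Z\Gamma)\neq 0\}-1$, which \emph{differs} from the displayed formula in the statement --- and your version is the correct one. The paper's display is internally inconsistent with its own torsion-free specialization: for torsion-free $\Gamma$ of type FP one has $\cd\Gamma=\max\{n : H^{n}(\Gamma,\Z\Gamma)\neq 0\}$, so $\dim\p\Gamma=\cd\Gamma-1$ forces the $-1$ in the general formula (sanity check: for a closed surface group, $H^{2}(\Gamma,\Z\Gamma)\neq 0$ and $\p\Gamma=S^{1}$ has dimension $1=2-1$). So your proof establishes the corrected statement and, in doing so, exposes a typo (a missing $-1$) in the theorem as printed.
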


         In certain special cases, the topology of the boundary of a group completely determines the algebraic structure of the group, up to abstract commensurability.

        \begin{thm}
            Let $\Gamma$ be a hyperbolic group. Then,
            \begin{enumerate}
            \item $\p \Gamma$ is empty if and only if $\Gamma$ is finite.
            \item $\p \Gamma$ consists of two points if and only if $\Gamma$ contains $\Z$ as a finite-index subgroup.
            \item \cite{stallings, dunwoody} $\p \Gamma$ is homeomorphic to the Cantor set if and only if $\Gamma$ contains a free group of rank $\geq 2$ as a finite-index subgroup.
            \item \cite{tukia, gabai, cassonjungreis} $\p \Gamma$ is homeomorphic to the circle if and only if $\Gamma$ contains the fundamental group of a closed orientable surface of genus $\geq 2$ as a finite-index subgroup.
            \end{enumerate}
        \end{thm}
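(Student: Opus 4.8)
The plan is to prove each biconditional by treating the four ``if'' directions uniformly and then the four ``only if'' directions in increasing order of difficulty. For the ``if'' directions, suppose $H \le \Gamma$ is a finite-index subgroup of the stated type, and fix a proper geodesic hyperbolic space $X$ on which $\Gamma$ acts geometrically. Then $H$ also acts geometrically on $X$, so by definition $\p H$ and $\p\Gamma$ are both the boundary of $X$ and hence canonically identified. It therefore suffices to compute $\p H$ from a model on which $H$ itself acts geometrically: the line $\R$ for $H\cong\Z$ (boundary two points), the $2r$-regular tree for $H$ free of rank $r\ge 2$ (boundary a Cantor set), and $\Hy^2$ for $H$ a closed orientable surface group of genus $\ge 2$ (boundary $S^1$). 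Since the homeomorphism type of $\p H$ is independent of the model by \Cref{thm:QI_inv_boundary}, each ``if'' direction follows at once; the case of finite $\Gamma$ is immediate, as then $X$ is bounded and carries no geodesic ray, so $\p\Gamma=\emptyset$.

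For the converses of (1) and (2): if $\Gamma$ is infinite then $X$ is unbounded, and an unbounded proper geodesic space contains a geodesic ray (extract a limit of longer and longer geodesic segments based at a fixed point using properness), so $\p\Gamma\ne\emptyset$, giving the remaining half of (1). For (2), suppose $\p\Gamma=\{\xi_+,\xi_-\}$. I would join $\xi_+$ to $\xi_-$ by a bi-infinite geodesic $\ell$ (visibility in proper hyperbolic spaces) and argue that $X$ lies in a bounded neighborhood of $\ell$: if not, choose $x_n$ with $d(x_n,\ell)\to\infty$ and, by cocompactness, $\gamma_n\in\Gamma$ with $\gamma_n x_n$ in a fixed compact set; since $\Gamma$ permutes $\{\xi_+,\xi_-\}$, the geodesic $\gamma_n\ell$ has the same ideal endpoints as $\ell$ and so lies within Hausdorff distance $O(\delta)$ of $\ell$, forcing $d(x_n,\ell)=d(\gamma_n x_n,\gamma_n\ell)$ to stay bounded, a contradiction. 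Hence $X$ is quasi-isometric to $\ell\cong\R$, so $\Gamma$ is quasi-isometric to $\Z$, is two-ended, and is therefore virtually $\Z$ by the classical characterization of two-ended groups.

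For the converse of (3): a Cantor set is infinite and totally disconnected, so $\Gamma$ has infinitely many ends, the ends corresponding to the connected components of the boundary. By Stallings' theorem \cite{stallings}, $\Gamma$ splits nontrivially over a finite subgroup, and by Dunwoody's accessibility theorem \cite{dunwoody} this process terminates, exhibiting $\Gamma$ as the fundamental group of a finite graph of groups with finite edge groups and vertex groups that are finite or one-ended. A one-ended vertex group is quasiconvex, so its connected boundary would embed as a connected subset of $\p\Gamma$ with more than one point, contradicting total disconnectedness; hence all vertex groups are finite, and the fundamental group of a finite graph of finite groups is virtually free (Karrass--Pietrowski--Solitar). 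The rank is $\ge 2$ since $\p\Gamma$ has more than two points.

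For the converse of (4), which I expect to be the main obstacle: suppose $\p\Gamma\cong S^1$. The boundary action of a hyperbolic group is a convergence action, and since $\Gamma$ acts cocompactly on the space of distinct triples of $\p\Gamma$ it is a \emph{uniform} convergence action. The decisive input is the Convergence Group Theorem of Tukia, Gabai, and Casson--Jungreis \cite{tukia, gabai, cassonjungreis}: a group acting on $S^1$ as a uniform convergence group is topologically conjugate to a cocompact Fuchsian group acting on $\partial\Hy^2=S^1$. Thus $\Gamma$ is virtually a cocompact lattice in $\PSL_2(\R)$, and passing to a torsion-free finite-index subgroup yields a closed orientable surface group, necessarily of genus $\ge 2$ since $\p\Gamma$ is infinite. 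The difficulty is genuinely concentrated here: the ``if'' directions and cases (1)--(3) use only ends theory and the quasi-isometry invariance of the boundary, whereas (4) depends on the Convergence Group Theorem, together with the standard but nontrivial verification that the boundary action of a hyperbolic group is a uniform convergence action.
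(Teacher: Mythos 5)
Your proposal is correct, and it matches the paper's approach in the only sense available: the paper states this theorem as a compilation of classical results with no proof of its own, citing exactly the inputs you invoke at the crux points (Stallings and Dunwoody for (3), the Tukia--Gabai--Casson--Jungreis Convergence Group Theorem for (4)), while your treatment of (1)--(2) via quasi-isometry invariance of the boundary, Arzel\`a--Ascoli, and the classification of two-ended groups is the standard argument. One cosmetic slip in (4): a torsion-free finite-index subgroup of a cocompact Fuchsian group may be a non-orientable closed surface group, so one must pass to a further index-two (orientation-preserving) subgroup to obtain the closed \emph{orientable} surface group of genus $\geq 2$.
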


        Cannon's Conjecture~\cite{cannon-conj}, one of the preeminent open problems in the field, asks a higher-dimensional analog of the last case. Namely, if a hyperbolic group has $2$-sphere boundary, is the group virtually Kleinian? A positive resolution to Cannon's Conjecture would also imply a positive answer to the following question, asked by Kapovich--Kleiner~\cite{kapovichkleiner}. If a hyperbolic group has boundary homeomorphic to the Sierpinski carpet, is the group virtually Kleinian?

        More is known in the low-dimensional setting about the topological structure of boundaries. For example, the next theorem completely classifies $1$-dimensional boundaries of groups that do not virtually split over a $2$-ended subgroup.

        \begin{thm} \cite[Theorem 4]{kapovichkleiner}
            Suppose $\Gamma$ is a one-ended hyperbolic group, $\p \Gamma$ is one-dimensional, and has no local cut points. Then $\p \Gamma$ is homeomorphic to either the Sierpinski carpet or the Menger curve.
        \end{thm}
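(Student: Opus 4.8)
The plan is to reduce the statement to two classical topological characterizations and then to separate the two cases by a planarity dichotomy driven by the dynamics of the $\Gamma$-action on $\p\Gamma$. First I would recall the characterizations. By Whyburn's theorem, a metrizable continuum is homeomorphic to the Sierpi\'nski carpet if and only if it is one-dimensional, locally connected, has no local cut points, and embeds in $S^2$. By Anderson's theorem, a metrizable continuum is homeomorphic to the Menger curve if and only if it is one-dimensional, locally connected, has no local cut points, and is \emph{nowhere planar}, i.e.\ no nonempty open subset embeds in the plane. The first four conditions are common to both; the only difference is planar versus nowhere planar.

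Next I would verify the shared hypotheses for $Z := \p\Gamma$. Boundaries of hyperbolic groups are compact and metrizable, and since $\Gamma$ is one-ended, $Z$ is connected, hence a metrizable continuum. It is one-dimensional and has no local cut points by assumption. Local connectedness is the one nontrivial input: by the theorem of Swarup, the boundary of a one-ended hyperbolic group has no global cut point, and by the work of Bestvina--Mess and Bowditch this upgrades to local connectedness of $Z$ (cf.\ \cite{bestvinamess, bowditch}). Thus $Z$ satisfies the four shared hypotheses, and the theorem reduces to the dichotomy: \emph{either $Z$ embeds in $S^2$, or $Z$ is nowhere planar.}

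The crux is to prove this dichotomy, and here the group action enters. Suppose some nonempty open set $U \subseteq Z$ embeds in the plane; I want to conclude that all of $Z$ embeds in $S^2$. The action of $\Gamma$ on $Z$ is a uniform convergence action, as it is cocompact on the space of distinct triples, and consequently it has strong expansion dynamics. Concretely, for any point and any target open set one can find group elements carrying the complement of an arbitrarily small neighborhood of a point into $U$. Since homeomorphisms preserve planarity, this shows that $Z$ minus an arbitrarily small set is planar, yielding a finite cover of $Z$ by planar open sets, and the expansion also forces the regions one must glue to form a null sequence. One then promotes this local planarity to a global embedding in $S^2$, using local connectedness and the absence of local cut points to control how the planar charts fit together. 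In the complementary case no open subset of $Z$ is planar, so Anderson's characterization applies directly and $Z$ is the Menger curve.

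The main obstacle is precisely the promotion of local planarity to a global embedding: \emph{locally planar} does not formally imply \emph{globally planar}, and a naive gluing of planar charts can fail. The argument requires genuine control from the dynamics---organizing the complementary regions into a null sequence of well-separated pieces and then appealing to an embedding criterion compatible with Whyburn's description of carpets. This dynamical-to-topological step is the technical heart of the argument \cite{kapovichkleiner}, while everything else is either standard boundary theory or a direct application of the two topological characterizations.
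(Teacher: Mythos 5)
Your reduction is the right one: verify that $\p\Gamma$ is a Peano continuum (compact, metrizable, connected since $\Gamma$ is one-ended, and locally connected via Bestvina--Mess together with the cut-point theorems of Bowditch and Swarup), note that one-dimensionality and the absence of local cut points are hypotheses, and invoke Whyburn's characterization of the Sierpi\'nski carpet and Anderson's characterization of the Menger curve, so that everything hinges on the dichotomy ``planar or nowhere planar.'' But at exactly this hinge there is a genuine gap, which you in effect concede: you try to pass from ``some nonempty open $U\subseteq\p\Gamma$ is planar'' to ``$\p\Gamma$ embeds in $S^2$'' by using the convergence dynamics to produce planar charts covering $\p\Gamma$ and then gluing them. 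As you yourself observe, local planarity does not imply planarity ($K_5$ is a one-dimensional, locally planar Peano continuum that is not planar), and you supply no embedding criterion that makes the chart-gluing work; labeling this step ``the technical heart'' and citing \cite{kapovichkleiner} for it leaves the proof incomplete, since the null-sequence control you gesture at does not by itself produce an embedding into $S^2$.

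The missing idea is Claytor's embedding theorem, and with it the dichotomy is proved in the contrapositive direction, with no gluing at all. Claytor's theorem states that a Peano continuum with no local cut points embeds in $S^2$ if and only if it contains no homeomorphic copy of $K_5$ or $K_{3,3}$ (the no-local-cut-point hypothesis is what eliminates Claytor's additional obstruction curves for general Peano continua --- a second place the hypothesis is used, beyond the two characterization theorems). So if $\p\Gamma$ is not planar, it contains a compact nonplanar graph $K$; since $K$ has local cut points while $\p\Gamma$ has none, $K\neq\p\Gamma$, so $\p\Gamma\setminus K$ is a nonempty open set. Density of pairs of fixed points of loxodromic elements then yields, for any prescribed nonempty open $U$, an element $g$ with repelling point off $K$ and attracting point in $U$, whence $g^{n}K\subseteq U$ for large $n$ and $U$ is nonplanar. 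Thus nonplanarity forces nowhere planarity and Anderson's theorem gives the Menger curve, while in the planar case Whyburn's theorem gives the carpet. Note that the dynamics is only ever needed to push a single compact set into an open set --- precisely the step you do have --- and never to assemble planar charts into a global embedding, which is where your version founders.
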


    The following question is widely open.

    \begin{question} \label{ques:whichspaces}
            Which topological spaces arise as the boundary of a hyperbolic group?
    \end{question}

     In addition to the examples above, Menger compacta in dimensions 1, 2, and 3 arise as boundaries of groups acting by isometries on certain right-angled buildings~\cite{dymaraosajda}. Many trees of manifolds, including the Pontriagin sphere, arise as boundaries of hyperbolic groups~\cite{swiatkowski-treesOfManifolds}, as do trees of graphs~\cite{hodaswiatkowski}.

    We end this section by collecting fundamental topological properties that are known to hold for the boundary of every hyperbolic group. These properties restrict spaces that can serve as boundaries as in \Cref{ques:whichspaces}.

        \begin{thm}
            Let $\Gamma$ be a hyperbolic group. Then,
            \begin{enumerate}
                \item $\Gamma$ is one-ended if and only if $\p \Gamma$ is connected.
                \item \cite[Proposition III.H.3.7]{bridsonhaefliger} $\p \Gamma$ is compact.
                \item \cite{bowditch-treelike, levitt-nonnesting, swarup-cutpoint} $\p \Gamma$ does not have a global cut point.
                \item \cite[Section 3]{bestvinamess} If $\Gamma$ is one-ended, then $\p \Gamma$ is locally connected.
            \end{enumerate}
        \end{thm}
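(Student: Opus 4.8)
Part (2) is the most elementary and I would prove it directly from properness. Fix a proper geodesic hyperbolic space $X$ on which $\Gamma$ acts geometrically and a basepoint $p$, and recall that every point of $\overline{X}$ is represented by a generalized geodesic ray issuing from $p$. Any sequence of such rays consists of $1$-Lipschitz maps into the proper space $X$, so the Arzel\`a--Ascoli theorem produces a subsequence converging uniformly on compact sets to a generalized geodesic ray. Hence $\overline{X}$ is (sequentially) compact, and $\p X$, being closed in $\overline{X}$, is compact as well.

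For part (1) I would use the correspondence between the ends of $X$ and the connected components of $\p X$. Each geodesic ray from $p$ determines an end of $X$, and this yields a continuous surjection from $\p X$ onto the space of ends whose fibers are exactly the connected components of $\p X$: two boundary points lie in the same component precisely when representatives can be joined by paths remaining outside arbitrarily large balls about $p$. Thus $\p X$ is connected if and only if $X$ has a single end, i.e. $\Gamma$ is one-ended. The converse direction can alternatively be read off from the classification recalled earlier in this section together with Stallings' theorem: a group that is not one-ended is finite, two-ended, or infinitely-ended, with boundary respectively empty, two points, or a Cantor set, each of which is disconnected.

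Part (3), that $\p\Gamma$ has no global cut point, is by far the main obstacle, and is precisely the content of the cited work of Bowditch, Levitt, and Swarup; I would only sketch the strategy and otherwise refer to the original papers. The action of a one-ended hyperbolic group on its boundary is a uniform convergence action on a compact connected metrizable space. Assuming $\p\Gamma$ had a global cut point, Bowditch organizes the collection of cut points into a \emph{pretree}, from which one extracts a nontrivial $\Gamma$-action by homeomorphisms on a dendrite and ultimately an isometric $\Gamma$-action on an $\R$-tree. Levitt's non-nesting result and Swarup's argument then force this action to have large (in particular nontrivial) point stabilizers, contradicting one-endedness in the presence of the convergence dynamics. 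The entire difficulty lies in controlling this induced tree action; there is no short route, which is why an expository treatment should state the theorem and cite it rather than reproduce the proof.

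Finally, part (4) is a formal consequence of (1) and (3) via the argument of Bestvina--Mess. They show that a compact connected metrizable boundary carrying a uniform convergence action fails to be locally connected only if a point of failure can be localized, through a semicontinuous decomposition of $\p\Gamma$, into a global cut point of the boundary. Since (3) rules out global cut points and (1) supplies connectedness, $\p\Gamma$ must be locally connected. In summary, the only genuine work is in part (3); part (2) is elementary, part (1) is a clean translation between ends and components, and part (4) follows formally from (3) by Bestvina--Mess.
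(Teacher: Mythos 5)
The paper itself records this theorem without proof, as a pointer to the cited literature, so the relevant comparison is with those sources. Your division of labor --- prove (2) directly, defer (3) to Bowditch--Levitt--Swarup, and derive (4) from (1) and (3) --- matches both the paper's treatment and the actual logical structure of the subject. Your Arzel\`a--Ascoli argument for (2) is exactly the proof in Bridson--Haefliger III.H.3.7, and your account of (4) is the correct reading of Bestvina--Mess: a compact, connected boundary that fails to be locally connected must contain a global cut point, which (3) forbids.

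The genuine gap is in part (1). Your assertion that the fibers of the map $\partial X \to \operatorname{Ends}(X)$ are \emph{exactly} the connected components of $\partial X$ is not available to you: for a one-ended group there is a single end, so the claim that its fiber is connected \emph{is} the statement that $\partial \Gamma$ is connected --- the argument is circular in precisely the direction that needs proof. (In general, connectedness of the fibers is proved by decomposing $\Gamma$ over finite subgroups via Stallings--Dunwoody into one-ended vertex groups and invoking the one-ended case, so it cannot serve as input here.) A non-circular argument for that direction: if $\partial X = A \sqcup B$ with $A,B$ nonempty and closed, then compactness of $\partial X$ gives a positive lower bound on visual distances between $A$ and $B$, hence a uniform bound $(\eta,\eta')_p \leq C$ for all $\eta \in A$, $\eta' \in B$; so every geodesic between points of $A$ and $B$ meets a fixed ball about $p$, and one checks that removing a slightly larger ball separates rays into $A$ from rays into $B$, giving at least two ends; contraposing yields one-ended $\Rightarrow$ connected. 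Separately, your Stallings fallback contains a factual error: an infinitely-ended hyperbolic group has Cantor set boundary only if it is virtually free (this is item (3) of the classification theorem recorded in this same section), and the paper's own example of a free product of two surface groups --- whose boundary is countably many circles together with uncountably many singletons --- is a counterexample. What is true, and all you need there, is that the boundary is disconnected. A final small point: in (3), Swarup's completion of Bowditch's program reaches its contradiction by producing splittings over two-ended subgroups and applying accessibility, not by point stabilizers directly contradicting one-endedness; since you defer to the literature there, this imprecision is harmless.
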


    \section{Extending the Gromov product to the boundary}

    For a hyperbolic geodesic metric space $X$, the Gromov product $(x,y)_p$ roughly measures the distance from $p$ to the geodesic $[x,y]$. The definition extends to the boundary of $X$ as follows.

    \begin{sassumption}
            Throughout the remainer of the article, unless otherwise stated, the space $X$ will be a $(\delta)$-hyperbolic metric space and $\p X$ will be its boundary, viewed as the set of equivalence classes of sequences.
    \end{sassumption}

    \begin{defn}
        Let $X$ be a $(\delta)$-hyperbolic metric space. The Gromov product extends to $X \cup \p X$ by the formula
            \[ (x,y)_p := \inf \liminf_{i,j \rightarrow \infty} (x_i,y_j), \]
        where the infimum is taken over all sequences $\{x_i\} \in x$ and $\{y_j\} \in y$.
    \end{defn}

       \begin{figure}[h]
                \begin{centering}
	            \begin{overpic}[width=.6\textwidth, tics=5]{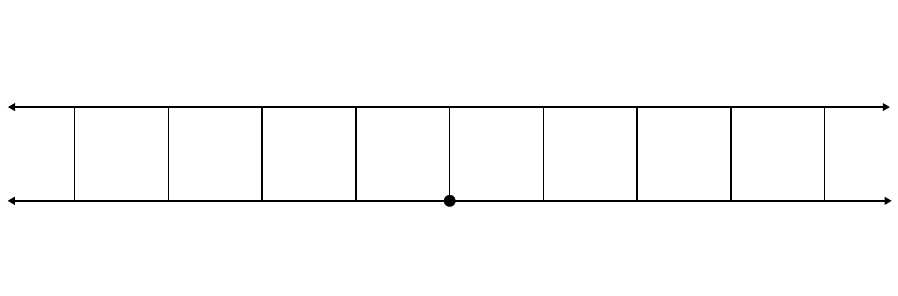} 
                    \put(-10,28){$X$}
                    \put(49,7){$p$}
                    \put(-10,15){$a^{-\infty}$}
                    \put(103,15){$a^{+\infty}$}
                    \put(59,23){\small{$z_1$}}
                    \put(59,28){\small{$w_1$}}
                    \put(59,7){\small{$y_1$}}
                    \put(69,7){\small{$y_2$}}
                    \put(69,2){\small{$w_2$}}
                    \put(69,23){\small{$z_2$}}
                    \put(79.5,7){\small{$y_3$}}
                    \put(79.5,23){\small{$z_3$}}
                    \put(79.5,28){\small{$w_3$}}
                    \put(90,7){\small{$y_4$}}
                    \put(90,2){\small{$w_4$}}
                    \put(90,23){\small{$z_4$}}
                    \put(38,23){\small{$x_1$}}
                    \put(27,23){\small{$x_2$}}
                    \put(17,23){\small{$x_3$}}
                    \put(7,23){\small{$x_4$}}
                \end{overpic}
	           \caption{\small{An example of Bridson--Haefliger that motivates the definition of the extension of the Gromov product to the boundary. }}
                \end{centering}
        \end{figure}

    \begin{example}[\cite{bridsonhaefliger}, Example III.H.3.16]
        The following example of Bridson--Haefliger illustrates why both an infimum and a $\liminf$ are needed to make the above definition well-defined. Consider the metric graph $X$ above, which can be viewed as the Cayley graph for the direct product $\Z \times \Z/2\Z$ (with the bigons collapsed to an edge). The boundary $\p X$ consists of two points, denoted $a^{+\infty}$ and $a^{-\infty}$. Then, $x_n \rightarrow a^{-\infty}$ while $y_n,z_n,w_n \rightarrow a^{+\infty}$. However, $\lim_{i,j \rightarrow \infty} (x_i,w_j)_p$ does not exist. Moreover, $\liminf_{i,j \rightarrow \infty}(x_i,y_j)_p \neq \liminf_{i,j \rightarrow \infty}(x_i,z_j)_p$.
    \end{example}

    \begin{remark}
        The Gromov product takes values in $[0,\infty]$, and $(\eta,\eta')_p = \infty$ if and only if $\eta,\eta' \in \p X$ and $\eta = \eta'$.
    \end{remark}

    The $\delta$-inequality extends to the boundary of $X$, as shown below in Lemma~\ref{lemma:delta_ineq_boundary}. This key property is needed to prove the existence of visual metrics on $\p X$. The proof uses the next helpful lemma, proven using a diagonal sequence argument, which says that, while the extension of the Gromov product to the boundary involves taking an infimum and a $\liminf$, there exist sequences that realize this infimum and via a limit rather than a $\liminf$.

    \begin{lem} \label{lemma:exist_seq}
        Let $X$ be a $(\delta)$-hyperbolic metric space. For all $\eta, \eta' \in \p X$, there exist sequences $\{x_n\}, \{x_n'\}$ of points in $X$ so that $\{x_n\} \in \eta$, $\{x_n'\} \in \eta'$, and
                \[(\eta,\eta')_p = \lim_{n \rightarrow \infty} (x_n,x_n')_p.\]
    \end{lem}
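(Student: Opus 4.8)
The plan is to realize the infimum-of-$\liminf$ as a genuine limit by a diagonal extraction, taking care that the diagonal sequences remain valid representatives of $\eta$ and $\eta'$. If $\eta=\eta'$ then $(\eta,\eta')_p=\infty$, and any choice $\{x_n\}=\{x_n'\}\in\eta$ works since $(x_n,x_n)_p=d(p,x_n)\to\infty$; so I assume $L:=(\eta,\eta')_p<\infty$. By the definition of $L$ as an infimum, for each $m\in\Z_+$ I would fix representatives $\{u_i^{(m)}\}_i\in\eta$ and $\{v_j^{(m)}\}_j\in\eta'$ with $\liminf_{i,j}(u_i^{(m)},v_j^{(m)})_p<L+1/m$, and treat $\{u_i^{(1)}\}_i$ as a reference. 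Since $\{u_i^{(m)}\}_i$ and $\{u_i^{(1)}\}_i$ both converge to infinity and lie in $\eta$, the $\delta$-inequality upgrades the matched-index equivalence to $\liminf_{i,i'}(u_i^{(m)},u_{i'}^{(1)})_p=\infty$, so there is an increasing threshold $I_m\ge m$ with $(u_i^{(m)},u_{i'}^{(1)})_p>m$ whenever $i,i'\ge I_m$. I would define thresholds $J_m$ on the $\eta'$ side analogously.

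Next I would build the diagonal. For each $n$, choose indices $i_n\ge I_n$ and $j_n\ge J_n$ large enough that moreover $(u_{i_n}^{(n)},v_{j_n}^{(n)})_p<L+1/n$; this is possible because the $n$-th $\liminf$ lies below $L+1/n$, so arbitrarily deep index pairs realize a value this small. Set $x_n:=u_{i_n}^{(n)}$ and $x_n':=v_{j_n}^{(n)}$.

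The crux is to check that $\{x_n\}\in\eta$ and $\{x_n'\}\in\eta'$. For $n\le n'$, routing the $\delta$-inequality through the deep intermediate point $u_{I_{n'}}^{(1)}$ gives $(x_n,x_{n'})_p\ge\min\{n,n'\}-\delta=n-\delta$, since both $(x_n,u_{I_{n'}}^{(1)})_p>n$ and $(x_{n'},u_{I_{n'}}^{(1)})_p>n'$ by the threshold property; letting $\min(n,n')\to\infty$ shows $\{x_n\}$ converges to infinity. Comparing with the subsequence $a_n':=u_{I_n}^{(1)}$ of the reference, which still lies in $\eta$ and converges to infinity, the threshold property gives $(x_n,a_n')_p>n\to\infty$, so $\{x_n\}\sim\{a_n'\}$ and therefore $\{x_n\}\in\eta$; the argument for $\{x_n'\}\in\eta'$ is identical.

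Finally, the construction forces $\limsup_n(x_n,x_n')_p\le L$, while admissibility of $\{x_n\}\in\eta$ and $\{x_n'\}\in\eta'$ in the defining infimum gives $\liminf_{n,m}(x_n,x_m')_p\ge L$, whence $\liminf_n(x_n,x_n')_p\ge L$ because the diagonal $\liminf$ dominates the double $\liminf$. Thus $\lim_n(x_n,x_n')_p=L$, as desired. I expect the main obstacle to be precisely the verification of the previous paragraph: a naive diagonal can drift out of the equivalence class or fail to converge to infinity, and it is the combination of choosing indices past the thresholds $I_n,J_n$ and running the $\delta$-inequality through a sufficiently deep intermediate point — the \emph{diagonal-sequence argument} — that repairs this.
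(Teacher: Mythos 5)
Your proof is correct and is exactly the diagonal sequence argument the paper indicates for this lemma (the paper states the result and notes it is ``proven using a diagonal sequence argument'' without giving details). Your careful handling of the crux---using increasing thresholds against a fixed reference representative and routing the $\delta$-inequality through a sufficiently deep intermediate point to keep the diagonal sequences inside $\eta$ and $\eta'$---is precisely the content that makes the diagonal extraction work, and the final squeeze via $\limsup_n(x_n,x_n')_p\leq L\leq\liminf_n(x_n,x_n')_p$ is sound.
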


    We note that the above argument can be extended to any finite set of points in the boundary. The previous lemma is used to prove the next, which extends the $\delta$-inequality to the boundary.

    \begin{lem} \label{lemma:delta_ineq_boundary}
         Let $X$ be a $(\delta)$-hyperbolic metric space. For all $p \in X$ and $\eta, \eta', \eta'' \in \p X$
            \[ (\eta,\eta')_p \geq \min \bigl\{(\eta, \eta'')_p, (\eta'', \eta')_p\bigr\} - \delta. \]
    \end{lem}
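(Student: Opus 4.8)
The plan is to deduce the boundary inequality from the $(\delta)$-hyperbolicity of $X$ (the $\delta$-inequality for triples of \emph{interior} points) by approximating the three boundary points with sequences and applying the interior inequality term by term along a common index. The one place where care is required is that the extended Gromov product is defined by an infimum of $\liminf$s; to avoid fighting the infimum, I would realize only the product $(\eta,\eta')_p$ on the left-hand side as a genuine limit, and control the two products involving $\eta''$ using nothing more than the trivial lower bound that the infimum provides.

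First I would apply \Cref{lemma:exist_seq} to the pair $\eta,\eta'$ to obtain sequences $\{x_n\}\in\eta$ and $\{y_n\}\in\eta'$ with $(\eta,\eta')_p=\lim_{n\to\infty}(x_n,y_n)_p$. Next, fix \emph{any} sequence $\{z_n\}\in\eta''$ whatsoever. For each fixed $n$ the three interior points $x_n,y_n,z_n$ satisfy the $\delta$-inequality, so
\[(x_n,y_n)_p \ \geq\ \min\bigl\{(x_n,z_n)_p,\ (y_n,z_n)_p\bigr\}-\delta.\]
I would then take $\liminf_{n\to\infty}$ of both sides. The left-hand side is a genuine limit equal to $(\eta,\eta')_p$, so the whole argument reduces to bounding the $\liminf$ of the right-hand side from below.

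Two elementary observations finish the proof. The first is that $\liminf_n \min\{a_n,b_n\}=\min\{\liminf_n a_n,\liminf_n b_n\}$; the nontrivial inequality ``$\geq$'' follows because, for every $\varepsilon>0$, one eventually has $a_n>\liminf_n a_n-\varepsilon$ and $b_n>\liminf_n b_n-\varepsilon$ simultaneously. The second is that for the chosen $\{x_n\}$ and the arbitrary $\{z_n\}$ one has $\liminf_n(x_n,z_n)_p\geq(\eta,\eta'')_p$, and likewise $\liminf_n(y_n,z_n)_p\geq(\eta',\eta'')_p$: indeed $(\eta,\eta'')_p$ is the infimum over all sequence pairs of the joint $\liminf_{i,j}$, so it is at most $\liminf_{i,j}(x_i,z_j)_p$, which in turn is at most the diagonal value $\liminf_n(x_n,z_n)_p$. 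Combining these, the right-hand side has $\liminf$ at least $\min\{(\eta,\eta'')_p,(\eta',\eta'')_p\}$, giving the claim.

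I expect the main obstacle to be keeping the directions of the inf/$\liminf$/diagonal comparisons straight, since a careless attempt to also realize the two products involving $\eta''$ as limits would force a shared optimal $\eta$-sequence that need not exist. The key point that makes the argument robust is that we only ever need \emph{lower} bounds on the products involving $z_n$, and any suboptimal choice of $\{z_n\}$ only \emph{increases} those $\liminf$s, so an arbitrary $\{z_n\}$ suffices. The degenerate cases in which a product equals $+\infty$ (i.e.\ two of the boundary points coincide) are absorbed automatically by reading the inequalities in the extended reals.
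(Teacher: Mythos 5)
Your proof is correct, and it takes a genuinely different (and in one respect more economical) route than the paper. The paper invokes the remark following \Cref{lemma:exist_seq} — that the diagonal-sequence argument extends to any finite set of boundary points — to obtain a \emph{single triple} of sequences $\{x_i\}\in\eta$, $\{x_i'\}\in\eta'$, $\{x_i''\}\in\eta''$ realizing all three extended products simultaneously as genuine limits; it then applies the interior $\delta$-inequality termwise and exchanges $\min$ with the limit, exactly as you do at the analogous step. You instead realize only the left-hand product $(\eta,\eta')_p$ as a limit (using just the two-point statement of \Cref{lemma:exist_seq}), take an arbitrary representative $\{z_n\}\in\eta''$, and control the two products involving $\eta''$ by the one-sided chain
\[
(\eta,\eta'')_p \;\leq\; \liminf_{i,j\to\infty}(x_i,z_j)_p \;\leq\; \liminf_{n\to\infty}(x_n,z_n)_p,
\]
which holds because the infimum in the definition ranges over all sequence pairs and the diagonal infimum dominates the joint one. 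Both of your elementary observations check out: $\liminf_n\min\{a_n,b_n\}\geq\min\{\liminf_n a_n,\liminf_n b_n\}$ by the $\varepsilon$-argument you give, and termwise inequalities pass to $\liminf$s, with the extended-real reading absorbing the degenerate coincidence cases. What your route buys is that it bypasses the simultaneous-realization claim entirely — a claim the paper asserts but does not prove, and which is exactly where the subtlety you flag lives: the near-optimal $\eta$-sequence for the pair $(\eta,\eta')$ and the one for $(\eta,\eta'')$ need not be mergeable into a single sequence without further argument. What the paper's route buys is symmetry and brevity \emph{given} the stronger lemma: all three products are treated identically, and the limit computation is a one-line display. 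Your asymmetric version, needing only lower bounds on the $\eta''$-products, is the more self-contained of the two and matches the treatment in standard references.
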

    \begin{proof}
        Let $p \in X$ and $\eta, \eta', \eta'' \in \p X$.
        It follows from Lemma~\ref{lemma:exist_seq} and its proof that there exist sequences $\{x_i\} \in \eta$, $\{x_i'\} \in \eta'$, and $\{x_i''\} \in \eta''$ so that
             \[  (\eta,\eta')_p = \lim_{i \rightarrow \infty} (x_i,x_i')_p, \quad \quad
                    (\eta,\eta'')_p = \lim_{i \rightarrow \infty} (x_i,x_i'')_p, \quad \quad
                    (\eta',\eta'')_p = \lim_{i \rightarrow \infty} (x_i',x_i'')_p.\]
        Then, by the $\delta$-inequality,
            \begin{eqnarray*}
                (\eta, \eta')_p &=& \lim_{i \rightarrow \infty} (x_i,x_i')_p\\
                 &\geq &  \lim_{i \rightarrow \infty} \min \bigl\{ (x_i,x_i'')_p, (x_i'', x_i')_p \bigr\} - \delta \\
                 & \geq & \min \bigl\{ \lim_{i \rightarrow \infty} (x_i,x_i'')_p, \lim_{i \rightarrow \infty} (x_i'',x_i')_p\bigr\} - \delta \\
                 &= & \min \bigl\{ (\eta, \eta'')_p, (\eta'', \eta')_p  \bigr\} - \delta,
            \end{eqnarray*}
            as desired.
    \end{proof}

    \section{Visual metrics} \label{sec:visual_metrics}

    The Gromov product $(\eta,\eta')_p$ of two elements $\eta, \eta' \in \p X$ records how long the elements fellow-travel. So, the quantity
        \[ \rho(\eta,\eta'):= a^{-(\eta,\eta')_p}\]
    for $a>1$ is a good measure of separation: the longer $\eta$ and $\eta'$ fellow-travel, the closer the elements are to each other, and the smaller the value of $\rho(\eta,\eta')$. Moreover, $\rho$ is clearly symmetric, and $\rho(\eta,\eta') = 0$ if and only if $\eta = \eta'$. Thus, $\rho$ is a great candidate for a metric on the boundary. Indeed, Bourdon~\cite{bourdon-flot} proved that $\rho$ is a metric in the case that $X$ is $\CAT(-1)$ and $a=e$, which is discussed below. However, depending on the choice of $a$, the function $\rho$ can fail to satisfy the triangle inequality in $\delta$-hyperbolic spaces as seen in Example~\ref{example:tri_ineq}.

    This section details how a metric on the boundary can be obtained from the separation function~$\rho$. This construction is possible because the function $\rho$ satisfies a weak version of the triangle inequality, making $\rho$ a quasimetric. One obtains a metric by snowflaking the function $\rho$ and applying a chaining construction. Details of the construction are given in Subsection~\ref{subsec:vismet}; the $\CAT(-1)$ case is discussed in Subsection~\ref{subsec:CAT-1}; and examples are given in Subsection~\ref{subsec:VMexamples}, including a boundary with a visual metric that is not a geodesic metric space.

\subsection{Visual metric construction: metrics from quasimetrics} \label{subsec:vismet}

     \begin{defn} (Visual metric.)
        Let $X$ be a hyperbolic space with basepoint $p \in X$. A metric $d$ on $\p X$ is a {\it visual metric} if there exists a number $a>1$, called the {\it visual metric parameter}, and a constant $C\geq 1$ so that
            \[\frac{1}{C} a^{-(\eta,\eta')_p} \leq d(\eta,\eta') \leq Ca^{-(\eta,\eta')_p}\]
        for all $\eta,\eta' \in \p X$.
    \end{defn}

    The construction of visual metrics on boundaries follows from the more general setting of quasimetric spaces.

    \begin{defn} (Quasimetric.)
        Let $Z$ be a set. A function $q:Z \times Z \rightarrow [0,\infty)$ is a {\it quasimetric on $Z$ with parameter $K$} if
        \begin{enumerate}
            \item $q(z,w) = 0$ if and only if $z=w$,
            \item $q(z,w) = q(w,z)$ for all $z,w \in Z$, and
            \item $q(x,y) \leq K \max\big\{q(x,z), q(z,y)\big\}$ for all $x,y,z \in Z$ and $K \geq 1$.
        \end{enumerate}
    \end{defn}

    \begin{remark}
        Note that Condition~(3) in the quasimetric definition can also be expressed to more closely resemble the standard triangle inequality. Let Condition~(3') be \[q(x,y) \leq K' \bigl(q(x,z) + q(z,y)\bigr)\] for all $x,y,z \in Z$ and a constant $K' \geq 1$. Then, if $q$ satisfies Condition~(3), then $q$ satisfies Condition~(3') with $K' = K$, and if $q$ satisfies Condition~(3'), then $q$ satisfies Condition~(3) with $K = 2 K'$.
    \end{remark}

        Crucially, the separation function $\rho$ of points in the boundary of a $\delta$-hyperbolic space given above is a quasimetric with parameter dependent only on $\delta$:

    \begin{lem} \label{lem:hypqm}
        Let $X$ be a $(\delta)$-hyperbolic metric space, and let $p \in X$. Let $a>1$. The function $\rho:\p X \times \p X \rightarrow [0,\infty)$ given by \[\rho(\eta,\eta'):= a^{-(\eta,\eta')_p}\] is an $a^{\delta}$-quasimetric on $\p X$.
    \end{lem}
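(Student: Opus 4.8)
The plan is to verify the three defining properties of a quasimetric in turn, showing that each holds with parameter $K = a^\delta$. Symmetry (property~(2)) is immediate: the Gromov product satisfies $(\eta,\eta')_p = (\eta',\eta)_p$ directly from its defining formula, and this extends to the boundary, so $\rho(\eta,\eta') = \rho(\eta',\eta)$. For property~(1), I would invoke the observation recorded just before the statement, namely that $(\eta,\eta')_p = \infty$ if and only if $\eta = \eta'$. Since $a>1$, the quantity $a^{-(\eta,\eta')_p}$ vanishes precisely when the exponent is $+\infty$, so $\rho(\eta,\eta') = 0$ if and only if $\eta = \eta'$.

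The substantive step is property~(3), the relaxed triangle inequality. The key algebraic observation is that, because $t \mapsto a^{-t}$ is strictly decreasing for $a>1$, the maximum of two such exponentials corresponds to the minimum of the exponents:
\[ \max\bigl\{ a^{-(\eta,\eta'')_p},\, a^{-(\eta'',\eta')_p}\bigr\} = a^{-\min\{(\eta,\eta'')_p,\,(\eta'',\eta')_p\}}. \]
Thus the desired inequality $\rho(\eta,\eta') \leq a^\delta \max\{\rho(\eta,\eta''),\, \rho(\eta'',\eta')\}$ is equivalent, after writing both sides as powers of $a$, to
\[ a^{-(\eta,\eta')_p} \leq a^{\,\delta - \min\{(\eta,\eta'')_p,\,(\eta'',\eta')_p\}}. \]
Taking logarithms base $a$ (which preserves the inequality since $a>1$) and then multiplying through by $-1$ (which reverses it), this reads exactly as
\[ (\eta,\eta')_p \geq \min\bigl\{(\eta,\eta'')_p,\, (\eta'',\eta')_p\bigr\} - \delta. \]

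This last inequality is precisely the extension of the $\delta$-inequality to the boundary established in \Cref{lemma:delta_ineq_boundary}, so property~(3) holds with $K = a^\delta$, completing the verification. I do not anticipate any genuine obstacle here: all the analytic content---that the Gromov product on $\p X$ continues to satisfy the $\delta$-inequality, which itself rested on the diagonal-sequence argument of \Cref{lemma:exist_seq}---has already been carried out upstream. The only point demanding care is the bookkeeping of inequality directions when passing between $\rho$ and the Gromov product through the monotone decreasing map $t \mapsto a^{-t}$, and in particular correctly matching $\max$ on the $\rho$ side with $\min$ on the Gromov-product side.
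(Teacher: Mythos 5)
Your proposal is correct and follows essentially the same route as the paper: both verify symmetry and the vanishing condition directly from the fact that $(\eta,\eta')_p = \infty$ exactly when $\eta = \eta'$, and both reduce the relaxed triangle inequality with $K = a^{\delta}$ to the boundary extension of the $\delta$-inequality in \Cref{lemma:delta_ineq_boundary} via the order-reversing map $t \mapsto a^{-t}$, which converts the $\min$ of Gromov products into the $\max$ of the $\rho$-values. Your bookkeeping of the inequality directions matches the paper's chain of displayed inequalities, so there is nothing to add.
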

    \begin{proof}
        The function $\rho$ is clearly symmetric. Further, $(\eta, \eta')_p = \infty$ if and only if $\eta = \eta'$, so $\rho(\eta, \eta') = 0$ if and only if $\eta = \eta'$. The last condition follows from the extension of the $\delta$-inequality to the boundary as given in Lemma~\ref{lemma:delta_ineq_boundary}. That is, for all $\eta, \eta', \eta'' \in \p X$ the following hold:
        \begin{eqnarray*}
            (\eta,\eta')_p & \geq & \min \{ (\eta,\eta'')_p, (\eta'', \eta')_p\} - \delta, \\
            -(\eta,\eta')_p & \leq & \max \{ -(\eta,\eta'')_p, -(\eta'', \eta')_p\} + \delta, \\
            a^{-(\eta,\eta')_p} & \leq & a^{\delta}\max \{ a^{-(\eta,\eta'')_p}, a^{-(\eta'', \eta')_p}\},
        \end{eqnarray*}
        as desired.
    \end{proof}

    We now detail how a metric can be obtained from a quasimetric. We follow the proof of Buyalo--Schroeder~\cite[Section 2.2.2]{buyaloschroeder}; see also \cite[Proposition 14.5]{heinonen-Lectures}. The metric involves two simple constructions, the first of which is called the chain construction.

    \begin{defn}[Chain construction]
        Let $Z$ be a set, and let $q$ be a quasimetric on $Z$. The map {\it obtained from $q$ by the chain construction} is the map $d:Z \times Z \rightarrow [0,\infty)$ defined by
            \[ d(z,z'):= \inf \sum_i q(z_i,z_{i+1}),\]
        where the infimum is taken over all finite sequences of points $z=z_0, \ldots, z_{k+1} = z'$ in $Z$.
    \end{defn}

        It follows immediately from definition that $d$ is symmetric and satisfies the triangle inequality, which the quasimetric need not. However, the map $d$ may fail to be a metric as $d(z,z')$ could be zero for distinct $z,z' \in Z$; examples are given by Schroeder~\cite{schroeder06}. The next proposition shows that if the quasimetric constant is small enough, then the chain construction yields a metric. (Moreover, Schroeder's examples illustrate that $K\leq 2$ is the optimal constant.)

    \begin{prop} \label{prop:chain} \cite[Lemma 2.2.5]{buyaloschroeder}
        Let $q$ be a $K$-quasimetric on a set $Z$ with $K \leq 2$. Then the map $d$ obtained from $q$ by the chain construction is a metric on $Z$. Moreover, for all $z,z' \in Z$,
            \[\frac{1}{2K}q(z,z') \leq d(z,z') \leq q(z,z').\]
    \end{prop}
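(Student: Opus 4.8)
The plan is to dispatch the easy properties of $d$ directly and then reduce the whole statement to a single chain inequality proved by induction. From the definition of the chain construction it is immediate that $d$ is symmetric (reverse each chain), that $d(z,z)=0$, and that $d$ satisfies the triangle inequality (concatenate a near-optimal chain from $z$ to $w$ with one from $w$ to $z'$); none of this uses $K\le 2$. The upper bound $d(z,z')\le q(z,z')$ follows by testing the infimum defining $d$ on the trivial one-step chain $z=z_0,z_1=z'$. Thus the real content is the lower bound $\tfrac{1}{2K}q(z,z')\le d(z,z')$, which, since $d$ is an infimum over chains, is equivalent to the assertion that every finite chain $z=z_0,\dots,z_{k+1}=z'$ satisfies
\[ q(z,z') \le 2K\sum_{i} q(z_i,z_{i+1}). \]
In particular this gives $d(z,z')>0$ whenever $z\neq z'$, which is exactly the nondegeneracy that upgrades the pseudometric $d$ to a genuine metric.

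I would prove this chain inequality by induction on the number of edges, writing $S=\sum_i q(z_i,z_{i+1})$ for the length of the chain; the base case of one edge is trivial since $2K\ge 1$. For the inductive step I would choose the balance vertex, namely the largest index $j$ for which the prefix $z_0,\dots,z_j$ has length at most $S/2$. By maximality of $j$ the suffix $z_{j+1},\dots,z_{k+1}$ then has length strictly less than $S/2$, and the two flanks are separated by a single bridge edge $b=q(z_j,z_{j+1})$. The inductive hypothesis applied to the (strictly shorter) prefix and suffix chains bounds $q(z_0,z_j)$ and $q(z_{j+1},z_{k+1})$ in terms of their lengths, while the bridge is controlled directly by $b\le S$. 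It then remains to reassemble these three pieces into a bound on $q(z_0,z_{k+1})$ using the quasimetric inequality in the hypothesis on $q$.

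The hard part is exactly this reassembly, and it is where $K\le 2$ must enter. Joining three consecutive pieces requires two applications of the quasimetric inequality, which a priori produces factors as large as $K^2$ or $K^3$ rather than the target $2K$; controlling this is delicate. The key is to perform the two triangle steps in the order that makes the unavoidable extra factor of $K$ land on the \emph{shorter} flank: when the bridge is large (say $b\ge S/2$) the two flanks together have length less than $S/2$, so the smaller one has length at most $S/4$, and then $K\le 2$ yields precisely $K^2 b\le 2KS$ together with $2K^3\cdot(S/4)\le 2KS$; the remaining, small-bridge configurations are handled by bounding the two near-balanced halves directly. Tracking the constants so that the final factor is exactly $2K$, and in particular closing the borderline balanced cases, is the main obstacle. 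It is genuinely sharp: Schroeder's examples show the chain construction can fail to separate distinct points once $K>2$, so no argument can evade the hypothesis. Once the chain inequality is established, both displayed estimates and the metric property of $d$ follow immediately.
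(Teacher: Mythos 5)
Your overall skeleton — dispatch symmetry, the triangle inequality, and the upper bound directly, then reduce everything to the chain inequality $q(z,z')\le 2K\sum_i q(z_i,z_{i+1})$ proved by induction — matches the paper. The gap is in the inductive step, and it is fatal in the regime $\sqrt{2}<K\le 2$. With your balance vertex $j$ (prefix length $S_1\le S/2$, bridge $b=q(z_j,z_{j+1})$, suffix length $S_2<S/2$), consider the small-bridge, near-balanced case $S_1\approx S_2\approx S/2$ with $b$ tiny. The inductive hypothesis gives $q(z_0,z_j)\le 2KS_1\approx KS$ and $q(z_{j+1},z_{k+1})\le 2KS_2\approx KS$, and the only reassembly tool is the quasimetric inequality applied twice, yielding at best
\[ q(z_0,z_{k+1}) \;\le\; K\max\bigl\{2KS_1,\; K\max\{b,\,2KS_2\}\bigr\} \;\approx\; K^3 S, \]
no matter which flank receives the extra factor of $K$; and $K^3S>2KS$ exactly when $K>\sqrt{2}$. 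Your proposed fallback of ``bounding the two near-balanced halves directly'' also fails: applying induction to $z_0,\dots,z_{j+1}$ (length $S_1+b$) and to the suffix, then one quasimetric step, gives $q(z_0,z_{k+1})\le 2K^2\max\{S_1+b,\,S_2\}$, and the maximality of $j$ forces $S_1+b>S/2$, so at $K=2$ this exceeds $2KS$. (Your large-bridge case $b\ge S/2$ does close, as you computed, but the balanced case is genuinely blocked, not merely delicate: the plain inductive hypothesis $q\le 2K\cdot(\text{length})$ is too weak to recombine.)

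The paper avoids this by changing both ingredients of the induction. First, it proves a \emph{stronger weighted} statement, $q(z,z')\le \Sigma(\sigma):= K\bigl(q(z_0,z_1)+2\sum_{i=1}^{k-1}q(z_i,z_{i+1})+q(z_k,z_{k+1})\bigr)$, in which endpoint edges count once and interior edges twice; this is what makes $\Sigma$ exactly additive over the two \emph{overlapping} subchains $\{z_0,\dots,z_{p+1}\}$ and $\{z_p,\dots,z_{k+1}\}$, which share the bridge edge. Second, the split index $p$ is chosen not by mass balance but by the quasimetric itself: $p$ is maximal with $q(z,z')\le Kq(z_p,z')$, which forces $q(z,z')\le Kq(z,z_{p+1})$ as well; assuming $q(z,z')>\Sigma(\sigma)$ pins $p$ into $\{1,\dots,k-1\}$, and then
\[ q(z,z_{p+1})+q(z_p,z') \;\le\; \Sigma(\sigma_p')+\Sigma(\sigma_p'') \;=\; \Sigma(\sigma) \;<\; q(z,z') \;\le\; K\min\bigl\{q(z,z_{p+1}),\,q(z_p,z')\bigr\} \;\le\; q(z,z_{p+1})+q(z_p,z'), \]
a contradiction, with $K\le 2$ entering only in the final step $K\min\{a,b\}\le a+b$. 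If you wish to salvage a balancing-style argument you would at minimum need to strengthen your inductive statement to such a weighted form; the cleanest repair is to adopt the paper's quasimetric-based choice of the split point.
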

    \begin{proof}
        By definition, the map $d$ is symmetric, satisfies the triangle inequality, and $d(z,z') \leq q(z,z')$ for all $z,z' \in Z$. We will show that $q(z,z') \leq 2Kd(z,z')$ for all $z,z' \in Z$, which implies that $d$ is a metric on $Z$ since $q$ is a quasimetric on $Z$. To verify this claim we will show that if $\sigma = \{z=z_0, z_1, \ldots, z_k, z_{k+1} = z'\}$ is a sequence of points in $Z$, then
        \begin{eqnarray} \label{eqn:sigma}
            q(z,z') &\leq& \Sigma (\sigma) := K \left( q(z_0,z_1) + 2 \sum_{i=1}^{k-1} q(z_i,z_{i+1}) + q(z_k, z_{k+1}) \right).
        \end{eqnarray}
        Since $\Sigma(\sigma) \leq 2K \sum_{i=0}^k q(z_i,z_{i+1})$, it will follow that $q(z,z') \leq 2Kd(z,z')$.

        We prove Inequality~\ref{eqn:sigma} by induction. If $k=1$, then the inequality holds since $q$ is a quasimetric.

        Now let $k \in \N$, and assume Inequality~\ref{eqn:sigma} holds for all sequences with at most $k+1$ elements. Let $\sigma = \{z=z_0, z_1, \ldots, z_k, z_{k+1} = z'\}$ be a sequence with $k+2$ elements. Let $p \in \{1,\ldots, k-1\}$. Let
            \[\sigma_p' = \{z_0, z_1, \ldots, z_{p+1}\} \quad \textrm{ and } \quad
               \sigma_p'' = \{z_p, z_{p+1}, \ldots, z_{k+1} \}. \]
        Then $\Sigma(\sigma) = \Sigma(\sigma_p') + \Sigma(\sigma_p'')$ by definition. Since $q$ is a quasimetric, for each $p \in \{0, \ldots, k\}$
            \begin{eqnarray} \label{eqn:qm_max}
                q(z,z') &\leq& K \max\{ q(z,z_p), q(z_p, z') \}.
            \end{eqnarray}
        Thus, there exists a maximal $p \in \{0, \ldots, k\}$ so that
            \[ q(z,z') \leq K q(z_p, z'). \]
        It then follows from Equation~\ref{eqn:qm_max} that
            \[q(z,z') \leq Kq(z,z_{p+1}).\]
        Assume towards a contradiction that
            \begin{eqnarray} \label{qm_cont}
                q(z,z') &>& \Sigma(\sigma).
            \end{eqnarray}
        Then, by the definition of $\Sigma(\sigma)$
            \[ q(z,z') > Kq(z,z_1) \quad \textrm{ and } \quad q(z,z') > Kq(z_k, z').\]
        Thus, $p \in \{1, \ldots, k-1\}$. Therefore,
            \begin{eqnarray*}
                q(z,z_{p+1}) + q(z_p, z') &\leq & \Sigma(\sigma_p') + \Sigma(\sigma_p'') \\
                &=& \Sigma (\sigma) \\
                &<& q(z,z') \\
                &\leq& K \min\{ q(z,z_{p+1}), q(z_p, z')\} \\
                &\leq& q(z,z_{p+1}) + q(z_p, z'),
            \end{eqnarray*}
         a contraction. Above, the induction hypothesis was used in line 1, Equation~\ref{qm_cont} was used in line~3, the choice of $p$ was used in line 4, and that $K \leq 2$ was used in line 5. The conclusion of the proposition follows.
    \end{proof}

    The second operation involved in the metric-from-quasimetric construction is snowflaking, which serves to make small distances larger so that the infimum in the chain construction is not zero for distinct points. Snowflaking is an example of a quasisymmetry (see Section~\ref{sec:quasisymmetries}).

  \begin{defn}[Snowflake]
        Let $Z$ be a set and $q: Z \times Z \rightarrow [0,\infty)$. Let $\epsilon>0$. The {\it $\epsilon$-snowflake of~$q$} is the function $q^{\epsilon}: Z \times Z \rightarrow [0,\infty)$ given by $q^{\epsilon}(x,y) = q(x,y) ^{\epsilon}$.
    \end{defn}

    The next lemma follows immediately from the definition of quasimetric.

    \begin{lem} \label{lemma:snowflake_qm}
        Let $q$ be a $K$-quasimetric on a set $Z$, and let $\epsilon>0$. Then the snowflake $q^{\epsilon}$ is a $K^{\epsilon}$-quasimetric on $Z$. \qed
    \end{lem}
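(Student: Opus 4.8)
The plan is to verify the three defining properties of a quasimetric for $q^{\epsilon}$ directly, exploiting a single elementary fact: for $\epsilon > 0$ the map $t \mapsto t^{\epsilon}$ is a strictly increasing bijection of $[0,\infty)$ onto itself that fixes $0$ and is multiplicative, in the sense that $(st)^{\epsilon} = s^{\epsilon} t^{\epsilon}$. Each quasimetric axiom for $q^{\epsilon}$ is obtained from the corresponding axiom for $q$ by applying this map, so the proof amounts to tracking how these order-theoretic and algebraic properties interact with each of the three conditions.

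The first two axioms are immediate. Because $t \mapsto t^{\epsilon}$ vanishes only at $t = 0$, we have $q^{\epsilon}(z,w) = q(z,w)^{\epsilon} = 0$ if and only if $q(z,w) = 0$, which by the first quasimetric axiom for $q$ holds if and only if $z = w$. Symmetry of $q^{\epsilon}$ follows instantly from symmetry of $q$, since the exponentiation is applied pointwise: $q^{\epsilon}(z,w) = q(z,w)^{\epsilon} = q(w,z)^{\epsilon} = q^{\epsilon}(w,z)$.

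The only step with any content—though still routine—is the quasi-triangle inequality. Starting from $q(x,y) \leq K \max\{q(x,z), q(z,y)\}$ and applying the increasing map $t \mapsto t^{\epsilon}$ to both sides preserves the inequality, so $q(x,y)^{\epsilon} \leq \bigl(K \max\{q(x,z), q(z,y)\}\bigr)^{\epsilon}$. Multiplicativity factors out the constant as $K^{\epsilon}$, and the fact that an increasing map commutes with $\max$ converts $\bigl(\max\{q(x,z), q(z,y)\}\bigr)^{\epsilon}$ into $\max\{q(x,z)^{\epsilon}, q(z,y)^{\epsilon}\}$. Combining these gives $q^{\epsilon}(x,y) \leq K^{\epsilon} \max\{q^{\epsilon}(x,z), q^{\epsilon}(z,y)\}$, which is precisely the $K^{\epsilon}$-quasimetric inequality. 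There is no genuine obstacle here: the whole lemma is the observation that exponentiation by a positive real respects the order structure, multiplication, and the $\max$ operation appearing in the quasimetric axiom, which is why the statement is flagged with \qed as immediate from the definitions.
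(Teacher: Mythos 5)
Your proof is correct and is exactly the routine verification the paper has in mind: the lemma is stated there with \qed and no written proof, noted as following immediately from the definition of quasimetric, and your check (exponentiation is increasing, multiplicative, commutes with $\max$, and vanishes only at $0$) is precisely that omitted argument.
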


    The following proposition shows that an arbitrary quasimetric can be snowflaked appropriately to result in a suitable quasimetric to which the chain construction can be applied.

    \begin{prop} \label{prop:metric_from_qm}
        Let $q$ be a $K$-quasimetric on a set $Z$. Then for all $\epsilon \in (0, \frac{\ln 2}{\ln K}]$ there exists a metric $d_{\epsilon}$ on $Z$ bilipschitz equivalent to the snowflake $q^{\epsilon}$. In particular, for all $z,z' \in Z$,
            \[\frac{1}{2K^{\epsilon}}q^{\epsilon}(z,z') \leq d_{\epsilon}(z,z') \leq q^{\epsilon}(z,z').\]
    \end{prop}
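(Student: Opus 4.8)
The plan is to produce $d_\epsilon$ as the output of the chain construction applied to the snowflaked quasimetric $q^\epsilon$, assembling the two results already established. The whole argument reduces to checking that snowflaking lowers the quasimetric parameter to the threshold $2$ demanded by \Cref{prop:chain}.

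First I would apply \Cref{lemma:snowflake_qm}: since $q$ is a $K$-quasimetric, its $\epsilon$-snowflake $q^\epsilon$ is a $K^\epsilon$-quasimetric on $Z$. The numerical heart of the matter is the equivalence
\[ K^\epsilon \leq 2 \iff \epsilon \ln K \leq \ln 2 \iff \epsilon \leq \frac{\ln 2}{\ln K}, \]
valid for $K>1$. Hence for every $\epsilon$ in the stated interval $\left(0, \frac{\ln 2}{\ln K}\right]$ the snowflake $q^\epsilon$ is a quasimetric whose parameter $K^\epsilon$ satisfies $K^\epsilon \leq 2$.

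Next I would invoke \Cref{prop:chain}, with the quasimetric taken to be $q^\epsilon$ and its parameter taken to be $K^\epsilon \leq 2$. The proposition then yields a genuine metric $d_\epsilon$ on $Z$, namely the map obtained from $q^\epsilon$ by the chain construction, together with the two-sided estimate
\[ \frac{1}{2K^\epsilon}\, q^\epsilon(z,z') \leq d_\epsilon(z,z') \leq q^\epsilon(z,z') \]
for all $z,z'\in Z$. This is precisely the asserted bilipschitz equivalence between $d_\epsilon$ and $q^\epsilon$, with bilipschitz constant $2K^\epsilon \leq 4$.

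Finally I would dispatch the degenerate case $K=1$ (reading $\frac{\ln 2}{\ln K}$ as $+\infty$): here the quasimetric inequality is the ultrametric inequality, so $q$, and every snowflake $q^\epsilon$, is already a metric, and one may simply set $d_\epsilon = q^\epsilon$. There is no genuine obstacle in this argument, which is essentially a two-step composition of results already in hand; the only point needing care is the elementary threshold computation above, which is exactly what pins the upper bound on $\epsilon$ to the value forcing $K^\epsilon \leq 2$ and so bringing \Cref{prop:chain} into play.
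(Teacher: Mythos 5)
Your proposal is correct and follows exactly the paper's proof: apply \Cref{lemma:snowflake_qm} to see that $q^{\epsilon}$ is a $K^{\epsilon}$-quasimetric with $K^{\epsilon} \leq 2$ precisely when $\epsilon \leq \frac{\ln 2}{\ln K}$, then invoke \Cref{prop:chain} to obtain the metric via the chain construction together with the stated two-sided bounds. Your explicit handling of the degenerate case $K=1$ is a minor addition the paper leaves implicit, but the argument is otherwise identical.
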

    \begin{proof}
        Let $q$ be a $K$-quasimetric on a set $Z$. If $\epsilon \in (0, \frac{\ln 2}{\ln K}]$, hen the snowflake $q^{\epsilon}$ is a $K^\epsilon$-quasimetric on $Z$ with $K^{\epsilon} \leq 2$. Thus, the map obtained from $q^{\epsilon}$ by the chain construction is a metric on $Z$ as desired by Proposition~\ref{prop:chain}.
    \end{proof}

        The existence of visual metrics follows.

    \begin{cor}[Visual metric existence]
        Let $X$ be a $(\delta)$-hyperbolic metric space, and let $p \in X$. For all $a \in (1, e^{2/\delta}]$, there exists a visual metric $d_{a}$ on $\p X$ with visual metric parameter $a$. Further, for all $\eta, \eta' \in \p X$,
            \[\frac{1}{2a^{\delta}}a^{-(\eta,\eta')_p} \leq d_{\epsilon}(\eta,\eta') \leq  a^{-(\eta,\eta')_p}\]
    \end{cor}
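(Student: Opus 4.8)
The plan is to realize the corollary as an immediate consequence of two facts already in hand: that the separation function $\rho_a(\eta,\eta') := a^{-(\eta,\eta')_p}$ is a quasimetric (\Cref{lem:hypqm}), and that a quasimetric with parameter at most $2$ is turned into a bilipschitz-equivalent metric by the chain construction (\Cref{prop:chain}). No new estimate is needed; the only thing to organize is the arithmetic linking the visual parameter $a$, the hyperbolicity constant $\delta$, and the quasimetric parameter.

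First I would, for the given $a$, consider $\rho_a(\eta,\eta') = a^{-(\eta,\eta')_p}$ on $\p X$, which by \Cref{lem:hypqm} is an $a^{\delta}$-quasimetric. The condition under which \Cref{prop:chain} applies is $a^{\delta}\le 2$, i.e.\ $a\le 2^{1/\delta}$; granting this, feeding $\rho_a$ to the chain construction produces a metric $d_a$ on $\p X$ with
\[
\frac{1}{2a^{\delta}}\, a^{-(\eta,\eta')_p} \le d_a(\eta,\eta') \le a^{-(\eta,\eta')_p}
\qquad \text{for all } \eta,\eta' \in \p X,
\]
since there the quasimetric parameter is $K = a^{\delta}$. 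These are exactly the defining inequalities of a visual metric with parameter $a$ and constant $C = 2a^{\delta}$, which is the desired conclusion.

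Equivalently—and this is the route that explains the subscript $\epsilon$ appearing in the statement—I would fix the base $e$, observe that $\rho(\eta,\eta') := e^{-(\eta,\eta')_p}$ is an $e^{\delta}$-quasimetric, set $\epsilon := \ln a$, and note the exponent identity $\rho^{\epsilon} = \bigl(e^{-(\eta,\eta')_p}\bigr)^{\epsilon} = a^{-(\eta,\eta')_p} = \rho_a$. Then the admissible window $\epsilon \in \bigl(0,\tfrac{\ln 2}{\delta}\bigr]$ of \Cref{prop:metric_from_qm} corresponds under $a = e^{\epsilon}$ to $K^{\epsilon} = a^{\delta}\le 2$, and substituting $K^{\epsilon} = a^{\delta}$ into that proposition's conclusion recovers the same displayed inequalities. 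Both routes thus give the identical bound, the second merely presenting $\rho_a$ as a snowflake of a fixed base quasimetric.

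I expect no serious obstacle, since the substance is carried by \Cref{lem:hypqm} (built on the extended $\delta$-inequality, \Cref{lemma:delta_ineq_boundary}) and by the chain construction. The only points needing care are bookkeeping: checking the exponent identity above, confirming that the constraint genuinely governing the construction is $a^{\delta}\le 2$ (equivalently $a\le 2^{1/\delta}$), and verifying that $d_a$ is a bona fide metric rather than merely a pseudometric. This last point is guaranteed by the strict lower bound together with the fact that $(\eta,\eta')_p < \infty$ for distinct boundary points, so that $a^{-(\eta,\eta')_p} > 0$ separates them.
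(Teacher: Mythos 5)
Your proposal is correct and takes essentially the same approach as the paper, whose proof is exactly the one-line deduction from \Cref{lem:hypqm} (the separation function is an $a^{\delta}$-quasimetric) and \Cref{prop:chain} (the chain construction metrizes any $K$-quasimetric with $K \leq 2$), with your snowflake reformulation via \Cref{prop:metric_from_qm} being the same argument in different packaging. You are also right that the constraint genuinely governing the construction is $a^{\delta} \leq 2$, i.e.\ $a \leq 2^{1/\delta} = e^{(\ln 2)/\delta}$, so the upper bound $e^{2/\delta}$ in the statement appears to be a typo that your argument (like the paper's) does not actually cover.
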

    \begin{proof}
        The corollary follows from Lemma~\ref{lem:hypqm} and Proposition~\ref{prop:chain}.
    \end{proof}

 \subsection{The $\CAT(-1)$ setting} \label{subsec:CAT-1}

    In this subsection we sketch the proof of the following theorem of Bourdon.

    \begin{thm}[$\CAT(-1)$ visual metrics] \cite[Theorem 2.5.1]{bourdon-flot}
        \label{thm:CAT-1vm}
        Let $X$ be a $\CAT(-1)$ space and $p \in X$. Then the function $\rho:\p X \rightarrow \p X$ given by $\rho(\eta,\eta') = e^{-(\eta,\eta')_p}$ is a metric on $\p X$.
    \end{thm}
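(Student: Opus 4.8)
The separation function $\rho$ is manifestly symmetric and nonnegative, and since $(\eta,\eta')_p=\infty$ exactly when $\eta=\eta'$ we have $\rho(\eta,\eta')=0$ precisely when $\eta=\eta'$; so the entire content of the theorem is the triangle inequality
\[\rho(\xi,\zeta)\ \le\ \rho(\xi,\eta)+\rho(\eta,\zeta)\]
for all $\xi,\eta,\zeta\in\p X$. The plan is to recast this as a four-point inequality, verify that inequality using the $\CAT(-1)$ comparison with the model space $\Hy^2$, and then pass to the boundary by a limiting argument mirroring the proof of Lemma~\ref{lemma:delta_ineq_boundary}.

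First I would prove the inequality for interior points. For $x,y,z\in X$ the identity $(x,y)_p=\tfrac12\bigl(d(p,x)+d(p,y)-d(x,y)\bigr)$ gives $\rho(x,y)=e^{d(x,y)/2}\,e^{-d(p,x)/2}\,e^{-d(p,y)/2}$, so after clearing the common factor $e^{d(p,x)/2}e^{d(p,y)/2}e^{d(p,z)/2}$ the desired triangle inequality $\rho(x,z)\le\rho(x,y)+\rho(y,z)$ becomes
\[e^{d(x,z)/2}\,e^{d(p,y)/2}\ \le\ e^{d(x,y)/2}\,e^{d(p,z)/2}+e^{d(y,z)/2}\,e^{d(p,x)/2}.\]
This is precisely the Ptolemy inequality for the four points $p,x,y,z$ (with the pairs $\{x,z\}$ and $\{p,y\}$ playing the role of the diagonals) for the weight $e^{d(\cdot,\cdot)/2}$. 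Thus the theorem is equivalent to the assertion that $X$ satisfies a hyperbolic Ptolemy inequality.

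Next I would establish this four-point inequality from the $\CAT(-1)$ hypothesis. The key input is that the quantity $2\sinh\bigl(d(\cdot,\cdot)/2\bigr)$ satisfies the Ptolemy inequality in $\Hy^2$ (the classical hyperbolic Ptolemy theorem), and the $\CAT(-1)$ comparison transfers this to $X$ by comparing the configuration $p,x,y,z$ to one in $\Hy^2$ whose pairwise distances dominate the corresponding distances in $X$ in the needed direction. Since for points tending to infinity one has $2\sinh(t/2)=e^{t/2}\bigl(1-e^{-t}\bigr)\to e^{t/2}$, the $\sinh$- and the $e^{\cdot/2}$-normalizations of Ptolemy agree in the boundary limit, so the $\sinh$-version is exactly what is required. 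Finally, to reach the boundary I would invoke Lemma~\ref{lemma:exist_seq}, applied to the three points $\xi,\eta,\zeta$, to choose sequences $\{x_i\}\in\xi$, $\{y_i\}\in\eta$, $\{z_i\}\in\zeta$ realizing each of the three boundary Gromov products as an honest limit; then $\rho(x_i,z_i)\to\rho(\xi,\zeta)$ and likewise for the other pairs, so the interior inequality passes to the limit to yield the boundary triangle inequality.

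The main obstacle is the four-point comparison step. The $\CAT(-1)$ condition is native to single geodesic triangles, whereas the Ptolemy inequality constrains all six pairwise distances among $p,x,y,z$ simultaneously; the delicate part is producing the correct comparison configuration in $\Hy^2$ and checking that the comparison moves the six distances in mutually compatible directions, so that the explicit model inequality forces the one in $X$. Everything else — the algebraic reduction to Ptolemy, the $\sinh$ versus $e^{\cdot/2}$ bookkeeping, and the passage to ideal points — is routine once this comparison is in hand.
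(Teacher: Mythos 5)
Your reduction of the theorem to a four-point inequality is correct, and your limiting step is sound: in the $\CAT(-1)$ setting you can even avoid the diagonal-sequence care of Lemma~\ref{lemma:exist_seq} by taking $x_i \in [p,\xi)$, $y_i \in [p,\eta)$, $z_i \in [p,\zeta)$ and invoking \Cref{prop:catgr_ext}, so that all three Gromov products converge simultaneously; since all six distances diverge when $\xi,\eta,\zeta$ are pairwise distinct (the degenerate cases being trivial), the ratio $2\sinh(t/2)/e^{t/2} \to 1$ indeed lets the interior $\sinh$-Ptolemy inequality pass to the boundary triangle inequality after normalizing by $s(p,x_i)s(p,y_i)s(p,z_i)$, where $s(u,v) = 2\sinh\bigl(d(u,v)/2\bigr)$. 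This is a genuinely different route from the paper's, which follows Bourdon: there one introduces the auxiliary function $\alpha_p(y,y') = \sin\bigl(\angle \bar{y}\bar{p}\bar{y}'/2\bigr)$ defined via comparison triangles, proves it is a metric on each sphere $S(r,p)$ (Claim~\ref{claimB}, a single-triangle comparison plus the subadditivity of sine on $[0,\pi/2]$), and shows via hyperbolic trigonometry (Claims~\ref{claimA} and~\ref{claimC}) that $\alpha_p$ converges to $e^{-(\eta,\eta')_p}$. The paper's argument thus only ever compares one triangle at a time, which is why it closes easily; your route, if completed, proves more (the boundary metric is in fact Ptolemaic, a theorem of Foertsch--Schroeder), but at the cost of a four-point statement.

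That four-point statement is where your proof has a genuine gap, exactly at the step you flagged: the $\CAT(-1)$ comparison axiom governs a single geodesic triangle, and no naive comparison configuration in $\Hy^2$ simultaneously dominates all six pairwise distances of $p,x,y,z$ ``in the needed direction.'' The missing idea has a name: Reshetnyak's majorization theorem, valid for $\CAT(\kappa)$ spaces. Span the closed quadrilateral $p,x,y,z$ by geodesic sides $[p,x],[x,y],[y,z],[z,p]$; majorization produces a convex quadrilateral $\bar{p}\bar{x}\bar{y}\bar{z} \subset \Hy^2$ with the \emph{same} four side lengths and a $1$-Lipschitz map back to $X$, so the two diagonals satisfy $d(x,z) \leq d(\bar{x},\bar{z})$ and $d(p,y) \leq d(\bar{p},\bar{y})$ --- sides fixed, diagonals moved only in the favorable direction. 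One then needs the classical hyperbolic Ptolemy inequality for \emph{arbitrary} quadruples in $\Hy^2$, which follows from Euclidean Ptolemy via the ball-model identity
\[ \sinh\Bigl(\frac{d(u,v)}{2}\Bigr) \;=\; \frac{|u-v|}{\sqrt{(1-|u|^2)(1-|v|^2)}}, \]
after dividing the Euclidean inequality by the product of the four conformal factors; monotonicity of $\sinh$ then transfers the inequality from the comparison quadrilateral to $X$. This is the $\CAT(-1)$ analogue of the Foertsch--Lytchak--Schroeder theorem that $\CAT(0)$ spaces are Ptolemaic, and it is a real theorem, not a routine consequence of the comparison axiom --- without majorization (or an equivalent four-point argument) your proof is incomplete as written, though entirely repairable.
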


    The structure of the boundary of a $\CAT(-1)$ space is simpler than in the general $\delta$-hyperbolic setting. The first contributing factor is that geodesic segments, rays, and lines are unique in a $\CAT(0)$ space (and thus in a $\CAT(-1)$ space). This property follows easily from the $\CAT(0)$ definition.

    \begin{lem}[Unique geodesics] \cite[Remark 1.4.2]{bourdon-flot} \cite[Proposition II.8.2]{bridsonhaefliger}
        Let $X$ be $\CAT(0)$ space, let $p \in X$, and let $\eta \in \p X$. There exists a unique geodesic ray $[p,\eta)$ from $p$ to $\eta$.
    \end{lem}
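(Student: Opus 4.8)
The plan is to treat existence and uniqueness separately, using two standard consequences of the $\CAT(0)$ inequality: the uniqueness of geodesic segments between points of $X$, and the convexity of the metric — that for geodesics $\alpha,\beta\colon[0,1]\to X$ parametrized proportionally to arc length, $t\mapsto d(\alpha(t),\beta(t))$ is convex (see \cite{bridsonhaefliger}). Both are obtained by comparison with $\E^2$, where the analogous distance function is affine: applying the $\CAT(0)$ inequality to a comparison triangle gives the upper bound needed for convexity, and applied to two geodesics with the \emph{same} pair of endpoints it forces $d(\alpha(t),\beta(t))\equiv 0$, yielding uniqueness of segments.

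Granting these, uniqueness of the ray is immediate. If $c',c''\colon[0,\infty)\to X$ are unit-speed geodesic rays from $p$ both representing $\eta$, then restricting to $[0,T]$ and reparametrizing shows $f(t):=d\bigl(c'(t),c''(t)\bigr)$ is convex on $[0,\infty)$. Since $c'$ and $c''$ are asymptotic, $f$ is bounded, and $f(0)=d(p,p)=0$. A bounded convex function on $[0,\infty)$ vanishing at $0$ is identically zero: were $f(t_0)>0$ for some $t_0$, convexity would force $f(t)\ge (t/t_0)f(t_0)\to\infty$. Hence $c'=c''$.

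For existence, fix a geodesic ray $c$ representing $\eta$, put $R:=d\bigl(p,c(0)\bigr)$, and for each $n$ let $\sigma_n\colon[0,a_n]\to X$ be the unique geodesic from $p$ to $c(n)$, so $a_n=d\bigl(p,c(n)\bigr)$ with $|a_n-n|\le R$. The key step is to show that for each fixed $t$ the points $\sigma_n(t)$ form a Cauchy sequence. Comparing the triangle $\Delta\bigl(p,c(m),c(n)\bigr)$ with $\E^2$, the $\CAT(0)$ inequality gives $d\bigl(\sigma_m(t),\sigma_n(t)\bigr)\le 2t\sin(\bar\gamma_{m,n}/2)$, where $\bar\gamma_{m,n}$ is the comparison angle at $p$; feeding $d\bigl(c(m),c(n)\bigr)=|n-m|$ and $|a_n-n|\le R$ into the Euclidean law of cosines, a short computation shows $\bar\gamma_{m,n}\to 0$ as $m,n\to\infty$. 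Completeness of $X$ then yields $c'(t):=\lim_n\sigma_n(t)$, and passing the equalities $d\bigl(\sigma_n(s),\sigma_n(t)\bigr)=|s-t|$ to the limit shows $c'$ is a unit-speed geodesic ray from $p$. To see $c'(\infty)=\eta$, compare $\sigma_n$ with the segment $c|_{[0,n]}$, which shares the endpoint $c(n)$: convexity of the metric bounds $d\bigl(\sigma_n(t),c(t)\bigr)\le 2R$ uniformly in $n$ and $t\le n$, so $d\bigl(c'(t),c(t)\bigr)\le 2R$ and $c'$ is asymptotic to $c$.

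The main obstacle is the existence half, and specifically the comparison-angle estimate $\bar\gamma_{m,n}\to 0$: this is the one place where the $\CAT(0)$ geometry and the linear growth control $|a_n-n|\le R$ coming from $c$ being a genuine ray must be combined carefully. Completeness of $X$ is also essential for extracting the limit ray; this is harmless in the present article, where $X$ is proper and hence complete.
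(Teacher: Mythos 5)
Your proof is correct and follows essentially the same route as the source the paper defers to for this lemma (the paper itself gives no proof, citing Bridson--Haefliger, Proposition II.8.2): uniqueness via convexity of $t\mapsto d\bigl(c'(t),c''(t)\bigr)$ for a bounded function vanishing at $0$, and existence by taking the limit of the segments $[p,c(n)]$, with the comparison-angle estimate and the $2R$ fellow-traveling bound from convexity exactly as in the standard argument. Your observation that existence requires completeness (absent from the lemma as stated, but automatic for the proper spaces considered in the article) is accurate and worth flagging.
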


    Following uniqueness of geodesic rays, one can prove that the extension of the Gromov product to the boundary of a $\CAT(-1)$ space also has a nice description. One can compare the next proposition to \Cref{lemma:exist_seq}.

    \begin{prop}[Gromov product extension] \cite[Proposition 2.4.3]{bourdon-flot} \cite[Proposition 3.4.2]{buyaloschroeder} \label{prop:catgr_ext}
        Let $X$ be a $\CAT(-1)$ space, let $p \in X$ and $\eta, \eta' \in \p X$. Let $y_n \in [p, \eta)$ and $y_n' \in [p, \eta')$ be sequences of points converging to $\eta$ and $\eta'$. Then $(y_n,y_n')_p$ converges to $(\eta,\eta')_p$.
    \end{prop}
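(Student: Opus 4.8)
The plan is to identify $(\eta,\eta')_p$ with the monotone limit of the Gromov products along the unique rays $c:=[p,\eta)$ and $c':=[p,\eta')$ (which exist and are unique by the preceding lemma), and then to show that this same limit is forced for \emph{every} admissible pair of sequences in the infimum defining $(\eta,\eta')_p$. If $\eta=\eta'$ then $c=c'$, so $y_n,y_n'$ lie on a common ray and $(y_n,y_n')_p=\min\{d(p,y_n),d(p,y_n')\}\to\infty=(\eta,\eta)_p$; so I would assume $\eta\neq\eta'$. The first real step is a purely metric monotonicity statement: if $a'$ lies on $[p,a]$ and $w\in X$ is arbitrary, then $(a',w)_p\le(a,w)_p$, which follows from $(a,w)_p-(a',w)_p=\tfrac12\bigl[d(a,a')-\bigl(d(a,w)-d(a',w)\bigr)\bigr]\ge 0$ by the triangle inequality. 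Applying this in each coordinate shows $(c(t),c'(s))_p$ is non-decreasing in $t$ and in $s$, so $L:=\lim_{t,s\to\infty}(c(t),c'(s))_p$ exists in $[0,\infty]$; since $d(p,y_n),d(p,y_n')\to\infty$, this gives $(y_n,y_n')_p\to L$, and it remains to prove $L=(\eta,\eta')_p$.

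The upper bound $(\eta,\eta')_p\le L$ is immediate: the sequences $\{c(n)\}$ and $\{c'(n)\}$ represent $\eta$ and $\eta'$, hence are admissible in the infimum, and by monotonicity their $\liminf_{i,j}$ equals $L$.

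The substance is the lower bound $(\eta,\eta')_p\ge L$. Given arbitrary $\{x_i\}\in\eta$ and $\{x_j'\}\in\eta'$, I must show $\liminf_{i,j}(x_i,x_j')_p\ge L$. Fixing $t,s>0$, the key input is that the geodesic segments $[p,x_i]$ converge to the ray $c$ uniformly on compact sets (the standard description of the cone topology on a $\CAT(0)$ boundary, using uniqueness of rays and the identification of the sequential and ray boundaries), and similarly $[p,x_j']\to c'$. Letting $z_i,z_j'$ be the points at distance $t$ and $s$ along $[p,x_i]$ and $[p,x_j']$, we then have $z_i\to c(t)$ and $z_j'\to c'(s)$. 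Since $z_i\in[p,x_i]$ and $z_j'\in[p,x_j']$, the monotonicity step gives $(x_i,x_j')_p\ge(z_i,z_j')_p$, while continuity of the Gromov product in its entries gives $(z_i,z_j')_p\to(c(t),c'(s))_p$; hence $\liminf_{i,j}(x_i,x_j')_p\ge(c(t),c'(s))_p$. Letting $t,s\to\infty$ yields $\ge L$, and taking the infimum over all sequences gives $(\eta,\eta')_p\ge L$, completing the identification $(\eta,\eta')_p=L=\lim_n(y_n,y_n')_p$.

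I expect the only genuine difficulty to be the convergence $[p,x_i]\to[p,\eta)$ used in the lower bound: this is exactly where uniqueness of geodesics is indispensable, and it is what upgrades the coarse ``infimum over all sequences'' to a sharp limit along the rays. It is worth noting that the argument uses only the $\CAT(0)$ structure; the $\CAT(-1)$ hypothesis enters separately, via comparison with $\Hy^2$, to guarantee that $L<\infty$ when $\eta\neq\eta'$, as the preceding remark requires.
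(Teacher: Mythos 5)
Your proof is correct, and a preliminary remark is in order: the paper states this proposition \emph{without} proof, deferring to Bourdon [Proposition 2.4.3] and Buyalo--Schroeder [Proposition 3.4.2], so the comparison is with those standard arguments, which yours reconstructs in essentially the same shape. The skeleton is identical: monotonicity of $(c(t),c'(s))_p$ along the rays produces the limit $L$; points along the rays form admissible sequences, giving $(\eta,\eta')_p \leq L$; and an arbitrary admissible pair is compared against the rays to get the reverse inequality. One pleasant feature of your write-up is that your monotonicity step is more elementary than the usual convexity argument: it uses only the triangle inequality together with $a' \in [p,a]$, so it is valid in any metric space, and the comparison geometry is quarantined entirely in the segment-to-ray convergence $[p,x_i] \rightarrow [p,\eta)$.

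Two caveats on your closing remarks, neither fatal in the intended setting. First, that convergence does not follow from uniqueness of rays alone: as you invoke it, it needs properness of $X$ (Arzel\`a--Ascoli to extract subsequential limit rays, hyperbolic fellow-traveling to see any such limit represents $\eta$, then uniqueness to upgrade to full convergence). Properness is a standing hypothesis in the cited sources and in the paper's Section 3 framework identifying the sequential and ray boundaries, so this is consistent, but your claim that ``only the $\CAT(0)$ structure'' is used holds only under properness; in a merely complete $\CAT(-1)$ space one would instead use the comparison with $\Hy^2$ at exactly this step, since exponential convergence of geodesics issuing from a common point makes $[p,x_i](t)$ Cauchy for fixed $t$. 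Second, finiteness of $L$ when $\eta \neq \eta'$ does not require a separate $\Hy^2$-comparison: since your argument identifies $L = (\eta,\eta')_p$, finiteness is immediate from $(\delta)$-hyperbolicity (the paper's remark that $(\eta,\eta')_p = \infty$ if and only if $\eta = \eta'$), which $\CAT(-1)$ already implies.
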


    Bourdon's strategy to prove \Cref{thm:CAT-1vm} is to introduce an auxiliary function $\alpha_p$ on the $\CAT(-1)$ space, where $p$ is the basepoint of the Gromov product above. Defined below, $\alpha_p$ will serve two purposes:
        \begin{enumerate}
            \item First, $\alpha_p$ is defined using the sine function, which is easily seen to satisfy the triangle inequality for angles in $[0, \frac{\pi}{2}]$:
                \[\sin (\theta + \theta') \leq \sin (\theta) + \sin(\theta').\]
            The definition of $\alpha_p$ involves comparison triangles, so some care is required to prove the triangle inequality holds on spheres in a general $\CAT(-1)$ space.
            \item Second, hyperbolic trigonometry allows one to relate $\alpha_p$ to an expression involving hyperbolic sine and cosine
                \[ \sinh(x) = \frac{e^x - e^{-x}}{2} \quad \textrm{ and } \quad \cosh(x) = \frac{e^x + e^{-x}}{2},\]
            which will allow one to prove that the function $\alpha_p$ converges to the function $e^{-(\eta,\eta')_p}$ on the boundary.
        \end{enumerate}

    Let $X$ be a $\CAT(-1)$ space, and let $p \in X$.
    Define \[\alpha_p: X \backslash \{p\} \times X \backslash \{p\} \rightarrow [0,1]\]
    by
        \[\alpha_p(y,y'):= \sin \frac{\angle \bar{y}\bar{p}\bar{y}'}{2},  \]
    where $\Delta(\bar{y}\bar{x}\bar{y}')$ is a comparison triangle for $\Delta(yxy')$ in $\Hy^2$, and $\angle \bar{y}\bar{p}\bar{y}'$ is the angle based at $\bar{p}$ between the geodesics $\bar{p}\bar{y}$ and $\bar{p}\bar{y'}$. See Figure~\ref{figure-CAT-1metric}.

           \begin{figure}
                \begin{centering}
	            \begin{overpic}[width=.5\textwidth,tics=5]{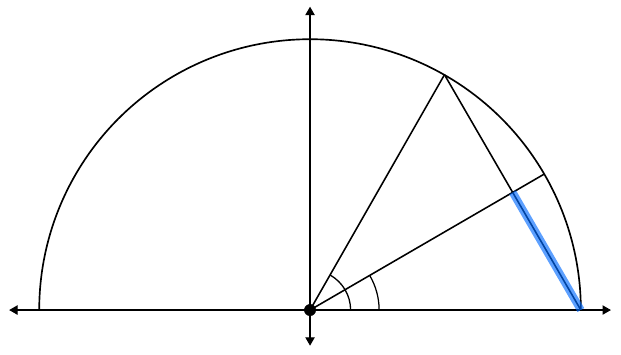} 
                    \put(46.5,3.5){\small{$p$}}
                    \put(72,47){\small{$y$}}
                    \put(92,3.5){\small{$y'$}}
                    \put(62,10){$\frac{\angle ypy'}{2} $}
                \end{overpic}
	           \caption{\small{The function $\alpha_p$ corresponds to half the chordal metric on the circle. For the points illustrated on the unit circle above, $\sin \frac{\angle ypy'}{2}$ is the length of the highlighted blue segment. }}
	           \label{figure-CAT-1metric}
                \end{centering}
        \end{figure}

    \begin{proof}[Sketch of the proof of Theorem~\ref{thm:CAT-1vm}]
        The proof of the theorem follows from three claims. The first claim is a tool from hyperbolic trigonometry that will be needed in the subsequent two claims.  We refer to the reader to \cite{katok} for details on hyperbolic geometry.

        \begin{claim} \label{claimA}
            For all $y,y' \in X - \{p\}$,
                 \[\alpha_p(y,y') = \sqrt{ \frac{\cosh\bigl(d(y,y')\bigr)}{2\sinh\bigl(d(p,y)\bigr)\sinh\bigl(d(p,y')\bigr)} - \frac{\cosh\bigl( d(p,y) - d(p,y')\bigr)}{2\sinh\bigl(d(p,y)\bigr)\sinh\bigl(d(p,y')\bigr)} }. \]
        \end{claim}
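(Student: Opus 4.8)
The plan is to transport the entire computation into the comparison triangle in $\Hy^2$ and then apply textbook hyperbolic trigonometry; no feature of the ambient $\CAT(-1)$ space is used beyond the fact that $\alpha_p$ is \emph{defined} through the comparison angle. To set notation, write $a = d(p,y)$, $b = d(p,y')$, and $c = d(y,y')$, and let $\gamma = \angle \bar{y}\bar{p}\bar{y}'$ be the comparison angle at $\bar p$. By definition of the comparison triangle $\Delta(\bar y\bar p\bar y') \subset \Hy^2$, the two sides emanating from $\bar p$ have hyperbolic lengths $a$ and $b$, they meet at angle $\gamma$, and the opposite side has length $c$.

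The first step is to extract $\cos\gamma$ from the side lengths via the hyperbolic law of cosines for $\Hy^2$:
\[ \cosh c = \cosh a\,\cosh b - \sinh a\,\sinh b\,\cos\gamma, \]
so that
\[ \cos\gamma = \frac{\cosh a\,\cosh b - \cosh c}{\sinh a\,\sinh b}. \]
The second step is to pass from $\cos\gamma$ to $\sin\frac{\gamma}{2} = \alpha_p(y,y')$ using the half-angle identity $\sin^2\tfrac{\gamma}{2} = \tfrac{1}{2}(1-\cos\gamma)$, which is legitimate to invert with a nonnegative square root because $\gamma \in [0,\pi]$ forces $\tfrac{\gamma}{2} \in [0,\tfrac{\pi}{2}]$ and hence $\sin\tfrac{\gamma}{2}\ge 0$. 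Substituting the expression for $\cos\gamma$ and placing everything over the common denominator $2\sinh a\,\sinh b$ yields
\[ \sin^2\tfrac{\gamma}{2} = \frac{\cosh c - (\cosh a\,\cosh b - \sinh a\,\sinh b)}{2\sinh a\,\sinh b}. \]
The final step is to recognize $\cosh a\,\cosh b - \sinh a\,\sinh b = \cosh(a-b)$, the subtraction formula for hyperbolic cosine, which collapses the numerator to $\cosh(d(y,y')) - \cosh(d(p,y)-d(p,y'))$ and delivers the claimed formula after taking square roots.

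Since the argument is a direct substitution of two standard hyperbolic identities, there is no genuine obstacle; the only points requiring care are bookkeeping ones. I would make sure the sign works out so that the radicand is nonnegative (this is automatic, since $\cos\gamma \le 1$ guarantees $1-\cos\gamma\ge 0$), and I would note that $y,y'\neq p$ ensures $\sinh a,\sinh b>0$ so the denominator does not vanish. The subtraction identity for $\cosh$ is the one step easiest to misremember, so I would cite it explicitly (as from the reference to \cite{katok} already indicated), but otherwise the claim follows in three lines.
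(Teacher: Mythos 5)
Your derivation is correct, and it is precisely the argument the paper intends: the paper states Claim~\ref{claimA} as a fact of hyperbolic trigonometry and defers the details to \cite{katok}, and your three steps (hyperbolic law of cosines in the comparison triangle in $\Hy^2$, the half-angle identity $\sin^2\frac{\gamma}{2} = \frac{1}{2}(1-\cos\gamma)$ with the nonnegative root justified by $\gamma \in [0,\pi]$, and the subtraction formula $\cosh a \cosh b - \sinh a \sinh b = \cosh(a-b)$) fill in exactly that computation. Your bookkeeping remarks — that $y, y' \neq p$ keeps the denominator nonzero and that $\cos\gamma \leq 1$ (equivalently, the triangle inequality $d(y,y') \geq |d(p,y) - d(p,y')|$) makes the radicand nonnegative — are also correct.
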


           The next two claims illustrate the key properties of the function $\alpha_p$.

        \begin{claim} \label{claimB}
            The function $\alpha_p$ is a metric on $S(r,p)$, the sphere of radius $r>0$ about $p$ in $X$.
        \end{claim}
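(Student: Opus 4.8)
The plan is to verify the three metric axioms for $\alpha_p$ on $S(r,p)$, reserving essentially all of the work for the triangle inequality. First I would record the explicit form of $\alpha_p$ on the sphere supplied by Claim~\ref{claimA}: setting $d(p,y)=d(p,y')=r$ collapses that formula to
\[ \alpha_p(y,y') = \sqrt{\frac{\cosh d(y,y') - 1}{2\sinh^2 r}} = \frac{\sinh\bigl(d(y,y')/2\bigr)}{\sinh r}. \]
Symmetry is immediate, and since $t\mapsto \sinh(t/2)/\sinh r$ is strictly increasing and vanishes only at $t=0$, we obtain $\alpha_p(y,y')=0$ if and only if $y=y'$. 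Thus $\alpha_p$ is a symmetric, positive-definite function, and moreover it is a strictly increasing function of the ambient distance $d(y,y')$ — a monotonicity I will exploit below.

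For the triangle inequality, fix $y_1,y_2,y_3\in S(r,p)$ and abbreviate the comparison angles by $\theta_{ij}=\angle \bar{y}_i\bar{p}\bar{y}_j\in[0,\pi]$, so that $\alpha_p(y_i,y_j)=\sin(\theta_{ij}/2)$. The goal is $\sin(\theta_{13}/2)\le \sin(\theta_{12}/2)+\sin(\theta_{23}/2)$. My plan is to prove the single inequality
\[ \alpha_p(y_1,y_3)\ \le\ \sin\!\Bigl(\tfrac{\theta_{12}+\theta_{23}}{2}\Bigr), \]
after which the subadditivity estimate $\sin(\theta+\theta')=\sin\theta\cos\theta'+\cos\theta\sin\theta'\le \sin\theta+\sin\theta'$, valid for $\theta,\theta'\in[0,\tfrac{\pi}{2}]$, applied with $\theta=\theta_{12}/2$ and $\theta'=\theta_{23}/2$ finishes the proof. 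The easy case is $\theta_{12}+\theta_{23}\ge\pi$: then $\theta_{23}/2\ge \pi/2-\theta_{12}/2$ gives $\sin(\theta_{23}/2)\ge\cos(\theta_{12}/2)$, so $\sin(\theta_{12}/2)+\sin(\theta_{23}/2)\ge \sin(\theta_{12}/2)+\cos(\theta_{12}/2)\ge 1\ge \sin(\theta_{13}/2)=\alpha_p(y_1,y_3)$. Hence I may assume $\theta_{12}+\theta_{23}<\pi$.

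The heart of the argument is a development in $\Hy^2$. Build comparison triangles $\bar{\Delta}(\bar{p}\,\bar{y}_1\,\bar{y}_2)$ and $\bar{\Delta}(\bar{p}\,\bar{y}_2\,\bar{y}_3)$ for $\Delta(p y_1 y_2)$ and $\Delta(p y_2 y_3)$, glued along the common radius $[\bar{p},\bar{y}_2]$ of length $r$ with $\bar{y}_1$ and $\bar{y}_3$ placed on opposite sides. Since $\theta_{12}+\theta_{23}<\pi$, the three points $\bar{y}_1,\bar{y}_2,\bar{y}_3$ lie on the circle of radius $r$ about $\bar{p}$, with $\bar{y}_1$ and $\bar{y}_3$ subtending the angle $\theta_{12}+\theta_{23}$ at $\bar{p}$; the hyperbolic law of cosines then gives $\sinh\bigl(d_{\Hy^2}(\bar{y}_1,\bar{y}_3)/2\bigr)/\sinh r=\sin\bigl((\theta_{12}+\theta_{23})/2\bigr)$. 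The crux is the distance comparison
\[ d(y_1,y_3)\ \le\ d_{\Hy^2}(\bar{y}_1,\bar{y}_3), \]
i.e.\ that straightening the developed figure does not decrease the distance between its far endpoints; combined with the monotonicity of $\alpha_p$ in the ambient distance this yields precisely $\alpha_p(y_1,y_3)\le\sin((\theta_{12}+\theta_{23})/2)$. I expect this distance comparison to be the main obstacle, and it is the only place where the full $\CAT(-1)$ hypothesis (rather than mere metric or $\delta$-hyperbolic structure) is needed: it is equivalent to the comparison-angle triangle inequality $\theta_{13}\le\theta_{12}+\theta_{23}$, which can fail for a general metric and must be extracted from the $\CAT(-1)$ comparison for the glued configuration via Alexandrov's lemma and Reshetnyak majorization, cf.~\cite[II.4]{bridsonhaefliger} and \cite{buyaloschroeder}.
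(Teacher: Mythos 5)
Your proof is correct and is exactly the route the paper sketches for Claim~\ref{claimB}: use comparison triangles to reduce to points on a circle in $\Hy^2$, where $\alpha_p(y,y') = \sinh\bigl(d(y,y')/2\bigr)/\sinh r = \sin(\theta/2)$ and the triangle inequality becomes subadditivity of sine on $[0,\tfrac{\pi}{2}]$ — your write-up simply supplies the details the paper leaves implicit. The one step you defer to heavier machinery, the distance comparison $d(y_1,y_3)\le d_{\Hy^2}(\bar y_1,\bar y_3)$, needs neither Reshetnyak majorization nor Alexandrov's lemma: since $\theta_{12}+\theta_{23}<\pi$, the chord $[\bar y_1,\bar y_3]$ crosses the common radius $[\bar p,\bar y_2]$ at some point $\bar q$, so taking $q\in[p,y_2]$ with $d(p,q)=d(\bar p,\bar q)$ and applying the $\CAT(-1)$ inequality once in each comparison triangle gives $d(y_1,y_3)\le d(y_1,q)+d(q,y_3)\le d(\bar y_1,\bar q)+d(\bar q,\bar y_3)=d(\bar y_1,\bar y_3)$.
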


            To prove the claim, comparison triangles may be used to reduce it to a sphere in $\Hy^2$, where the triangle inequality is clear.

            The last claim proves that $\alpha_p$ converges to the function $e^{-(\eta,\eta')_p}$.

        \begin{claim} \label{claimC}
            Let $\eta, \eta' \in \p X$. Let $y_n \in [p, \eta)$ and $y_n' \in [p, \eta')$ be sequences of points converging to $\eta$ and $\eta'$. Then,
                \[\lim_{n \rightarrow \infty} \alpha_p(y_n, y_n') = e^{-(\eta,\eta')_p}.\]
        \end{claim}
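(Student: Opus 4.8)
\textbf{Proof proposal for Claim~\ref{claimC}.}

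The plan is to feed the explicit formula from Claim~\ref{claimA} into a careful asymptotic analysis as $y_n \to \eta$ and $y_n' \to \eta'$ along the geodesic rays $[p,\eta)$ and $[p,\eta')$. First I would set up notation: write $s_n := d(p,y_n)$ and $s_n' := d(p,y_n')$, noting that since $y_n \in [p,\eta)$ and $y_n' \in [p,\eta')$ are chosen converging to the respective boundary points, we have $s_n, s_n' \to \infty$. By Proposition~\ref{prop:catgr_ext}, the Gromov product $(y_n, y_n')_p = \tfrac12\bigl(s_n + s_n' - d(y_n,y_n')\bigr)$ converges to $(\eta,\eta')_p$, which I will abbreviate as $g$. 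The whole computation is therefore about showing that the right-hand side of Claim~\ref{claimA} tends to $e^{-g}$ under these conditions.

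The key step is to replace each $\cosh$ and $\sinh$ by its dominant exponential term. Concretely, $\sinh(s) = \tfrac12 e^{s}(1 - e^{-2s})$ and similarly for $\cosh$, so that as $s_n, s_n' \to \infty$ the correction factors $1 - e^{-2s_n}$ and $1 - e^{-2s_n'}$ tend to $1$. Inside the square root in Claim~\ref{claimA}, the two terms share the common denominator $2\sinh(s_n)\sinh(s_n')$, so I would combine them and write the bracketed expression as
\[
\frac{\cosh\bigl(d(y_n,y_n')\bigr) - \cosh(s_n - s_n')}{2\sinh(s_n)\sinh(s_n')}.
\]
Expanding each hyperbolic cosine in the numerator via $\cosh(u) = \tfrac12(e^u + e^{-u})$ and the denominator via the product of the $\sinh$ expansions, the dominant exponential growth is $e^{d(y_n,y_n')}$ and $e^{s_n + s_n'}$ respectively, up to factors tending to $1$. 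The ratio of these leading terms is $e^{d(y_n,y_n') - s_n - s_n'} = e^{-2(y_n,y_n')_p}$, and the suppressed lower-order terms ($e^{-d(y_n,y_n')}$, $e^{\pm(s_n - s_n')}$, $e^{-(s_n+s_n')}$) all vanish in the limit after dividing by the dominant pieces, provided one checks that $d(y_n,y_n') \to \infty$ as well (which follows since $d(y_n,y_n') = s_n + s_n' - 2(y_n,y_n')_p$ and $(y_n,y_n')_p \to g < \infty$ while $s_n + s_n' \to \infty$). Taking the square root then yields $\lim_{n\to\infty}\alpha_p(y_n,y_n') = \sqrt{e^{-2g}} = e^{-g} = e^{-(\eta,\eta')_p}$, as claimed.

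The main obstacle I anticipate is bookkeeping the competing exponential rates cleanly rather than any conceptual difficulty: one must verify that every term other than the $e^{d(y_n,y_n')}/e^{s_n+s_n'}$ ratio genuinely becomes negligible, and in particular handle the $\cosh(s_n - s_n')$ subtraction in the numerator, since $s_n - s_n'$ need not be bounded and $e^{|s_n - s_n'|}$ could a priori be large. The resolution is that $e^{|s_n - s_n'|}$ is dominated by $e^{s_n + s_n'}$ (their ratio is $e^{-2\min\{s_n,s_n'\}} \to 0$), so that subtracted term contributes nothing in the limit; making this precise is the one place requiring genuine care. I would also remark that the convergence in Proposition~\ref{prop:catgr_ext} is exactly what guarantees the limit $g$ exists and is finite whenever $\eta \neq \eta'$, and that the degenerate case $\eta = \eta'$ (where $g = \infty$ and both sides give $0$) can be checked directly.
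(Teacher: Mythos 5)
Your proposal is correct and follows exactly the route the paper indicates (the paper only sketches this step, saying the claim ``follows by analyzing the description of $\alpha_p$ given in Claim~\ref{claimA} and applying Proposition~\ref{prop:catgr_ext}''): you substitute the hyperbolic-trigonometric formula, use the proposition to identify the limit $g = (\eta,\eta')_p$ of the Gromov products, and extract the dominant exponentials. Your asymptotic bookkeeping is sound --- in particular, dividing numerator and denominator by $e^{s_n+s_n'}$ makes the ratio $e^{-2(y_n,y_n')_p}$ plus terms controlled by $e^{-2\min\{s_n,s_n'\}}$ and $e^{-d(y_n,y_n')-s_n-s_n'}$, all of which vanish, so this is a faithful filling-in of the paper's sketch rather than a different argument.
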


            The claim follows by analyzing the description of $\alpha_p$ given in Claim~\ref{claimA} and applying Proposition~\ref{prop:catgr_ext}.
    The claims together complete the proof of the theorem.
    \end{proof}

    As described by Bourdon~\cite[Example 2.5.9]{bourdon-flot} and Buyalo--Schroeder~\cite[Section 2.4.3]{buyaloschroeder}, the metric above on the boundary of the hyperbolic plane corresponds to half the chordal metric on the circle. See \Cref{figure-CAT-1metric}.

 \subsection{Examples} \label{subsec:VMexamples}

    \begin{example}[Triangle inequality failure] \label{example:tri_ineq}
        Let $\Gamma = \Z/10\Z * \Z/10\Z = \la a,b \,|\, a^{10} = b^{10} \ra$, and let $X$ be the Cayley graph for $\Gamma$ with respect to the generating set above. Then the function $\rho:\p X \times \p X \rightarrow [0,\infty)$ given by $\rho(\eta,\eta') = e^{-(\eta,\eta')_p}$ does not satisfy the triangle inequality. Indeed, let $\eta, \eta', \eta''$ be three points in $\p X$ as shown in \Cref{figure-tri_inequality}.

        \begin{figure}
                \begin{centering}
	            \begin{overpic}[width=.4\textwidth, tics=5]{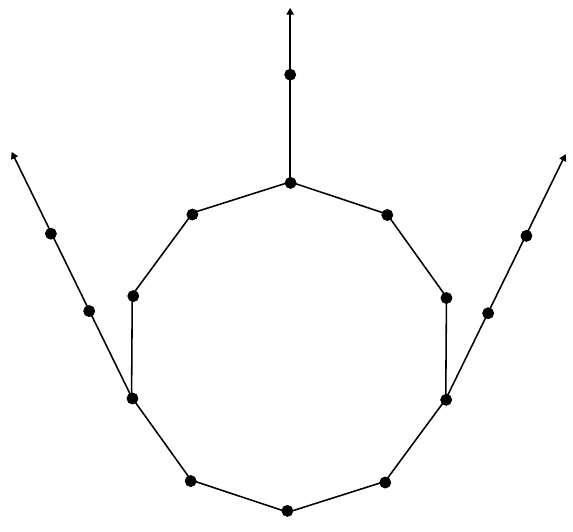} 
                    \put(48.5,-1.5){\small{$p$}}
                    \put(-2,67){\small{$\eta$}}
                    \put(100,67){\small{$\eta'$}}
                    \put(53,90){\small{$\eta''$}}
                \end{overpic}
	           \caption{\small{A portion of the Cayley graph for $\Z/10\Z * \Z/10\Z$ that illustrates the failure of the triangle inequality in a potential metric on the boundary. }}
	           \label{figure-tri_inequality}
                \end{centering}
        \end{figure}

        Then
            \[ (\eta,\eta')_p = 0, \quad  \quad (\eta,\eta'')_p = 2, \quad \quad(\eta', \eta'')_p = 2.\]
        So, $\rho(\eta,\eta') = 1$, $\rho(\eta, \eta'') = e^{-2}$, and $\rho(\eta', \eta'') = e^{-2}$. Hence, the triangle inequality fails for this triple:
            \[ \rho(\eta, \eta') > \rho(\eta, \eta'') + \rho(\eta', \eta''). \]
    \end{example}

 \begin{example}[Boundaries that are not geodesic metric spaces] \label{ex-nogeodesic}  There are $\CAT(-1)$ spaces whose boundaries with the Bourdon visual metric are not geodesic metric spaces. Such spaces can be easily constructed via polygonal complexes with large angle in the links of the vertices.

 For example, let $X$ be the polygonal complex with every cell isometric to a regular, right-angled hyperbolic hexagon and every vertex link a $10$-cycle, with total angle $5\pi$. Then, $X$ is a $\CAT(-1)$ space  with visual boundary homeomorphic to the circle. Note the automorphism group of this complex acts on $X$ geometrically and is therefore virtually Fuchsian.

 We claim that the boundary equipped with the visual metric $d(\eta,\eta') = e^{-(\eta,\eta')_p}$ is not a geodesic metric space.
 The space $X$ contains a configuration of geodesic rays $\eta_1, \eta_2, \eta_3$ in the one skeleton of~$X$ that each emanate from a vertex $p$ and pass through a vertex $q$ so that the angle at~$q$ between consecutive rays equals $\pi$. See Figure~\ref{figure-nogeodesic}. Moreover, the unique geodesic between pairs $(\eta_i,\eta_j)$ for $i \neq j$ is the concatenation of rays $[q,\eta_i) \cup [q, \eta_j)$. Thus, $(\eta_i, \eta_j)_p = d_X(p,q)$ by Proposition~\ref{prop:catgr_ext}. Hence,
    \[d(\eta_1,\eta_2) = d(\eta_2,\eta_3) = d(\eta_1,\eta_3). \]
 There are only two arcs in the boundary between $\eta_1$ and $\eta_3$, and a similar argument for the other arc proves neither is a geodesic. Thus, $(\p X, d)$ is not a geodesic metric space.

 Note that increasing the cone angle exacerbates this situation. See Figure~\ref{figure-nogeodesic}. Moreover, the boundary has self-similarity properties, and this nonexistence of geodesics exists at all scales. In particular, there is no geodesic between any pair of points in this boundary. One can visualize that the metric on the boundary looks like a snowflake.
  \end{example}

\begin{figure}
    \begin{centering}
	\begin{overpic}[width=.35\textwidth, tics=5]{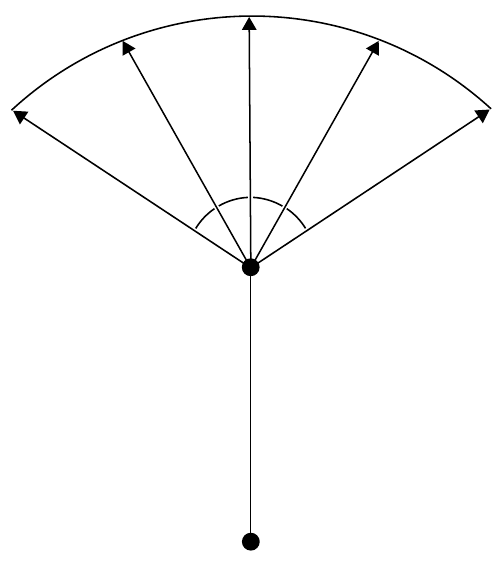} 
        \put(47,2){$p$}
        \put(47,49){$q$}
        \put(-3,82){$\eta_1$}
        \put(17,96){$\eta_2$}
        \put(42,100){$\eta_3$}
        \put(67,96){$\eta_4$}
        \put(88,82){$\eta_5$}
        \put(32,62){\small{$\pi$}}
        \put(39,67){\small{$\pi$}}
        \put(47,67){\small{$\pi$}}
        \put(54,62){\small{$\pi$}}
    \end{overpic}
	\caption{\small{An example of a circle boundary of a hyperbolic group so that the boundary is not a geodesic metric space when equipped with the visual metric. Illustrated are geodesic rays and geodesic lines in a $\CAT(-1)$ polygonal complex $X$ described in Example~\ref{ex-nogeodesic} with boundary $\p X \cong S^1$. The Gromov product between any pair of distint rays is equal: $(\eta_i,\eta_j)_p = d_X(p,q)$ for $i \neq j$. Therefore, the arc from $\eta_1$ to $\eta_5$ is not a geodesic in the Bourdon metric $d(\eta_i,\eta_j) = e^{-(\eta_i,\eta_j)_p}$.  }}
	\label{figure-nogeodesic}
    \end{centering}
    \end{figure}

    \begin{remark}[Quasi-arcs]
       While the boundary is not in general a geodesic metric space, boundaries of one-ended hyperbolic groups contain {\it quasi-arcs}, which are quasisymmetric embeddings of the interval $[0,1]$. See \Cref{sec:qs_invariants} for discussion, references, and applications.
    \end{remark}

    \section{Quasisymmetries} \label{sec:quasisymmetries}

    \begin{center}
        {\it ``The quasisymmetry condition is very subtle; it is a flexible condition and also guarantees strong properties of embeddings.'' - Heinonen~\cite[Page 79]{heinonen-Lectures} }
    \end{center}

\subsection{Boundary motivation}

     From the point of view of geometric group theory, one wishes to understand how quasi-isometries extend to the boundary of a hyperbolic space equipped with a visual metric. We start with a simple example.

    \begin{example}[Extension of isometries of $X$ to the boundary $(\p X, d_a)$]
        Let $X$ be isometric to the $4$-valent metric tree with every edge of length one. The function $d_e(\eta,\eta') := e^{-(\eta,\eta')_p}$ is a visual metric on $\p X$.
        Consider the rays $\eta, \eta' \in \p X$ as illustrated in \Cref{figure-QS_to_boundary}.

    \begin{figure}
    \begin{centering}
	\begin{overpic}[width=.65\textwidth, tics=5]{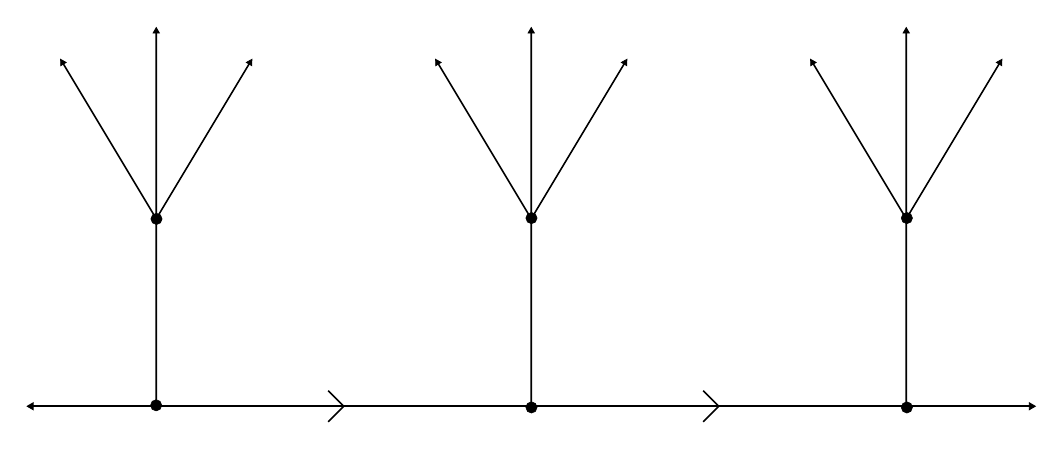} 
            \put(14,3){\small{$p$}}
            \put(30.5,2){\small{$a$}}
            \put(65.5,2){\small{$a$}}
            \put(3,40){\small{$\eta$}}
            \put(14,44){\small{$\eta'$}}
            \put(24,40){\small{$\eta''$}}
            \put(37,40){\small{$a\eta$}}
            \put(48,44){\small{$a\eta'$}}
            \put(60,40){\small{$a\eta''$}}
            \put(70,40){\small{$a^2\eta$}}
            \put(82,44){\small{$a^2\eta'$}}
            \put(95,40){\small{$a^2\eta''$}}
    \end{overpic}
	\caption{\small{ Extensions of isometries of the tree to the boundary distort the distance between pairs of these rays in the visual metric, but they preserve the ratio of distances between the rays in this triple.  }}
	\label{figure-QS_to_boundary}
    \end{centering}
    \end{figure}

        The space $X$ is the Cayley graph for the free group $F_2 = \la a,b \ra$, which acts on $X$ by isometries and extends to a map $\p X \rightarrow \p X$. Then,
            \[  d_e(\eta,\eta') = e^{-(\eta,\eta')_p} = e^{-1},\]
        while
            \[ d_e(a\eta,a\eta') = e^{-(a\eta,a\eta')_p} = e^{-2}. \]
        Notably, the extension of $a$ to the boundary is not an isometry. Indeed, the basepoint of the visual metric is fixed, while the rays translate.
        Moreover, the distortion in distance can be made arbitrarily bad:
            \[ d_e(a^k\eta,a^k\eta') = e^{-(a^k\eta,a^k\eta')_p} = e^{-(k+1)}. \]
        However, consider now a third ray $\eta'' \in \p X$ as in the figure. In this special case the relative distance - the ratio of distances - is not distorted by the extension of powers of $a$ to the boundary:
            \[ \frac{d_e(\eta,\eta'')}{d_e(\eta', \eta'')} = \frac{d_e(a^k\eta,a^k\eta'')}{d_e(a^k\eta', a^k\eta'').} \]
        This is the type of behavior one can expect in general from quasi-isometry extensions:

        \begin{center}
           {\it  Quasi-isometries distort distances in a controlled way.}

           {\it  Extensions of quasi-isometries to the boundary distort relative distances in a controlled way. }
        \end{center}
    \end{example}

        This example is particularly nice as the quasi-isometries were isometries and the metric space was a tree. Ratios of distances on the boundary were not distorted at all. In general, one must ask for a weaker condition on ratios of distances as given in the definition below.

\subsection{Quasisymmetries}

    \begin{defn}[Quasisymmetry]
        A homeomorphism $f:(Z,d) \rightarrow (Z',d')$ between metric spaces is a {\it quasisymmetry} if there exists a homeomorphism $\phi:[0,\infty) \rightarrow [0,\infty)$ so that for all $x,y,z \in Z$ with $x \neq z$,
            \[ \frac{d'\bigl(f(x),f(y)\bigr)}{d'\bigl(f(x),f(z)\bigr)} \leq \phi \left( \frac{d(x,y)}{d(x,z)} \right). \]
            The function $\phi$ is called the {\it control function} for the map $f$, and $f$ is also called a $\phi$-quasisymmetry.
    \end{defn}

    Equivalently, a homeomorphism $f:(Z,d) \rightarrow (Z',d')$ is a $\phi$-quasisymmetry if
        \[ \frac{d(x,y)}{d(x,z)} \leq t \quad\quad \textrm{ imples } \quad \quad\frac{d'\bigl(f(x),f(y)\bigr)}{d'\bigl(f(x),f(z)\bigr)} \leq \phi(t)\]
    for all $x,y,z \in Z$ with $x \neq z$.

    Quasisymmetries were introduced by Tukia--V\"ais\"al\"a~\cite{tukiavaisala} for general metric spaces, following work of Beurling--Ahlfors~\cite{beurlingahlfors}.
    Quasisymmetries can also be defined with respect to the distortion of annuli. We will use the following notation throughout.

    \begin{notation}
        Let $Z$ be a metric space. Let $B_r(z)$ denote the ball of radius $r$ about $z \in Z$. For $K \geq 0$, let $KB_r(z):= B_{Kr}(z)$.
    \end{notation}

    For example, let $r>0$ and $t>1$ and consider the annulus $A  = B_{rt}(x) \setminus B_r(x)$. Suppose $f:Z \rightarrow Z'$ is a $\phi$-quasisymmetry. As all points $z,y$ in the annulus $A$ satisfy $\displaystyle \frac{d(x,y)}{d(x,z)} \leq t$, the image of the annulus $A$ is contained in the annulus $A' = B_{s\phi(t)}\bigl(f(x) \bigr) \setminus B_s\bigl(f(x)\bigr)$. Note that the quasisymmetry condition gives no control on the radius $s$, only on the ratio of the inner and outer radii. The next lemma captures this discussion.

    \begin{figure}
    \begin{centering}
	\begin{overpic}[width=.7\textwidth, tics=5]{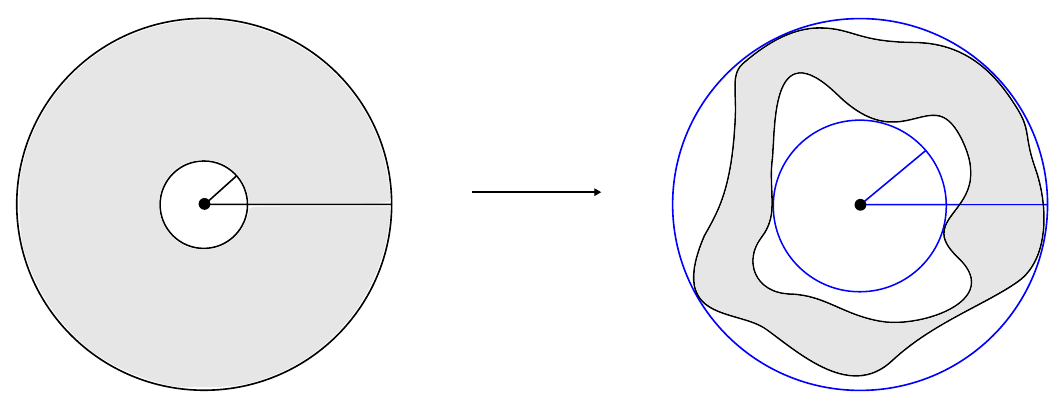} 
        \put(18,17){\small{$x$}}
        \put(19.5,21.5){\Small{$r$}}
        \put(27,17){\Small{$rt$}}
        \put(50,22){\small{$f$}}
        \put(77.5,16){\small{$f(x)$}}
        \put(83,23){\textcolor{blue}{\Small{$s$}}}
        \put(91,17){\textcolor{blue}{\Small{$s\phi(t)$}}}
    \end{overpic}
	\caption{\small{ Distortion of annuli under a quasisymmetry.  }}
	\label{figure-QS_to_boundary}
    \end{centering}
    \end{figure}

    \begin{lem} \cite[Lemma 1.2.18]{mackaytyson} \label{lemma:annuli}
        Let $Z$ and $Z'$ be metric spaces. A homeomorphism $f:Z \rightarrow Z'$ is $\phi$-quasisymmetric if and only if whenever
            \[ B_1:= B_{r_1}(z) \subset B_{r_2}(z) =: B_2\]
        are concentric balls in $Z$, there exist concentric balls
            \[ B_1':= B_{r_1'}(f(z)) \subset B_{r_2'}(f(z)) =: B_2' \]
        so that
            \[B_1' \subset f(B_1) \subset f(B_2) \subset B_2' \quad \quad \textrm{ and } \quad \quad \frac{r_2'}{r_1'} \leq \phi \left( \frac{r_2}{r_1} \right).  \]
    \end{lem}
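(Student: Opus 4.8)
The plan is to prove the two implications separately, in each case translating between the pointwise ratio formulation of quasisymmetry and the concentric-ball formulation by building the relevant radii out of suprema and infima of distances to the image of the center.

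\emph{From quasisymmetry to the ball condition.} Suppose $f$ is a $\phi$-quasisymmetry, and fix concentric balls $B_1 = B_{r_1}(z) \subset B_2 = B_{r_2}(z)$, so that $t := r_2/r_1 \ge 1$. I would define the candidate image radii directly by
\[ r_2' := \sup\{\, d'(f(z), f(y)) : y \in B_2 \,\} \qquad \text{and} \qquad r_1' := \inf\{\, d'(f(z), f(x)) : x \in Z \setminus B_1 \,\}. \]
The inclusion $f(B_2) \subset B_2'$ is then immediate from the definition of $r_2'$ (enlarging $r_2'$ by an arbitrarily small amount to account for open balls), and $B_1' \subset f(B_1)$ holds because any $w$ with $d'(f(z), w) < r_1'$ cannot have $f^{-1}(w) \in Z \setminus B_1$. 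For the ratio bound, any $y \in B_2$ and $x \in Z \setminus B_1$ satisfy $d(z,y)/d(z,x) < r_2/r_1 = t$; applying the quasisymmetry inequality gives $d'(f(z),f(y)) \le \phi(t)\, d'(f(z),f(x))$, and taking the supremum over $y$ and the infimum over $x$ yields $r_2' \le \phi(t)\, r_1'$, i.e. $r_2'/r_1' \le \phi(r_2/r_1)$.

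\emph{From the ball condition to quasisymmetry.} Conversely, given a triple $x,y,z$ with $x \ne z$, set $t = d(x,y)/d(x,z)$ and first treat the principal case $t \ge 1$. I would apply the hypothesis to the concentric balls $B_1 = B_{d(x,z)}(x) \subset B_2 = B_{d(x,y)+\varepsilon}(x)$ for small $\varepsilon > 0$. Since $z$ lies outside the open ball $B_1$, the inclusion $B_1' \subset f(B_1)$ forces $d'(f(x),f(z)) \ge r_1'$; since $y \in B_2$, the inclusion $f(B_2) \subset B_2'$ gives $d'(f(x),f(y)) < r_2'$. Hence $\frac{d'(f(x),f(y))}{d'(f(x),f(z))} < \frac{r_2'}{r_1'} \le \phi\!\left(\frac{d(x,y)+\varepsilon}{d(x,z)}\right)$, and letting $\varepsilon \to 0$ and using continuity of $\phi$ gives the desired bound $\le \phi(t)$.

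\emph{Main obstacle.} The delicate points are bookkeeping rather than conceptual. First, one must ensure $r_1' > 0$ and manage the strict-versus-nonstrict inclusions forced by working with open balls; positivity of $r_1'$ uses that $f$ is a homeomorphism, so that $f(z)$ is separated from the image of the closed complement of $B_1$, and the open/closed mismatch is absorbed by the harmless $\varepsilon$-enlargements above. The genuinely subtle step is the regime $t = d(x,y)/d(x,z) < 1$ in the reverse direction: here $B_1 \subset B_2$ forces $r_2/r_1 \ge 1$, so a naive application of the ball condition only yields the weaker bound $\phi(1)$, and recovering the sharp bound $\phi(t)$ requires the standard observation that the two formulations are equivalent up to a quantitative adjustment of the control function (equivalently, applying the argument to $f^{-1}$ and symmetrizing). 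Making the correspondence between the pointwise and annular control functions precise is the crux of the argument.
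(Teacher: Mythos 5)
The paper does not actually prove this lemma --- it is quoted from Mackay--Tyson --- so your proposal is measured against the standard argument there, and your forward direction is exactly that argument and essentially correct. One repair: your ``harmless $\varepsilon$-enlargement'' of $r_2'$ is not harmless, because the lemma demands $r_2'/r_1' \leq \phi(r_2/r_1)$ exactly, and $r_2' = s + \varepsilon$ with $s$ the supremum gives only $r_2'/r_1' \leq \phi(r_2/r_1) + \varepsilon/r_1'$. No enlargement is needed: with $r_1' := \inf\{d'(f(z),f(x)) : x \in Z \setminus B_1\}$ (positive, as you say, by continuity of $f^{-1}$ at $f(z)$; treat $Z = B_1$ as a trivial case), set $r_2' := \phi(r_2/r_1)\, r_1'$. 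For each fixed $y \in B_2$ and every $x \notin B_1$ one has $d'\bigl(f(z),f(y)\bigr) \leq \phi\bigl(d(z,y)/d(z,x)\bigr)\, d'\bigl(f(z),f(x)\bigr) \leq \phi\bigl(d(z,y)/r_1\bigr)\, d'\bigl(f(z),f(x)\bigr)$, so taking the infimum over $x$ gives $d'\bigl(f(z),f(y)\bigr) \leq \phi\bigl(d(z,y)/r_1\bigr)\, r_1' < \phi(r_2/r_1)\, r_1' = r_2'$, strictly, since $d(z,y) < r_2$ and $\phi$ is strictly increasing. This yields the open-ball containment $f(B_2) \subset B_2'$ with ratio exactly $\phi(r_2/r_1)$ (and $\phi(1) \geq 1$, from the triple with the two comparison points equal, gives $B_1' \subset B_2'$). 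Your converse argument in the regime $t = d(x,y)/d(x,z) \geq 1$ is also correct.

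The genuine gap is the regime $t < 1$ of the converse, which you flag as the crux but defer to ``the standard observation'' or to ``applying the argument to $f^{-1}$ and symmetrizing,'' and neither patch closes it as stated. Applying anything to $f^{-1}$ is unavailable: the hypothesis inverts to $f^{-1}(B_1') \subset B_1$ and $B_2 \subset f^{-1}(B_2')$, which are the wrong-way inclusions for an annulus condition on $f^{-1}$, and you cannot invoke the lemma on quasisymmetry of inverses since quasisymmetry of $f$ is what is being proved. More seriously, with the \emph{same} control $\phi$ the implication is simply false: take $Z = Z' = \{0, s, 1\} \subset \R$ with $0 < s < 1$, $f$ the identity, and $\phi(t) = t^2$. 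Choosing $r_1' = r_1$ and $r_2' = r_2$ verifies the ball condition, since $r_2/r_1 \geq 1$ implies $r_2/r_1 \leq (r_2/r_1)^2$; yet for the triple $(x,y,z) = (0,s,1)$ the domain ratio is $s$ while the image ratio is $s > s^2 = \phi(s)$, so $f$ is not $\phi$-quasisymmetric. Hence the converse can only be proved with a quantitatively adjusted control function --- the survey's ``same $\phi$'' phrasing is an abuse of the cited source --- and even the adjusted version is delicate: since the ball condition only constrains ratios $r_2/r_1 \geq 1$, it gives the bound $\phi(1)$ on all of $t \in (0,1)$ but no decay as $t \to 0$, and in spaces with isolated points the inner radius $r_1'$ can be taken as large as the distance to the nearest neighbor of $f(z)$ even when $B_1'$ is a single point, so the annulus condition can hold while the pointwise ratio stays bounded away from $0$ along triples with $t \to 0$. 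Supplying the precise adjustment of the control function (and the hypotheses under which the small-$t$ decay can be recovered) is exactly the content your proposal leaves unproven.
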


    Simple computations show that many familiar, well-behaved maps are quasisymmetries.

    \begin{example}[Bilipschitz maps are quasisymmetries]
        A map $f:(Z,d) \rightarrow (Z', d')$ between metric spaces is {\it $K$-bilipschitz} if for all $x,y \in Z$,
            \[ \frac{1}{K}d(x,y) \leq d\bigl( f(x), f(y)\bigr) \leq Kd(x,y). \]
        A bilipschitz map distorts distances by at most a uniform multiplicative factor. If $f$ is $K$-bilipschitz, then $f$ is a quasisymmetry with control function $\phi(t) = K^2t$. Indeed,
        \[ \frac{d\bigl( f(x),f(y) \bigr)}{d\bigl( f(x),f(z) \bigr)}  \leq \frac{Kd(x,y)}{\frac{1}{K}d(x,z)} \leq K^2\cdot\frac{d(x,y)}{d(x,z)}.\]
    \end{example}

    \vskip.1in

        \begin{center}
            {\it ``Informally, one can define an embedding to be bilipschitz if it distorts both the shape and size of an object by a bounded amount, while quasisymmetry only preserves the approximate shape."

            - Heinonen~\cite[Page 79]{heinonen-Lectures}}
        \end{center}

    \begin{example}[Snowflake maps are quasisymmetries]
        Recall the definition of snowflaking, also used in the construction of visual metrics. Let $(Z,d)$ be a metric space. Let $\epsilon \in (0,1)$. The {\it snowflake of $(Z,d)$ with parameter~$\epsilon$} is the metric space $(Z,d^{\epsilon})$ where $d^{\epsilon}:Z\times Z \rightarrow [0,\infty)$ is given by
        \[d^{\epsilon}(z_1,z_2) = d(z_1,z_2)^{\epsilon}.\]

        If $(Z,d)$ is a metric space and $(Z,d^\epsilon)$ is its $\epsilon$-snowflake for some $\epsilon \in (0,1)$, then the identity map is a quasisymmetry with control function $\phi(t) = t^\epsilon$. Indeed, suppose $x,y,z \in Z$ with $x \neq z$.
            \[\textrm{If } \quad \frac{d(x,y)}{d(x,z)} \leq t \quad \textrm{then, } \quad \frac{d^{\epsilon}(x,y)}{d^{\epsilon}(x,z)}= \frac{d(x,y)^\epsilon}{d(x,z)^\epsilon} = \left(\frac{d(x,y)}{d(x,z)} \right)^{\epsilon} \leq t^{\epsilon}.\]
        \end{example}

    The above examples imply the following lemma. The isometry type of the visual metric on the boundary of a hyperbolic metric space depends on the choices in its construction: the basepoint $p$ in the Gromov product and on the choice of visual metric parameter. The quasisymmetry type is nonetheless well-defined.

    \begin{lem} \cite[Chapter 2.2]{buyaloschroeder}
       Let $X$ be a $\delta$-hyperbolic geodesic metric space.
       \begin{enumerate}
           \item Let $d_a$ and $d_{a'}$ be visual metrics on $\p X$ defined with respect to basepoints $p$ and $p'$, respectively. Then $(\p X,d_a)$ and $(\p X, d_{a'})$ are bilipschitz equivalent.
           \item Visual metrics $d_a$ and $d_{a'}$ on $\p X$ defined with respect to a basepoint $p$ and visual metric parameters $a$ and $a'$ are H\"{o}lder equivalent. That is, there exists a constant $K$ so that for all $\eta, \eta' \in \p X$,
                \[ \frac{1}{K}d_a^{\alpha}(\eta,\eta') \leq d_{a'}(\eta,\eta') \leq Kd_a^{\alpha}(\eta,\eta')\]
            where $\alpha = \frac{\log a'}{\log a}$.
       \end{enumerate}
    \end{lem}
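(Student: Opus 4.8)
The plan is to reduce both parts to the defining two-sided comparison of a visual metric, namely $\tfrac{1}{C}a^{-(\eta,\eta')_p}\le d_a(\eta,\eta')\le Ca^{-(\eta,\eta')_p}$, combined with a single estimate on how the Gromov product responds to the relevant change. For part~(1) the change is the basepoint, controlled by the estimate $|(\eta,\eta')_p-(\eta,\eta')_{p'}|\le d(p,p')$; for part~(2) the basepoint is fixed and the only change is the parameter, governed by the identity $(a')^{t}=(a^{t})^{\alpha}$ with $\alpha=\log a'/\log a>0$. In both cases the conclusion is bookkeeping once the relevant control is in place.

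For part~(1), note first that the bilipschitz conclusion forces the two metrics to share a common visual parameter $a$ (changing the parameter produces only H\"older equivalence, as part~(2) shows). I would begin by upgrading the basepoint estimate to the boundary: the excerpt records $|(x_i,x_j)_p-(x_i,x_j)_{p'}|\le d(p,p')$ for points of $X$, and since a shift by the constant $d(p,p')$ commutes with both $\liminf$ and $\inf$, applying these operations over sequences $\{x_i\}\in\eta$, $\{x_j'\}\in\eta'$ — or more cleanly realizing each extended product as an honest limit via \Cref{lemma:exist_seq} — yields $|(\eta,\eta')_p-(\eta,\eta')_{p'}|\le d(p,p')$ for all $\eta,\eta'\in\p X$. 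Writing $D=d(p,p')$, this gives $a^{-D}a^{-(\eta,\eta')_p}\le a^{-(\eta,\eta')_{p'}}\le a^{D}a^{-(\eta,\eta')_p}$. Feeding this between the defining comparison based at $p$ (with constant $C$) and the one based at $p'$ (with constant $C'$) produces $d_{a'}(\eta,\eta')\le CC'a^{D}d_a(\eta,\eta')$ and, symmetrically, $d_a(\eta,\eta')\le CC'a^{D}d_{a'}(\eta,\eta')$, so the two metrics are bilipschitz with constant $L=CC'a^{D}$.

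For part~(2), the basepoint is fixed, so the extended Gromov product $(\eta,\eta')_p$ is literally the same for both metrics; only the parameter differs. With $\alpha=\log a'/\log a$ one has $(a')^{-(\eta,\eta')_p}=\bigl(a^{-(\eta,\eta')_p}\bigr)^{\alpha}$. I would raise the defining comparison for $d_a$ to the power $\alpha$ (valid since $\alpha>0$), obtaining $C^{-\alpha}\bigl(a^{-(\eta,\eta')_p}\bigr)^{\alpha}\le d_a^{\alpha}(\eta,\eta')\le C^{\alpha}\bigl(a^{-(\eta,\eta')_p}\bigr)^{\alpha}$, and then chain this with the defining comparison for $d_{a'}$, which controls $(a')^{-(\eta,\eta')_p}=\bigl(a^{-(\eta,\eta')_p}\bigr)^{\alpha}$ with constant $C'$. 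This gives $\tfrac{1}{K}d_a^{\alpha}(\eta,\eta')\le d_{a'}(\eta,\eta')\le K d_a^{\alpha}(\eta,\eta')$ with $K=C'C^{\alpha}$, which is exactly the claimed H\"older equivalence.

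I expect the only genuinely non-formal step to be the extension of the basepoint estimate to $\p X$, which requires a little care in commuting the constant shift past the $\inf$ and $\liminf$ appearing in the extended Gromov product; \Cref{lemma:exist_seq} renders this routine. Everything downstream is elementary manipulation of the visual-metric inequalities.
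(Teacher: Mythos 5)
Your proposal is correct and follows the same route as the paper's proof: part~(1) rests on extending the basepoint estimate $|(\eta,\eta')_p-(\eta,\eta')_{p'}|\le d_X(p,p')$ to the boundary via \Cref{lemma:exist_seq}, and part~(2) on the relation $a'=a^{\alpha}$ together with the defining visual-metric comparisons. You supply more detail than the paper (explicit bilipschitz and H\"older constants, and the correct observation that part~(1) tacitly fixes a common parameter $a$), but the underlying argument is identical.
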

    \begin{proof}
        Conclusion~(1) follows from the fact that for all $\eta, \eta' \in \p X$,
            \[|(\eta,\eta')_p - (\eta, \eta')_{p'}| \leq d_X(p,p').\]
        Indeed, it follows from the definition of the Gromov product together with the triangle inequality that for all $x_i,x_j,p,p' \in X$,
        $|(x_i,x_j)_p - (x_i,x_j)_{p'}| \leq d(p,p')$. The inequality above then follows from \Cref{lemma:exist_seq}.

        Conclusion~(2) follows from definitions and the relationship $a' = a^{\alpha}$.
    \end{proof}

    \begin{lem}\cite[Theorem 2.2]{tukiavaisala} \cite[Lemma 5.2.13]{buyaloschroeder}
        Suppose $f:X \rightarrow Y$ is a $\phi$-quasisymmetry and $g:Y \rightarrow Z$ is a $\psi$-quasisymmetry. Then $f^{-1}: Y \rightarrow X$ is a $\phi'$-quasisymmetry with $\phi'(t) = \frac{1}{\phi^{-1}(\frac{1}{t})}$. The composition $g \circ f: X \rightarrow Z$ is a $(\psi \circ \phi)$-quasisymmetry. In particular, quasisymmetry is an equivalence relation on metric spaces. \qed
    \end{lem}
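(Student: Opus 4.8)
The plan is to prove the three assertions in the order in which they build on one another: first the composition statement, which is essentially immediate; then the inverse statement, which carries the only genuine subtlety; and finally the equivalence-relation claim, which is a formal consequence of the first two together with reflexivity.

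For the composition $g \circ f$, note first that it is a homeomorphism as a composite of homeomorphisms. Fix $x,y,z \in X$ with $x \neq z$. I would apply the quasisymmetry inequality for $g$ to the triple $f(x),f(y),f(z)$ and the inequality for $f$ to $x,y,z$. The one point to use explicitly is that the control function $\psi$ is monotone increasing, being a homeomorphism of $[0,\infty)$; this lets me substitute the bound $\frac{d_Y(f(x),f(y))}{d_Y(f(x),f(z))} \leq \phi\!\left(\frac{d_X(x,y)}{d_X(x,z)}\right)$ inside $\psi$ to obtain $\frac{d_Z(gf(x),gf(y))}{d_Z(gf(x),gf(z))} \leq \psi\!\left(\phi\!\left(\frac{d_X(x,y)}{d_X(x,z)}\right)\right)$, which is exactly the $(\psi \circ \phi)$-quasisymmetry condition.

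For the inverse $f^{-1}$, here is the main step. Given $u,v,w \in Y$ with $u \neq w$, set $x = f^{-1}(u)$, $y = f^{-1}(v)$, $z = f^{-1}(w)$, and write $s = \frac{d_X(x,y)}{d_X(x,z)}$ for the quantity to be bounded and $t = \frac{d_Y(u,v)}{d_Y(u,w)}$ for the known quantity. The trick is to apply the quasisymmetry inequality for $f$ not to $(x,y,z)$ but to the \emph{swapped} triple $(x,z,y)$ (legitimate whenever $x \neq y$; the degenerate case $u = v$ gives $s = 0$ and is trivial). This yields $\frac{1}{t} \leq \phi(1/s)$. Since $\phi$ is an increasing homeomorphism, so is $\phi^{-1}$, and applying it followed by inverting gives $s \leq \frac{1}{\phi^{-1}(1/t)} = \phi'(t)$, precisely the desired bound. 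It then remains to verify that $\phi'$ is a genuine control function: as $t$ increases, $1/t$ and hence $\phi^{-1}(1/t)$ decrease, so $\phi'(t) = 1/\phi^{-1}(1/t)$ increases, with $\phi'(t) \to 0$ as $t \to 0^{+}$ and $\phi'(t) \to \infty$ as $t \to \infty$; thus $\phi'$ extends to an increasing self-homeomorphism of $[0,\infty)$ fixing $0$, and $f^{-1}$ is a homeomorphism because $f$ is.

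Finally, the equivalence-relation claim is formal: reflexivity holds since the identity is a $\phi$-quasisymmetry with $\phi(t) = t$; symmetry is the inverse statement just proved; and transitivity is the composition statement. The only real obstacle — and it is a modest one — is the inverse direction, specifically the idea of interchanging the second and third points so that the quasisymmetry inequality, which a priori only bounds the target ratio from above in terms of the source ratio, can be flipped into a bound controlling the source ratio from above. Everything else is bookkeeping with monotone homeomorphisms, the sole point of care being the verification that $\phi'$ is itself a valid control function.
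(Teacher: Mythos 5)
Your proof is correct, and since the paper states this lemma with only a citation (the argument is exactly \cite[Theorem 2.2]{tukiavaisala}), your write-up matches the standard proof: the swap of the second and third points to get $\frac{1}{t} \leq \phi\bigl(\frac{1}{s}\bigr)$ is precisely the classical trick for the inverse, and the composition and equivalence-relation parts are handled as in the references. No gaps; your care in checking the degenerate case $u=v$ and that $\phi'$ is a genuine control function is exactly what the cited proofs require.
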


    We note one immediate feature of quasisymmetries that distinguish them from the maps in the next subsection. Quasisymmetries take bounded spaces to bounded spaces.

    \begin{prop} \cite[Theorem 2.5]{tukiavaisala} \cite[Proposition 10.8]{heinonen-Lectures} \label{prop:QSBounded}
        If $f:X \rightarrow Y$ is a $\phi$-quasisymmetry and $B \subset X$ is a subset so that $0 <  \diam B < \infty$, then $\diam f(B) <\infty$.
    \end{prop}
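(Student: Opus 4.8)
The plan is to show that the entire image $f(B)$ is contained in a single ball of finite radius in $Y$, which immediately bounds its diameter. Write $d$ and $d'$ for the metrics on $X$ and $Y$, respectively. The only structural input needed is that the control function $\phi$ is monotonically increasing: since $\phi$ is a self-homeomorphism of $[0,\infty)$, it must fix the unique non-cut-point $0$ and is therefore strictly increasing.

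First I would exploit the hypothesis $\diam B > 0$ to select two distinct reference points $x_0, z_0 \in B$, so that $\lambda := d(x_0, z_0) > 0$ is a fixed positive ``unit scale.'' For an arbitrary $w \in B$, applying the quasisymmetry inequality to the triple $(x_0, w, z_0)$ yields
\[ \frac{d'\bigl(f(x_0), f(w)\bigr)}{d'\bigl(f(x_0), f(z_0)\bigr)} \leq \phi\left( \frac{d(x_0, w)}{\lambda}\right). \]

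Next I would combine the finiteness of $\diam B =: D$ with the monotonicity of $\phi$. Since $d(x_0, w) \leq D$, the argument of $\phi$ is at most $D/\lambda$, so the right-hand side is bounded by the fixed finite constant $\phi(D/\lambda)$. Rearranging gives
\[ d'\bigl(f(x_0), f(w)\bigr) \leq \phi\left(\frac{D}{\lambda}\right) \cdot d'\bigl(f(x_0), f(z_0)\bigr) =: M, \]
where $M$ is independent of $w$. Hence every point of $f(B)$ lies within distance $M$ of $f(x_0)$, and the triangle inequality gives $\diam f(B) \leq 2M < \infty$.

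The argument presents no serious obstacle; the one point requiring care is the role of the hypothesis $\diam B > 0$, which is exactly what guarantees a valid comparison point $z_0 \neq x_0$ so that the denominator $d(x_0, z_0)$ in the quasisymmetry ratio does not vanish. The monotonicity and finiteness of $\phi$ on $[0,\infty)$ then deliver the uniform, $w$-independent bound that confines $f(B)$ to a bounded ball.
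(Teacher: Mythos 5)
Your proposal is correct and follows essentially the same route as the paper: fix a reference pair in $B$ with positive separation, bound the quasisymmetry ratio $d(x_0,w)/d(x_0,z_0)$ uniformly over $w \in B$ using $\diam B < \infty$, and conclude that $f(B)$ lies in a ball about $f(x_0)$. The only cosmetic difference is that the paper chooses the pair $b_1, b_1'$ to nearly realize the diameter, so the ratio is at most $2$ and the bound is the fixed constant $\phi(2)$, whereas you take an arbitrary distinct pair and invoke the (correctly justified) monotonicity of $\phi$ to get the $w$-independent bound $\phi(D/\lambda)$.
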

    \begin{proof}
        Suppose that $B \subset X$ with $0 <  \diam B < \infty$. Let $b_1, b_1' \in B$ so that
            \[ \frac{1}{2}\diam B \leq d(b_1,b_1').   \]
        Let $b \in B$. It follows from the choice of the points $b_1, b_1'$ that
            \[d(b_1,b) \leq \diam B \leq 2 \cdot d(b_1,b_1').\]
        This yields the ratio
            \[\frac{d(b_1,b)}{d(b_1,b_1')} \leq 2.\]
        Since $f$ is a $\phi$-quasisymmetry,
            \[\frac{d\bigl(f(b_1), f(b) \bigr)}{d \bigl( f(b_1), f(b_1')\bigr)} \leq \phi(2). \]
        Thus,
            \[ d\bigl(f(b), f(b_1)\bigr) \leq \phi(2) \cdot d \bigl( f(b_1),f(b_2) \bigr).\]
        Therefore, $f(B)$ is bounded, as desired.
    \end{proof}

    There is a stronger version of the proposition above, appearing in \cite[Proposition 10.8]{heinonen-Lectures} and \cite[Theorem 2.5]{tukiavaisala}, which is used to prove that the doubling property is a quasisymmetry invariant; see \Cref{sec:qs_invariants}.

    \subsection{Relation to quasi-conformal and quasi-M\"{o}bius maps}

        There are two other well-studied notions related to quasisymmetries:

        \begin{enumerate}
            \item Quasi-conformal maps, defined by a local, infinitesimal condition on relative distances:

            \begin{defn}
                Let $f:Z \rightarrow Z'$ be a homeomorphism between metric spaces. Let $p \in Z$. The {\it dilatation} of $f$ at $p$ is
                \[ H(f,p):= \limsup_{r \rightarrow 0} \frac{\sup \bigl\{ d\bigl(f(x),f(p)\bigr) \,|\, x \in B_r(p) \bigr\}}{\inf \bigl\{ d\bigl(f(x),f(p)\bigr) \,|\, x \notin B_r(p) \bigr\}}. \]
                The homeomorphism $f$ is {\it $C$-quasi-conformal} if $H(f,p) \leq C$ for all $p \in X$. The map is {\it quasi-conformal} if it is $C$-quasi-conformal for some $C$.
            \end{defn}

            \item Quasi-M\"{o}bius maps, defined with respect to the cross-ratio:

            \begin{defn} \cite{vaisala-QM}
                Let $(Z,d)$ be a metric space. The {\it cross-ratio} of four pairwise distinct points $z_1, z_2, z_3, z_4 \in Z$ is
                    \[[z_1,z_2,z_3,z_4] := \frac{d(z_1,z_3)d(z_2,z_4)}{d(z_1,z_4)d(z_2,z_3)}.\]

                Let $f:Z \rightarrow Z'$ be a homeomorphism between metric spaces. The map $f$ is {\it quasi-M\"{o}bius} if there exists a homeomorphism $\phi:[0,\infty) \rightarrow [0,\infty)$ so that for every set of four pairwise distinct points $z_1, z_2, z_3, z_4 \in Z$,
                    \[ [f(z_1), f(z_2), f(z_3), f(z_4)] \leq \phi \bigl( [z_1,z_2,z_3,z_4] \bigr)]. \]
            \end{defn}
        \end{enumerate}

        For general metric spaces, the following implications hold~\cite{vaisala-QM}:
            \[ \textrm{Quasisymmetric} \Rightarrow \textrm{Quasi-M\"{o}bius} \Rightarrow \textrm{Quasi-conformal},\]
        and these notions need not agree in general. For example, quasisymmetries take bounded spaces to bounded spaces by \Cref{prop:QSBounded}, while maps in the other families need not. The familiar map $f:\C \rightarrow \C$ given by $f(z) = \frac{z-i}{z+i}$, which takes the upper halfplane to the unit disk, is a M\"{o}bius transformation and is thus both quasi-M\"{o}bius and quasi-conformal, but is not quasisymmetric.
        If $f:Z \rightarrow Z'$ is a quasi-M\"{o}bius homeomorphism between metric spaces and both $Z$ and $Z'$ are unbounded, then $f$ is quasisymmetric. Moreover, these two notions agree for boundaries of hyperbolic groups~\cite[Proposition 2.6]{haissinsky-QM}.

       The work of Heinonen--Koskela~\cite{heinonenkoskela} presents additional motivations for these notions and conditions under which they agree. These ideas were used in a fundamental way in the quasi-isometric rigidity work of Bourdon--Pajot~\cite{bourdonpajot} for right-angled Fuchsian buildings, and in subsequent work of Xie~\cite{xie06} for Fuchsian buildings.

        These distinct families of maps yield different perspectives on the analytic structure of the boundary of a hyperbolic group. Quasi-conformal maps are often studied and motivated through the lens of classical hyperbolic geometry, complex analysis, and conformal mappings. We refer the reader to \cite[Chapter 22]{drutukapovich} for a detailed discussion on quasi-conformal maps in geometric group theory and to \cite{haissinsky-QM} for quasi-M\"{o}bius maps. In particular, quasi-M\"{o}bius maps are useful to re-construct quasi-isometries of spaces via boundary maps.

    \subsection{Quasi-isometries and quasisymmetries} \label{sec:QIandQS}

    In this section we discuss the following foundational theorem. See the text of Buyalo--Schroeder~\cite[Theorem 5.2.17; Chapter 7]{buyaloschroeder} for variants for spaces without a geometric group action.

    \begin{thm} \cite{paulin} \label{thm:QI_QS}
        Let $X$ and $X'$ be proper geodesic hyperbolic metric spaces admitting geometric actions by hyperbolic groups. Let $d_a$ and $d_a'$ be visual metrics on $\p X$ and $\p X'$, respectively. Then the spaces $X$ and $X'$ are quasi-isometric if and only if the boundaries $(\p X, d_a)$ and $(\p X', d_a')$ are quasisymmetric.
    \end{thm}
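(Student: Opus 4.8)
The plan is to treat the two implications separately, in each case using the visual comparison $d_a(\eta,\eta') \asymp a^{-(\eta,\eta')_p}$ (and its analogue for $d_a'$, the choice of basepoint being irrelevant up to bilipschitz equivalence) to translate between metric estimates on the boundary and coarse estimates on Gromov products inside the spaces. For the forward implication, I would start from a $(K,C)$-quasi-isometry $f\colon X \to X'$. By \Cref{thm:QI_inv_boundary} the induced boundary map $f_\partial$ is already a homeomorphism, so only a control function must be produced. The geometric input is that $f$ coarsely preserves Gromov products: $f$ sends a geodesic $[x,y]$ to a $(K,C)$-quasi-geodesic, which by stability of quasi-geodesics (the Morse Lemma, \cite[III.H.1.7]{bridsonhaefliger}) lies within bounded Hausdorff distance of $[f(x),f(y)]$; comparing $d(f(p),\cdot)$ with $d(p,\cdot)$ then gives constants $\lambda\ge 1$, $c\ge 0$ with $\lambda^{-1}(x,y)_p - c \le (f(x),f(y))_{f(p)} \le \lambda(x,y)_p + c$, and \Cref{lemma:exist_seq} extends this to all $\eta,\eta'\in\p X$.

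The subtlety in the forward direction is that this two-sided bound, inserted naively into the ratio $d_a'(f_\partial\eta,f_\partial\eta')/d_a'(f_\partial\eta,f_\partial\eta'')$, does not close up: the mismatch between $\lambda$ and $\lambda^{-1}$ leaves an error term that grows with $(\eta,\eta')_p$. I would remove it by reorganizing the estimate around a quantity that is a genuine distance rather than a Gromov product, namely the cross-ratio of four boundary points, which equals $a$ to the power of an alternating sum of Gromov products and, up to $O(\delta)$, measures an honest signed length along a geodesic. Such lengths are distorted by $f$ with matching multiplicative and additive constants, so $f_\partial$ is quasi-M\"obius with power-type control; since $\p X$ and $\p X'$ are boundaries of hyperbolic groups, quasi-M\"obius and quasisymmetric coincide (\cite[Proposition 2.6]{haissinsky-QM}), yielding the quasisymmetry. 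This is essentially the route of \cite[Theorem 5.2.17]{buyaloschroeder}.

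For the reverse implication, the heart of Paulin's theorem \cite{paulin}, I would run the same dictionary backwards to build a quasi-isometry out of boundary data. Given a quasisymmetry $g\colon(\p X,d_a)\to(\p X',d_a')$, I first upgrade it to a quasi-M\"obius homeomorphism, so that $g$ coarsely preserves cross-ratios and hence the relevant alternating sums of Gromov products. To define $f\colon X\to X'$, I associate to each $x\in X$ a triple $\eta_1,\eta_2,\eta_3\in\p X$ whose three pairwise bi-infinite geodesics pass within bounded distance of $x$, a coarse tripod centered at $x$. The images $g\eta_1,g\eta_2,g\eta_3$ determine, through their pairwise Gromov products, a bounded region of $X'$, and I set $f(x)$ to be any point of it.

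Verifying that this $f$ is a quasi-isometry is the main obstacle of the whole proof. One must show it is coarsely well defined (independent, up to bounded error, of the chosen tripod), that the cross-ratio control forces $d(f(x),f(y)) \asymp d(x,y)$ up to additive and multiplicative constants, and that $f$ is coarsely surjective. The last point is where coboundedness enters: the geometric group actions guarantee that every point of $X$ really is the center of such a tripod with uniformly bounded constants, which is what makes the reconstruction uniform and the inverse map available. Assembling these estimates realizes $g$ as the boundary extension of a quasi-isometry, completing the equivalence.
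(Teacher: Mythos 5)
Your proposal is correct and follows essentially the same route as the paper's sketch: in the forward direction you pass through the quasi-M\"{o}bius condition, using the Morse Lemma and the fact that the logarithm of the cross-ratio measures, up to $O(\delta)$, a distance between geodesics with endpoints at infinity, and then invoke the equivalence of quasi-M\"{o}bius and quasisymmetric maps for boundaries of hyperbolic groups; in the reverse direction you reconstruct the quasi-isometry by sending centers of ideal triangles (your coarse tripods) to centers of image triangles and checking coarse well-definedness, distance control, and surjectivity, with the geometric actions supplying uniformity, exactly as the paper describes. Your explicit observation that the two-sided Gromov-product estimate alone fails to close up in the quasisymmetry ratio is precisely the subtlety the cross-ratio formulation is designed to circumvent, so the proposal matches the paper's argument in both structure and substance.
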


    The proof utilizes the quasi-M\"{o}bius notion on the boundary $\p X$ and its relationship to distances in the space $X$. Indeed, this condition captures, up to a few $\delta$, the distance between geodesics connecting points on the boundary.
    These ideas can be illustrated in the simple case where $X$ is a tree and the visual metric is given by $d(\eta, \eta') = e^{-(\eta,\eta')_p}$ for some $p \in X$. Suppose $x,y,z,w\in \p X$ are pairwise disjoint. Suppose $\{x_i\}, \{y_i\}, \{w_i\}, \{z_i\}$ are points in $X$ so that
    \[ x_i \rightarrow x, \quad y_i \rightarrow y, \quad z_i \rightarrow z, \quad w_i \rightarrow w.\]
        Then,
        \begin{eqnarray*}
            \log[x,y,z,w] &=& \log\bigl(d(x,z)\bigr) + \log\bigl(d(y,w)\bigr)
            - \log\bigl(d(x,w)\bigr)-\log\bigl(d(y,z)\bigr) \\
            &=& -(x,z)_p - (y,w)_p + (x,w)_p + (y,z)_p \\
            &=& \frac{1}{2}\Bigl(d(x_i,z_i) + d(y_i,w_i) - d(w_i,x_i) - d(y_i,z_i) \Bigr)
              \end{eqnarray*}
        for large enough $i$. This last quantity corresponds to a distance in the tree between geodesics between points in $\{x,y,z,w\}$ (up to a sign); see \Cref{figure-quasiMobius}.

       \begin{figure}
                \begin{centering}
	            \begin{overpic}[width=.9\textwidth, tics=5]{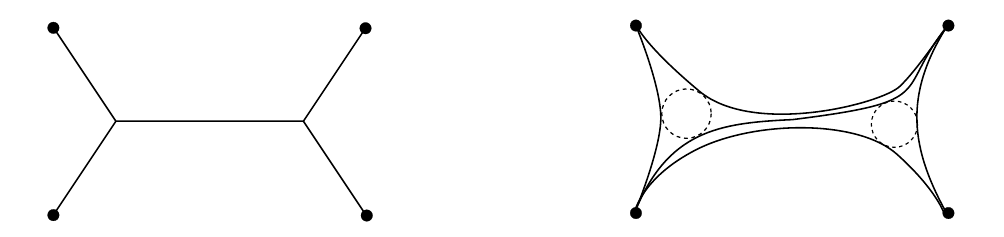} 
                    \put(2,21){$x$}
                    \put(1.5,1){$w$}
                    \put(38,21){$z$}
                    \put(38,1){$y$}
                    \put(60,21){$x$}
                    \put(60,1){$w$}
                    \put(96,21){$z$}
                    \put(96,1){$y$}
                \end{overpic}
	           \caption{\small{The logarithm of the cross ratio, $\log[x,y,z,w]$, corresponds in a tree to the distance in the tree between the geodesics with endpoints $x$, $w$ and $y$, $z$ on the boundary. In a $\delta$-hyperbolic space, this captures the same distance, up to a few $\delta$. }}
	           \label{figure-quasiMobius}
                \end{centering}
            \end{figure}

        Suppose $f:X \rightarrow X'$ is a quasi-isometry.
        Suppose the points are arranged so that $\log[x,y,z,w]$ equals the distance, call it $L$, between the geodesic lines $xw$ and $yz$.
        Using the Morse Lemma, one must show the distance between the geodesic lines $f(x)f(w)$ and $f(y)f(z)$ is bounded in terms of $L$ and the hyperbolicity constants. Exponentiating shows the boundary extension of $f$ satisfies the desired quasi-M\"{o}bius condition.

        In a general $\delta$-hyperbolic metric space, the logarithm of the cross ratio also measures, up to a constant multiple of $\delta$, the distance between geodesics~\cite[Section 4]{paulin}. Alternatively, this equals (up to some $\delta$) the distance between the centers of the ideal triangles with vertices in the set $\{x,y,z,w\}$. This point of view is also utilized in proving that every quasi-M\"{o}bius map between boundaries is induced by a quasi-isometry.

        Given a map $f: \p X \rightarrow \p X'$, one defines the desired map $X \rightarrow X'$ by realizing each point in $X$ as the center of an ideal triangle with vertices $z,z',z'' \in \p X$ and mapping it to the ideal triangle with vertices $f(z), f(z'), f(z'') \in \p X'$. One must then use the quasi-M\"{o}bius condition to verify that the map does not depend on the choice of ideal triangle, up to finite distance, and that the induced map is a quasi-isometry.

        We refer the reader to \cite[Section 1.6]{bourdon-flot} for an alternative proof that quasi-isometries induce quasisymmetries of the boundary.

    \subsection{Quasisymmetry invariants} \label{sec:qs_invariants}

        The quasisymmetry homeomorphism type of the boundary of a hyperbolic group is a quasi-isometry invariant. Hence, any quasisymmetry invariant yields a quasi-isometry invariant. This section collects well-known quasisymmetry invariants (while postponing discussion of the quasisymmetry invariant conformal dimension to the next section). Interestingly, all of the invariants in this section hold (or naturally do not hold) for the boundary of a hyperbolic group. While they are thus not interesting from the point of view of quasi-isometric classification, they lead to some powerful results about the structure of {\it all} hyperbolic groups.

        The following properties are quasisymmetry invariants:

        \begin{enumerate}
            \item Doubling property
            \item Linear connectedness
            \item Uniformly perfect property
            \item Uniformly disconnected property
        \end{enumerate}

\vskip.1in

    \begin{defn}[Doubling]
        A metric space $(Z,d)$ is {\it doubling} if there exists a constant $C>0$  so that every ball $B_r(z)$ in $Z$ can be covered by at most $C$ balls of radius $\frac{r}{2}$.
    \end{defn}

    The doubling property is a quasisymmetry invariant~\cite[Theorem 10.18]{heinonen-Lectures}.

    \begin{example}[Trees and their boundaries]
        As a first example, the $4$-regular tree $T_4$ is not a doubling metric space, while its boundary is. See \Cref{figure-doubling}.

        Indeed, fix a basepoint $p \in T_4$, and consider $B_r(p)$. In order to cover the leaves of this ball with balls of radius $\frac{r}{2}$, one needs at least as many balls as the number of points in the sphere of radius $\frac{r}{2}$ around $p$. The number of points on this sphere grows exponentially, so the space $T_4$ is not doubling.

        Fix a visual metric on $\p T_4$. For convenience, suppose the metric is given by $d_2(\eta,\eta'):= 2^{-(\eta, \eta')_p}$. Then, $(\p T_4, d_2)$ is doubling with doubling constant $C=3$. To see this, fix a ball $B$ of radius $r = 2^{-k}$ in $\p T_4$. This ball corresponds to the set of all geodesic rays in $T_4$ based at $p$ that pass through a particular vertex $v \in T_4$ at distance $k$ from $p$. There are three vertices $v_1, v_2, v_3 \in T_4$ at distance one from $v$ and at distance $k+1$ from $p$. These vertices correspond to three balls of radius $\frac{r}{2} = 2^{-(k+1)}$ that cover $B$.

        \begin{figure}[t]
                \begin{centering}
	            \begin{overpic}[width=.85\textwidth, tics=5]{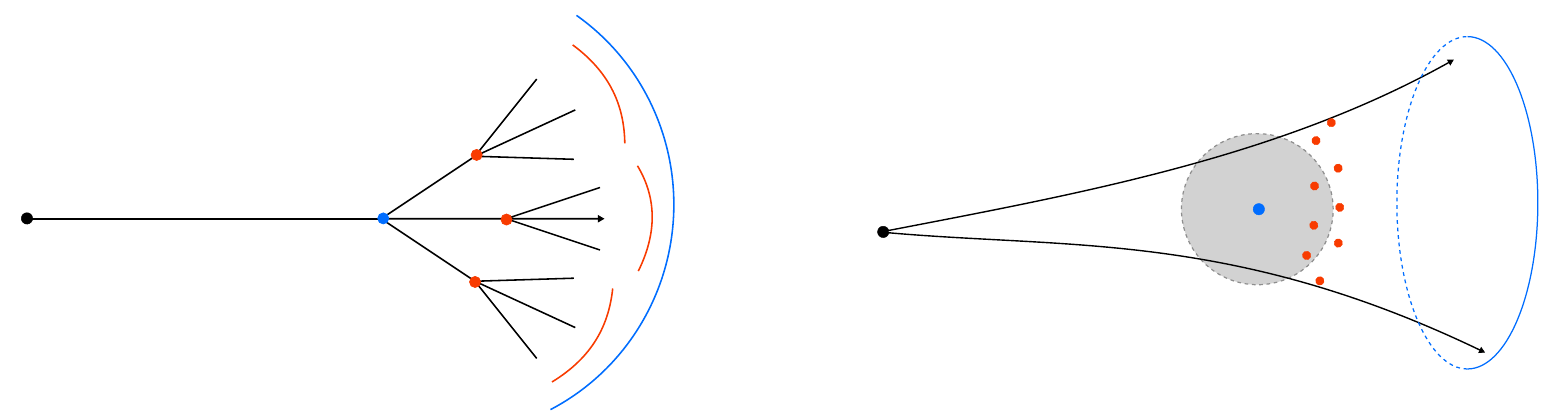} 
                    \put(-1,12.5){\small{$p$}}
                    \put(12,14){\small{$k$}}
                    \put(23,10.5){\small{$v$}}
                    \put(39,12.2){\small{$z$}}
                    \put(40,2){\small{$B_{2^{-k}}(z)$}}
                \end{overpic}
	           \caption{\small{The boundary of a hyperbolic group is a doubling metric space. On the left, the ball of radius $2^{-k}$ about $z \in \p T$ corresponds to all rays in the tree passing through the vertex $v$. This ball is covered by the three balls of radius $\frac{1}{2}2^{-k}$ corresponding to the three red vertices at distance $1$ from $v$. A similar picture occurs in any boundary, where the shadow of a ball in the space can be covered by shadows of points at a fixed farther distance from the basepoint.}}
	           \label{figure-doubling}
                \end{centering}
            \end{figure}
    \end{example}

    \begin{example}
        The hyperbolic plane is not a doubling metric space. Indeed, a 4-valent tree quasi-isometrically embeds in $\Hy^2$. On the other hand, Euclidean space $\E^n$ is doubling for all $n$.
    \end{example}

    The ideas in the argument above that $\p T_4$ is doubling generalize to boundaries of hyperbolic groups (and more generally to the boundary of a hyperbolic metric space $X$ that has {\it bounded growth at some scale}, meaning that there are constants $r$ and $R$ with $R > r > 0$, and $N \in \N$ such that every open ball of radius $R$ in $X$ can be covered by $N$ open balls of radius $r$).

    \begin{prop} \cite[Proposition 9.2]{bonkschramm}
        Let $G$ be a hyperbolic group, and let $d_a$ be a visual metric on $\p G$. Then $(\p G, d_a)$ is a doubling metric space.
    \end{prop}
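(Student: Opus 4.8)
The plan is to generalize the shadow argument used above for $\p T_4$ to an arbitrary hyperbolic group. Fix a proper geodesic $\delta$-hyperbolic space $X$ on which $G$ acts geometrically, take $p$ to be the basepoint of the Gromov product defining the visual metric, and recall the two-sided estimate $\frac{1}{C}a^{-(\eta,\eta')_p} \le d_a(\eta,\eta') \le C a^{-(\eta,\eta')_p}$. The guiding observation is that, up to bounded multiplicative error, a ball of radius $\rho$ in $(\p X, d_a)$ is the \emph{shadow} from $p$ of a ball in $X$ centered at the point $w$ lying at distance $\log_a(1/\rho)$ along a geodesic ray from $p$ toward the center of the boundary ball: the boundary points in the ball are precisely those represented by rays that fellow-travel $[p,w]$. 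Covering a boundary ball by smaller boundary balls then becomes the problem of covering the shadow of $w$ by the shadows of boundedly many points lying a fixed distance deeper in $X$, and the number required is controlled by the local geometry of $X$.

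First I would record that a geometric action endows $X$ with \emph{bounded growth at some scale}. Since the action is cocompact there is a constant $D$ with the orbit $G\cdot p$ being $D$-dense in $X$, and since it is properly discontinuous the number $N(r) := \#\{g \in G : d(p, gp) \le r\}$ is finite for every $r$. Given any ball $B_R(y) \subset X$, every point of it lies within $D$ of an orbit point in $B_{R+D}(y)$, and translating by a group element carrying $y$ to within $D$ of $p$ bounds the number of such orbit points by $N(R+2D)$. Hence $B_R(y)$ is covered by at most $N(R+2D)$ balls of radius $D$, which is exactly bounded growth at scale $(D,R)$.

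Next I would carry out the covering. Write $\rho = a^{-n}$ and set $c := \log_a 2$, so that balls of radius $\rho/2$ have radius $a^{-(n+c)}$, and let $w \in [p,\eta_0)$ be the point at distance $n$. The visual-metric estimate together with the extended $\delta$-inequality of \Cref{lemma:delta_ineq_boundary} shows that every $\eta' \in B_\rho(\eta_0)$ is represented by a ray passing within $O(\delta)$ of $w$, and the rays through the $O(\delta)$-ball about $w$ meet the sphere $S(n+c,p)$ in a region of diameter $O(c+\delta)$, a bound independent of $n$ and $\eta_0$. Choosing a maximal $D$-separated set of orbit points inside this region, bounded growth caps its cardinality by a constant $M = M(\delta, c, D)$; the shadows of these points are balls of radius comparable to $a^{-(n+c)}$ whose union covers $B_\rho(\eta_0)$. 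After enlarging $c$ by a fixed additive constant to absorb the distortion factor $C$ and the $O(\delta)$ errors, this exhibits $B_\rho(\eta_0)$ as a union of at most $M$ balls of radius $\rho/2$, giving the doubling property with constant $M$.

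The main obstacle is the bookkeeping in the shadow correspondence: converting the purely metric statement ``$d_a(\eta_0,\eta') < \rho$'' into the geometric statement ``the ray to $\eta'$ passes within a controlled distance of $w$'' requires combining the two-sided visual bound with the boundary $\delta$-inequality, and one must check that the additive $O(\delta)$ errors in the Gromov product perturb the relevant radii only by a fixed multiplicative factor, so that they are absorbed into the choice of $c$ without disturbing the scale-independent count $M$. Once these estimates are pinned down the counting is immediate. Conceptually, the entire argument is the special case, for spaces admitting a geometric group action, of the general principle that bounded growth at some scale forces the boundary to be doubling.
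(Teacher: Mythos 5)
Your proposal follows essentially the same route as the paper's sketch: identify a boundary ball of radius $\rho$ with the shadow of a point $w$ at distance $\log_a(1/\rho)$ from the basepoint, cover it by shadows of boundedly many points a fixed distance $c$ deeper, and count those points using bounded growth at some scale (which the paper invokes via uniform local finiteness of the Cayley graph, and which you correctly derive from the geometric action). Your write-up simply fills in the quantitative bookkeeping --- the $O(\delta)$ fellow-traveling estimates and the absorption of the distortion constant $C$ into the choice of $c$ --- that the paper's sketch leaves implicit, and it is correct.
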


    To prove the proposition, one can compare a ball in $\p X$ to the {\it shadow} of a ball in $X$.

    \begin{defn} \label{defn:shadow}
        Let $X$ be a $\delta$-hyperbolic proper geodesic metric space. Let $p \in X$. Let $x \in X$. The {\it shadow} of the ball $B_r(x)$ in $\p X$ consists of the set of equivalence classes that contain a representative geodesic ray based at $p$ and passing through $B_r(x)$.
    \end{defn}

    A ball of radius $r$ in the boundary lies between the shadows of two points, whose distance from the basepoint can be computed in terms of the visual metric parameter; see \cite[Lemme 1.6.2]{bourdon-flot}.
    In particular, suppose $X$ is a graph. If the visual metric parameter of $\p X$ equals $a$, and $d(p,x) = k$, then the shadow of $B_r(x)$ in $\p X$ is a set of diameter roughly $a^{-k}$.  This set can be covered by shadows of balls of radius $\frac{a^{-k}}{2}$ corresponding to ``children'' of $x$, ie shadows based at vertices $\{v_i\}$ at distance $k+d$ from $p$, so that $x$ lies near a geodesic $[p,v_i]$, and $d$ depends on $a$. See \Cref{figure-doubling}. Since the Cayley graph of a hyperbolic group is uniformly locally finite, one concludes the boundary is doubling.

    The doubling property has powerful embedding consequences that has lead to beautiful theorems about the structure of all hyperbolic groups. The first theorem below proves that all hyperbolic groups quasi-isometrically embed into $\Hy^n$ for sufficiently large $n$. The other proves $\Hy^2$ quasi-isometrically embeds into every one-ended hyperbolic group.

    \begin{thm} \cite[Theorem 1.1]{bonkschramm} \label{thm:hyp_embedding}
        If $G$ is a hyperbolic group, then there exists $n\geq 1$ so that the group $G$ quasi-isometrically embeds into a convex subset of $\Hy^n$.
    \end{thm}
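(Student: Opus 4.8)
The plan is to prove the theorem \emph{from the boundary in}: first embed $\p G$ quasisymmetrically into the boundary sphere $S^{n-1} = \p \Hy^n$ of some real hyperbolic space, and then promote this boundary embedding to a quasi-isometric embedding of $G$ itself. We may assume $G$ is nonelementary, since finite and two-ended groups embed trivially. Fix a geometric action of $G$ on a proper geodesic hyperbolic space $X$, say a Cayley graph, and fix a visual metric $d_a$ on $\p X = \p G$. The two properties of $\p G$ that drive the argument are that it is compact, hence bounded, and that it is doubling by the proposition preceding the theorem.

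The analytic engine is Assouad's embedding theorem: for a doubling metric space $(Z,d)$ and any $\epsilon \in (0,1)$, the snowflake $(Z,d^{\epsilon})$ admits a bilipschitz embedding into some $\R^N$, with $N$ controlled by $\epsilon$ and the doubling constant. Applying this to $(\p G, d_a)$ with, say, $\epsilon = 1/2$, and recalling that the identity onto a snowflake is a quasisymmetry (control $\phi(t)=t^{\epsilon}$) while bilipschitz maps are quasisymmetries, the composition yields a quasisymmetric embedding $f:\p G \hookrightarrow \R^N$. Since $\p G$ is bounded, I would then post-compose with inverse stereographic projection $\R^N \hookrightarrow S^N = \p \Hy^{N+1}$, which is bilipschitz on bounded sets and hence quasisymmetric, so that $Z := f(\p G)$ lands inside the round sphere. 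Note that the snowflake of a visual metric is again a visual metric (with parameter $a^{\epsilon}$), so passing to $Z$ equipped with the chordal metric restricted from $S^N$ — which is the visual metric there — loses nothing.

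To produce the convex subset, let $C := \mathrm{Hull}(Z) \subset \Hy^{N+1}$ be the convex hull of the compact set $Z$. Being convex in $\Hy^{N+1}$, the space $C$ is $\CAT(-1)$, hence proper, geodesic, and hyperbolic, and its Gromov boundary is exactly the limit set $Z$, carrying a visual metric bilipschitz to the chordal metric restricted from $S^N$. Thus $\p C$ is quasisymmetric to $\p G$ via $f$. Invoking the quasi-isometry/quasisymmetry correspondence in the form valid for spaces without a cocompact action (referenced after \Cref{thm:QI_QS}), $C$ and $X$ are quasi-isometric; composing with the quasi-isometry $G \to X$ yields a quasi-isometry — in particular a quasi-isometric embedding — of $G$ into the convex subset $C$ of $\Hy^{N+1}$. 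Setting $n = N+1$ completes the argument.

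The main obstacle is Assouad's theorem itself: producing the bilipschitz embedding of the snowflaked boundary into a Euclidean space of controlled dimension is the deep input, and it is precisely where the doubling hypothesis is consumed. A secondary point requiring care is the boundary-to-space promotion: one must know that a convex hull (equivalently, the hyperbolic cone of Bonk--Schramm) recovers the correct hyperbolic boundary, and that the quasisymmetric boundary map integrates to a quasi-isometry of the total spaces. Here it is essential that $X$ is visual and has bounded growth at some scale — properties holding for the Cayley graph of a hyperbolic group — so that $X$ is quasi-isometric to the cone over its own boundary and the correspondence applies.
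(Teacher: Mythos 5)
Your proposal is correct and follows essentially the same route the paper sketches for the Bonk--Schramm theorem: use the doubling property of $(\p G, d_a)$, apply Assouad's embedding theorem to a snowflake to get a bilipschitz (hence quasisymmetric) embedding into $\R^N$ viewed inside the boundary sphere of real hyperbolic space, and then promote the boundary map to a quasi-isometry onto a convex subset (the convex hull, playing the role of Bonk--Schramm's hyperbolic cone), using that the spaces involved are visual. Your added care about visuality of the hull and the non-cocompact form of the quasi-isometry/quasisymmetry correspondence fills in exactly the steps the paper leaves implicit, so there is nothing substantively different to compare.
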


    A key ingredient in the proof of the theorem above is Assouad's Embedding Theorem~\cite{assouad}, which states that if $(Z,d)$ is a doubling metric space and $\epsilon \in (0,1)$, then there exists $n<\infty$ so that the snowflaked space $(Z,d^{\epsilon})$ admits a biLipschitz embedding into $\R^n$. Thus, if $G$ is a hyperbolic group and $d_a$ is a visual metric on $\p G$, then $(\p G, d_{a'})$ admits a biLipschitz embedding into $\R^n$ for some $n$. (Here, $\R^n$ is viewed as part of the boundary sphere of $\Hy^{n-1}$.) We also note that Bonk--Schramm prove there is a {\it rough similarity} from $G$ into $\Hy^n$, which is a stronger notion than quasi-isometry.

    Gromov's famous {\it Surface Subgroup Conjecture} asks whether the fundamental group of a closed hyperbolic surface is a subgroup of every one-ended hyperbolic group. If the subgroup is quasi-convex, then a hyperbolic plane quasi-isometrically embeds in the group. The next theorem solved the geometric version of the Surface Subgroup Conjecture in a strong way.

    \begin{thm} \cite[Theorem 1]{bonkkleiner-planes} \label{thm:hyp_planes}
        If $G$ is a one-ended hyperbolic group, then the hyperbolic plane quasi-isometrically embeds in $G$.
    \end{thm}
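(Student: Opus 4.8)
The plan is to transfer the problem to the boundary and realize the plane as a coarse convex hull. Fix a proper geodesic hyperbolic space $X$ on which $G$ acts geometrically, together with a visual metric $d_a$ on $\p X = \p G$. Using the dictionary between quasi-isometric embeddings and boundary maps developed in \Cref{sec:QIandQS}, a quasi-isometric embedding $\Hy^2 \to X$ will be produced from a suitable subset of $\p X$: namely, if $C \subset \p X$ is a \emph{quasi-circle} (a subset quasisymmetric to the round circle $S^1$), then the union $Y \subset X$ of all bi-infinite geodesics with both endpoints on $C$ is, I claim, quasi-isometric to $\Hy^2$. Composing $Y \hookrightarrow X$ with a quasi-isometry $G \to X$ then yields the theorem. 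So the crux is to (i) produce a quasi-circle in $\p X$ and (ii) show that its geodesic hull is quasi-isometric to $\Hy^2$. Note that a mere quasi-arc will not do: its hull is a half-plane, whose boundary at infinity is an arc rather than a circle, so $\Hy^2$ cannot quasi-isometrically embed into it.

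For the boundary geometry I would assemble the facts already recorded in the excerpt. Since $G$ is one-ended, $\p X$ is compact, connected, and locally connected (the last by Bestvina--Mess \cite{bestvinamess}), and it has no global cut point. Moreover $(\p X, d_a)$ is doubling by the Bonk--Schramm estimate \cite{bonkschramm}. Compactness, connectedness, local connectedness, and the absence of cut points together give that $(\p X, d_a)$ is \emph{linearly connected}; since linear connectedness is a quasisymmetry invariant, this holds for every visual metric. These are exactly the hypotheses under which a complete doubling metric space is known to be quasi-arc connected: any two points are joined by a quasi-arc with uniform constants (Tukia; see \cite{mackaytyson}).

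To upgrade quasi-arcs to a quasi-circle I would exploit the absence of cut points together with the self-similarity coming from the geometric $G$-action. Choosing two points of $\p X$ and joining them by two quasi-arcs that meet only at their endpoints produces a topological circle; the no-cut-point hypothesis is what allows the second arc to avoid the first, and cocompactness of the $G$-action is used to keep the quasisymmetry constants of the resulting curve under control at every scale, so that the curve is genuinely quasisymmetric to $S^1$. For step (ii), I would parametrize $C$ quasisymmetrically by $S^1$ and invoke the cross-ratio description of \Cref{sec:QIandQS}: the logarithm of the cross-ratio of four boundary points equals, up to an additive error controlled by $\delta$, the distance in $X$ between the corresponding geodesics. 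Because this cross-ratio structure on $C$ agrees coarsely with that of $S^1 = \p \Hy^2$, the geodesic hull of $C$ carries the same coarse geometry as the geodesic hull of $S^1$ in $\Hy^2$, which is $\Hy^2$ itself; this is precisely the converse construction (boundary map $\Rightarrow$ quasi-isometry via ideal triangles) sketched in \Cref{sec:QIandQS}.

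The main obstacle is step (i): producing an honest quasi-circle, rather than merely a quasi-arc, with constants uniform across all scales. Quasi-arc existence in doubling linearly connected spaces is classical, but closing an arc into a quasisymmetric circle requires the two arcs to be ``independent'' and to have comparable geometry, which is delicate when $\p X$ has many local cut points (as for surface amalgams). Controlling these constants via the group action, and then verifying rigorously in step (ii) that the hull is two-dimensional in the coarse sense and nondegenerate, is where the real work lies; this is the content of Bonk--Kleiner \cite{bonkkleiner-planes}.
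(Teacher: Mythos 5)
Your plan assembles the right ingredients, and in the same order the survey does: doubling of $(\p X, d_a)$ via Bonk--Schramm \cite{bonkschramm}, linear connectedness, the Tukia/Mackay quasi-arc theorem \cite{tukia-turning, mackay-arcs}, and the dictionary between quasisymmetric boundary maps and quasi-isometric embeddings from \Cref{sec:QIandQS}. Your observation that the weak hull of a quasi-arc is only a half-plane, into which $\Hy^2$ cannot quasi-isometrically embed (a quasi-isometric embedding would induce a topological embedding of $S^1$ into an arc), is correct and sharper than the survey's one-line gloss. One repair in step (ii): \Cref{thm:QI_QS} as stated requires geometric actions on \emph{both} spaces, and the weak hull of your circle admits none; you need the visual-space version of the quasisymmetry-to-quasi-isometry correspondence (Bonk--Schramm, or \cite[Chapter 7]{buyaloschroeder}), which the survey explicitly flags as the variant for spaces without a geometric action.

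There are, however, two genuine gaps. First, your claim that compactness, connectedness, local connectedness, and the absence of cut points ``together give'' linear connectedness is false: linear connectedness is a quantitative metric property, not a topological one. A metric on the disk with a sequence of fjords of depth $d_n \to 0$ and width $d_n^2$ is a Peano continuum without cut points that is not linearly connected. Bonk--Kleiner prove linear connectedness of $\p G$ using the cocompact group action (the approximate self-similarity of the boundary at all scales); this is precisely their Proposition 4 that the survey cites, not a topological corollary. Second, and more seriously, your crux --- closing two quasi-arcs into a quasi-circle --- is both unproven and misplaced relative to the actual proof. Two quasi-arcs meeting only at their common endpoints $\{a,b\}$ need not form a quasi-circle: bounded turning fails if the arcs nearly touch away from the endpoints (cusps), so disjointness is far from sufficient; one needs a relative separation estimate of the form $d(x,y) \gtrsim \min\bigl\{ d(x,\{a,b\}),\, d(y,\{a,b\}) \bigr\}$ between the two arcs, and ``cocompactness keeps the constants under control'' is not a mechanism for producing it. In fact the argument of \cite{bonkkleiner-planes} never constructs a quasi-circle as input: from the quasi-arc one takes the weak hull, which is uniformly quasi-isometric to a hyperbolic half-plane and hence contains $(K,C)$-quasi-isometrically embedded hyperbolic disks of \emph{every} radius with uniform constants; translating these disks by the cocompact $G$-action and extracting a limit using properness (an Arzel\`a--Ascoli argument) yields a $(K,C')$-quasi-isometric embedding of all of $\Hy^2$. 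So your ``a mere quasi-arc will not do'' is true only for the approach in which the image of $\Hy^2$ is the hull of a fixed boundary set; the known proof extracts the plane as a limit of large disks inside hulls of the arc, and the quasi-circle you want appears only a posteriori, as the boundary circle of the limit plane. Put differently, you have reduced the theorem to the existence of quasi-circles in $\p G$ --- a statement essentially equivalent to the theorem itself, and one whose proof you defer to the very paper in question --- whereas the actual proof is designed exactly to avoid that step.
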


     Again, the doubling property and Assouad's Embedding Theorem enter the proof of this theorem in an important way. One other analytic ingredient is needed:

    \begin{defn}[Linearly connected]
        A metric space $(Z,d)$ is {\it linearly connected} if there exists a constant $L>0$ so that for all $x,y \in Z$ there exists a connected subset $S \subset Z$ of diameter at most $Ld(x,y)$ containing $\{x,y\}$.
    \end{defn}

    The linearly connected property is also known as {\it bounded turning} and is a quasisymmetry invariant~\cite[Theorem 2.11]{tukiavaisala}. Bonk--Kleiner~\cite[Proposition 4]{bonkkleiner-planes} showed that the boundary of a one-ended hyperbolic group is linearly connected.

    \begin{defn}
        A {\it quasi-arc} in a metric space is the quasisymmetric image of $[0,1]$.
    \end{defn}

    \begin{thm} \cite{tukia-turning, mackay-arcs}
        If $(Z,d)$ is a complete, doubling, and linearly connected metric space, then any two points in $Z$ are the endpoints of a quasi-arc.
    \end{thm}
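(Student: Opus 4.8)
The plan is to build the quasi-arc as a limit of finite chains refined at geometrically decreasing scales, and then to invoke the metric characterization of quasi-arcs. First I would record the reduction: an arc in a metric space is a quasi-arc precisely when it is doubling and of \emph{bounded turning}, meaning there is a constant $C$ with $\diam \gamma[a,b] \leq C\, d(a,b)$ for every subarc $\gamma[a,b]$ (see \cite{tukiavaisala, mackaytyson}). Since $Z$ is doubling, every subspace of $Z$ is doubling; hence it suffices to produce, for given $x,y \in Z$, an embedded arc from $x$ to $y$ inside $Z$ satisfying the bounded turning condition.

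The main engine is a chaining lemma extracted from linear connectedness: for all $a,b \in Z$ and all $\epsilon>0$ there is a finite $\epsilon$-chain $a = w_0, w_1, \ldots, w_m = b$ (consecutive points within distance $\epsilon$) all lying in a set of diameter at most $L\, d(a,b)$. This holds because the connected set supplied by linear connectedness is itself $\epsilon$-chainable between any two of its points, the chain remaining inside it. Using this, fix $\lambda \in (0,1)$ and construct a nested sequence of chains $\sigma_0 = (x,y), \sigma_1, \sigma_2, \ldots$, where $\sigma_{k+1}$ refines $\sigma_k$ by replacing each consecutive pair with a chain of mesh at most $\lambda^{k+1} d(x,y)$ and diameter at most $L$ times the gap it fills, retaining the vertices of $\sigma_k$ as a subsequence. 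Completeness of $Z$ then yields a uniform limit map $\gamma\colon [0,1] \to Z$ with $\gamma(0)=x$ and $\gamma(1)=y$, and the geometric decay of the mesh makes the telescoping sum $\sum_k L\lambda^k d(x,y)$ converge, which is what forces the bounded turning estimate for the limit.

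Two technical points remain, and the second is the crux. First, one must verify that the limit set satisfies bounded turning with a constant independent of $x$ and $y$; this comes from comparing the diameter of the portion of $\gamma$ between two scale-$k$ vertices against the distance between nearby vertices, summing the inserted diameters across all finer scales. Second, and here is the main obstacle, one must guarantee that the limit is an \emph{embedded} arc rather than merely a path, and that passing to the arc does not destroy bounded turning. This is exactly where the doubling hypothesis is indispensable: doubling bounds the number of chain vertices at scale $k$ that can meet a fixed ball, which controls how far the refined chains can double back or overlap, so that after pruning redundant loops the resulting arc inherits a uniform bounded turning constant. I would organize this bookkeeping by tracking the genealogy of chain vertices and applying doubling at each scale to keep the overlap, and hence the number of pruning operations, uniformly controlled.

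Finally, combining the embedded arc produced by the construction with the bounded turning estimate and the inherited doubling property, the characterization cited in the first paragraph identifies $\gamma([0,1])$ as a quasi-arc from $x$ to $y$, which completes the proof.
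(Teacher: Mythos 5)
Your first paragraph is fine: the arc version of the Tukia--V\"ais\"al\"a characterization (an arc is a quasi-arc if and only if it is doubling and of bounded turning, see \cite[Theorem 4.9]{tukiavaisala} and \cite{mackaytyson}) is correct, subsets of a doubling space are doubling, and the chaining lemma you extract from linear connectedness is valid. The gap is at the step you yourself flag as the crux, and your proposed repair does not work. Pruning removes only \emph{exact} loops, i.e.\ genuine self-intersections of the limit path; but bounded turning fails already for \emph{near}-self-intersections, which pruning cannot touch. Nothing in your naive refinement scheme prevents the scale-$k$ chain from passing within, say, $\lambda^{k+10}d(x,y)$ of itself at two indices that are far apart along the chain: linear connectedness only confines each inserted subchain to a ball of radius $L$ times the gap it fills, and these balls for distant gaps may come arbitrarily close without overlapping. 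In the limit this produces distinct points $a \neq b$ on the path with $d(a,b)$ as small as you like while the subpath between them has diameter comparable to a much coarser scale, so the bounded turning constant is unbounded, and since $a \neq b$ there is no loop to excise. Doubling does not rule this out: it bounds the cardinality of separated sets in a ball, not the recurrence of a path, so your appeal to ``doubling controls how far the refined chains can double back'' does not yield the needed estimate.

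What the actual proofs do --- and what your outline is missing --- is a straightening (shortcutting) lemma applied \emph{at every scale during the construction}, not a cleanup at the end. In \cite{mackay-arcs}, whenever the current $\epsilon$-chain comes within a fixed fraction of $\epsilon$ of itself at two indices, the entire intervening subchain is excised; one then proves, using the doubling and linear connectedness constants, that this procedure terminates in a chain that is discretely of bounded turning at scale $\epsilon$, and --- the delicate point --- that the straightening performed at scale $\lambda^k$ displaces the chain by only $O(\lambda^k)$, so the modifications are summable across scales, the straightened chains still converge, and the limit is an embedded arc satisfying bounded turning at all scales simultaneously (injectivity then comes for free from bounded turning, rather than from pruning). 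Tukia's argument in \cite{tukia-turning} likewise enforces the no-near-return property inductively during the construction. Without this scale-by-scale excision and the accompanying displacement estimate, the sentence ``after pruning redundant loops the resulting arc inherits a uniform bounded turning constant'' is where your proof fails.
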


    Bonk--Kleiner then use the quasi-arc in the boundary to find the hyperbolic plane in the space. We note that the existence of quasi-arcs in the boundary is also utilized by Mackay~\cite{mackay10} to give lower bounds on the conformal dimension of one-ended hyperbolic groups that do not split over a virtually cyclic subgroup; see \Cref{sec:round_trees}.

    The doubling and linearly connected properties are also important in providing a quasisymmetric classification of the unit circle.

    \begin{thm} \cite[Theorem 4.9]{tukiavaisala}
        A metric circle is quasisymmetric to the Euclidean circle $S^1$ if and only if it is doubling and linearly connected.
    \end{thm}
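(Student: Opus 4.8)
The plan is to prove the two implications separately, placing essentially all of the work in the reverse direction. For the \emph{forward direction} I would argue by invariance: the Euclidean circle $S^1$ is doubling, being a subset of the doubling space $\R^2$ (and doubling passes to subspaces), and it is linearly connected, since for any two points the shorter Euclidean arc between them has diameter comparable to their distance. Because both the doubling property and linear connectedness are quasisymmetry invariants (as recorded above), any metric circle quasisymmetric to $S^1$ inherits them, and this direction requires nothing further.

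For the reverse direction I would first isolate a comparability lemma for a doubling, linearly connected metric circle $(Z,d)$. For $x\neq y$ let $\rho(x,y)$ denote the smaller of the diameters of the two arcs of $Z$ with endpoints $x,y$. Each such arc contains $x$ and $y$, so $d(x,y)\le\rho(x,y)$; conversely, linear connectedness yields a connected set of diameter at most $Ld(x,y)$ containing $x$ and $y$, and a connected proper subset of a circle is a sub-arc, so one of the two arcs has diameter at most $Ld(x,y)$, giving $\rho(x,y)\le L\,d(x,y)$. Thus $d\asymp\rho$, and the analogous comparability of Euclidean distance with shorter-arc length holds on $S^1$. Phrasing all later estimates through $\rho$ rather than $d$ is what lets the arc structure be used directly.

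Next I would cut $Z$ into two arcs. Since $Z$ is compact, choose $a,b$ realizing $\diam Z$, so $d(a,b)=\diam Z$, and let $\alpha,\beta$ be the two arcs of $Z$ with endpoints $a,b$. As each contains both $a$ and $b$, each has diameter exactly $\diam Z$. Each is a closed, hence compact and complete, subspace of a doubling space, so each is doubling; and each inherits linear connectedness: for $x,y\in\alpha$, if the arc realizing $\rho(x,y)$ lies in $\alpha$ the inside arc has diameter $\le Ld(x,y)$, while otherwise that realizing arc contains all of $\beta$, forcing $\diam Z=\diam\beta\le\rho(x,y)\le Ld(x,y)$ and hence bounding the inside arc by $\diam Z\le Ld(x,y)$. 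Now $\alpha$ is a complete, doubling, linearly connected space homeomorphic to an interval, so applying the quasi-arc theorem to $(\alpha,d|_\alpha)$ and its two endpoints produces a quasi-arc between them; being an embedded arc in $\alpha$ with endpoints at the two ends of $\alpha$, it must be all of $\alpha$, so $\alpha$ is quasisymmetric to $[0,1]$. The same holds for $\beta$.

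Finally I would glue. Identifying $S^1$ with an upper and a lower semicircle meeting in two points, I combine the parametrizations $f_\alpha\colon\alpha\to[0,1]$ and $f_\beta\colon\beta\to[0,1]$, via the obvious identifications of $[0,1]$ with the two semicircles, into a homeomorphism $f\colon Z\to S^1$ agreeing at $a,b$. The hard part will be verifying the three-point quasisymmetry inequality for $f$. When the arcs realizing $\rho(x,y)$ and $\rho(x,z)$ both lie inside a single piece, the inequality is immediate from the quasisymmetry of $f_\alpha$ or $f_\beta$; the genuine obstacle is the \emph{straddling} triples, whose relevant short arcs cross the two junction points. Here I would combine $d\asymp\rho$ on $Z$ with the comparability of Euclidean distance and shorter-arc length on $S^1$, together with the fact that $f$ carries the two diameter-sized pieces $\alpha,\beta$ onto the two semicircles, to reduce each cross-piece distance ratio to within-piece ratios up to a bounded multiplicative error. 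The choice of $a,b$ as a diameter-realizing pair is precisely what keeps these gluing constants uniform, since it forces both pieces to have diameter comparable to $\diam Z$; assembling the bounds over the finitely many configurations then yields a single control function $\phi$ and completes the proof.
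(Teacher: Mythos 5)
Your forward direction and the first three steps of the reverse direction are sound: the comparability $d \asymp \rho$ (the connected set from linear connectedness must contain one of the two open arcs, by the cut-point argument), the inheritance of doubling and linear connectedness by the two halves $\alpha,\beta$, and the observation that the quasi-arc from the Tukia--Mackay theorem with endpoints the endpoints of $\alpha$ must be all of $\alpha$ are all correct and rather elegant. The genuine gap is the final gluing step, and it is not a matter of unfinished case analysis: the map you construct can actually fail to be quasisymmetric. Gluing two quasisymmetric parametrizations at their endpoints does not produce a quasisymmetric map, because the quasi-arc theorem gives you $f_\alpha$ and $f_\beta$ with no compatibility whatsoever at the junctions. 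Concretely, take $Z = S^1$ itself with $\alpha,\beta$ the two semicircles meeting at $a,b$; let $f_\alpha$ be the identity and let $f_\beta$ be the self-map of the lower semicircle given in arclength coordinate $s \in [0,\pi]$ from $a$ by $s \mapsto \pi(s/\pi)^{1/2}$, a quasisymmetric snowflake-type homeomorphism fixing both endpoints. Each piece is a quasisymmetry onto a semicircle, but for the triple $x = a$, $y \in \alpha$ and $z \in \beta$ both at arclength $t$ from $a$, one has $d(x,y)/d(x,z) \asymp 1$ while
\[
\frac{\bigl|f(x)-f(y)\bigr|}{\bigl|f(x)-f(z)\bigr|} \asymp \frac{t}{\sqrt{\pi t}} \longrightarrow 0 \quad (t \to 0),
\]
and swapping $y$ and $z$ makes the image ratio blow up; no control function $\phi$ can exist. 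This is exactly a straddling triple, and it shows your claimed reduction ``to within-piece ratios up to a bounded multiplicative error'' is false: quasisymmetry of $f_\alpha$ and $f_\beta$ pins $|f(y)-f(a)|$ only between control-function bounds in terms of $d(y,a)/\diam Z$, which permits incompatible moduli of distortion at small scales on the two sides of each junction (here linear versus square-root). Your diameter-realizing choice of $a,b$ controls only the macroscopic sizes of the pieces, not this infinitesimal behavior.

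The missing idea is that the parametrization must be built with uniform control at \emph{all} points and scales simultaneously, rather than assembled from two independently produced pieces. The actual proof of Tukia--V\"ais\"al\"a (the paper states the theorem with a citation and gives no proof) proceeds by iterated subdivision: using compactness and linear connectedness one chooses, at each dyadic generation, points cutting each arc into two sub-arcs of comparable diameter, maps them to the dyadic points of $S^1$, and then uses doubling plus bounded turning to verify quasisymmetry of the limiting homeomorphism. In that construction every point of $Z$ is a junction of some generation, and the comparable-diameter choice at each stage is precisely what rules out the mismatched distortion rates that sink the two-piece gluing. A repair of your argument would have to either construct $f_\alpha, f_\beta$ with matched boundary behavior at $a$ and $b$ (which the quasi-arc theorem does not provide) or abandon the decomposition in favor of such a global scheme.
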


    An analogous statement was achieved for 2-spheres by Bonk--Kleiner, and requires an additional assumption; see \Cref{def:ahlfors}.

    \begin{thm} \cite[Theorem 1.2]{bonkkleiner02}
       Let $Z$ be an Ahlfors 2-regular metric space homeomorphic to $S^2$. Then $Z$ is quasisymmetric to $S^2$ if and only if $Z$ is linearly connected.
    \end{thm}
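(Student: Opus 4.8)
The forward direction is immediate: the round sphere $S^2$ is linearly connected, and linear connectedness is a quasisymmetry invariant by \cite[Theorem 2.11]{tukiavaisala}, so any $Z$ quasisymmetric to $S^2$ inherits the property. The content lies in the converse, so assume $Z$ is Ahlfors $2$-regular, homeomorphic to $S^2$, and linearly connected. The plan is to produce a quasisymmetry $Z \to S^2$ as a limit of explicit combinatorial maps built by circle packing, with the decisive point being a modulus comparison in which the regularity exponent $2$ plays a critical role.

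First I would upgrade linear connectedness (bounded turning) to the two-sided \emph{linear local connectedness} (LLC) condition: that small metric balls are connected through sets of controlled diameter (LLC$_1$), and that complements of balls are likewise connected (LLC$_2$). The LLC$_1$ bound is essentially a restatement of linear connectedness; for LLC$_2$ one uses that $Z$ is a topological $2$-sphere, so that the Jordan--Schoenflies picture converts separation of the two sides of a small ball into the required diameter control. It is a standard topological fact that on a sphere bounded turning is equivalent to LLC, and I would invoke that. I also note that Ahlfors $2$-regularity already forces $Z$ to be doubling, which will be needed for the limiting argument.

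Next I would discretize $Z$ at each scale $\epsilon>0$: choose a maximal $\epsilon$-separated net and form the incidence (nerve) graph of the associated cover by balls. Because $Z$ is a topological sphere and the cover has controlled overlap, this nerve can be realized, after subdivision, as a triangulation $T_\epsilon$ of $S^2$. The Koebe--Andreev--Thurston circle packing theorem then realizes $T_\epsilon$ as a circle packing of the round sphere, and matching the packing to the net produces a discrete map $f_\epsilon \colon Z \to S^2$ whose combinatorics encode the metric of $Z$ at scale $\epsilon$.

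The heart of the argument, and the step I expect to be the main obstacle, is to show the family $\{f_\epsilon\}$ is \emph{uniformly} quasisymmetric, with control independent of $\epsilon$. I would establish this by comparing the combinatorial moduli of ring domains in $T_\epsilon$ with conformal moduli: the upper bound on the measures of balls coming from Ahlfors $2$-regularity controls the modulus from above, while LLC$_2$ supplies the connecting curve families that realize the matching lower bound, and the two estimates agree with constants independent of scale precisely because the regularity exponent equals $2$, the conformal dimension of $S^2$. This two-sided modulus comparison (a \emph{$K$-approximation} estimate in the Bonk--Kleiner framework) is what forces the round sphere to be the extremal representative of its quasisymmetry class; were the regularity exponent strictly greater than $2$, the comparison would degenerate and no conclusion could be drawn, which is exactly why the hypothesis pins the exponent at the critical value. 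Granting the uniform control, doubling yields equicontinuity, so by the Arzel\`a--Ascoli theorem a subsequence of the $f_\epsilon$ converges to a map $f \colon Z \to S^2$; the uniform quasisymmetry bounds pass to the limit, showing $f$ is a quasisymmetric homeomorphism and completing the proof.
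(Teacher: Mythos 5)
This theorem is stated in the paper without proof, cited to Bonk--Kleiner \cite{bonkkleiner02}, and your sketch follows essentially the strategy of that original proof: passing from bounded turning to the two-sided LLC condition on a topological sphere, building approximations of $Z$ at each scale from maximal $\epsilon$-separated nets realized as triangulations of $S^2$ (Bonk--Kleiner's $K$-approximations), mapping these to the round sphere via the Koebe--Andreev--Thurston circle packing theorem, establishing uniform control through a two-sided comparison of combinatorial and classical modulus in which Ahlfors $2$-regularity gives the upper bound and LLC the lower bound with constants independent of scale (and with the exponent $2$ critical, exactly as you note), and extracting a limit map by Arzel\`a--Ascoli. The only cosmetic difference is that Bonk--Kleiner phrase the uniform control as uniformly quasi-M\"obius with a three-point normalization supplying the equicontinuity, rather than directly uniformly quasisymmetric, but on bounded spaces such as these the two notions coincide, so your outline is a faithful reproduction of the cited argument.
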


    This work was subsequently used by Bonk--Kleiner to prove a version of Cannon's Conjecture~\cite{bonkkleiner05S2}. They showed if $G$ is a hyperbolic group with 2-sphere boundary and the Ahlfors regular conformal dimension of $G$ is attained, then $G$ is virtually Kleinian.

    \begin{remark}[Doubling, embeddings, and relatively hyperbolic groups]
        The Bowditch boundary of a relatively hyperbolic group pair $(G, \cP)$ equipped with a visual mertric is doubling if and only if every peripheral subgroup is virtually nilpotent, as shown by Mackay--Sisto~\cite[Proposition 4.5]{mackaysisto-planes} using work of Dahmani--Yaman~\cite{dahmaniyaman}.

        The embedding theorems, \Cref{thm:hyp_planes} and \Cref{thm:hyp_embedding}, have been extended to relatively hyperbolic group pairs where each peripheral subgroup is virtually nilpotent by work of Mackay--Sisto. In this setting Mackay--Sisto~\cite{mackaysisto-planes} extended the proof strategy of Bonk--Kleiner (\Cref{thm:hyp_planes}) to show that a hyperbolic plane quasi-isometrically embeds in the Cayley graph for the group. This question is open for general relatively hyperbolic groups. In addition, Mackay--Sisto~\cite{mackaysisto-maps_relhyp} proved that a group is hyperbolic relative to virtually nilpotent subgroups if and only if it embeds into truncated real hyperbolic space with at most polynomial distortion. Again, the proof utilizes the doubling property and extends the work of Bonk--Schramm (\Cref{thm:hyp_embedding}) to the relatively hyperbolic setting.
    \end{remark}

    We now mention two other quasisymmetry invariants that are important in the quasisymmetric classification of the standard Cantor set.

    \begin{defn}[Uniformly perfect]
                A metric space $(Z,d)$ is {\it uniformly perfect} if there is a constant $c >0 $ so that the set $B_r(z) \setminus B_{cr}(z) \neq \emptyset$
                for each $z \in Z$ and each $0<r<\diam Z$.
    \end{defn}

        Every connected space is uniformly perfect. As explained by Heinonen, ``the condition of uniform perfectness forbids isolated islands in a uniform manner'' \cite[Page 88]{heinonen-Lectures}. The annulus definition of quasisymmetry can be used to show this property is a quasisymmetry invariant.  Ahlflors regular spaces are uniformly perfect, hence boundaries of non-elementary hyperbolic groups are uniformly perfect by work of Coornaert~\cite{coornaert93}.

    \begin{defn}[Uniformly disconnected]
        Let $(Z,d)$ be a metric space. A sequence of points $z_0, z_1, \ldots, z_n \in Z$ is a {\it $\delta$-chain} if $d(z_{i-1}, z_i) \leq \delta d(z_0,z_n)$ for all $i \in \{1, \ldots, n\}$. The points need not be distinct, and the chain is called {\it trivial} if all of the points coincide. The metric space $(Z,d)$ is {\it uniformly disconnected} if there exists $\delta>0$ so that there are no nontrivial $\delta$-chains in $Z$.
    \end{defn}

        A path connected space is not uniformly disconnected, as any pair of points can be joined by a nontrivial $\delta$-chain for any $\delta>0$. As explained by Mackay--Tyson, ``uniform disconnectedness is a quantitative, scale-invariant version of total disconnectedness: it asserts the existence of separating annuli of a definite modulus at all scales and locations'' \cite[Page 9]{mackaytyson}. The two-thirds Cantor set is uniformly disconnected.

    There is a well-known topological characterization of the Cantor set: any nonempty, compact, totally disconnected metric space without isolated points is homeomorphic to the standard two-thirds Cantor set.     The above analytic properties can be used to provide a quasisymmetric classification of the standard two-thirds Cantor set.

     \begin{thm}[Quasisymmetric classification of the Cantor set] \cite{davidsemmes}
         A metric space is quasisymmetric to the two-thirds Cantor set if and only if it is compact, doubling, uniformly perfect, and uniformly disconnected.
     \end{thm}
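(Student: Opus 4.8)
\textbf{The forward implication} is immediate from the quasisymmetry invariance results already in hand. A quasisymmetry is in particular a homeomorphism, so any space quasisymmetric to the compact two-thirds Cantor set is compact. The two-thirds Cantor set is doubling (being a subset of $\R$), uniformly perfect, and uniformly disconnected, and each of these three properties is a quasisymmetry invariant. Hence any space quasisymmetric to it inherits all four properties, and there is nothing more to do.

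\textbf{For the converse}, let $(Z,d)$ be compact, doubling, uniformly perfect, and uniformly disconnected, and rescale so that $\diam Z = 1$. Uniform disconnectedness forces $Z$ to be totally disconnected and uniform perfectness forbids isolated points, so by the topological characterization of the Cantor set recalled above $Z$ is already homeomorphic to it; the work is to upgrade this to a quasisymmetry. The plan is to realize $(Z,d)$ as the boundary of a tree equipped with a visual metric. Let $\delta$ be the uniform disconnectedness constant, fix $\lambda \in (0,1)$, and for each $n \ge 0$ declare $x \approx_n y$ when there is a chain $x = z_0,\dots,z_k = y$ with $d(z_{i-1},z_i) < \delta\lambda^n$ for all $i$. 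Since connecting two distinct points requires a step exceeding $\delta\, d(x,y)$, each $\approx_n$-class has diameter $<\lambda^n$; and $x \not\approx_n y$ forces $d(x,y) \ge \delta\lambda^n$, so distinct classes are $\delta\lambda^n$-separated. These partitions are clopen and refine as $n$ grows, so they assemble into a rooted tree $T$ whose level-$n$ vertices are the $\approx_n$-classes.

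Writing $n(x,y)$ for the largest $n$ with $x \approx_n y$, the two bounds above give $d(x,y) \asymp \lambda^{n(x,y)}$, so $(Z,d)$ is bilipschitz to $(\p T,\rho)$ with $\rho(x,y)=\lambda^{n(x,y)}$, and $\rho$ is precisely a visual metric on the tree $T$ since $(x,y)_{\mathrm{root}} = n(x,y)$. It remains to control the combinatorics of $T$. Doubling bounds by a constant $N$ the number of level-$(n+1)$ classes inside a level-$n$ class. \emph{The step I expect to be the main obstacle} is extracting from uniform perfectness that classes neither fail to split nor shrink too quickly: if a level-$n$ class $Q$ had diameter much smaller than $\lambda^n$, then for a range of radii $r$ the ball $B_r(z)$ about any $z \in Q$ would equal $Q$ itself (nothing outside $Q$ lies within $\delta\lambda^n$ of $z$), making $B_r(z)\setminus B_{cr}(z)$ empty and contradicting uniform perfectness. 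This yields $\diam Q \asymp \lambda^n$ for every level-$n$ class, so each class splits after a uniformly bounded number of levels; collapsing these bounded non-branching segments exhibits $T$ as quasi-isometric to a bushy tree of valence in $[3,N+1]$.

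\textbf{To conclude}, such a tree is quasi-isometric to the regular binary tree $T_2$, whose boundary carrying the visual metric $3^{-n(x,y)}$ is bilipschitz to the two-thirds Cantor set. A quasi-isometry between trees, which are hyperbolic, induces a quasisymmetry of their boundaries with respect to visual metrics—the variant of \Cref{thm:QI_QS} for spaces without a geometric group action established by Buyalo--Schroeder~\cite{buyaloschroeder}—so we obtain a quasisymmetry $(\p T,\rho)\to(\p T_2,3^{-n})$. Composing the bilipschitz map $(Z,d)\to(\p T,\rho)$, this quasisymmetry, and the bilipschitz map to the two-thirds Cantor set, and invoking that bilipschitz maps are quasisymmetries and that quasisymmetry is an equivalence relation, we conclude that $(Z,d)$ is quasisymmetric to the two-thirds Cantor set.
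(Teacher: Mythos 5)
The paper offers no proof of this theorem --- it is stated with a citation to David--Semmes --- so there is no in-paper argument to compare against; your proposal must stand on its own, and it does. What you have written is in essence the standard proof (the one in David--Semmes, also presented in Mackay--Tyson's book): use uniform disconnectedness to build nested clopen partitions by $\delta\lambda^n$-chains, check that classes have diameter $<\lambda^n$ and are $\delta\lambda^n$-separated, organize the partitions into a rooted tree, and identify $(Z,d)$ bilipschitzly with the boundary of that tree in the ultrametric $\lambda^{n(x,y)}$. Your key estimates are correct: the diameter bound follows because a chain with steps $<\delta\lambda^n \leq \delta d(x,y)$ would be a forbidden nontrivial $\delta$-chain, and the separation bound because a two-point chain witnesses $x\approx_n y$ whenever $d(x,y)<\delta\lambda^n$. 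The step you flagged as the likely obstacle is also handled correctly: if $\diam Q < c\delta\lambda^n$, then for any $r \in (\diam Q/c,\ \delta\lambda^n)$ one has $B_r(z) = Q \subseteq B_{cr}(z)$, so the annulus $B_r(z)\setminus B_{cr}(z)$ is empty, violating uniform perfectness; hence $\diam Q \geq c\delta\lambda^n$, which forces branching within a uniformly bounded number of levels.

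A few details are glossed but are all repairable or standard. First, if $Z$ is not a single $\approx_0$-class you need an artificial root joining the level-$0$ classes (or finitely many negative levels, which suffice since $\delta\lambda^n$ eventually exceeds $\diam Z = 1$); relatedly, $(x,y)_{\mathrm{root}} = n(x,y)$ holds only up to a bounded additive error, which affects only bilipschitz constants. Second, the fact that a bounded-valence rooted tree with branching at least every $k_0$ levels is quasi-isometric to the binary tree is asserted, not proved; it is a standard fact about bushy bounded-valence trees, acceptable to cite in this context. Third, invoking the Buyalo--Schroeder boundary-extension theorem (the variant of \Cref{thm:QI_QS} without group actions) requires the trees to be proper, geodesic, and visual; your trees are locally finite with every vertex on a ray to infinity, so this is satisfied, though it deserves a sentence. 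Finally, the statement (as in the paper) implicitly excludes the degenerate one-point space, which vacuously satisfies all four conditions; a parenthetical assumption that $Z$ has at least two points closes this. None of these is a genuine gap; the argument is sound.
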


    \section{Hausdorff and conformal dimension} \label{sec:dimensions}

Introduced by Felix Hausdorff in 1918, Hausdorff dimension is a notion of fractal dimension that captures the number of sets of a given diameter required to cover a metric space.

    \subsection{Hausdorff dimension}

Dimension can be viewed as an exponent: the dimension of $\R^d$ equals~$d$. Hausdorff dimension is a {\it critical exponent} - it is the infimal exponent that makes a natural series converge. For additional background, we recommend the book by Falconer~\cite{falconer}.

    \begin{center}
       {\it ``Very roughly, a dimension provides a description of how much space a set fills.''

       - Falconer~\cite[Page xx]{falconer} }
    \end{center}

We begin with a simple, motivating example.

    \begin{example} \label{ex:01dim} (The metric space $[0,1]^d$).
        Let $Z = [0,1]^d$. For $\epsilon>0$ let $N(\epsilon)$ be the number of cubes of side length $\epsilon$ needed to cover $Z$. We record this count in the following table.
        \vskip.1in

        \begin{center}
        $\begin{array}{c|c|c|c|c}
                        &  \,\,d=1 \,\,      & \,\,d=2 \,\,   & \ldots   & d \\
            \epsilon    & N(\epsilon)   & N(\epsilon)   &   & \,\, N(\epsilon) \,\,  \\
            \hline
            \frac{1}{2} & 2         & 4 = 2^2   &  & 2^d \\
            \frac{1}{2^2} & 2^2       & (2^2)^2   &  & (2^2)^d \\
            \frac{1}{2^3} & 2^3       & (2^3)^2   & \,\,\ldots \,\, & (2^3)^d \\
            \vdots & \vdots& \vdots& & \vdots\\
            \frac{1}{2^n} & 2^n       & (2^n)^2   &  & (2^n)^d \\

        \end{array}$
        \end{center}
    \vskip.1in

        In the table above, for $Z = [0,1]^d$, we have
            \[N(\epsilon) = \left(\frac{1}{\epsilon}\right)^d.\]
        Equivalently, taking logs,
            \[ \log N(\epsilon) = d\log\left(\frac{1}{\epsilon}\right), \]
        so
            \[ d = \frac{\log N(\epsilon)}{\log(\frac{1}{\epsilon})}.\]
        The right-hand side of the expression above keeps track of the number of sets needed to cover the metric space $Z$, relative to the diameter of the sets. Taking logs recovers the exponent.
    \end{example}

    The above example generalizes to the notion of box-counting dimension (also known as {\it Minkowski dimension} or {\it Minkowski–Bouligand dimension}), which gives a good first intuition for Hausdorff dimension.

    \begin{defn} [Box-counting dimension]
        Let $Z \subset \R^n$ be a bounded subset. For $\epsilon>0$, let $N(\epsilon)$ be the minimum number of cubes of side length $\epsilon$ needed to cover $Z$. The {\it box-counting dimension} of $Z$ is
            \[ \dim_{\boxd}(Z,d) = \lim_{\epsilon \rightarrow 0} \frac{\log N(\epsilon)}{\log(\frac{1}{\epsilon})}. \]
    \end{defn}

    \begin{example}[Box-counting dimension of the two-thirds Cantor set] \label{ex:CantorPacking}
        A computation similar to that of \Cref{ex:01dim} with covers of diameter $\epsilon_k = \frac{1}{3^k}$ and $N(\epsilon_k) = 2^k$ computes that the box-counting dimension of the two-thirds Cantor set embedded in the unit interval is $\frac{\log 2}{\log 3}$.
    \end{example}

    The box-counting definition generalizes to arbitrary metric spaces via {\it covering dimension} and {\it packing dimension}. In these cases, cubes in $\R^n$ are replaced by open balls in the metric space. For covering dimension, one counts the minimal number of balls required to cover the space. For packing dimension, one counts the maximum number of disjoint balls that can fit in the space. The limit above need not exist, and one can define the upper and lower box dimension via a $\liminf$ and $\limsup$, respectively.

    While box-counting dimension and covering dimension are simpler and easier to work with than Hausdorff dimension, they are only finitely stable in general. That is,
        \[\dim_{\boxd}(A_1 \cup \ldots \cup A_n) = \max\{ \dim_{\boxd}(A_1), \ldots, \dim_{\boxd}(A_n).\} \]
    Hausdorff dimension, on the other hand, is defined via a measure and is countably stable:
        \[ \Hdim \bigcup_{i=1}^{\infty} A_i = \sup_{1 \leq i \leq \infty} \Hdim(A_i). \]
    For example, the box-counting dimension and the covering dimension of the rationals $\Q \subset \R$ equals one, while the Hausdorff dimension of the rationals equals zero.

    A key difference in the definition of Hausdorff dimension is that diameter of the sets used to cover the metric space can vary - their diameter is simply bounded above by $\epsilon$ - and, for each $\epsilon>0$, one can take a countable collection of sets rather than a finite one.

    \begin{defn}[Hausdorff dimension]
        Let $(Z,d)$ be a metric space. Let $\epsilon>0$ and $s>0$. Let
            \[\cH^s_{\epsilon}(Z) := \inf \left\{ \, \sum_{i=1}^{\infty} (\diam U_i)^s \, \, \Bigg| \,\,  \diam(U_i) \leq \epsilon \,\, \textrm{ and } \,\,  Z \subset \bigcup_{i=1}^{\infty} U_i \, \right\}.\]
        The {\it $s$-dimensional Hausdorff measure} is
            \[\cH^s(Z):= \lim_{\epsilon \rightarrow 0} \cH^s_{\epsilon}(Z).\]
        The {\it Hausdorff dimension} of $Z$ is
            \begin{eqnarray*}
                \Hdim(Z) &:=& \inf \{s \geq0 \,\,|\,\, \cH^s(Z) = 0\} \\
                        &=& \sup \{s \geq0 \,\, |\,\, \cH^s(Z) = \infty\}.
            \end{eqnarray*}
        \end{defn}

    There is a limit in the definition of the $s$-dimensional Hausdorff measure. For each $s>0$, the function $\epsilon \mapsto \cH^s_{\epsilon}$ is non-decreasing as $\epsilon \rightarrow 0$, since the family of possible covers decreases as $\epsilon \rightarrow 0$. Hence, this limit exists and lies in $[0,\infty]$. By definition, Hausdorff dimension is a critical exponent.

    \begin{remark}[Computing Hausdorff dimension using balls]  \label{rem:balls} 
        In the defintion of Hausdorff dimension given above, the sets $U_i$ covering the metric space $X$ can be arbitrary. One can define Hausdorff dimension by covering $Z$ using balls instead; both definitions appear frequently in the literature and yield an equal dimension. This observation follows since a cover by balls yields a potential cover in the computation of $\cH^s_{\epsilon}$, and conversely, if $\{U_i\}_{i=1}^{\infty}$ is a cover of $Z$ with $\diam(U_i)<\epsilon$, then each $U_i$ is contained in a ball of radius at most $\epsilon$.
    \end{remark}

    Hausdorff dimension is a bilipschitz invariant of a metric space, but is not invariant under general quasisymmetries, as seen in the next example.

    \begin{example}[Hausdorff dimension increases under snowflaking]   Snowflaking increases the Hausdorff dimension of a metric space. In particular, there is no supremum to the Hausdorff dimension within a quasisymmetry class of a metric space.

    Specifically, let $(Z,d)$ be a metric space. If $\alpha \in (0,1)$, then
        \[ \Hdim(Z,d^\alpha) = \frac{1}{\alpha}\Hdim(Z,d). \]
    Intuition can be obtained by carrying out the same computations as in \Cref{ex:01dim} with $\alpha = \frac{1}{2}$. Lengths are increased by the same exponent at all scales. For example, it now requires 4 sets to cover $[0,1]$ with intervals of diameter $\frac{1}{2}$ in the metric $d^{\alpha}$. Indeed, the distance between consecutive points in the list $0, \frac{1}{4}, \frac{1}{2}, \frac{3}{4}, 1$ now equals $\frac{1}{2}$, and so on.

    To compute the equality rigorously, first show that $\cH^{s/\alpha}(Z,d^{\alpha}) =\cH^s(Z,d)$ for all $s>0$. To see this, let $\epsilon >0$. Suppose $\{U_i\}_{i=1}^{\infty}$ is a cover of $(Z,d)$ by sets of diameter at most $\epsilon$. Then $\{U_i\}_{i=1}^{\infty}$ is a cover of $(Z,d^{\alpha})$ of sets of diameter at most $\epsilon^{\alpha}$. For $U \subset Z$, let $\diam U$ and $\diam_{\alpha} U$ denote the diameter of $U$ in the metrics $d$ and $d^{\alpha}$, respectively. Then $\diam_\alpha U = (\diam U)^{\alpha}$, so
        \[ \sum_{i=1}^{\infty} (\diam_{\alpha} U_i)^{s/\alpha} = \sum_{i=1}^{\infty} (\diam U_i)^s. \]
    Thus, $\cH_{\epsilon^{\alpha}}^{s/\alpha}(Z,d^{\alpha}) \leq \cH_{\epsilon}^s(Z,d)$. Since $\epsilon \rightarrow 0$ as $\epsilon^{\alpha}\rightarrow 0$, this implies $\cH^{s/\alpha}(Z,d^{\alpha}) \leq \cH^s(Z,d)$. Switching the roles of $(Z,d)$ and $(Z,d^{\epsilon})$ gives the other inequality.

    Finally, to see that $\Hdim(Z,d^\alpha) = \frac{1}{\alpha}\Hdim(Z,d)$, suppose that $s>\Hdim(Z,d)$. Then, $\cH^{s/\alpha}(Z,d^{\alpha})  = \cH^s(Z,d) = 0$, so $\Hdim(Z,d^{\alpha}) \leq \frac{s}{\alpha}$. Similarly, if $s<\Hdim(Z,d)$, then $\cH^{s/\alpha}(Z,d^{\alpha})  = \cH^s(Z,d) = \infty$. Thus, $\Hdim(Z,d^{\alpha}) = \frac{1}{\alpha}\Hdim(Z,d)$.
    \end{example}

    \begin{remark}[Upper bounds on Hausdorff dimension via specific covers] \label{rem:HdimUpperbound}
        Upper bounds on the Hausdorff dimension of a metric space are easier to produce than lower bounds.  Upper bounds can be obtained from specifying a family of covers of the space by sets of diameter tending to zero as follows. Let $(Z,d)$ be a metric space. For $\epsilon>0$, let $\{U_i^{\epsilon}\}$ be a cover of $Z$ by sets of diameter at most~$\epsilon$. Then, $\cH^s_{\epsilon}(Z) \leq \sum_i (\diam U_i^{\epsilon})^s$. In practice, typically each cover $\{U_i^{\epsilon}\}$ has finitely many sets, say at most $N(\epsilon)$ of them. Then, $\sum_i (\diam U_i^{\epsilon})^s \leq N(\epsilon) \epsilon^s$. One then needs to find $s$ so that $\lim_{\epsilon \rightarrow 0} \sum_i (\diam U_i^{\epsilon})^s = 0$. Again, in practice, the choice of $s$ is often clear from the relationship between $\epsilon$ and $N(\epsilon)$ as in the examples given earlier in this section. Then, with this choice of $s$,
            \[ \cH^s(Z)  := \lim_{\epsilon \rightarrow 0} \cH^s_{\epsilon}(Z) \leq \lim_{\epsilon \rightarrow 0} \sum_i (\diam U_i^{\epsilon})^s  = 0.\]
        Thus, $\Hdim(Z,d) \leq s$.
    \end{remark}

    \begin{remark}[Lower bounds on Hausdorff dimension via the Mass Distribution Principle] A lower bound on Hausdorff dimension can be obtained by arguing that each cover in a family of covers achieves the infimum in $\cH^s_{\epsilon}(Z)$. This strategy works in special cases like the two-thirds Cantor set as described in \cite[Example 2.7]{falconer}. However, this method is challenging in general, and there is an alternative tool called the Mass Distribution Principle that one can apply in many cases instead. This theorem, whose proof follows from definitions and is given below, relates Hausdorff dimension (and the Hausdorff measures) to the existence of a measure for which the measure of balls behaves nicely.
    \end{remark}

    \begin{thm}[Mass Distribution Principle]
        Suppose there exists a positive measure $\mu$ on $(Z,d)$ so that $\mu\bigl(B_r(z)\bigr) \leq Cr^s$ for some $C>0$ and for all $r>0$ and $z \in Z$. Then, $\cH^s(Z) >0$, and hence $s \leq \Hdim(Z,d)$.
    \end{thm}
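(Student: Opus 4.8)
The plan is to turn the pointwise ball estimate $\mu\bigl(B_r(z)\bigr) \le C r^s$ into a uniform lower bound on $\cH^s_\epsilon(Z)$ by testing it against an arbitrary countable cover and invoking countable subadditivity of $\mu$. Here I read ``positive measure'' as meaning $\mu(Z) > 0$ (a genuine mass distribution); were $\mu$ the zero measure the conclusion would fail, so this is an essential hypothesis. The target inequality is $\cH^s(Z) \ge \mu(Z)/(C 2^s)$, from which everything follows.

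First I would fix $\epsilon > 0$ and let $\{U_i\}_{i=1}^\infty$ be any cover of $Z$ with $\diam U_i \le \epsilon$ for all $i$. Discarding empty sets and choosing a point $z_i \in U_i$, every other point of $U_i$ lies within $\diam U_i$ of $z_i$, so $U_i \subseteq B_{2\diam U_i}(z_i)$ (the factor $2$ converts the closed ball of radius $\diam U_i$ into an open one and absorbs the degenerate case $\diam U_i = 0$, where $\mu(U_i) = 0$). This containment is the crux: it lets me apply the hypothesis, which is stated only for balls, to the arbitrary covering sets $U_i$ at the cost of a harmless constant,
\[ \mu(U_i) \;\le\; \mu\bigl(B_{2\diam U_i}(z_i)\bigr) \;\le\; C\,(2\diam U_i)^s \;=\; C\,2^s\,(\diam U_i)^s. \]

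Next I would sum over $i$ and use that $\{U_i\}$ covers $Z$ together with countable subadditivity of $\mu$, giving $\mu(Z) \le \sum_i \mu(U_i) \le C 2^s \sum_i (\diam U_i)^s$, hence $\sum_i (\diam U_i)^s \ge \mu(Z)/(C 2^s)$. Since this holds for every admissible cover, taking the infimum yields $\cH^s_\epsilon(Z) \ge \mu(Z)/(C 2^s)$, a bound independent of $\epsilon$; letting $\epsilon \to 0$ gives $\cH^s(Z) \ge \mu(Z)/(C 2^s) > 0$. Finally, because $\cH^t(Z) = 0$ for every $t > \Hdim(Z,d)$, the strict positivity $\cH^s(Z) > 0$ forces $s \le \Hdim(Z,d)$, as claimed.

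I expect the only genuine obstacle to be the mismatch between the hypothesis (phrased for metric balls) and the definition of Hausdorff measure (phrased for arbitrary covering sets), compounded by the fact that the $U_i$ need not be $\mu$-measurable. Enclosing each $U_i$ in the Borel ball $B_{2\diam U_i}(z_i)$ resolves both issues at once, since the bound $\mu(U_i) \le \mu\bigl(B_{2\diam U_i}(z_i)\bigr)$ requires no measurability of $U_i$; this is essentially the content of Remark~\ref{rem:balls}. Everything else is bookkeeping.
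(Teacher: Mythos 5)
Your proof is correct and takes essentially the same approach as the paper's: enclose each covering set $U_i$ in a ball centered at a point $z_i \in U_i$, apply the hypothesis to that ball, and use countable subadditivity of $\mu$ to get a lower bound on $\sum_i (\diam U_i)^s$ that is independent of $\epsilon$. The only difference is cosmetic: your radius $2\diam U_i$ (carefully handling open balls and zero-diameter sets) gives the constant $\mu(Z)/(C2^s)$, whereas the paper takes $U_i \subset B_{\diam U_i}(z_i)$ and gets $\mu(Z)/C$.
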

    \begin{proof}
        Let $\epsilon>0$, and let $\{U_i\}$ be a cover of $Z$ of sets of diameter at most $\epsilon$. We first show that
            \begin{eqnarray} \label{eqn:MDP}
                0 < \frac{1}{C}\mu(Z) \leq \sum_i (\diam U_i)^s.
            \end{eqnarray}
        Indeed, let $z_i \in Z$ so that $U_i \subset B_{\diam U_i}(z_i)$. Then
            \[ 0  < \mu(Z) = \mu \left( \bigcup_i U_i \right) \leq \sum_i \mu(U_i) \leq \sum_i \mu\bigl(B_{\diam U_i}(z_i)\bigr) \leq \sum_i C(\diam U_i)^s. \]
        Equation~\ref{eqn:MDP} implies that $\cH^s_{\epsilon}(Z) \geq \frac{1}{C}\mu(Z)>0$ for all $\epsilon$. Therefore, $\cH^s(Z) := \lim_{\epsilon \rightarrow 0}\cH^s_{\epsilon}(Z) \geq \frac{1}{C}\mu(Z) >0$, so $s \leq \Hdim(Z,d)$.
    \end{proof}

    The Mass Distribution Principle together with a natural family of covers allows one to compute the Hausdorff dimension of familiar fractals as follows.

    \begin{example}[Hausdorff dimension of the unit interval]
        The computation carried out with covers of the unit interval in \Cref{ex:01dim} shows that 1 is an upper bound for the Hausdorff dimension of the unit interval. The Mass Distribution Principle with respect to Lebesgue measure yields a lower bound of $1$.
    \end{example}

    \begin{example}[Hausdorff dimension of the two-thirds Cantor set] \cite[Page 55]{falconer}. \label{ex:HDimCantor}
        An upper bound of $\frac{\log 2}{\log 3}$ on the Hausdorff dimension of the two-thirds Cantor set $\cC$ follows from the natural covers described in \Cref{ex:CantorPacking}. There is also a natural measure on the Cantor set to which the Mass Distribution Principle can be applied. More specifically, let $\mu$ be the probability measure on the Cantor set constructed by assigning measure $\frac{1}{2^k}$ to each of the $2^k$ intervals of length $\frac{1}{3^k}$ in the construction of the Cantor set. Fix $B_r(z) \subset \cC$ for some $0<r$ and $z \in \cC$. Let $k \in \N$ such that $\frac{1}{3^{k+1}} \leq 2r \leq \frac{1}{3^k}$. Then $B_r(z)$ intersects at most one of the intervals of length $\frac{1}{3^k}$ in the construction of $\cC$. Therefore,
            \[ \mu\bigl(B_r(z)\bigr) \leq \frac{1}{2^k} = \left( \frac{1}{3^k}\right)^{\log 2/\log 3} \leq (6r)^{\log 2/\log 3}.  \]
        Thus, $\Hdim(\cC) = \frac{\log 2}{\log 3}$, where the lower bound follows from the Mass Distribution Principle.
    \end{example}

    The above discussion, on upper bounding Hausdorff dimension via specific covers and using a nice measure to give lower bounds, can be combined in the case that the metric space is Ahlfors regular in the following sense.

    \begin{defn}[Ahlfors regular] \label{def:ahlfors}
        A metric space $(Z,d)$ is {\it Ahlfors $s$-regular} for $s>0$ if there exists a Borel measure $\mu$ on $Z$ and a constant $C<\infty$ so that
            \[\frac{1}{C}r^s \leq \mu(\overline{B}_r(z)) \leq Cr^s\]
         for every closed ball of radius $r$ in $Z$.
    \end{defn}

    \begin{lem} \cite[Chapter 8.7]{heinonen-Lectures}
        If $(Z,d)$ is an Ahlfors $s$-regular metric space, then the Hausdorff dimension of $Z$ equals $s$.
    \end{lem}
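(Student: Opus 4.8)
The plan is to establish the two inequalities $s \leq \Hdim(Z,d)$ and $\Hdim(Z,d) \leq s$ separately, using the two halves of the Ahlfors regularity condition. The lower bound is essentially immediate, while the upper bound carries the real content.

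First I would deduce $s \leq \Hdim(Z,d)$ directly from the Mass Distribution Principle. The upper Ahlfors estimate gives $\mu(B_r(z)) \leq \mu(\overline{B}_r(z)) \leq C r^s$ for all $z \in Z$ and $r>0$, which is exactly the hypothesis of the Mass Distribution Principle; moreover the lower Ahlfors estimate guarantees that $\mu$ is a positive measure, since every ball has positive measure and hence $\mu(Z)>0$ whenever $Z \neq \emptyset$. Thus $\cH^s(Z)>0$, and the Mass Distribution Principle yields $s \leq \Hdim(Z,d)$.

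The upper bound $\Hdim(Z,d) \leq s$ is where I expect the main obstacle, and here one uses the lower Ahlfors estimate to control covering numbers by a packing argument. For a fixed small $r>0$, I would choose a maximal $r$-separated subset $\{z_i\} \subset Z$; by maximality the balls $\{B_r(z_i)\}$ cover $Z$, while the balls $\{B_{r/2}(z_i)\}$ are pairwise disjoint. To keep the total measure finite I would work inside a bounded region $\overline{B}_n(z_0)$: for the centers lying in $\overline{B}_n(z_0)$ and $r \leq 2$, disjointness together with the lower estimate $\mu(B_{r/2}(z_i)) \geq \frac{1}{C}(r/2)^s$ and the upper estimate $\mu(\overline{B}_{n+1}(z_0)) \leq C(n+1)^s$ forces the number of such centers to be at most $M_n r^{-s}$ for a constant $M_n$ independent of $r$. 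Hence $\overline{B}_n(z_0)$ is covered by at most $M_n r^{-s}$ sets of diameter at most $2r$. Feeding this family of covers into \Cref{rem:HdimUpperbound} (and using \Cref{rem:balls} to compute Hausdorff dimension with balls), for any $s'>s$ one obtains $\cH^{s'}_{2r}(\overline{B}_n(z_0)) \leq M_n 2^{s'} r^{s'-s} \to 0$ as $r \to 0$, so $\cH^{s'}(\overline{B}_n(z_0)) = 0$.

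Finally, to pass from bounded pieces to all of $Z$, I would invoke the countable stability of Hausdorff dimension: since $Z = \bigcup_{n} \overline{B}_n(z_0)$ and each piece has $\cH^{s'} = 0$ for every $s'>s$, countable subadditivity of Hausdorff measure gives $\cH^{s'}(Z)=0$, whence $\Hdim(Z,d) \leq s$. Combining the two inequalities yields $\Hdim(Z,d)=s$. The only genuinely delicate point is the packing step: one must ensure the disjoint half-balls stay inside a set of finite measure, and check that the open/closed ball and diameter/radius discrepancies only affect multiplicative constants, which do not alter the critical exponent.
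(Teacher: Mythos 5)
Your proof is correct, and there is in fact no proof in the paper to diverge from: the lemma is quoted from Heinonen without argument. Your write-up supplies the standard proof, assembled precisely from the paper's own toolkit --- the Mass Distribution Principle for the lower bound, the covering strategy of \Cref{rem:HdimUpperbound} together with \Cref{rem:balls} for the upper bound, and countable stability of Hausdorff dimension to globalize --- so it fits the text seamlessly. Two small points deserve attention, though neither is a genuine gap since both only perturb multiplicative constants. First, in the packing step, maximality of the $r$-separated set guarantees that some ball $B_r(z_i)$ contains a given point of $\overline{B}_n(z_0)$, but that center $z_i$ may lie just outside $\overline{B}_n(z_0)$; you should count the centers lying in $\overline{B}_{n+r}(z_0)$ (say $\overline{B}_{n+2}(z_0)$ when $r \leq 2$), whose disjoint half-balls then sit inside $\overline{B}_{n+3}(z_0)$, and the bound $M_n r^{-s}$ survives with a slightly larger constant. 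Second, the estimates in \Cref{def:ahlfors} can only be meant for radii $0 < r \leq \diam Z$, since for $r > \diam Z$ the lower bound would force $\mu(Z) = \infty$, contradicting the upper bound at small scales; so when $Z$ is bounded you should replace $\mu\bigl(\overline{B}_{n+1}(z_0)\bigr) \leq C(n+1)^s$ by $\mu(Z) \leq C (\diam Z)^s$, and the exhaustion $Z = \bigcup_n \overline{B}_n(z_0)$ collapses to finitely many steps. With these cosmetic repairs the two inequalities combine exactly as you say, and your observation that the \emph{lower} Ahlfors estimate is what makes $\mu$ a positive measure (so that the Mass Distribution Principle applies at all) is the right reading of the hypothesis.
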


    Topological dimension, or {\it Lebesgue covering dimension}, is defined for any topological space and provides a useful lower bound on the Hausdorff dimension of a metric space. As with Hausdorff dimension, topological dimension is also defined with respect to covers of the space. We recall the definition here for convenience.

    \begin{defn}[Topological dimension]
        Let $Z$ be a topological space.  The {\it order} of an open cover $\cU$ of $Z$ is equal to the smallest number $n$, if it exists, for which each point of $Z$ is contained in at most $n$ open sets in $\cU$. A {\it refinement} of an open cover $\cU = \{U_{\alpha}\}$ is an open cover $\cV = \{V_\beta\}$ so that for all $V_\beta \in \cV$ there exists $U_\alpha \in \cU$ so that $V_\beta \subset U_\alpha$. The {\it topological dimension} of $Z$ is the minimum $n$, if it exists, such that every finite open cover $\cU$ of $Z$ has an open refinement $\cV$ with order $n+1$. If no such minimal $n$ exists, then the space is defined to have infinite topological dimension.
    \end{defn}

    \begin{thm} \label{thm:TopdimHausdim} \cite[Theorem 8.14]{heinonen-Lectures}
        Let $Z$ be a metric space equipped with the metric topology. Then the topological dimension of $Z$ is less than or equal to the Hausdorff dimension of $Z$.
    \end{thm}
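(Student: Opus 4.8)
The plan is to prove the following quantized reformulation: for every integer $n \ge 0$,
\[ \cH^n(Z) = 0 \implies \dim_T Z \le n-1, \]
where $\dim_T$ denotes topological (covering) dimension. This suffices to prove the theorem. Indeed, set $d := \Hdim Z$ and let $n$ be the least integer strictly greater than $d$; then $\cH^n(Z) = 0$ by the very definition of Hausdorff dimension as a critical exponent, so the implication gives $\dim_T Z \le n-1 \le d = \Hdim Z$. Throughout I would work with the small inductive dimension $\operatorname{ind}$, which coincides with covering dimension for separable metric spaces by the Urysohn–Menger theorem; since the spaces of interest (compact boundaries of hyperbolic groups) are separable, this costs nothing.

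The argument is an induction on $n$. The base case $n=0$ is immediate: $\cH^0$ is counting measure, so $\cH^0(Z)=0$ forces $Z=\emptyset$, and $\dim_T \emptyset = -1$. For the inductive step I would exploit the standard characterization that $\operatorname{ind} Z \le n-1$ holds as soon as every point of $Z$ has arbitrarily small open neighborhoods whose topological boundaries have $\operatorname{ind} \le n-2$. So I fix $z \in Z$ and search for such neighborhoods among the metric balls $B_t(z)$, where $t \to 0$.

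The descent in dimension is supplied by a coarea (Eilenberg) inequality applied to the $1$-Lipschitz function $f = d(\cdot, z)$. Taking exponent $s = n$, this gives
\[ \int_0^\infty \cH^{n-1}\bigl(f^{-1}(t)\bigr)\, dt \le C\,\cH^n(Z) = 0. \]
Since $f^{-1}(t)$ is the sphere $S_t(z) = \{x : d(x,z)=t\}$, the integrand vanishes for almost every $t$, so $\cH^{n-1}(S_t(z)) = 0$ for a.e. $t>0$. Because $\partial B_t(z) \subseteq S_t(z)$, monotonicity of Hausdorff measure yields $\cH^{n-1}(\partial B_t(z)) = 0$ for a.e. $t$, and the inductive hypothesis at level $n-1$ then gives $\dim_T(\partial B_t(z)) \le n-2$ for a.e. $t$. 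In particular there are such good radii tending to $0$, producing the required small neighborhoods of $z$; hence $\operatorname{ind} Z \le n-1$, which closes the induction.

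The technical heart of the proof — and the step I expect to be the main obstacle to make fully rigorous — is the Eilenberg coarea inequality, since it is exactly the device that converts smallness of $\cH^n$ on $Z$ into smallness of $\cH^{n-1}$ on generic distance spheres; everything else is bookkeeping with the inductive definition of dimension. A secondary point requiring care is the identification $\operatorname{ind} Z = \dim_T Z$, which is why I restrict to the separable setting that encompasses all cases relevant here.
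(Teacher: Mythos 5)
The paper offers no proof of this statement---it is quoted directly from Heinonen \cite[Theorem 8.14]{heinonen-Lectures}---and your argument is essentially the standard Szpilrajn argument given in that cited source: induction on $n$ via the quantized claim $\cH^n(Z)=0 \Rightarrow \dim_T Z \le n-1$, with the dimensional descent supplied by the Eilenberg coarea inequality applied to the $1$-Lipschitz distance function $d(\cdot,z)$, so your proposal is correct and matches the intended proof. Two of your caveats can moreover be discharged cheaply: separability (needed to identify $\operatorname{ind}$ with covering dimension) is automatic rather than an extra hypothesis, since $\cH^s(Z)<\infty$ for some finite $s$ forces $\cH^s_\epsilon(Z)<\infty$ for every $\epsilon>0$, hence countable covers by sets of arbitrarily small diameter and thus a countable dense set; and the only case of Eilenberg's inequality you need is the zero-measure case, which has a two-line proof---given a cover $\{U_i\}$ with $\sum_i(\diam U_i)^n$ small, each fiber $f^{-1}(t)$ is covered by those $U_i$ it meets, and since $U_i$ meets $f^{-1}(t)$ only for $t$ in an interval of length at most $\diam U_i$, one gets $\int^{*}\cH^{n-1}_{\epsilon}\bigl(f^{-1}(t)\bigr)\,dt \le \sum_i (\diam U_i)^{n-1}\diam U_i = \sum_i (\diam U_i)^n$.
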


    \subsection{Hausdorff dimension of boundaries}

    Coornaert~\cite{coornaert93} computed the Hausdorff dimension of the boundary of a hyperbolic group equipped with a visual metric. This result, which is stated below, relates the Hausdorff dimension of the boundary to the {\it volume entropy} or {\it critical exponent of the Poincar\'{e} series} for the group. This correspondence generalizes work of Patterson~\cite{patterson76} in the case of Fuchsian groups and Sullivan~\cite{sullivan79,sullivan84} for limit sets of geometrically finite Kleinian groups acting by isometries on $\Hy^n$. See also work of Bishop--Jones~\cite{bishopjones} for arbitrary Kleinian groups.

    Before stating the general results, we begin with a special, motivating case.

    \begin{example}[Covering dimension of $(\p T_4, d_a)$] \label{ex:CovDimTree}

        Let $T_4$ denote the $4$-regular tree where every edge has length one. Fix a basepoint $p \in T_4$. For each $a>1$, the function
            \[ d_a(\eta,\eta') := a^{-(\eta,\eta')_p}\]
        defines a metric on $\p T_4$.
        For a metric space $(Z,d)$ and $\epsilon>0$, let $N(\epsilon)$ be the minimal number of sets of diameter $\epsilon$ needed to cover $Z$. Then the {\it covering dimension} of $Z$ is given by
            \[\dim_{\covering}(Z,d):= \lim_{\epsilon \rightarrow 0}\frac{N(\epsilon)}{\log\frac{1}{\epsilon}}.\]

        As a first example, we suggest to find the covering dimension of the following metric spaces:  $(\p T_4, d_e)$,  $(\p T_4, d_2)$,  $(\p T_4, d_a)$, $(\p T_n, d_a)$. We discuss the solution to (3), as the others are similar and are intended to build intuition for the Hausdorff dimension of the boundary as the above parameters vary.
        Let
            \[\epsilon_k = a^{-k}.\]
        The key observation in this example is that
            \begin{eqnarray*}
                N(\epsilon_k) &:=& \textrm{minimal number of sets of diameter } \epsilon_k \textrm{ needed to cover } \p T_4 \\
                &=& \textrm{the number of points on the sphere of radius } k \textrm{ in } T_4  \\
                &=& 4\cdot 3^{k-1}.
            \end{eqnarray*}

        \begin{figure}
                \begin{centering}
	            \begin{overpic}[width=.4\textwidth, tics=5]{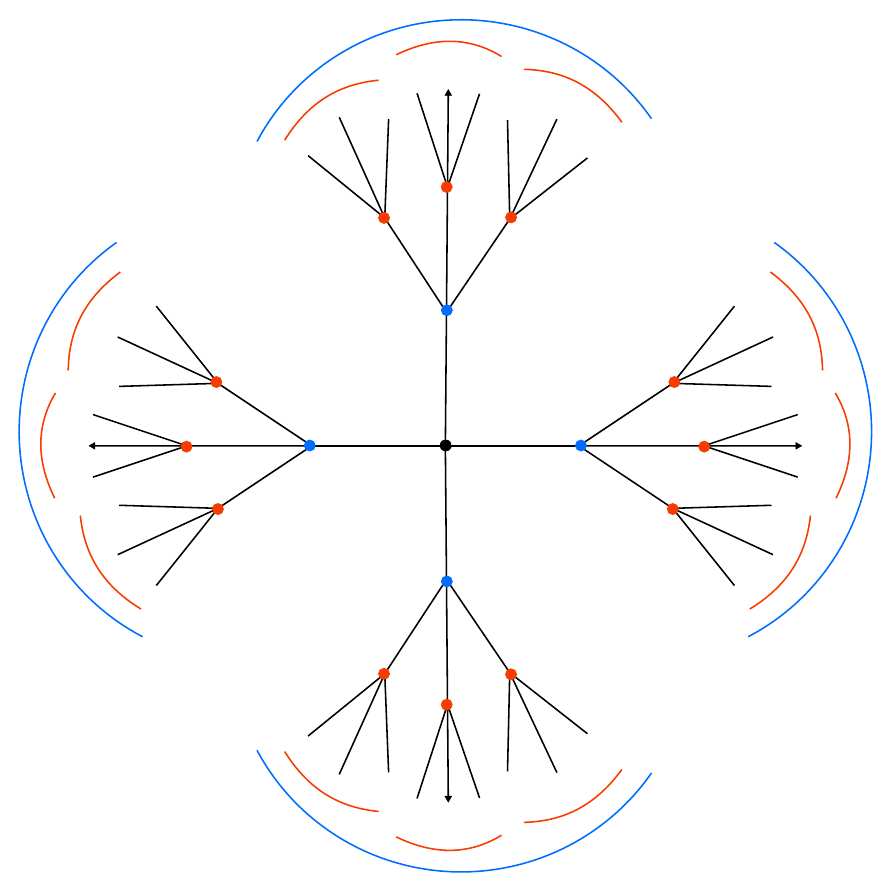} 
                    \put(51.5,46){\small{$p$}}
                \end{overpic}
	           \caption{\small{Covers of the boundary of the tree corresponding to vertices on spheres about $p$.}}
	           \label{figure-HausdimTree}
                \end{centering}
            \end{figure}

        Indeed, let $v_1, \ldots, v_n$ denote the $4 \cdot 3^{k-1}$ vertices on the sphere of radius $k$ about the point $p$ in~$T_4$. The collection $U_i$ of rays emanating from $p$ and passing through vertex $v_i$ is a set of diameter $a^{-k}$. See \Cref{figure-HausdimTree}. The collection $U_1, \ldots, U_n$ covers $\p T_4$ and there is no cover of sets of diameter $a^{-k}$ with fewer elements. Therefore,
            \begin{eqnarray*}
               \dim_{\covering}(\p T_4,d_a)
                &:=& \lim_{\epsilon_k \rightarrow 0}\frac{N(\epsilon_k)}{\log(\frac{1}{\epsilon_k})} \\
                &=& \lim_{k \rightarrow \infty} \frac{\log (4\cdot3^{k-1})}{\log a^{k}} \\
                &=& \lim_{k \rightarrow \infty} \frac{(k-1)\log 3 + \log 4}{k \log a} \\
                &=& \frac{\log 3}{\log a}.
            \end{eqnarray*}
    \end{example}

    For the boundary of the regular tree equipped with a visual metric, the Hausdorff dimension is equal to the covering dimension computed in the previous example. Indeed, the collection of covers of $\p T_4$ yield an upper bound on the Hausdorff dimension, and a natural measure on this Cantor set yields the lower bound via the Mass Distribution Principle. We provide details in the next example.

    \begin{example}[Hausdorff dimension of $(\p T_4, d_a)$]   \label{ex:HdimCantorBoundary}
        As above, let $T_4$ denote the $4$-regular tree where every edge has length one. Fix a basepoint $p \in T_4$. For each $a>1$, the function
            \[ d_a(\eta,\eta') := a^{-(\eta,\eta')_p}\]
        defines a metric on $\p T_4$.

        \noindent {\sc Upper bound:} We use the strategy described in \Cref{rem:HdimUpperbound} to find an upper bound on the Hausdorff dimension of $(\p T_4, d_a)$ via specific covers of the boundary.  Let $\epsilon>0$. Suppose $k \in \N$ so that $\epsilon<a^{-k}$. As described in \Cref{ex:CovDimTree}, there exists a cover of $(\p T_4, d_a)$ by $N(\epsilon) = 4\cdot 3^{k-1}$ sets of diameter at most $\epsilon$. Let $\delta>0$. Let $s = \frac{\log 3}{\log a} + \delta$. Then
            \begin{eqnarray*}
                \cH^s_\epsilon(\p T_4) &\leq & N(\epsilon)\epsilon^s \\
                &\leq & 4 \cdot 3^{k-1}\left(\frac{1}{a^k}\right)^{(\log 3 / \log a) + \delta} \\
                & = & 4 \cdot 3^{k-1}\left(\frac{1}{a^k}\right)^{\log 3 / \log a
                } a^{-k\delta} \\
                &=& 4 \cdot 3^{k-1}\frac{1}{3^k} a^{-k\delta},
            \end{eqnarray*}
        which tends to zero as $k \rightarrow \infty$. Therefore, $\Hdim (\p T_4, d_a) \leq \frac{\log 3}{\log a}$.

        \noindent {\sc Lower bound:} A lower bound is obtained via the Mass Distribution Principle, similar to the Cantor set in \Cref{ex:HDimCantor}. Let $\mu$ be the probability measure on $\p T_4$ constructed by assigning measure $\frac{1}{4\cdot 3^{k-1}}$ to each of the sets in the cover $\p T_4$ that corresponds to the $4 \cdot 3^{k-1}$ points on the $k$-sphere, as descried in \Cref{ex:CovDimTree}. Let $B_r(z) \in (\p T_4, d_a)$ for some $r>0$ and $z \in \p T_4$. Let $k \in \N$ so that $\frac{1}{a^{k+1}} \leq r \leq \frac{1}{a^k}$. Then, if $z' \in B_r(z)$, then $d_a(z,z') \leq r$, so the Gromov product satisfies $(z,z')_p \geq k$. Thus, the points $z$ and $z'$ must be in the same set in the cover corresponding to points in the sphere of radius $k$ about $p$ in the tree. Therefore,
            \[\mu\bigl(B_r(z) \bigr) \leq \frac{1}{4\cdot 3^{k-1}}  = \frac{3}{4}\cdot \frac{1}{3^k}  = \frac{3}{4} \left( \frac{1}{a^k} \right)^{\log 3 / \log a} \leq \frac{3}{4} \cdot r^{\log 3 / \log a}.  \]
        Therefore, by the Mass Distribution Principle, $\Hdim(\p T_4, d_a) \geq \frac{\log 3}{\log a}$.

        Together, these bounds show
            \[ \Hdim(\p T_4, d_a) = \frac{\log 3}{\log a}. \]
    \end{example}

    In the above example, the Hausdorff dimension of the boundary of the tree equals to the logarithm of the exponential growth rate in the tree divided by the logarithm of the visual metric parameter. This behavior holds in general, as shown by Coornaert~\cite{coornaert93}.

    \begin{defn}
        Let $G$ be a group acting by isometries on a metric space $X$. Let $p \in X$. Let $|G \cdot p \,\cap\, B_n(p)|$ denote the cardinality of the intersection of the orbit $G \cdot p$ with the ball of radius $n$ about $p$.
        The {\it volume entropy} or {\it critical exponent} of the action is defined by
            \[ h = h(G \curvearrowright X) = \limsup_{n \rightarrow \infty} \frac{\log |G \cdot p \,\cap\, B_n(p)|}{n}. \]
    \end{defn}

    \begin{thm} \cite[Corollary 7.6]{coornaert93} \label{thm:HausdimBoundary}
        Let $G$ be a hyperbolic group, and suppose $G$ acts on $X$ geometrically. Let $h = h(G \curvearrowright X)$ be the critical exponent of the action of $G$ on $X$. Let $d_a$ be a visual metric on $\p X$ with parameter $a$. Then
            \[ \Hdim(\p X, d_a) = \frac{h}{\log a}. \]
    \end{thm}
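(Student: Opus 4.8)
The plan is to prove the two inequalities $\Hdim(\p X, d_a) \le \frac{h}{\log a}$ and $\Hdim(\p X, d_a) \ge \frac{h}{\log a}$ separately, writing $s := \frac{h}{\log a}$ throughout. Both directions rest on the geometric dictionary used in \Cref{ex:HdimCantorBoundary}: a ball of radius $r = a^{-n}$ in $(\p X, d_a)$ is comparable to the shadow (\Cref{defn:shadow}) of a ball $B_\rho(x)$ with $d(p,x) \approx n$, where $\rho$ depends only on $\delta$ and the cocompactness constant; see \cite[Lemme 1.6.2]{bourdon-flot}. Concretely, two boundary points lying in the shadow of such a ball have Gromov product at least $n - O(\delta)$, so such a shadow has $d_a$-diameter $\lesssim a^{-n}$, while conversely every $d_a$-ball of radius $a^{-n}$ is sandwiched between two shadows of balls at distance $n \pm O(1)$ from $p$. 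The counting of orbit points then plays the role of the count $4 \cdot 3^{k-1}$ of sphere vertices in the tree.

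For the upper bound I would generalize the cover argument of \Cref{rem:HdimUpperbound}. Fix $\delta' > 0$. By the $\limsup$ definition of $h$, there is $M$ so that $|G \cdot p \cap B_n(p)| \le e^{(h + \delta')n}$ for all $n \ge M$. Since the action is cocompact, every point of the sphere of radius $n$ about $p$ lies within a fixed distance $D$ of an orbit point, so the shadows of the balls $B_{D + \rho}(g \cdot p)$, taken over the at most $e^{(h+\delta')n}$ orbit points in $B_{n+1}(p)$, cover $\p X$ by sets of $d_a$-diameter at most $C a^{-n} =: \epsilon_n$. Taking $s' = \frac{h + 2\delta'}{\log a}$ and using $a^{-nt/\log a} = e^{-nt}$, one computes
\[
\cH^{s'}_{\epsilon_n}(\p X) \le e^{(h+\delta')n}\bigl(C a^{-n}\bigr)^{s'} = C^{s'} e^{(h+\delta')n} e^{-(h + 2\delta')n} = C^{s'} e^{-\delta' n} \xrightarrow{n \to \infty} 0,
\]
exactly as in \Cref{ex:HdimCantorBoundary}. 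Hence $\Hdim(\p X, d_a) \le \frac{h + 2\delta'}{\log a}$, and letting $\delta' \to 0$ gives $\Hdim(\p X, d_a) \le s$.

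For the lower bound I would apply the Mass Distribution Principle, which requires a measure $\mu$ on $\p X$ with $\mu(B_r(\eta)) \le C r^s$ for all $\eta$ and $r$. The natural candidate is a Patterson--Sullivan measure: for $s' > h$ the Poincar\'e series $\sum_{g \in G} e^{-s' d(p, g \cdot p)}$ converges, and normalizing the weighted orbit sums $\sum_g e^{-s' d(p, g\cdot p)} \, \delta_{g \cdot p}$ and passing to a weak-$*$ limit as $s' \to h^+$ yields a probability measure supported on $\p X$. The key input is the \emph{shadow lemma}, which states that $\mu$ of the shadow of $B_\rho(x)$ is comparable to $e^{-h\, d(p,x)}$ once $\rho$ is large enough. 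Since $e^{-hn} = a^{-sn} = (a^{-n})^{s}$ and a $d_a$-ball of radius $r = a^{-n}$ is contained in such a shadow with $d(p,x) \approx n$, the shadow lemma gives $\mu(B_r(\eta)) \lesssim e^{-hn} \asymp r^s$. The Mass Distribution Principle then yields $\Hdim(\p X, d_a) \ge s$, completing the proof.

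The main obstacle is the lower bound, and specifically the shadow lemma together with the well-definedness of the Patterson--Sullivan measure. Proving that the weak-$*$ limit is supported on the boundary rather than concentrating in $X$, and that it satisfies the two-sided shadow estimate, is the technical heart of the argument; cocompactness of the action is essential here, both to guarantee that orbit points are coarsely dense on each sphere and to control the geometry of shadows uniformly. By contrast, the upper bound is a direct counting argument that mirrors the tree computation of \Cref{ex:HdimCantorBoundary} almost verbatim once the orbit-counting bound from the definition of $h$ is in hand.
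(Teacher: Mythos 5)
Your proposal is correct and follows essentially the same route the paper sketches for Coornaert's theorem: an upper bound by covering $\p X$ with shadows of uniformly-sized balls centered at orbit points on spheres, counted via the growth rate, and a lower bound via the Mass Distribution Principle applied to the Patterson--Sullivan measure through the shadow lemma. You also correctly isolate the genuine technical content (the construction of the Patterson--Sullivan measure and the two-sided shadow estimate, where cocompactness is essential), which is exactly what the paper delegates to Coornaert's work.
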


    As in the previous examples, the proof of \Cref{thm:HausdimBoundary} gives both an upper and lower bound on the Hausdorff dimension. The upper bound is obtained by covering the boundary by specific sets and counting them using the growth rate of the group. In the tree examples, the cover corresponding to a vertex on the sphere of radius $k$ around the basepoint $p$ consisted of all rays emanating from $p$ and passing through the vertex. In the general setting, one covers the boundary by the set of shadows of balls of a fixed radius $R$ around points on the sphere of radius $k$ around the basepoint $p$; see \Cref{defn:shadow}.  These sets have size at most $Ca^{-k}$, where $C$ is a fixed constant, and $a$ is the visual metric parameter. The growth rate of the group counts the number of these sets. The lower bound uses the Mass Distribution Principle, where the measure is the Patterson--Sullivan measure.

    An alternative proof of \Cref{thm:HausdimBoundary} follows from work of Paulin, who generalized Coornaert's result to the conical limit set of a group acting by isometries on an almost extendable hyperbolic metric space \cite[Theorem 2.1]{paulin97}. 
    Paulin's lower bound argument also uses the Mass Distribution Principle, but by embedding a tree in the hyperbolic space and using a natural measure on the boundary of the tree. Calegari also gave a proof of \Cref{thm:HausdimBoundary} using the Patterson--Sullivan measure in the notes \cite[Section 2.5]{calegari13}.

    \begin{remark}[Relatively hyperbolic group case]
        Potyagailo--Yang~\cite{potyagailoyang} analyze the Hausdorff dimension of boundaries of relatively hyperbolic group pairs. They consider the Floyd boundary together with the Floyd metric as well as the Bowditch boundary together with the shortcut metric. In both cases, they obtain a result similar to \Cref{thm:HausdimBoundary}; the Hausdorff dimension equals an explicit constant times the growth rate of the group. They note, with references in \cite[Section 1.2]{potyagailoyang}, that the Hausdorff dimension of the Bowditch boundary equipped with the visual metric of a relatively hyperbolic group pair can be infinite when the peripherals contain nonabelian free groups, for example.
    \end{remark}

    \subsection{Conformal dimension}

    \begin{defn}[Conformal dimension]
        Let $Z$ be a metric space. The {\it conformal dimension} of~$Z$, denoted $\Cdim(Z)$, is the infimal Hausdorff dimension of a metric space $Z'$ quasisymmetric to~$Z$.
    \end{defn}

    Crucially, conformal dimension is a quasisymmetry invariant of metric spaces by definition. Thus, conformal dimension yields a quasi-isometry invariant of hyperbolic groups by \Cref{thm:QI_QS}.

    \begin{example}[Conformal dimension of the boundary of free groups]
        Every finitely generated free group of rank at least two acts geometrically on the 4-valent tree. \Cref{ex:HdimCantorBoundary} computes the Hausdorff dimension of the boundary of this tree equipped with a visual metric:
            \[ \Hdim(\p T_4, d_a) = \frac{\log 3}{\log a}. \]
        Since the visual metric parameter $a$ can be taken arbitrary large, and changing the parameter is a quasisymmetry, the conformal dimension of the boundary of a free group equals zero.

        Alternatively, one can see that the conformal dimension is zero by fixing a visual metric parameter and changing the metric on the tree by increasing the edge lengths towards infinity. These transformations are quasi-isometries that extend to quasisymmetries of the boundary. The Hausdorff dimension can be computed as above and tends to zero.
    \end{example}

    \begin{remark}[Lower bounds on conformal dimension]
        The topological dimension of a metric space yields a first lower bound on the conformal dimension of the metric space by~\Cref{thm:TopdimHausdim}.

        We will discuss Gromov's Round Trees in the next section, which often provides a much larger lower bound on the conformal dimension of a metric space. For example, these can be used to prove the conformal dimension of various one-dimensional boundaries, including boundaries of Fuchsian buildings, tends to infinity across certain families.
    \end{remark}

    \begin{remark}[Upper bounds on conformal dimension]
        An upper bound on the conformal dimension of a metric space can be obtained by finding (or finding an upper bound on) the Hausdorff dimension of any metric in the quasisymmetry class of the space.

        The Hausdorff dimension of the boundary of a hyperbolic group with a visual metric with parameter~$a$ is given by Theorem~\ref{thm:HausdimBoundary}:
        \[ \Hdim(\p X, d_a) = \frac{h}{\log a}, \]
        where $h$ is the volume entropy.
        As detailed in \Cref{sec:visual_metrics}, the allowable visual metric parameters for $\p X$ depend on the $\delta$-hyperbolicity constant of the space $X$. Thus, to find an upper bound on Hausdorff dimension of the boundary, it suffices to find an upper bound on the exponential growth rate of the group action together with an upper bound on the $\delta$-hyperbolicity constant of the space. The former is typically easy to obtain using the smallest translation length of a generator. However, computing hyperbolicity constants is often quite hard. We note that the presence of a $\CAT(-1)$ metric is especially useful here, as one can take the visual metric parameter to equal~$e$. Then, an upper bound on the Hausdorff dimension is given by finding an upper bound on the volume entropy.

        There are stronger techniques to provide upper bounds on conformal dimension, including $\ell^p$-cohomology and combinatorial modulus, which are not discussed in this article. These powerful ideas have been applied by Carrasco~\cite{carrasco}, Bourdon--Kleiner~\cite{bourdonkleiner13, bourdonkleiner15}, Mackay~\cite{mackay16}, and Carrasco--Mackay~\cite{carrascomackay}.
    \end{remark}

    \section{Round trees and arcs in the boundary} \label{sec:round_trees}

    In this section we describe the round tree technique, and then we survey its applications. We refer the reader to \cite[Chapter 4]{mackaytyson} for a general discussion of lower bounds on conformal dimension.

\subsection{The product of a Cantor set and an interval}

    We present a motivating special case before giving more general results.
    Recall that the Hausdorff dimension of the two-thirds Cantor set equals $\frac{\log 2}{\log 3}$, while its conformal dimension equals zero. The next example illustrates that taking the product with an interval has a remarkable consequence: the resultant space is one that achieves the conformal dimension. See \Cref{figure-CantorProduct}.

    \begin{figure}
                \begin{centering}
	            \begin{overpic}[width=.8\textwidth, tics=5]{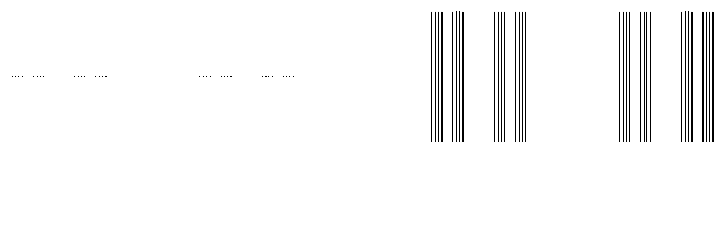} 
                    \put(10,2){\small{$\Cdim(\cC) = 0$}}
                    \put(10,8){\small{$\Hdim(\cC) = \frac{\log 2}{\log 3}$}}
                    \put(60,2){\small{$\Cdim\bigl(\cC\times [0,1]\bigr) =  1+ \frac{\log 2}{\log 3}$}}
                    \put(60,8){\small{$\Hdim\bigl(\cC \times [0,1]\bigr) = 1+ \frac{\log 2}{\log 3}$}}
                \end{overpic}
	           \caption{\small{A stabilization occurs with respect to conformal dimension upon taking the product with an interval. While the conformal dimension of the two-thirds Cantor set is zero, the conformal dimension of the product of the Cantor set and the interval is achieved by the Hausdorff dimension of this space.}}
	           \label{figure-CantorProduct}
                \end{centering}
            \end{figure}

    \begin{thm} \label{thm:cantorInterval}
        Let $\cC$ be the two-thirds Cantor set, and let $Z = \cC \times [0,1]$ equipped with the product metric. Then
            \[\Cdim(Z) = \Hdim(Z) = \Hdim (\cC) + 1 = \frac{\log 2}{\log 3} + 1.\]
    \end{thm}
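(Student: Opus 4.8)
The plan is to prove the two nontrivial equalities separately: first compute $\Hdim(Z) = 1 + \frac{\log 2}{\log 3}$, and then establish $\Cdim(Z) = \Hdim(Z)$. The inequality $\Cdim(Z) \le \Hdim(Z)$ is free from the definition of conformal dimension, since the identity is a quasisymmetry of $Z$ to itself, so the entire content is a matching lower bound on conformal dimension.

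For the Hausdorff dimension I would show that $Z$ is Ahlfors $s$-regular for $s = 1 + \frac{\log 2}{\log 3}$ and then invoke the lemma that Ahlfors $s$-regularity forces $\Hdim = s$. The regulating measure is the product $\mu = \nu \times \lambda$, where $\nu$ is the standard Cantor measure assigning mass $2^{-k}$ to each level-$k$ interval and $\lambda$ is Lebesgue measure on $[0,1]$. Given a ball $B_r((c,t))$ with $3^{-(k+1)} \le r \le 3^{-k}$, its projection to $\cC$ meets a bounded number of level-$k$ Cantor intervals and its projection to $[0,1]$ is an interval of length comparable to $r$; combining the estimates $\nu(\cdot) \asymp 2^{-k} \asymp r^{\log 2/\log 3}$ and $\lambda(\cdot) \asymp r$ yields $\mu(B_r) \asymp r^{\log 2/\log 3}\cdot r = r^s$. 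This gives Ahlfors $s$-regularity, hence $\Hdim(Z) = s = \Hdim(\cC) + 1$.

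The lower bound $\Cdim(Z) \ge s$ is the heart of the matter, and it is where the ``rich curve family'' philosophy of the paper enters. I would use the family $\Gamma = \{\gamma_c : c \in \cC\}$ of vertical segments $\gamma_c(t) = (c,t)$ and show its conformal $s$-modulus is positive. For any admissible density $\rho$ one has $\int_0^1 \rho(c,t)\,dt \ge 1$ for every $c$, so Jensen's inequality (using $s \ge 1$) gives $\int_0^1 \rho(c,t)^s\,dt \ge 1$; integrating against $\nu$ and applying Fubini yields $\int_Z \rho^s\,d\mu \ge \nu(\cC) = 1$, whence $\operatorname{Mod}_s(\Gamma) \ge 1 > 0$. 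The decisive input is then Tyson's theorem: in an Ahlfors $Q$-regular space carrying a curve family of positive $Q$-modulus, the conformal dimension equals $Q$. Applied with $Q = s$, this upgrades positivity of modulus to the obstruction $\Cdim(Z) \ge s$, completing the chain of equalities.

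The main obstacle is precisely this last step: positivity of modulus is easy to verify, but converting it into a lower bound on conformal dimension requires the quasisymmetric quasi-invariance of modulus (Tyson's theorem), which is the substantive external tool and is not among the results developed in the excerpt. By contrast, the Hausdorff dimension computation and the Fubini--Jensen modulus estimate are routine applications of the machinery already in place.
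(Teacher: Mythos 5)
Your proposal is correct, but the decisive step is reached by a genuinely different route from the paper's. The paper proves the lower bound $\Cdim(Z) \geq \Hdim(Z)$ by hand: assuming a quasisymmetry $f:Z \rightarrow Z'$ onto a space of Hausdorff dimension less than $p = 1 + \frac{\log 2}{\log 3}$, it takes an efficient ball cover of $Z'$, thins it by the $5r$-covering lemma, pulls the balls back through the annulus-distortion characterization of quasisymmetries, and then plays the vertical curve family $\cE$ with its Cantor probability measure against the cover via H\"{o}lder's inequality, ending in the contradiction $\frac{1}{10}c' \leq AH^{\alpha}\epsilon^{1/p}\bigl(\cH^p(Z)\bigr)^{1/p'}$ --- an inline specialization of the Pansu--Bourdon argument, using only tools developed earlier in the article. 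You use the same curve family and the same measure, but package them as a conformal modulus estimate, $\operatorname{Mod}_s(\Gamma) \geq 1$ via Jensen and Fubini (your Jensen step is correct since $x \mapsto x^s$ is convex and $dt$ is a probability measure on $[0,1]$), and then invoke Tyson's theorem to convert positive $s$-modulus in an Ahlfors $s$-regular space into $\Cdim(Z) \geq s$. This is a legitimate shortcut, with the caveat you yourself flag: Tyson's theorem is a substantive external input not developed in the paper, and its proof is essentially the covering/H\"{o}lder argument the paper carries out explicitly, so you defer rather than eliminate the hard work. What your route buys is brevity, the explicit modulus viewpoint, and the extra information that $Z$ is Ahlfors regular with the conformal dimension attained; what the paper's route buys is a self-contained elementary proof from its own lemmas (annulus distortion, the $5r$-covering lemma, the Mass Distribution Principle). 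Your Hausdorff-dimension computation also differs mildly --- Ahlfors regularity of the product measure rather than the paper's citation of Falconer's product formula --- but this is harmless and is in any case needed as a hypothesis for Tyson's theorem.
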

    \begin{proof}
        The definition of conformal dimension implies $\Cdim(Z) \leq \Hdim(Z)$.
        The Hausdorff dimension of $Z$ follows from the calculations in \Cref{ex:HDimCantor} together with \cite[Example 7.6]{falconer}. The remainder of the proof follows the arguments of \cite[Lemme 1.6]{bourdon95} and \cite[Proposition 4.1.3]{mackaytyson}, restricted to this special case.

        Suppose towards a contradiction that $\Cdim(Z) < \Hdim(Z)$. Then, there exists a metric space $(Z',d')$ of Hausdorff dimension strictly less than $p:=1+ \frac{\log 2}{\log 3} = \Hdim(Z)$ and a quasisymmetry $f:Z \rightarrow Z'$.

        Let $\cE$ be the following natural curve family in $Z$:
            \[\cE:= \bigl\{ \{z\} \times [0,1] \,\,|\,\, z \in \cC \bigr\}.\]
        We will use the following two properties of the set $\cE$:

        \begin{enumerate}
            \item The diameter of each $E \in \cE$ is uniformly bounded from below, as $\diam(E) = 1$.

            Consequently, there exists $c'>0$ so that
                \[\diam\bigl(f(E)\bigr) \geq c'\]
            for each $E \in \cE$. Indeed, the map $f$ is a homeomorphism by definition, and since $Z$ and $Z'$ are compact, $f^{-1}$ is uniformly continuous. So, if $\epsilon = \frac{1}{2}$, then there exists $\delta>0$ so that if $d(f(x),f(x'))<\delta$, then $d(x,x')<\frac{1}{2}$. This implies the image of $E \in \cE$ cannot have diameter less than $\delta$, as desired.

            \item There is a probability measure $\mu$ on $\cE$ induced from the natural probability measure $\mu_{\cC}$ on the Cantor set (see \Cref{ex:HDimCantor}). Recall that with respect to the measure $\mu_{\cC}$, if $B_r(z)$ is a ball of radius $r$ in the Cantor set, then $\mu_{\cC}\bigl(B_r(z)\bigr) \leq (6r)^{\log 2/\log 3}$. Thus, there exist constants $A>0$ and $\alpha \geq 0$ so that with respect to the measure $\mu$, if $B_r(z) \subset Z$ is a ball of radius $r$ in $Z$, then
                \[ \mu \bigl\{ E \in \cE \,\,|\,\, E \cap B_r(z) \neq \emptyset  \bigr\} \leq Ar^{\alpha}. \]
            Indeed, $A = 6^{\log 2 / \log 3}$ and $\alpha = \frac{\log 2}{\log 3}$ in this case.

            We record a relationship between $\alpha$ and $p = 1 + \alpha$ used later in the proof. Let $p'$ be the H\"{o}lder conjugate of $p$, so that $\frac{1}{p} + \frac{1}{p'} = 1$ and $p' = \frac{p}{p-1}$. Then $\alpha p' = p$.
        \end{enumerate}

        Let $\epsilon >0$. Since $\Hdim(Z') <p$, there exists a cover of $Z'$ by a collection of balls $\{B_i'\}_{i \in I'}$ so that $B_i'$ has radius $r_i'$ and
            $ \sum_{i \in I'} (r_i')^p <\epsilon$; see \Cref{rem:balls}.
        By \Cref{lemma:ballscover} there exists a subfamility $\{B_i'\}_{i \in I} \subset \{B_i'\}_{i \in I'}$ so that the elements of $\{B_i'\}_{i \in I}$ are pairwise disjoint and $Z = \bigcup_{i \in I} 5B_i'$. (Recall that if $B = B_r(z)$, then $KB = B_{Kr}(z)$.)
        Then,
            \[ \sum_{i \in I} (r_i')^p < \epsilon.  \]

        The quasisymmetry $f$ yields a corresponding family of balls in the metric space $Z$. More specifically, by Lemma~\ref{lemma:annuli} there exists a constant $H$ and balls $\{B_i\}_{i \in I}$ in the metric space $Z$ so that
            \[B_i \subset f^{-1}(B_i') \subset f^{-1}(5B_i') \subset HB_i.\]
        Since $f$ is a homeomorphism, balls in the family $\{B_i\}_{i \in I}$ are pairwise disjoint.

        For each $i \in I$, define a function
            \[h_i: \cE \rightarrow \{0,1\}\]
        given by
            \begin{equation*}
            h_i(E)=
            \begin{cases}
                 1 & f(E) \cap 5B_i' \neq \emptyset\\
                 0 & \text{otherwise}
             \end{cases}
            \end{equation*}

        \begin{claim} \label{claim:c'}
            $\displaystyle \frac{1}{10}c' \leq \sum_{i \in I} r_i'h_i(E).$
        \end{claim}
        \begin{proof}[Proof of Claim.]
            Let $E \in \cE$. Then, we have the following inequalities. The first holds from the definition of $c'$, and the second holds since $\{5B_i'\}_{i \in I}$ covers $Z'$. The final equality follows from the definition of $h_i(E)$.
            \[c' \leq \diam\bigl(f(E)\bigr) \leq \sum_{f(E) \cap 5B_i' \neq \emptyset} \diam(5B_i') \leq \sum_{f(E) \cap 5B_i' \neq \emptyset} 10r_i' = \sum_{i \in I} 10r_i'h_i(E). \]
        \end{proof}

        The proof will conclude after the next sequence of inequalities. We provide justification below.
            \begin{eqnarray}
                \frac{1}{10}c' & \leq & \int_{\cE} \, \sum_{i \in I} r_i' h_i(E) \, d\mu(E)  \label{L1} \\
                & \leq & \sum_{i \in I} r_i' \, \mu \bigl\{ E \in \cE \,|\, f(E) \cap 5B_i' \neq \emptyset \bigr\} \label{L2}  \\
                & \leq & \sum_{i \in I} r_i' \mu \bigl\{ E \in \cE \,|\, E \cap HB_i \neq \emptyset \bigr\} \label{L3} \\
                & \leq & \sum_{i \in I} r_i' A(Hr_i)^{\alpha} \label{L4} \\
                & = & AH^{\alpha} \sum_{i \in I} r_i' r_i^{\alpha} \label{L5} \\
                & \leq & AH^{\alpha} \left( \sum_{i \in I} (r_i')^p \right)^{1/p}\left( \sum_{i \in I} (r_i^{\alpha})^{p'} \right)^{1/p'} \label{L6} \\
                & \leq & AH^{\alpha} \left( \sum_{i \in I} (r_i')^p \right)^{1/p}\left( \sum_{i \in I} r_i^p \right)^{1/p'} \label{L7} \\
                & \leq & AH^{\alpha} \epsilon^{1/p} \bigl(\cH^p(Z) \bigr)^{1/p'}.\label{L8}
            \end{eqnarray}

            Inequality~\ref{L1} follows from integrating both sides of the inequality of \Cref{claim:c'} with respect to the measure $\mu$. Inequality~\ref{L2} follows from the definition of $h_i$.  Inequality~\ref{L3} holds since $f$ is a homeomorphism and $f^{-1}(5B_i') \subset HB_i$. Inequality~\ref{L4} comes from the second property of the curve family. Inequality~\ref{L5} rearranges terms. Inequality~\ref{L6} follows from the H\"{o}lder Inequality. Inequality~\ref{L7} applies that $\alpha p' = p$ as discussed above. Finally, Inequality~\ref{L8} comes from the assumptions on $r_i'$ given above, and the fact that the Hausdorff measure of $Z$ is less than the term involving the $r_i$ because the sets in the collection $\{B_i\}_{i \in I}$ are pairwise disjoint and $p$ is the Hausdorff dimension of $Z$.

            The inequalities combine to $\frac{1}{10}c' \leq AH^{\alpha}\epsilon^{1/p}\bigl( \cH^p(Z)\bigr)^{1/p'}$, a contradiction as $\epsilon$ can be taken arbitrarily small, while the other constants are fixed.
    \end{proof}

    The proof of the previous theorem applied the following useful lemma.

    \begin{lem} \cite[Theorem 1.2]{heinonen-Lectures} \label{lemma:ballscover}
        Let $(Z,d)$ be a metric space. Let $\cF$ be a family of balls in $Z$ so that $Z = \bigcup_{B \in \cF} B$. Assume that the diameters of the balls in $\cF$ are uniformly bounded from above. Then, there exists a subfamily $\cG \subset \cF$ so that the elements of $\cG$ are pairwise disjoint and $Z = \bigcup_{B \in \cG} 5B$.
    \end{lem}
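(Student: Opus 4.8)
The plan is to prove this by the standard greedy selection argument, stratifying the balls by radius so that a maximal-disjoint-subfamily construction can be carried out inductively. Write each ball as $B = B_{r(B)}(z_B)$, and let $R < \infty$ be a uniform upper bound on the radii, which exists since the diameters are uniformly bounded from above. First I would partition $\cF$ into the subfamilies
\[
\cF_j = \left\{ B \in \cF \,:\, \frac{R}{2^{j+1}} < r(B) \leq \frac{R}{2^j} \right\}, \qquad j = 0, 1, 2, \ldots,
\]
so that every ball of $\cF$ lies in exactly one $\cF_j$, and within a single $\cF_j$ all radii agree up to a factor of $2$.

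Next I would build the subfamily $\cG$ stage by stage. Let $\cG_0$ be a maximal pairwise disjoint subcollection of $\cF_0$; such a collection exists by Zorn's Lemma, since the union of a chain of pairwise disjoint subcollections is again pairwise disjoint. Having chosen $\cG_0, \ldots, \cG_{j-1}$, let $\cG_j$ be a maximal pairwise disjoint subcollection of those $B \in \cF_j$ that are disjoint from every ball already selected in $\cG_0 \cup \cdots \cup \cG_{j-1}$. Set $\cG = \bigcup_{j \geq 0} \cG_j$. By construction $\cG$ is pairwise disjoint: two balls in the same $\cG_j$ are disjoint by maximality within that stage, and a ball in $\cG_j$ is disjoint from all earlier-selected balls by the defining property of $\cG_j$.

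It remains to verify that $Z = \bigcup_{B \in \cG} 5B$. Since $\cF$ covers $Z$, it suffices to show that every $B \in \cF$ is contained in $5B'$ for some $B' \in \cG$. Fix $B \in \cF_j$ with radius $r = r(B)$. By maximality of $\cG_j$, the ball $B$ must intersect some ball $B' \in \cG_0 \cup \cdots \cup \cG_j$; otherwise $B$ could be added to $\cG_j$, contradicting maximality. Write $r' = r(B')$, and say $B' \in \cF_i$ with $i \leq j$, so that $r' > R/2^{i+1} \geq R/2^{j+1} \geq r/2$, giving $r < 2r'$. Now if $x \in B$ and $z \in B \cap B'$, then
\[
d(x, z_{B'}) \leq d(x, z_B) + d(z_B, z) + d(z, z_{B'}) < r + r + r' = 2r + r' < 5r',
\]
using $r < 2r'$ in the last step. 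Hence $B \subset 5B'$, which completes the argument.

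I expect the only genuine subtlety to be the appeal to Zorn's Lemma to extract a maximal pairwise disjoint subfamily at each stage. The stratification by radius is precisely what makes the greedy selection work in an arbitrary metric space, where there need not be a largest ball available to select first; the radius comparison $r' > r/2$ that drives the dilation factor $5$ then falls out immediately from membership in the strata $\cF_i, \cF_j$ with $i \leq j$.
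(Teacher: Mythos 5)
Your proof is correct and is precisely the standard argument behind the cited result: the paper itself gives no proof, deferring to \cite[Theorem 1.2]{heinonen-Lectures}, and Heinonen's proof there is exactly your dyadic stratification of the radii into the families $\cF_j$, the inductive extraction of maximal disjoint subfamilies via Zorn's Lemma, and the comparison $r < 2r'$ feeding the triangle-inequality estimate $2r + r' < 5r'$. Nothing to add — the one subtlety you flag (maximality forcing each $B \in \cF_j$ to meet a ball already selected at stage $\leq j$, of at least half its radius) is handled correctly.
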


    We now state two generalizations of \Cref{thm:cantorInterval}. Their proofs are modeled on an argument due to Pansu \cite[Proposition 2.9]{pansu}, \cite[Lemma 6.3]{pansu89b} and follow the same reasoning as in the theorem above. The curve family and its measure allows one to compare covers by balls of the two spaces $Z$ and $Z'$ related by a quasisymmetry $f:Z \rightarrow Z'$. The H\"{o}lder inequality is useful to introduce the exponents appearing in the Hausdorff dimension computations.

    \begin{prop} \cite[Lemma 1.6]{bourdon95}
        Let $(Z,d)$ be a compact metric space. Suppose there exists the following:
            \begin{enumerate}
                \item A family of curves $\cC = \{\gamma_i \,|\, i \in I\}$ in $Z$ with diameters uniformly bounded from below.
                \item A probability measure $\mu$ on $\cC$ and constants $A>0$ and $\alpha \geq 0$ so that for every ball of radius $r$ in $Z$,
                    \[ \mu \{\gamma \in \cC\,|\, \gamma \cap B \neq \emptyset \} \leq Ar^{\alpha}.\]
            \end{enumerate}
            Let $\tau$ denote the packing dimension of $Z$. Then
            \[\Cdim(Z) \geq \frac{\tau}{\tau - \alpha} \quad \quad \textrm{ with } \tau-\alpha \geq 1.\]
    \end{prop}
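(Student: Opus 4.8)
The plan is to argue by contradiction, running the same covering-and-integration scheme used in the proof of \Cref{thm:cantorInterval}, with the Cantor measure replaced by the abstract curve-family measure $\mu$ and its $\alpha$-regularity from hypothesis~(2). Set $p := \frac{\tau}{\tau - \alpha}$ and suppose toward a contradiction that $\Cdim(Z) < p$; fix $q$ with $\Cdim(Z) < q < p$, so there is a metric space $(Z',d')$ with $\Hdim(Z') < q$ and a quasisymmetry $f : Z \to Z'$. Exactly as in the theorem, compactness of $Z$ and $Z'$ makes $f^{-1}$ uniformly continuous, so the hypothesis that the curves $\gamma \in \cC$ have $\diam \gamma$ bounded below yields a constant $c' > 0$ with $\diam f(\gamma) \geq c'$ for all $\gamma$. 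Given $\epsilon > 0$, I would use $\Hdim(Z') < q$ to cover $Z'$ by balls $\{B_i'\}$ with $\sum_i (r_i')^q < \epsilon$ (\Cref{rem:balls}); by \Cref{lemma:ballscover} pass to a pairwise disjoint subfamily $\{B_i'\}_{i \in I}$ whose $5$-fold dilates still cover $Z'$, and by \Cref{lemma:annuli} pull these back through the quasisymmetry to pairwise disjoint balls $\{B_i\}_{i \in I}$ of radius $r_i$ in $Z$ with $f^{-1}(5B_i') \subset H B_i$ for a fixed constant $H$.

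With $h_i(\gamma) = 1$ when $f(\gamma) \cap 5 B_i' \neq \emptyset$ and $0$ otherwise, the lower bound on $\diam f(\gamma)$ and the fact that $\{5B_i'\}$ covers $Z'$ give $\tfrac{1}{10} c' \leq \sum_i r_i' h_i(\gamma)$, exactly as in \Cref{claim:c'}. Integrating against $\mu$, passing the images back through $f^{-1}$ via $f^{-1}(5B_i') \subset HB_i$, and applying hypothesis~(2) to the balls $HB_i$ yields
\[ \tfrac{1}{10} c' \leq \sum_i r_i' \, \mu\{\gamma : \gamma \cap H B_i \neq \emptyset\} \leq A H^{\alpha} \sum_i r_i' \, r_i^{\alpha}. \]
Now apply Hölder's inequality with exponents $q$ and its conjugate $q' = \frac{q}{q-1}$. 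The exponent bookkeeping is the crux: one checks that $q < p = \frac{\tau}{\tau - \alpha}$ is equivalent to $\alpha q' > \tau$, so writing $s := \alpha q'$ gives
\[ \sum_i r_i' \, r_i^{\alpha} \leq \Bigl( \sum_i (r_i')^q \Bigr)^{1/q} \Bigl( \sum_i r_i^{s} \Bigr)^{1/q'} \leq \epsilon^{1/q} \Bigl( \sum_i r_i^{s} \Bigr)^{1/q'}, \]
where $s > \tau$.

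The one genuinely new ingredient, and the step I expect to be the main obstacle, is controlling $\sum_i r_i^{s}$ over the disjoint balls $\{B_i\}$ by a constant independent of $\epsilon$. In \Cref{thm:cantorInterval} the corresponding sum had exponent $p = \Hdim(Z)$ and was bounded by $\cH^p(Z)$ using the Ahlfors $p$-regularity of $\cC \times [0,1]$ together with disjointness; here no regularity is available, and this is precisely where the packing dimension $\tau$ must enter. Since $s > \tau = \dim_P(Z)$, the $s$-dimensional packing measure of $Z$ vanishes, so $Z$ decomposes into pieces of arbitrarily small packing premeasure; because $\sum_i (r_i')^q < \epsilon$ forces $\max_i r_i' < \epsilon^{1/q} \to 0$, uniform continuity of $f^{-1}$ forces $\max_i r_i \to 0$, and the premeasure bound applied below that scale gives $\sum_i r_i^{s} \leq M$ for a constant $M$ depending only on $Z$ and $s$. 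Feeding this into the chain yields $\tfrac{1}{10} c' \leq A H^{\alpha} M^{1/q'} \epsilon^{1/q}$, a contradiction as $\epsilon \to 0$ since $c'>0$ is fixed. The delicate point to get right is the interchange between the fixed decomposition witnessing $\dim_P(Z) = \tau$ and the $\epsilon$-dependent families $\{B_i\}$: one must invoke the packing-premeasure estimate only at a scale below which every $r_i$ lies, and this is the part of the argument that does not reduce to a formal copy of the Cantor-set case.

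Finally, the side condition $\tau - \alpha \geq 1$ can be obtained by running the \emph{same} covering-and-integration argument with $Z' = Z$ and $f$ the identity, at exponent $1 + \alpha$: if $\Hdim(Z) < 1 + \alpha$ then $Z$ admits covers by balls with $\sum_i r_i^{1+\alpha}$ arbitrarily small, and the estimate $c \leq 10 \sum_i r_i \, \mu\{\gamma : \gamma \cap 5B_i \neq \emptyset\} \leq 10 \cdot 5^{\alpha} A \sum_i r_i^{1+\alpha}$ (no Hölder step needed) forces a contradiction. Hence $\Hdim(Z) \geq 1 + \alpha$, and since packing dimension dominates Hausdorff dimension, $\tau \geq \Hdim(Z) \geq 1 + \alpha$; in particular $\tau > \alpha$, so that $\frac{\tau}{\tau - \alpha}$ is well defined.
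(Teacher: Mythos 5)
Your overall scheme is the right one: the paper states this proposition without proof, indicating only that it follows the same covering-and-integration reasoning as \Cref{thm:cantorInterval}, and that is exactly what you run. Most of your bookkeeping is correct: the equivalence $q < \tau/(\tau-\alpha) \Leftrightarrow \alpha q' > \tau$, the chain through the analogue of \Cref{claim:c'}, the H\"{o}lder step, and your derivation of the side condition via $\Hdim(Z) \geq 1+\alpha$ (the no-H\"{o}lder run of the argument, plus the fact that packing dimension dominates Hausdorff dimension) are all sound. The gap sits exactly where you predicted it, but your proposed repair fails: it is \emph{not} true that $s > \tau = \dim_P(Z)$ yields a constant $M$ and a scale $\delta_0$ such that every packing of $Z$ by disjoint balls of radii $r_i \leq \delta_0$ satisfies $\sum_i r_i^s \leq M$. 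A uniform bound of that kind at a fixed scale is governed by the \emph{upper box} dimension, not the packing dimension: the packing premeasure at scale $\delta$, $\cP^s_\delta(Z) = \sup \sum_i (2r_i)^s$ over disjoint families with $r_i \leq \delta$, is finite for small $\delta$ when $s > \overline{\dim}_B(Z)$ and is infinite for \emph{every} $\delta > 0$ when $s < \overline{\dim}_B(Z)$. Concretely, $Z = \{0\} \cup \{1/n : n \geq 1\}$ has $\dim_P(Z) = 0$ but $\overline{\dim}_B(Z) = \frac{1}{2}$; taking $s = \frac{1}{4}$ and the disjoint balls $B(1/n, \frac{1}{4n^2})$ for $n_0 \leq n \leq N$ gives radii below any prescribed $\delta_0$ while $\sum_i r_i^s \asymp \sum_{n=n_0}^{N} n^{-1/2} \rightarrow \infty$. (This $Z$ carries no curve family, but the claimed bound is a statement about $Z$ alone, and that statement is false.)

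Your hedge --- invoking the premeasure estimate ``only at a scale below which every $r_i$ lies'' --- cannot close this, because $\cP^s(Z) = 0$ only supplies a countable decomposition $Z = \bigcup_j E_j$ with $\sum_j \cP^s_0(E_j)$ small, and the scales at which $\cP^s_{\delta}(E_j)$ approximates $\cP^s_0(E_j)$ degenerate as $j \rightarrow \infty$: there is no single scale valid for all pieces simultaneously (in the example, decompose into singletons: each has premeasure zero, yet at every positive scale each contributes $(2\delta)^s$ and there are infinitely many). As written, your argument proves the proposition with $\overline{\dim}_B(Z)$ in place of $\tau$ --- a strictly weaker conclusion, since $\tau \mapsto \tau/(\tau - \alpha)$ is decreasing and $\dim_P \leq \overline{\dim}_B$. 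To obtain the packing-dimension statement one must localize the entire covering argument to a single piece of the decomposition: replace the $E_j$ by their closures (which does not change upper box dimension), use that each $\gamma$ is a continuum of diameter at least $c$ together with a Baire category argument inside $\gamma$ to extract a subcontinuum of definite diameter contained in a single piece, and use that $\mu$ is a probability measure to find one piece $E_{j_0}$ with $\overline{\dim}_B(E_{j_0}) < s$ that works for a subfamily of (sub)curves of positive $\mu$-measure; one then runs your chain for that subfamily, bounding $\sum_i r_i^s$ only over the balls whose dilates meet $E_{j_0}$. This localization --- the step where connectivity of the curves and positivity of $\mu$ are genuinely used on the $Z$ side, rather than only through \Cref{claim:c'} --- is the missing idea, and it does not reduce to a formal premeasure estimate on all of $Z$.
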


    The second generalization uses a measure on the metric space to replace the radius of a ball.

    \begin{prop} \cite[Proposition 4.1.3]{mackaytyson}
        Let $(Z,d,\mu)$ be a compact, doubling metric measure space, and let $1<p<\infty$. Suppose that there exists a family $\cE$ of connected sets in $Z$ and a probability measure $\mu$ on $\cE$ with the following properties:
            \begin{enumerate}
                \item There exists $c>0$ so that $\diam E \geq c$ for all $E \in \cE$.
                \item There exists $A>0$ and $r_0>0$ so that
                    \[\nu\{E \in \cE \,|\, E \cap B \neq \emptyset\} \leq A\mu(B)^{1/p'} \]
                    for all balls $B$ in $Z$ of radius at most $r_0$.
            \end{enumerate}
        Then, $\Cdim(Z) \geq p$, where $p$ and $p'$ are H\"{o}lder conjugates.
    \end{prop}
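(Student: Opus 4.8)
The plan is to argue by contradiction, following the structure of the proof of \Cref{thm:cantorInterval} almost verbatim, with the radius terms $r_i^{\alpha}$ there replaced throughout by the measure terms $\mu(B_i)^{1/p'}$ supplied by hypothesis~(2). (I read the measure on $\cE$ in hypothesis~(2) as $\nu$, reserving $\mu$ for the doubling measure on $Z$.) Suppose $\Cdim(Z) < p$; then there is a metric space $(Z',d')$ with $\Hdim(Z') < p$ and a quasisymmetry $f \colon Z \to Z'$. Since $Z$ is compact and each $E \in \cE$ satisfies $\diam E \geq c$, uniform continuity of $f^{-1}$ produces a constant $c' > 0$ with $\diam f(E) \geq c'$ for all $E \in \cE$, exactly as in \Cref{thm:cantorInterval}.

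Next I would fix $\epsilon > 0$ and cover $Z'$. Because $\Hdim(Z') < p$, for every threshold $\delta' > 0$ there is a cover of $Z'$ by balls $\{B_i'\}$ of radius $r_i' < \delta'$ with $\sum_i (r_i')^p < \epsilon$. Applying \Cref{lemma:ballscover} I extract a pairwise disjoint subfamily, still denoted $\{B_i'\}_{i \in I}$, with $Z' = \bigcup_{i \in I} 5B_i'$ and $\sum_{i\in I}(r_i')^p < \epsilon$. Pulling back through $f$ via \Cref{lemma:annuli} gives a constant $H$ and pairwise disjoint balls $\{B_i\}_{i \in I}$ in $Z$, of radii $r_i$, with $B_i \subset f^{-1}(B_i') \subset f^{-1}(5B_i') \subset HB_i$. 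The threshold $\delta'$ is chosen first: given $r_0$ from hypothesis~(2), uniform continuity of $f^{-1}$ guarantees that taking $\delta'$ small forces $\diam f^{-1}(5B_i')$, hence $Hr_i$, to be at most $r_0$, so that hypothesis~(2) applies to each $HB_i$.

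The heart of the argument is the chain of estimates. Define $h_i \colon \cE \to \{0,1\}$ by $h_i(E) = 1$ iff $f(E) \cap 5B_i' \neq \emptyset$. Since $f(E)$ is connected (as $E$ is connected and $f$ is a homeomorphism) and is covered by the sets $5B_i'$, its diameter is at most the sum of the diameters of the covering balls it meets, giving $\tfrac{1}{10}c' \leq \sum_{i} r_i' h_i(E)$ for every $E$. Integrating against $\nu$ and estimating:
\begin{align*}
    \tfrac{1}{10}c'
    &\leq \int_{\cE} \sum_{i\in I} r_i' h_i(E)\, d\nu(E)
    = \sum_{i \in I} r_i'\, \nu\{E : f(E) \cap 5B_i' \neq \emptyset\} \\
    &\leq \sum_{i \in I} r_i'\, \nu\{E : E \cap HB_i \neq \emptyset\}
    \leq \sum_{i \in I} r_i'\, A\, \mu(HB_i)^{1/p'} \\
    &\leq A C^{1/p'} \sum_{i \in I} r_i'\, \mu(B_i)^{1/p'}
    \leq A C^{1/p'} \left( \sum_{i \in I} (r_i')^p \right)^{1/p} \left( \sum_{i \in I} \mu(B_i) \right)^{1/p'} \\
    &\leq A C^{1/p'}\, \epsilon^{1/p}\, \mu(Z)^{1/p'}.
\end{align*}
The second line uses $f^{-1}(5B_i') \subset HB_i$ and hypothesis~(2); the passage to $\mu(B_i)$ uses the doubling property of $\mu$ to absorb the dilation by $H$ into a constant $C = C(H)$; and the H\"{o}lder inequality with exponents $p, p'$ is applied with $a_i = r_i'$ and $b_i = \mu(B_i)^{1/p'}$, so that $\sum_i b_i^{p'} = \sum_i \mu(B_i) \leq \mu(Z)$ by disjointness of the $B_i$. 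Since $\mu(Z), A, C, H$ are fixed while $\epsilon$ may be taken arbitrarily small, this is the desired contradiction.

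I expect the main obstacle to be the bookkeeping in the previous paragraph rather than any single deep step: arranging the order of quantifiers so that all pulled-back balls $HB_i$ have radius at most $r_0$ (which is what forces the cover of $Z'$ to be chosen fine, via compactness and uniform continuity of $f^{-1}$), and verifying that the doubling constant $C = C(H)$ controlling $\mu(HB_i) \leq C\,\mu(B_i)$ is uniform in $i$. The conceptually essential hypotheses are the connectedness of the sets in $\cE$, without which the estimate $\tfrac{1}{10}c' \leq \sum_i r_i' h_i(E)$ fails, and the precise exponent $1/p'$ in hypothesis~(2), which is exactly what makes the H\"{o}lder pairing collapse to $\sum_i \mu(B_i)$.
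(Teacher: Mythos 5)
Your proposal is correct and is exactly the adaptation the paper intends: the paper gives no separate proof of this proposition, stating only that it ``follows the same reasoning'' as \Cref{thm:cantorInterval}, and your argument carries that out faithfully, with the right substitution of $\mu(B_i)^{1/p'}$ for $r_i^{\alpha}$, the doubling property absorbing the dilation constant $H$, and disjointness giving $\sum_i \mu(B_i) \leq \mu(Z)$ in place of the Hausdorff-measure bound. Your handling of the radius restriction $r_0$ (choosing the cover of $Z'$ fine enough, via uniform continuity of $f^{-1}$, that the outer balls $HB_i$ have radius at most $r_0$) is also the standard and correct resolution of the one point where the general statement differs from the special case.
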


\subsection{Gromov's round trees}

    \begin{figure}[t]
    \begin{centering}
	\begin{overpic}[width=.3\textwidth, tics=5]{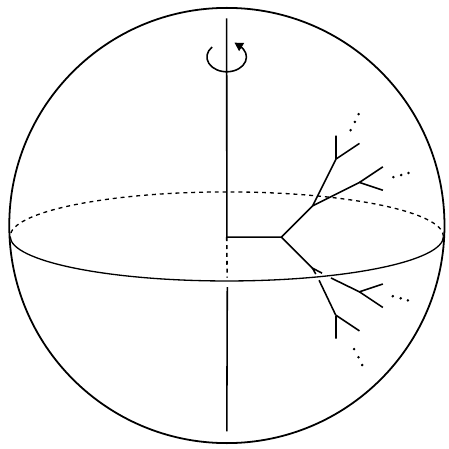} 
        \put(-5,90){$\Hy^3$}
    \end{overpic}
	\caption{\small{Gromov's round trees. }}
	\label{figure-GromovRoundTree}
    \end{centering}
    \end{figure}

    Gromov's round tree construction is a tool to build a family of arcs homeomorphic to the product of a Cantor set and an interval in the boundary of a hyperbolic metric space~\cite[Section 7.C3]{gromov93}. We start with Gromov's original definition and then present a combinatorial point of view. In particular, Mackay's {\it combinatorial round trees}, are useful in building subcomplexes of hyperbolic groups that are quasi-isometric to a round tree.

    \begin{example} \cite[Page 136]{gromov93}
        Gromov begins with the following example. Let $T \subset \Hy^2 \subset \Hy^3$ be a quasi-isometrically embedded rooted tree with boundary the Cantor set. Rotate $T$ around a geodesic as indicated in \Cref{figure-GromovRoundTree}.  The resultant space $\cR$ decomposes into unions of quasi-isometric copies of the hyperbolic plane, which intersect in disks. The boundary $\p \cR$ is homeomorphic to the product of a Cantor set and the circle.
    \end{example}

    \begin{defn} \cite[Page 137]{gromov93}
        A {\it two-dimensional round tree} $X_0$ is a two-dimensional space of negative curvature which admits an isometric action of $S^1$ with a single fixed point $x_0$ so that there is an isometrically embedded tree $T \subset X_0$ which intersects every $S^1$ orbit at exactly one point.
    \end{defn}

    Gromov's discussion focuses on the case that the metric is locally the metric of $\Hy^2$ away from the singular locus. This assumption is typically dropped in applications. To obtain bounds on conformal dimension, one does not require the full $S^1$-action; one can take a subcomplex with boundary the product of the Cantor set and an interval. Such subcomplexes are also referred to as round trees.

    Alternatively, one can construct a round tree by starting with a sector of the hyperbolic plane, viewed in the unit disk model. See \Cref{figure-GromovRoundTree_sector}. Take $b$ copies of this sector and glue them together by isometries along the intersection of the disk in the hyperbolic plane of radius $\ell$ with each sector. For each of the $b$ sectors, take $b-1$ additional sectors, and glue them together by isometries along the intersection of the disk in the hyperbolic plane of radius $2\ell$ with each sector. Continue in this way to obtain a round tree, built from sectors in the hyperbolic plane that naturally branch according to the $b$-valent rooted tree with edges of length $\ell$. The boundary is homeomorphic to the product of a Cantor set and an interval. Crucially, the Hausdorff dimension of the boundary of the round tree, and hence its conformal dimension by \Cref{thm:cantorInterval}, can be computed with respect to the degree of the tree and the distance between vertices.

     \begin{figure}[t]
    \begin{centering}
	\begin{overpic}[width=.3\textwidth, tics=5]{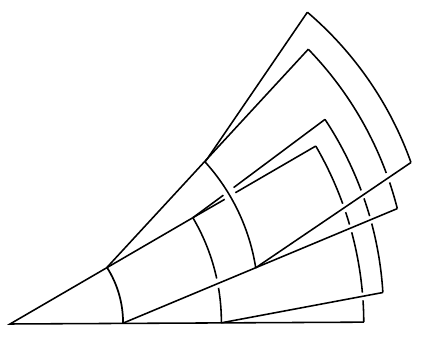} 

    \end{overpic}
	\caption{\small{A portion of a round tree obtained by gluing sectors of the hyperbolic plane (viewed in the disk model) along initial subcomplexes. The boundary of the limiting object is homeomorphic to the product of the Cantor set and an interval. }}
	\label{figure-GromovRoundTree_sector}
    \end{centering}
    \end{figure}

    To obtain lower bounds on conformal dimension, one seeks to quasi-isometrically embed a round tree in a hyperbolic metric space. Mackay~\cite{mackay16} gave a combinatorial version of round trees that are well-suited to polyhedral complexes. The results below generalize work of Bourdon~\cite{bourdon95} and Mackay~\cite{mackay}.

    If $A'\subset A$ is a subcomplex of a polygonal complex $A$, then the {\it star} of $A'$ in $A$, denoted $\St(A')$, is the union of all closed cells in $A$ that nontrivially intersect $A'$.  Let $\N = \Z_{\geq 1}$.

    \begin{defn}[Combinatorial round tree] \cite[Definition 7.1]{mackay16} \label{defn:RT}
        A polygonal $2$-complex $A$ is a {\it combinatorial round tree} with vertical branching $V \in \N$ and horizontal branching at most $H \in \N$ if the following holds. Let $T = \{1,2, \ldots, V\}$.   Then
            \[ A = \bigcup_{\ba \in T^{\N}} A_{\ba}, \]
        where
        \begin{enumerate}
            \item The complex $A$ has a basepoint $x_0$ contained in the boundary of a unique $2$-cell $A_0' \subset A$.
            \item Each complex $A_{\ba}$ is an infinite planar $2$-complex homeomorphic to a half-plane whose boundary is the union of two rays $L_{\ba}$ and $R_{\ba}$ with $L_{\ba} \cap R_{\ba} = \{x_0\}$.

            \item Let $A_0$ be a union of $2$-cells so that $A_0' \subseteq A_0$ and $A_0$ is homeomorphic to a closed disk.
            For $n>0$, the {\it round tree at step $n$} is defined as $A_n = \St(A_{n-1})$. Given $\ba = (a_1, a_2, \ldots) \in T^\N$, let $\ba_n = (a_1, \ldots, a_n) \in T^n$. If $\ba,\ba' \in T^\N$ satisfy $\ba_n = \ba'_n$ and $\ba_{n+1} \neq \ba'_{n+1}$, then
                \[ A_n \cap A_{\ba} \subseteq A_{\ba'} \cap A_{\ba} \subseteq A_{n+1} \cap A_{\ba}.\]
            \item Each $2$-cell $P \subseteq A_n$ meets at most $VH$ $2$-cells in $A_{n+1} \backslash A_n$.
        \end{enumerate}
    \end{defn}

    \begin{remark}
         A combinatorial round tree has the following structure, as  explained by Mackay \cite{mackay16}. Each complex $A_n$ is the union of $V^n$ distinct planar $2$-complexes $\{A_{\ba_n}\}_{\ba_n \in T^n}$. Each complex $A_{\ba_n} \subset A_n$ is homeomorphic to a disk. The complexes $A_{\ba_n} \subset A_n$ are glued together along their initial subcomplexes in a branching fashion. The boundary of $A_{\ba_n}$ decomposes as a union of three paths: $L_{\ba} \cap A_n$, $R_{\ba} \cap A_n$, and a connected path $E_{\ba_n}$. The complex $A_{\ba_{n+1}}$ is built from $A_{\ba_n}$ by attaching $2$-cells in $V$ strips along the path $E_{\ba_n}$. In each strip, each $2$-cell in $A_{\ba_n}$ is adjacent to at most $H$ new $2$-cells.
    \end{remark}

   Lower bounds on the conformal dimension of a hyperbolic polygonal complex then follow from the following theorem.

    \begin{thm} \cite[Theorem 7.2]{mackay16} \label{thm:mackayRT}
        Let $X$ be a hyperbolic polygonal $2$-complex. Suppose there is a combinatorial round tree $A$ with vertical branching $V \geq 2$ and horizontal branching $H \geq 2$. Suppose that $A^{(1)}$, with the natural length metric giving each edge length one, admits a quasi-isometric embedding into $X$. Then
            \[ \Cdim(\p X) \geq 1 + \frac{\log V}{\log H}.\]
    \end{thm}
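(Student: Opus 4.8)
The plan is to transport the problem to the boundary and then invoke one of the curve-family criteria for lower bounds on conformal dimension stated above. First I would upgrade the quasi-isometric embedding $A^{(1)} \hookrightarrow X$ to a boundary map. Since a combinatorial round tree is itself hyperbolic, a quasi-isometric embedding of proper geodesic hyperbolic spaces induces a topological embedding $\iota \colon \p A \hookrightarrow \p X$, and the quasi-M\"{o}bius/cross-ratio estimates underlying \Cref{thm:QI_QS} show that $\iota$ is a quasisymmetry onto its image once both boundaries carry visual metrics. Conformal dimension is monotone under such embeddings: any quasisymmetry $g \colon \p X \to W$ yields a quasisymmetry $g \circ \iota$ from $\p A$ onto a subset $S \subseteq W$, so that $\Hdim(W) \geq \Hdim(S) \geq \Cdim(S) = \Cdim(\p A)$ (the last equality because $S$ is quasisymmetric to $\p A$ and $\Cdim$ is a quasisymmetry invariant); taking the infimum over $g$ gives $\Cdim(\p X) \geq \Cdim(\p A)$. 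It therefore suffices to prove $\Cdim(\p A) \geq 1 + \frac{\log V}{\log H}$.

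Next I would set up the curve family. The boundary $\p A$ is homeomorphic to $C \times [0,1]$, where $C \cong T^{\N}$ is the Cantor set recording the vertical branching, and each half-plane $A_{\ba}$ contributes a \emph{vertical} arc $\gamma_{\ba} := \p A_{\ba}$. Let $\cE = \{\gamma_{\ba} : \ba \in T^{\N}\}$ and let $\nu$ be the natural probability measure on $\cE \cong T^{\N}$ assigning mass $V^{-n}$ to each level-$n$ cylinder. I would then verify the hypotheses of Bourdon's criterion (\cite[Lemma 1.6]{bourdon95}) for a visual metric $d_a$ on $\p A$ whose parameter is taken small enough that $1 < a \leq H$ (always admissible, since allowable parameters accumulate at $1$).

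The geometric heart of the argument is to convert the combinatorial branching data into metric estimates on $(\p A, d_a)$. Because $A_n = \St(A_{n-1})$ extends $A_{n-1}$ by a single layer of uniformly bounded cells, the splitting condition (iii) of \Cref{defn:RT} shows that two vertical arcs $\gamma_{\ba}, \gamma_{\ba'}$ whose indices first differ at level $n$ share a subcomplex out to distance $\approx n$ from $x_0$ and then diverge; hence the relevant Gromov products are $\approx n$ and the arcs are separated at scale $a^{-n}$. From this I would extract the two inputs to the criterion:
\begin{enumerate}
    \item[(i)] each $\gamma_{\ba}$ spans the full interval direction, so $\diam \gamma_{\ba} \geq c > 0$;
    \item[(ii)] a ball $B$ of radius $r \approx a^{-m}$ lies coarsely inside the shadow of a single frontier cell of $\p A_m$, which belongs to one vertical strip $\ba_m \in T^m$, so the arcs meeting $B$ are precisely those $\gamma_{\ba}$ agreeing with that strip to level $m$, giving
    \[ \nu\{\gamma \in \cE : \gamma \cap B \neq \emptyset\} \approx V^{-m} = r^{\alpha}, \qquad \alpha = \frac{\log V}{\log a}. \]
\end{enumerate}
Counting the $V^n$ strips, each carrying $\approx H^n$ frontier cells at scale $a^{-n}$, a uniform covering-and-packing estimate (equivalently, Ahlfors regularity of the uniform combinatorial structure) identifies the packing dimension $\tau = \Hdim(\p A, d_a) = \frac{\log(VH)}{\log a}$. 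Since $a \leq H$ we have $\tau - \alpha = \frac{\log H}{\log a} \geq 1$, so Bourdon's criterion applies and yields
\[ \Cdim(\p A) \geq \frac{\tau}{\tau - \alpha} = \frac{\log(VH)}{\log H} = 1 + \frac{\log V}{\log H}, \]
a value pleasingly independent of the auxiliary parameter $a$. Combined with $\Cdim(\p X) \geq \Cdim(\p A)$, this is the theorem.

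The step I expect to be the main obstacle is exactly this passage from combinatorics to metric: rigorously relating the level-$n$ divergence of the half-planes $A_{\ba}$ to Gromov products $\approx n$, and pinning down the ball-counting estimate (ii) together with the packing dimension $\tau$ by controlling shadows of frontier cells in the visual metric. One must also handle the ``at most $H$'' horizontal branching with care, though this is in fact favorable: it only upper-bounds $\tau$, and $\tau \mapsto \tau/(\tau-\alpha)$ is decreasing, so a smaller $\tau$ strengthens the conclusion. The boundary-embedding step is standard but still needs the quasi-M\"{o}bius machinery to confirm that the induced boundary map is genuinely quasisymmetric on these bounded boundaries.
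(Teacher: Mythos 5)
Your architecture is the right one, and it matches both this article's toolkit and the actual proof in \cite{mackay16} (whose mechanism the text only indicates in one line: embed a tree in the dual graph to the polygons and bound the dimension of the Cantor set of arcs in terms of $V$ and $H$): pull the round tree's boundary into $\p X$ quasisymmetrically, then run a Pansu--Bourdon curve-family argument on the arcs $\{\p A_{\ba}\}$ with the Bernoulli measure. One caveat on the first step: \Cref{thm:QI_QS} is stated for quasi-isometries between spaces with geometric actions, not for quasi-isometric embeddings. You first need $A$ hyperbolic (pull thin triangles back through the embedding using the Morse lemma), and then the fact that quasi-isometric embeddings of hyperbolic spaces induce quasi-M\"{o}bius embeddings of boundaries \cite[Chapter 5]{buyaloschroeder}; since $\Cdim$ is also invariant under quasi-M\"{o}bius maps of these compact, uniformly perfect boundaries, your monotonicity argument $\Cdim(\p X) \geq \Cdim(\p A)$ goes through without resolving the quasisymmetric-versus-quasi-M\"{o}bius issue.

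The genuine gap is the one you flag but then elide with ``$\approx$'': the radius-based criterion \cite[Lemma 1.6]{bourdon95} ties the exponents $\alpha$ and $\tau$ to the radius $r$, and your identifications $\alpha = \frac{\log V}{\log a}$ and $\tau = \frac{\log(VH)}{\log a}$ require that combinatorial level $n$ and distance from $x_0$ agree up to \emph{additive} error. \Cref{defn:RT} only gives linear comparability: cells of $A_{n+1}\setminus A_n$ avoid $A_{n-1}$, so their distance from $x_0$ is at least $c_1 n$, while the upper bound $c_2 n$ depends on cell perimeters and the embedding constants, and $c_1 \neq c_2$ in general. Propagating this, you only obtain $\alpha \geq \frac{\log V}{c_2 \log a}$ and $\tau \leq \frac{\log(VH)}{c_1 \log a}$; the $\log a$ cancels in $\tau/(\tau-\alpha)$ but $c_1, c_2$ do not, and when $c_1 < c_2$ the resulting bound is strictly weaker than $1 + \frac{\log V}{\log H}$. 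The repair is to use the measure-based criterion stated immediately after Bourdon's, \cite[Proposition 4.1.3]{mackaytyson}: put the natural ``rectangle'' measure $\mu$ on $\p A$, where the shadow of a level-$n$ cell inside a level-$n$ vertical cylinder gets mass at least $V^{-n}H^{-n}$ (horizontal branching at most $H$ only helps here), check that $\mu$ is doubling, and verify $\nu\{E \in \cE \,|\, E \cap B \neq \emptyset\} \leq A\,\mu(B)^{1/p'}$ with $p' = \frac{\log(VH)}{\log V}$. This succeeds where the radius-based version fails because a ball of radius $a^{-d}$ about $\xi$ is sandwiched, boundedly, between rectangles at the single combinatorial level $n = n(\xi,d)$ where the geodesic toward $\xi$ reaches distance $d$: both $\nu(\cdots) \lesssim V^{-n}$ and $\mu(B) \gtrsim (VH)^{-n}$ are expressed through the \emph{same} $n$, so the unknown layer-to-distance constants cancel and the clean exponent $p = 1 + \frac{\log V}{\log H}$ survives. (Relatedly, do not assert $\Hdim(\p A, d_a) = \frac{\log(VH)}{\log a}$ as an equality; the arcs need not be Ahlfors regular, and the measure formulation lets you avoid computing any dimension of $\p A$ at all.)
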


    In the proof of the theorem above, one embeds a tree in the dual graph to the polygons. The Hausdorff dimension of the Cantor set boundary can be bounded in terms of $V$ and $H$.

\subsection{Bourdon buildings}

    Bourdon~\cite{bourdon95} utilized Gromov's round trees to give lower bounds on the conformal dimension of certain right-angled Fuchsian buildings, which are often referred to as {\it Bourdon buildings}. A discussion of constructing round trees in these examples also appears in Gromov's article. The boundary of each building is homeomorphic to the Menger curve~\cite{benakli, dymaraosajda}. Bourdon's result presented the first family of hyperbolic groups with one-dimensional topological boundary for which the conformal dimension tends to infinity.

    Subsequent work of Bourdon~\cite{bourdon} computed the conformal dimension of the buildings exactly. Interestingly, non-isometric buildings can have equal conformal dimension. Moreover, the result proved that the conformal dimension values over all spaces in this family forms a dense subset of $(1, \infty)$. Bourdon--Pajot~\cite{bourdonpajot} then proved a strong form of quasi-isometric rigidity in this setting, showing that any quasi-isometry between two such buildings is bounded distance from an isometry. Thus, in contrast to the setting of the rank-1 symmetric spaces, the homeomorphism type of the boundary together with its conformal dimension is not a complete quasi-isometry invariant within this family of spaces. See \cite{bourdonpajot-survey} for additional discussion of these examples and their context.

    \begin{defn}[Bourdon building $I_{p,q}$]
        Let $p \geq 5$ and $q \geq 3$ be integers. The {\it Bourdon building} $I_{p,q}$ is a $\CAT(-1)$ polyhedral complex where each $2$-cell is isometric to a regular right-angled hyperbolic $p$-gon. Each edge of each polygon is contained in exactly $q$ distinct polygons. The link of each vertex is isomorphic to the complete bipartite graph $K_{q,q}$ with two vertex sets of size $q$. As shown by Bourdon~\cite[Proposition 2.2.1]{bourdon}, there is a unique polyhedral complex with these properties.
    \end{defn}

        Note that if $q=2$, so the link of each vertex is a $4$-cycle, then the resulting space is isometric to the hyperbolic plane. As $K_{2,2}$ is a subgraph of $K_{q,q}$ for $q>2$ in many ways, the building contains many (isometrically embedded) copies of the hyperbolic plane.

        The following graph product acts geometrically on the building $I_{p,q}$,
            \[\Gamma_{p,q}:= \la \, s_1, \ldots, s_p \,|\, s_i^q, [s_i, s_{i+1}] \,\ra.\]
        A fundamental domain for the action is a single polygon in the building, and one can view each generator as acting by rotation around a convex subcomplex containing an edge of a polygon.

        \begin{thm} \cite[Theorem 1.1]{bourdon}
            Let $I_{p,q}$ be a building as described above, and let $d_a$ be a visual metric on its boundary. Then
            \[ \Cdim(\p I_{p,q}, d_a) = 1 + \frac{\log (q-1)}{\arccosh\left( \frac{p-2}{2}\right)}. \]
        \end{thm}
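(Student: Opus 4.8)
The plan is to establish the two matching inequalities separately. Since $I_{p,q}$ is $\CAT(-1)$, \Cref{thm:CAT-1vm} lets me work with the honest Bourdon metric $d(\eta,\eta') = e^{-(\eta,\eta')_p}$ (visual parameter $a=e$); because conformal dimension is a quasisymmetry invariant, this choice is harmless, and it remains to show $\Cdim(\p I_{p,q}) = 1 + \alpha$, where $\alpha := \tfrac{\log(q-1)}{\arccosh((p-2)/2)}$. Set $\ell := \arccosh(\tfrac{p-2}{2})$.

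\emph{Lower bound via a round tree.} First I would fix an apartment $\Sigma \cong \Hy^2$, tiled by regular right-angled $p$-gons meeting four to a vertex, and build Gromov's round tree $\cR \subset I_{p,q}$ out of genuine totally geodesic hyperbolic sectors, as in \Cref{figure-GromovRoundTree_sector}. Its underlying rooted tree $T$ has branching $q-1$ (thickness: $q$ polygons share each edge, so $q-1$ continuations at each wall crossing), and the branch loci are spaced by the hyperbolic distance $\ell$, which is the distance between consecutive walls crossed by the central geodesic; this is precisely the place where the identity $\ell = \arccosh(\tfrac{p-2}{2})$ enters, via hyperbolic trigonometry in the right-angled $p$-gon. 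Because the sectors are totally geodesic, $\cR$ is convex, so Gromov products—and hence the visual metric—restrict from $I_{p,q}$, giving an isometric inclusion $(\p\cR,d) \subset (\p I_{p,q},d)$. The space $\p\cR$ is the product of an interval with a Cantor set of Hausdorff dimension $\alpha = \tfrac{\log(q-1)}{\ell}$ (the tree computation of \Cref{ex:HdimCantorBoundary} with $a=e$ and edge length $\ell$), and it carries the curve family $\cE$ of sector-boundary arcs, with diameters bounded below, together with a natural measure $\mu$ satisfying $\mu\{E \in \cE : E \cap B_r \neq \emptyset\} \leq A r^{\alpha}$. With packing dimension $\tau = 1+\alpha$, the generalization of \Cref{thm:cantorInterval} proved in \cite[Lemme~1.6]{bourdon95} yields $\Cdim(\p\cR) \geq \tfrac{\tau}{\tau-\alpha} = 1+\alpha$. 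Monotonicity of conformal dimension under quasisymmetric embedding onto a subspace then gives $\Cdim(\p I_{p,q}) \geq 1 + \alpha$.

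\emph{Upper bound.} The reverse inequality requires a metric $d'$ on $\p I_{p,q}$, quasisymmetric to $d$, with $\Hdim(\p I_{p,q},d') \leq 1+\alpha$. I would construct $d'$ from the combinatorial structure of the building, reweighting the visual metric so that the longitudinal (interval) and transverse (Cantor) scalings are balanced, and then show $d'$ is Ahlfors $(1+\alpha)$-regular, so that $\Hdim(\p I_{p,q},d') = 1+\alpha$ by the Ahlfors-regularity lemma (\cite[Chapter~8.7]{heinonen-Lectures}). Concretely, one organizes $\p I_{p,q}$ by shadows (\Cref{defn:shadow}) of balls centered on the sphere of radius $k\ell$ about the basepoint, exactly as in the proof of \Cref{thm:HausdimBoundary}; covers by such shadows give the upper estimate through \Cref{rem:HdimUpperbound}, and a Patterson--Sullivan-type measure adapted to $d'$ supplies a matching Mass Distribution Principle lower estimate on $\cH^{1+\alpha}$, pinning the dimension.

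\emph{Main obstacle.} The substantive difficulty is the upper bound: producing the optimal metric and proving the building cannot be compressed below the exponent $1+\alpha$. Equivalently, one must show that the modulus of the curve family $\cE$ detects \emph{precisely} the exponent furnished by the round-tree lower bound, so that this lower bound is sharp—this is where the rigidity of the right-angled building (uniqueness of the complex, \cite[Proposition~2.2.1]{bourdon}, together with the exact geometry giving $\ell = \arccosh(\tfrac{p-2}{2})$) is indispensable, and it is the heart of Bourdon's theorem. A secondary but necessary technical step is verifying the geometric identity $\ell = \arccosh(\tfrac{p-2}{2})$ and the convexity of $\cR$ used to transfer the visual metric from $I_{p,q}$ to $\p\cR$.
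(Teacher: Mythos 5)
Your proposal cannot be checked against an internal argument, because the paper does not prove this theorem: it is quoted from Bourdon, and the surrounding text only sketches how round trees are built in $I_{p,q}$. But that surrounding text also flags the essential historical point that your plan runs afoul of: round trees were used in \cite{bourdon95} to get lower bounds strong enough to distinguish infinitely many quasi-isometry classes, while the \emph{exact} value in the statement comes from the later paper \cite{bourdon} by finer means --- an explicitly constructed Ahlfors $Q$-regular metric on $\p I_{p,q}$ with $Q = 1+\alpha$ (giving the upper bound, and this is the part you concede you do not construct), together with a Pansu-type modulus argument applied to a diffuse curve family filling the \emph{whole} boundary in that metric, rather than to an embedded round-tree subspace. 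Your upper-bound paragraph is a plan, not a proof, and it cannot be rescued by \Cref{thm:HausdimBoundary} by ``dialing'' the visual parameter $a$: the admissible parameters are bounded in terms of the hyperbolicity constant, so $h/\log a$ does not reach $1+\alpha$ for free. Producing the optimal metric, proving its Ahlfors $(1+\alpha)$-regularity, and proving it quasisymmetric to a visual metric is the heart of Bourdon's theorem.

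There is also a concrete false step in your lower bound, and it is not the ``secondary technical step'' you call it. You identify $\ell = \arccosh\bigl(\frac{p-2}{2}\bigr)$ with the distance between consecutive walls crossed by the central geodesic. In the regular right-angled $p$-gon the side length $a$ satisfies $\cosh a = 4\cos^2(\pi/p)-1$, and two walls through edges separated by a single edge have that middle edge as their common perpendicular; so consecutive parallel walls (strip widths) are at distance exactly $a$, and $a = \arccosh\bigl(\frac{p-2}{2}\bigr)$ only when $p=6$. For $p=5$ one has $\cosh a = \frac{1+\sqrt5}{2} \approx 1.618$ versus $\frac{p-2}{2} = 1.5$ (so $a \approx 1.061 > \ell \approx 0.962$), and for $p=7$ the inequality reverses ($a \approx 1.449 < \ell \approx 1.567$) --- so if your accounting were correct, branching at every strip would give a lower bound \emph{exceeding} the true conformal dimension, an impossibility. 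This reductio exposes the real issue: the Hausdorff dimension of the Cantor cross-section is governed by the radial Gromov-product gaps between successive branch loci as seen from the basepoint, not by perpendicular wall-to-wall distances, and nothing in your construction shows these radial gaps can be made equal to $\arccosh\bigl(\frac{p-2}{2}\bigr)$. (That constant arises instead as $\log \lambda$, where $\lambda = \frac{(p-2)+\sqrt{(p-2)^2-4}}{2}$ is the root of $x^2-(p-2)x+1=0$, i.e.\ $\lambda + \lambda^{-1} = p-2$, a growth rate attached to the $\{p,4\}$ tessellation rather than a wall spacing.) With an honest spacing your round tree yields $1 + \log(q-1)/D$ for some non-optimal $D$, which is exactly the non-sharp 1995-style bound; your convexity claim for the union of sectors also needs proof, though for the weaker bound \Cref{thm:mackayRT} shows a quasi-isometrically embedded combinatorial round tree would suffice, which is the route the paper actually describes.
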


        Bourdon notes that for $p' = (p-2)^2$ and $q'-1 = (q-1)^2$, the boundaries of the associated buildings have equal conformal dimension. For example, $\Cdim(\p I_{5,3}) = \Cdim(\p I_{9,5})$. See also \cite[Section 3.5]{mackaytyson} for additional discussion of these examples.

    \begin{figure}[t]
    \begin{centering}
	\begin{overpic}[width=.5\textwidth,  tics=5]{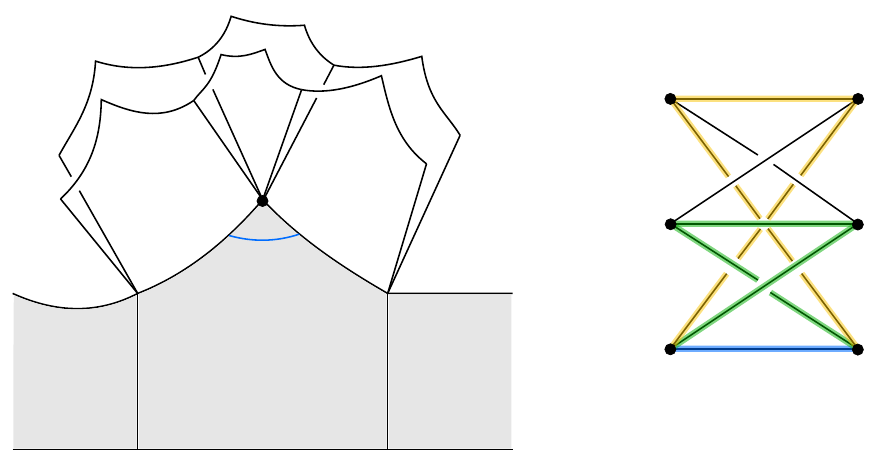} 
        \put(32,29){\small{$v$}}
        \put(5,10){\small{$A$}}
        \put(82,5){\small{$\Lk(v)$}}
    \end{overpic}
	\caption{\small{Building a round tree in a Bourdon building $I_{5,3}$. The complex is extended along strips at each vertex; this construction at $v$ is possible as the link of a vertex $v$ contains many 4-cycles that intersect only in an edge.}}
	\label{figure-BourdonBuilding}
    \end{centering}
    \end{figure}

        We indicate some of the intuition behind constructing round trees in this example. See \Cref{figure-BourdonBuilding}. The round tree $A$ is built recursively, by building branching sectors one strip at a time. The link of each vertex in the building is sufficiently rich that each strip can be extended to $q-1$ new strips. Suppose an initial portion of the round tree $A$ has been constructed and $P$ is a polygon on the outer boundary of this complex. There are various cases to consider, but to give one example, suppose $v \in P$ is a outer vertex, so that its incident edges are not glued to other polygons in the subcomplex previously constructed. The link of $v$ is $K_{q,q}$, and only one edge of this link corresponds to a polygon already in the subcomplex. This edge $e$ lies in $q-1$ $4$-cycles in the link that pairwise intersect only in $e$. These $4$-cycles correspond to polygons that can be added to the subcomplex to build pieces of $q-1$ new strips at the vertex $v$. One must then verify various compatibility conditions, as the complex is extended over various vertices.

\subsection{Mackay's applications}

    Mackay's work uses the round tree techniques in two quite different settings. The first result is remarkably general. Mackay produces round trees in the boundary of any one-ended hyperbolic group that does not split over a two-ended subgroup. As a consequence, Mackay proves the conformal dimension of any such group is strictly greater than one.
    The second application is to produce bounds on the conformal dimension of groups satisfying certain small cancellation conditions. As a consequence, Mackay obtains bounds on the conformal dimension of random groups, proving there are infinitely many quasi-isometry classes as certain parameters vary. The results are as follows.

    \begin{thm} \cite[Corollary 1.2]{mackay10}
        Suppose that $G$ is a hyperbolic group whose boundary is non-empty, connected, and has no local cut points. Then the conformal dimension of $\p G$ is strictly greater than one.
    \end{thm}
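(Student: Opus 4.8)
The plan is to exhibit in $Z := \p G$ a rich family of quasi-arcs carrying a probability measure that satisfies the hypotheses of the curve-family criterion \cite[Lemma 1.6]{bourdon95} with transverse exponent $\alpha > 0$; since $Z$ is connected its packing dimension satisfies $\tau \geq 1$, and the resulting bound $\Cdim(Z) \geq \tau/(\tau-\alpha)$ is then strictly greater than one. Throughout I fix a visual metric on $Z$ and record the standing properties: $Z$ is compact and connected (as $G$ is one-ended), and both doubling and linearly connected by the results of Bonk--Schramm and Bonk--Kleiner recorded in \Cref{sec:qs_invariants}. Because $G$ acts geometrically, these properties hold \emph{uniformly across all scales}, which is exactly what will convert the topological hypothesis ``no local cut points'' into usable quantitative data.

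The technical heart is a uniform avoidance lemma. Using that $Z$ is linearly connected and has no local cut points, together with cocompactness for scale-invariance, I would show there are constants $\lambda \geq 1$ and $c \in (0,1)$ so that for any scale and any configuration of points $x,y,p$ with $d(x,y)$ comparable to $d(x,p)$ and to $d(y,p)$, the points $x$ and $y$ may be joined by a $\lambda$-quasi-arc avoiding the ball $B_{c\,d(x,p)}(p)$. Linear connectedness supplies quasi-arcs joining any two points; absence of local cut points is precisely what permits rerouting such an arc around an obstructing point; and the geometric action promotes this to a single pair of constants valid at every scale and location.

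With the avoidance lemma in hand I would build the curve family recursively. Starting from one quasi-arc joining a well-separated pair of endpoints, at each stage I split every current curve into (at least) two $\lambda$-quasi-arcs whose tubular neighborhoods at the current scale are separated by a definite fraction of that scale, using the avoidance lemma to push the two branches apart. Iterating yields a family $\cE$ of quasi-arcs indexed by the Cantor set $\{0,1\}^{\N}$, with controlled separation at every dyadic scale, and the coin-flipping measure on $\{0,1\}^{\N}$ pushes forward to a probability measure $\mu$ on $\cE$. The diameters of the curves are bounded below (each joins the fixed initial endpoints), and the scale-uniform separation gives the modulus estimate $\mu\{E \in \cE : E \cap B_r \neq \emptyset\} \leq A r^{\alpha}$ with $\alpha > 0$ governed by the branching-versus-separation rate. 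Then \cite[Lemma 1.6]{bourdon95} applies and yields $\Cdim(Z) > 1$.

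The main obstacle is the uniform avoidance lemma. Promoting the purely topological statement ``no local cut points'' to a metric rerouting estimate that is uniform in scale and location---with a \emph{definite} separation gain at each step, so that the exponent $\alpha$ is bounded away from $0$---is exactly where the hyperbolic geometry and the cocompact action must be used in a coordinated way, and it is the step carrying the bulk of Mackay's technical work.
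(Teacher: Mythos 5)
Your proposal is correct in outline and follows essentially the same route the paper sketches for Mackay's theorem: quasi-arcs obtained from doubling plus linear connectedness, a quantitative splitting/rerouting step powered by the absence of local cut points (your ``uniform avoidance lemma,'' which is Mackay's annular linear connectivity and arc-straightening), iteration to produce a Cantor family of uniform quasi-arcs carrying a natural measure, and the Pansu--Bourdon curve-family criterion \cite[Lemma 1.6]{bourdon95} to conclude $\Cdim(\p G) > 1$. You also correctly locate the technical heart in promoting ``no local cut points'' to a scale-uniform metric statement, which is exactly where the bulk of Mackay's argument lies.
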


    The strategy to prove the above theorem begins with the existence of a quasi-arc in the boundary, as discussed in \Cref{sec:qs_invariants}. Mackay then performs a splitting operation, using the topology of the boundary, to obtain two arcs. He applies an arc straightening procedure to obtain two uniform quasi-arcs and repeats the process until the product of a Cantor set and an interval appears in the boundary. We note that Carrasco--Mackay characterized hyperbolic groups without $2$-torsion whose boundaries have conformal dimension equal to one~\cite{carrascomackay}. These groups admit a hierarchy over elementary edge groups that terminates in virtually cyclic or virtually Fuchsian vertex groups. This work uses combinatorial modulus techniques.

    For a detailed description on conformal dimension of boundaries of random groups we refer the reader to the survey of Mackay~\cite{mackay-survey};  we indicate some of the results below. First, recall the definition of the density model of a random group.

    \begin{defn}[Density model of random groups]
        Fix $m \geq 2$ and $S$ a generating set with $m$ elements. Fix a {\it density} $d \in (0,1)$. Consider all cyclically reduced words of length $\ell$ in $\la S \ra$. Consider all presentations which choose as relators $(2m-1)^{d\ell}$ of these words uniformly and independently at random. A property $\cP$ holds {\it generically} if the proportion of all such presentations at length $\ell$ which satisfy $\cP$ goes to $1$ as $\ell \rightarrow \infty$.
    \end{defn}

    \begin{thm} \cite[Theorem 1.3]{mackay16}
        There exists a constant $C>1$ so that for fixed $m \geq 2$, if $G$ is a random group at density $0<d<\frac{1}{8}$, then asymptotically almost surely
            \[ \frac{\log(2m-1) d\ell}{C|\log d|} \leq \Cdim(\p G). \]
        In particular, for each $d<\frac{1}{8}$, as $\ell \rightarrow \infty$, generic groups pass through infinitely many quasi-isometry classes.
    \end{thm}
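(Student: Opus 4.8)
The plan is to reduce the theorem to the combinatorial round tree criterion of \Cref{thm:mackayRT} and then to produce, asymptotically almost surely, a combinatorial round tree inside the Cayley $2$-complex of $G$ with large vertical branching and small horizontal branching. Since a random group at density $d<\tfrac12$ is generically hyperbolic, the presentation $2$-complex lifts to a hyperbolic polygonal $2$-complex $\widetilde X$ on which $G$ acts geometrically, whose $2$-cells are the $\ell$-gons carrying the $(2m-1)^{d\ell}$ chosen relators. By \Cref{thm:QI_QS} conformal dimension is a quasi-isometry invariant, so it suffices to exhibit a combinatorial round tree $A$ with vertical branching $V\ge 2$ and horizontal branching at most $H$ whose $1$-skeleton quasi-isometrically embeds in $\widetilde X$, and then to arrange that
\[
1+\frac{\log V}{\log H}\ \ge\ \frac{\log(2m-1)\,d\ell}{C\lvert\log d\rvert}.
\]

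The heart of the argument is a probabilistic estimate on shared subwords. Fix a reduced word $P$ of length $k$; among the $(2m-1)^{d\ell}$ relators, the expected number containing $P$ as a subword is on the order of $\ell\,(2m-1)^{d\ell-k}$, since there are about $\ell$ positions per relator and about $(2m-1)^{k}$ reduced words of length $k$. Choosing $k=\beta\ell$ with $\beta<d$ a fixed fraction (e.g.\ $\beta=\tfrac{d}{2}$), a first- and second-moment argument shows that generically one finds paths $P$ serving as the common initial subpath of roughly $V:=(2m-1)^{(d-\beta)\ell}$ distinct relators; each such relator, glued to the growing complex along $P$, spawns a new vertical branch. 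The horizontal branching is controlled by the small cancellation behaviour available at $d<\tfrac18$: pieces (maximal common subpaths of distinct relators) have length $O(d\ell)$, so within a single strip a given $2$-cell abuts at most $H=O(1/d)$ new $2$-cells, whence $\log H=O(\lvert\log d\rvert)$.

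With these parameters I would build $A$ recursively exactly as in \Cref{defn:RT}: starting from one polygon $A_0'$, at each outer boundary path attach the $V$ relators sharing it to create the $V$ vertical branches, subdividing each new layer into strips in which horizontal branching is at most $H$. The round tree axioms, in particular the nesting condition $A_n\cap A_{\ba}\subseteq A_{\ba'}\cap A_{\ba}\subseteq A_{n+1}\cap A_{\ba}$ and the bound $VH$ on cells met across a step, follow from the short-piece property, which forces distinct branches to separate quickly and to meet only along controlled initial subcomplexes. Feeding $V$ and $H$ into the displayed inequality gives $\tfrac{\log V}{\log H}=\Theta\!\big((d-\beta)\ell\log(2m-1)/\lvert\log d\rvert\big)$, which absorbs into the claimed bound after fixing $\beta$ and enlarging $C$.

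The main obstacle is verifying that $A^{(1)}$ embeds \emph{quasi-isometrically} into $\widetilde X$, not merely combinatorially: one must show that paths geodesic in $A$ remain uniform quasigeodesics in $\widetilde X$, so that the round tree's Cantor-set-times-interval boundary injects into $\p G$ with undistorted Gromov products. This is where the threshold $d<\tfrac18$ is essential, supplying the Greendlinger/local-to-global small cancellation control needed to rule out shortcuts through relators outside $A$ and to make all the counting statements hold simultaneously asymptotically almost surely. Granting the embedding, \Cref{thm:mackayRT} yields the lower bound. For the final clause, fix $d$ and let $\ell\to\infty$: the bound diverges, so $\Cdim(\p G)\to\infty$ along generic groups; since each $\Cdim(\p G)$ is bounded above by the finite Hausdorff dimension of a visual metric (\Cref{thm:HausdimBoundary}) and is a quasi-isometry invariant, the unbounded, finite values must realize infinitely many distinct quasi-isometry classes.
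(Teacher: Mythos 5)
Your overall skeleton is the right one and matches what the paper sketches: generic hyperbolicity, reduction via \Cref{thm:mackayRT} to building a combinatorial round tree in the Cayley $2$-complex with $\log V \approx d\ell\log(2m-1)$ and $\log H \approx |\log d|$, and your closing argument for infinitely many quasi-isometry classes (the bound diverges in $\ell$ while each $\Cdim(\p G)$ is finite by \Cref{thm:HausdimBoundary}) is exactly right. But your branching mechanism is not Mackay's, and as described it breaks the construction. The paper's sketch of \cite{mackay16} (following \cite{mackay}) does \emph{not} branch by finding $(2m-1)^{(d-\beta)\ell}$ relators sharing one common subword $P$; it needs only the far cheaper coverage statement that a.a.s.\ \emph{every} reduced word of length $\tfrac{3d\ell}{4}$ appears in \emph{some} relator (a union bound; no second moment). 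One subdivides the boundary arc of each strip into segments of length $\tfrac{d\ell}{4}$, attaches new \emph{outward} edge paths of length $\tfrac{d\ell}{4}$ at the subdivision points, and fills in one polygon per word of length $\tfrac{3d\ell}{4}$ (outward path, segment, outward path), so that $V \approx (2m-1)^{d\ell/4}$ comes from the choices of outward words, one relator per choice. That design is what makes \Cref{defn:RT} hold: consecutive cells in a layer overlap along shared outward paths, so each $A_{\ba}$ is genuinely a half-plane built from strips, and distinct branches diverge \emph{immediately} along distinct outward paths. In your version the $V$ polygons are glued along the \emph{same} spine $P$, so their boundary continuations must fan out through vertices of valence $2m$: pairs of branch cells can share pieces of length up to $\sim 2d\ell$ beyond $P$, and consecutive new cells within one layer share at most a vertex of the old boundary, so the planar strip structure and the immediate branch separation that the round-tree axioms and embedding estimates rely on are not available as stated.

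The second, and larger, gap is the one you flag and then grant: the quasi-isometric embedding of $A^{(1)}$ into the Cayley complex \emph{is} the theorem. That step is where $d<\tfrac{1}{8}$ (improving the $d<\tfrac{1}{16}$ threshold of \cite{mackay}) and the constant $C$ are actually earned, and it cannot be waved through with ``Greendlinger/local-to-global small cancellation'': at density $d$ the generic small cancellation condition is only $C'(2d+\epsilon)$, which for $d$ near $\tfrac{1}{8}$ is far weaker than $C'(1/6)$, so the classical Greendlinger lemma does not apply; the no-shortcut control comes instead from isoperimetric inequalities for random groups together with the specific geometry of the construction, in which every new cell is attached along a path of definite length $\tfrac{3d\ell}{4}$. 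In your construction new cells attach along paths of length only $\beta\ell=\tfrac{d\ell}{2}$ and occur in exponentially large books sharing a single spine, so even granting a general embedding machine, the hypotheses it needs are not the ones your complex satisfies. Your probabilistic count (every word of length $\beta\ell<d\ell$ lies in roughly $\ell(2m-1)^{(d-\beta)\ell}$ relators a.a.s.) is correct but stronger than necessary, and the extra strength does not compensate for the geometric defects above. In short: right reduction, right arithmetic of $V$ and $H$, but the round tree should be built by Mackay's outward-path/coverage mechanism rather than shared-subword books, and the embedding step needs the actual density-specific argument rather than an appeal to small cancellation folklore.
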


    The bounds above extend those given by Mackay in \cite[Theorem 1.4]{mackay} for $0<d<\frac{1}{16}$. The theorem bounds has been further extended by Oppenheim~\cite{oppenheim} and Frost~\cite{frost} to $0<d<\frac{1}{2}$.

    Mackay uses round trees to produce these lower bounds on conformal dimension. The strategy to find round trees in the Cayley $2$-complex is to realize groups in this family as small cancellation groups that satisfy certain useful conditions on the relators. For example, in \cite{mackay} Mackay asks that there exists some $N\geq 12$ so that every reduced word in $\la S \ra$ of length $N$ appears at least once in a cyclic conjugate of a relator (or its inverse). The round tree is built recursively. At each step, the boundary arc of each previous strip is split into segments of length 6. At each endpoint the subcomplex is first extended by adding $b$ new outward edge paths of length 3. Because each word of length 12 appears in some relator, there exist polygons in the Cayley 2-complex to fill in these extensions to build $b$ new strips in the round tree. See \cite[Section 5]{mackay} and \cite[Figure 4]{mackay}.

    This approach was generalized in \cite{mackay16} and \cite{frost} by asking that all words of length $\frac{3d\ell}{4}$ appear in the presentation a.a.s, and subdividing the boundary arc into segments of length $\frac{d\ell}{4}$. See \cite[Figure 17]{mackay-survey}.

\subsection{Relatively hyperbolic Coxeter groups}

    In joint work with Elizabeth Field, Radhika Gupta, and Robbie Lyman, the author used the round tree technique to produce the first non-trivial conformal dimension bounds for the Bowditch boundary of non-hyperbolic relatively hyperbolic groups. As explained below, one application proved there are infinitely many quasi-isometry classes among hyperbolic groups with Pontryagin sphere boundary.

    We considered the following family of Coxeter groups.

    \begin{defn}[Coxeter group]
        Let $\Gamma$ be a finite, simplicial graph with vertex set $V\G = \{s_1, \ldots, s_m\}$ and edge set $E\G$. Assume that each unoriented edge $\{s_i,s_j\} \in E\Gamma$ is labeled by a positive integer $m_{ij} \geq 2$. The \emph{Coxeter group $W_\Gamma$ with defining graph $\Gamma$} is the group given by the presentation
    \[ W_\Gamma = \langle \, s_1, \ldots, s_m \mid s_i^2 = 1,\ (s_is_j)^{m_{ij}} = 1 \textrm{ for all } s_i \in V\G, \{s_i,s_j\} \in E\G \, \rangle. \]
    The generators $s_i$ are called \emph{standard} or \emph{Coxeter} generators of $W_\Gamma$.
    \end{defn}

    \begin{defn}[Family of groups $\cW$]
        Let $\cW$ be the family of Coxeter groups with defining graph a complete graph and so that each edge label is at least $3$.
    \end{defn}

    Groups in this family have been previously considered, and we note relevant properties.

    \begin{itemize}
        \item The Davis--Moussong complex for $W_\Gamma \in \cW$ is a $\CAT(0)$-polygonal complex where each 2-cell is a regular Euclidean polygon and each vertex link is a complete graph on $n$ vertices. The group acts geometrically on this complex; each generator acts by a reflection across a convex wall, which is isometric to a regular tree \cite{davis}.

        \item If $W_\Gamma \in \cW$ and there is no triangle in $\Gamma$ with all three edges labeled three, then $W_\Gamma$ is hyperbolic. Otherwise, $W_\Gamma$ is hyperbolic relative to virtually abelian groups of rank two. In this case, the group is $\CAT(0)$ with isolated flats \cite{wise96}, \cite{hruska04} \cite[Moussong's Theorem; Corollary 12.6.3]{davis}.

        Let $\cP$ be the collection of stabilizers of two-dimensional flats in the Davis complex (and let $\cP$ be empty if $W_\Gamma$ is hyperbolic). Then $(W_\Gamma, \cP)$ is a relatively hyperbolic group pair.

        \item If $|V\Gamma| = 4$, then $W_\Gamma$ is Kleinian, and the $\CAT(0)$ boundary of $W_\Gamma$ is homeomorphic to the Sierpinski carpet \cite{andreev, schroeder, ruane-sier, haulmarkhruskasathaye}.

        One can view groups in this family $\cW$ as being constructed from Kleinian groups. This point of view is detailed \cite{fieldguptalymanstark} and motivated the construction of a $\CAT(-1)$ space that the group pair $(W_\Gamma, \cP)$ acts on geometrically finitely.

        \item If $|V\Gamma|\geq 5$, then the $\CAT(0)$ boundary of $W_\Gamma$ is homeomorphic to the Menger curve as shown by Haulmark--Hruska--Sathaye~\cite{haulmarkhruskasathaye}. This family of groups gave the first examples of non-hyperbolic $\CAT(0)$ groups with Menger curve boundary.

        \item Bourdon--Kleiner used $\ell_p$-cohomology to produce upper bounds on the conformal dimension of hyperbolic groups in this family \cite{bourdonkleiner15}. In particular, they showed that if $W_\Gamma \in \cW$ is defined on a complete graph with $m$ vertices and every edge label is at least $M \geq 4$, then
        \[ \Cdim(\p W_\Gamma) \leq 1 + \frac{\log(m-1)}{\log(2M-5)}.\]
    \end{itemize}

    We produced upper and lower bounds on the conformal dimension of the Bowditch boundary of the relatively hyperbolic group pair $(W_\Gamma, \cP)$.

    \begin{remark}[Bowditch boundary]
        Bowditch~\cite{bowditch12} proved that a relatively hyperbolic group pair $(G, \cP)$ admits a well-defined boundary, called the {\it Bowditch boundary} and denoted $\p(G, \cP)$. This compact topological space is the visual boundary of any {\it cusped Cayley graph} for $G$, where a combinatorial horoball is attached to each coset of each peripheral subgroup~\cite{grovesmanning08}. The cusped Cayley graph is a proper geodesic $\delta$-hyperbolic metric space, and hence the boundary $\p(W_\Gamma, \cP)$ admits a family of visual metrics. Since each group in $\cP$ is not itself nontrivially relatively hyperbolic, the quasisymmetry type of $\p(W_\Gamma, \cP)$ is a quasi-isometry invariant of the group $W_\Gamma$ \cite{behrstockdrutumosher, groff13, healyhruska}. Thus, the conformal dimension of $\p(W_\Gamma, \cP)$ is a quasi-isometry invariant of the group $W_\Gamma \in \cW$.
    \end{remark}

    The lower bounds on the conformal dimension of $\p(W_\Gamma, \cP)$ were obtained by quasi-isometrically embedding round trees in the Davis--Moussong complex for these groups. As the groups are not hyperbolic, the complex was constructed to avoid large intersections with flats. The bounds are as follows.

\begin{thm} \cite[Theorem A]{fieldguptalymanstark} \label{thm:CoxBounds}
    Let $\Gamma$ be a complete graph with $m \geq  11$ vertices and edge labels $m_{ij} \geq 3$. Let $M = \max{m_{ij}}$. Then
        \[\Cdim\bigl(\p (W_\G, \cP)\bigr) \geq 1 + \frac{\log \bigl(\lfloor\frac{m-5}{3}\rfloor\bigr)}{\log (2M-1)}.\]
\end{thm}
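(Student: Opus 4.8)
The plan is to produce the lower bound by exhibiting a combinatorial round tree, in the sense of \Cref{defn:RT}, inside the Davis--Moussong complex $\Sigma$ for $W_\G$, with vertical branching $V = \lfloor \frac{m-5}{3}\rfloor$ and horizontal branching at most $H = 2M-1$, and then invoking (a relatively hyperbolic analogue of) \Cref{thm:mackayRT}. The immediate difficulty is that $W_\G$ need not be hyperbolic, so \Cref{thm:mackayRT} does not apply to $\Sigma$ directly: the Bowditch boundary is the visual boundary of a hyperbolic cusped model $X$ for the pair $(W_\G, \cP)$, not of $\Sigma$. First I would therefore arrange the round tree to have uniformly bounded penetration into the flats of $\Sigma$ (equivalently, into the cosets of the peripheral subgroups in $\cP$). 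A quasi-isometric embedding of the one-skeleton $A^{(1)}$ into $\Sigma$ that avoids deep excursions into flats descends to a quasi-isometric embedding into $X$, since deep flat penetration is precisely what the cusping collapses; the boundary of the round tree, homeomorphic to $\cC \times [0,1]$, then embeds quasisymmetrically into $\p(W_\G,\cP)$.

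For the construction itself, recall that each $2$-cell of $\Sigma$ is a regular Euclidean $2m_{ij}$-gon and that the walls (fixed sets of reflections) are isometrically embedded trees. The \emph{horizontal} strips of the round tree are built by marching polygons along a wall: following a wall through successive polygons produces the half-plane complexes $A_{\ba}$, and the number of polygons one may append to a given polygon within a single strip is bounded, uniformly over all cells and in the worst case realized by the largest ($2M$-gon) cell, by $2M-1$; this is the source of $H = 2M-1$. The \emph{vertical} branching is obtained at the outer boundary path $E_{\ba_n}$ of each strip: at suitable vertices one may launch several new strips in independent wall directions, and one must choose these directions so that no three of the chosen generators span a triangle of $\G$ all of whose edges carry the label $3$ --- such a triangle generates an affine $\tilde A_2$ reflection subgroup, hence a flat. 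A counting argument over the $m$ Coxeter generators, discarding the directions that would close up a $(3,3,3)$ triangle with previously used directions, leaves at least $\lfloor \frac{m-5}{3}\rfloor$ admissible directions at each stage, yielding $V = \lfloor \frac{m-5}{3}\rfloor$. I would then verify the axioms of \Cref{defn:RT} --- planarity of each $A_{\ba}$, the nesting $A_n \cap A_{\ba} \subseteq A_{\ba'} \cap A_{\ba} \subseteq A_{n+1}\cap A_{\ba}$ for branches separating at step $n$, and the bound $VH$ on cells met in the next layer --- directly from the wall and link combinatorics of $\Sigma$.

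The main obstacle is the simultaneous management of flat-avoidance and vertical branching: the flats are exactly what obstructs transferring the embedding from $\Sigma$ to the hyperbolic model $X$, yet each new strip must be launched in a genuinely new wall direction, and naive choices repeatedly risk reconstructing a $(3,3,3)$ flat, thereby both collapsing distances in the visual metric and corrupting the required quasi-isometric embedding. Making the branching count $\lfloor \frac{m-5}{3}\rfloor$ rigorous --- so that the chosen directions are pairwise compatible, keep distinct strips uniformly separated, and keep every finite sub-configuration a bounded distance from the flats --- is the technical heart of the argument. Once the round tree is in hand with these properties, the quasi-isometric embedding $A^{(1)} \hookrightarrow X$ together with \Cref{thm:mackayRT} (and the stabilization statement \Cref{thm:cantorInterval}, which identifies the conformal dimension of $\cC \times [0,1]$ with $1 + \Hdim(\cC)$) gives
\[ \Cdim\bigl(\p(W_\G,\cP)\bigr) \geq 1 + \frac{\log V}{\log H} = 1 + \frac{\log\bigl(\lfloor \tfrac{m-5}{3}\rfloor\bigr)}{\log(2M-1)}, \]
using monotonicity of conformal dimension under passage to subspaces.
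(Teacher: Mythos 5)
Your global strategy coincides with the one the paper describes for \Cref{thm:CoxBounds}: build a combinatorial round tree as in \Cref{defn:RT} with $V = \lfloor\frac{m-5}{3}\rfloor$ and $H = 2M-1$, quasi-isometrically embed its $1$-skeleton in the Davis--Moussong complex while controlling intersections with flats, and conclude via a relatively hyperbolic version of \Cref{thm:mackayRT}. But the mechanism you give for the vertical branching count and for flat-avoidance --- ``choose these directions so that no three of the chosen generators span a triangle of $\Gamma$ all of whose edges carry the label $3$'' --- fails in exactly the cases the theorem covers. The hypotheses allow \emph{every} edge label to equal $3$ (then $M=3$ and the bound reads $1 + \log\bigl(\lfloor\frac{m-5}{3}\rfloor\bigr)/\log 5$), in which case $\Gamma$ is a complete graph in which every triangle is a $(3,3,3)$ triangle; any three distinct wall directions then violate your constraint, yet for $m \geq 14$ you need $V = \lfloor\frac{m-5}{3}\rfloor \geq 3$ directions at a single branch point. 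So $\lfloor\frac{m-5}{3}\rfloor$ cannot arise from triangle-avoidance among generators, and indeed it need not: using three pairwise label-$3$ generators as branching directions does not by itself place a flat inside the subcomplex. What must be prevented is the union of strips having unbounded-diameter intersection with (a neighborhood of) a flat stabilized by an affine $(3,3,3)$ subgroup, and the paper locates the technical heart accordingly: the recursive strip construction exploits the richness of the vertex links, and the crux is proving the resulting subcomplex is \emph{convex in the $1$-skeleton}, which is established via the wall structure of the Davis--Moussong complex; convexity is what simultaneously delivers the quasi-isometric embedding and the uniformly bounded penetration into flats. Your proposal is missing this idea and substitutes for it a combinatorial rule that is unsatisfiable.

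A second, smaller gap: you assert that bounded flat penetration makes the embedding ``descend'' to the cusped space $X$ because ``deep flat penetration is precisely what the cusping collapses.'' This is not automatic. Attaching horoballs can shorten distances between points of the round tree even when the tree itself never enters the horoballs, since geodesics of $X$ may shortcut through them; one needs a quantitative lemma (in the spirit of the bounded coset penetration arguments of Mackay--Sisto) that a subset meeting every peripheral coset in sets of uniformly bounded diameter is quasi-isometrically embedded in the cusped space. Likewise, \Cref{thm:mackayRT} is stated for hyperbolic polygonal $2$-complexes, and the cusped space is not one; the ``relatively hyperbolic analogue'' you invoke --- controlling the visual metric on the limit set of the round tree in $\p(W_\Gamma,\cP)$ well enough to run the curve-family argument of \Cref{thm:cantorInterval} --- is itself a substantive component of the cited proof, not something that can be quoted. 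Your final step, monotonicity of conformal dimension under passage to subspaces, is correct.
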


    In particular, the theorem shows that if the maximum edge labels are fixed, then the conformal dimension goes to infinity as the number of vertices in the defining graph does. This implies there are infinitely many quasi-isometry classes among any such family of Coxeter groups. Combining this result with work of Bourdon--Kleiner~\cite{bourdonkleiner15} proved that the conformal dimension of the boundaries of hyperbolic groups in this family achieves a dense set in $(1,\infty)$.

    \begin{remark}[Round tree construction]
        Similar to the results of Bourdon and Mackay discussed above, the construction of the round tree subcomplexes was recursive and followed from the richness of each vertex link and its high symmetry. In particular, one can locally build strips by extending along an edge path. As the number of vertices in the defining graph increases, the number of strips that can be constructed increases. The subtlety in the construction lies in ensuring that the subcomplex is convex in the 1-skeleton. The main tool to study convexity in Coxeter group is the structure of walls in the Davis--Moussong complex, and this is utilized in the proof.
    \end{remark}

    Additional motivation for studying this family of groups $\cW$ was to provide lower bounds on the conformal dimension of the boundary of arbitrary hyperbolic Coxeter groups with edge labels at least three. Indeed, if $\Gamma$ is a simplicial graph and $\Lambda \subset \Gamma$ is an induced subgraph, then if $W_\Gamma$ is hyperbolic, $W_\Lambda \leq W_\Gamma$ is a quasi-convex subgroup. Thus, if $W_\Gamma$ is a hyperbolic Coxeter group whose nerve contains an induced complete graph $\Gamma'$, then the lower bound on the conformal dimension of $\p W_{\Gamma'}$ yields a lower bound on the conformal dimension of $\p W_{\Gamma}$.

    \begin{remark}[Pontryagin sphere boundaries] \label{rem:Pont}
        The Pontryagin sphere is a compact 2-dimensional fractal that arises as the boundary of $3$-dimensional hyperbolic pseudo-manifolds, where the link at each point in the space is either a sphere or a torus. This space can be viewed as an inverse limit of an inverse system of tori. Alternatively, the space obtained by gluing opposite sides of each boundary square in the standard square Sierpinski carpet is homeomorphic to the Pontryagin sphere. See \cite{jakobsche,swiatkowski-trees, zawislak, fischer, DoubaLeeMarquisRuffoni} for details of its construction.

        The Pontryagin sphere arises as the boundary of Coxeter groups, as shown by Fischer~\cite{fischer} in the right-angled case and by \'Swi\k{a}tkowski~\cite{swiatkowski-trees} more generally.  That is, the boundary of a hyperbolic Coxeter group whose nerve is a closed surface of genus at least one is homeomorphic to the Pontryagin sphere~\cite[Theorem 2]{swiatkowski-trees}. Every complete graph embeds as an induced subgraph in the 1-skeleton of a triangulation of a surface, and edge labels can be chosen so that the nerve of the corresponding Coxeter group is homeomorphic to the surface. Therefore, \Cref{thm:CoxBounds} implies there are infinitely many quasi-isometry classes among hyperbolic Coxeter groups with Pontryagin sphere boundary. Work with Cashen--Dani--Schreve~\cite{cashendanischrevestark} proves the analogous statement for right-angled Coxeter groups, by embedding round trees in the Davis complex within a particular infinite family of these groups.

        The quasi-isometry classification, abstract commensurability classification, and questions of rigidity for hyperbolic groups with Pontryagin sphere boundary are broadly open.
    \end{remark}

\bibliographystyle{alpha}
\bibliography{refs}

 \end{document}